\title{On the Telescopic Picard Group}
\begin{document}

\begin{titlepage}
    \maketitle
    \begin{abstract}
        We prove that for any prime $p$ and height $\chrHeight\ge 1$, the telescopic Picard group $\Pic(\SpTn)$ contains a subgroup of the form $\Zp\times \ZZ / a_p(\extOrder)$, where $a_p = 1$ if $p=2$ and $a_p=2$ if $p$ is odd. 
        Using Kummer theory, we obtain an $(\uFpn \rtimes \ZZ / \chrHeight)$-Galois extension of $\SSTn$, obtaining the first example of a lift of a non-Abelian Galois extension of the $\Kn$-local sphere to the telescopic world, at arbitrary positive height and prime.

        Our proof proceeds by setting up a higher categorical framework for the periodicity theorem, utilizing the symmetries of this framework to construct Picard elements.

        \begin{figure}[h]
            \centering
            \includegraphics[width=0.76\linewidth]{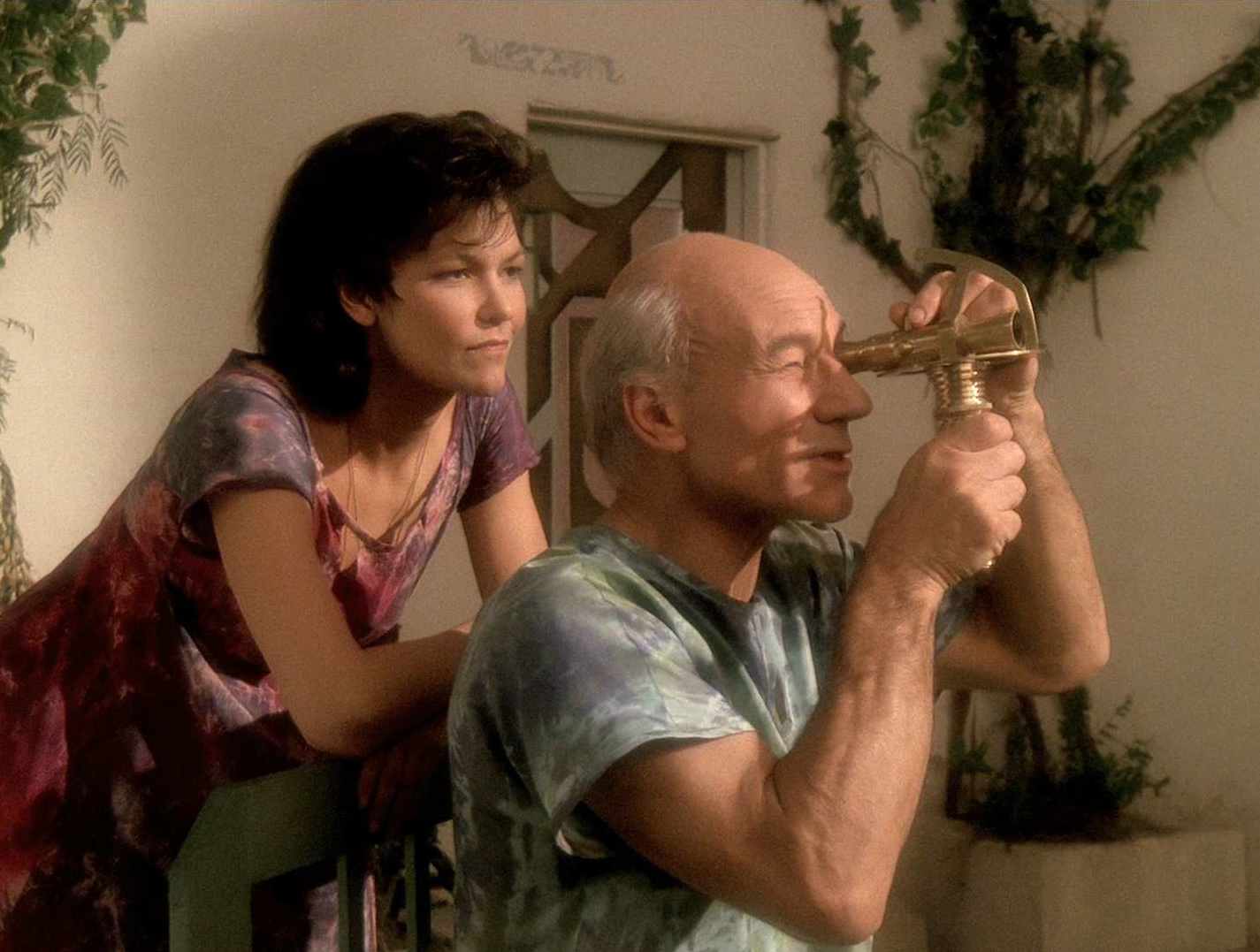}
            \caption{Patrick Stewart and Margot Rose in Star Trek: The Next Generation (1987)}
        \end{figure}
    \end{abstract}

\end{titlepage}

\tableofcontents

\newpage

\section{Introduction}\label{sec:intro}
    Throughout this paper, we use the term category to mean an $(\infty,1)$-category. Similarly we use the term $n$-category to mean an $(\infty,n)$-category.

    \subsection{Background and overview}
        \subsubsection*{Chromatic homotopy theory}
        Chromatic homotopy theory is the study of the category $\Sp_{(p)}$ of $p$-local spectra using the \quotes{height filtration} inherent in the category of compact objects. Two main approaches are taken for the extension of the filtration from compacts to the entire category, resulting in associated graded categories known as \quotes{monochromatic layers}.
        
        The first approach filters the category according to its relationship with formal groups, using the Lubin--Tate spectrum or Morava $E$-theory, which pertains to formal group deformations. This yields the categories $\SpKn$ of $\Kn$-local spectra for the monochromatic layers. This approach is more amenable to computations due to its correspondence with algebraic geometry, Dieudonn\'e modules and Galois descent.
        
        The second strategy, known as the telescopic approach, extends the filtration by colimits from the category of compact spectra to the category of all spectra. The monochromatic layers in this case are the categories $\SpTn$ of $\Tn$-local spectra.
        
        The two approaches are related by the relation $\SpKn\subseteq \SpTn$, which is known to be an equivalence for $n=0,1$ as shown by Miller~\cite{Miller-1981-Moore} and Mahowald~\cite{Mahowald-1981-bo}. However, a recent work of Burkland, Hahn, Levy and Schlank~\cite{Burklund-Hahn-Levy-Schlank-2023-Telescope} proved that the inclusion is strict for $n\ge 2$ and any prime.

        \subsubsection*{The Picard group}
        The Picard group of a symmetric monoidal category $\cC$ is the group of $\otimes$-invertible elements in $\cC$ up to isomorphism. This invariant has been extensively studied, particularly in the context of modules over a ring and, more generally, quasicoherent sheaves over a scheme.
    
        The $\Kn$-local Picard group $\Pic(\SpKn)$ has been the subject of extensive research, initiated by Hopkins--Mahowald--Sadofsky~ \cite{Hopkins-Mahowald-Sadofsky-1994-Picard}, and continuing through numerous contributions, including~\cite{Goerss-Henn-Mahowald-Rezk-2014-Picard, Heard-2015-Picard, Pstragowski-2018-Picard, Beaudry-Bobkova-Goerss-Henn-Pham-Stojanoska-2022-Picard, Mor-2023-Picard, BLLLSZ-2024-Picard} leading to its complete classification for sufficiently large primes and classification up to torsion for all primes in~\cite{Barthel-Schlank-Stapleton-Weinstein-2024-Picard}.
        In particular, $\Pic(\SpKn)$ admits a subgroup of the form
        \begin{equation*}
            \Zn = \Zn[\chrHeight,p] \coloneqq \lim_k \ZZ/(p^k(2p^n-2)),
        \end{equation*}
        topologically generated by $\Sigma \SSKn$. Note that $\Zn \cong \Zp \times \ZZ/(a_p(\extOrder))$ where $a_p = 1$ if $p=2$ and $a_p = 2$ if $p$ is odd.
        Moreover, when $\chrHeight \ge 2$, $\Pic(\SpKn)$ is of rank 2 over $\Zp$.
        
        In contrast, much less is known about $\Pic(\SpTn)$. In~\cite{CSY-cyclotomic}, Carmeli, Schlank and Yanovski show that $\Pic(\SpTn)$ contains a subgroup isomorphic to $\ZZ / (p-1)$. Our first main theorem is the following extension:
        \begin{alphThm}[\cref{thm:ZpxZ/vn->Pic}]\label{alphthm:subgroup-of-pic}
            The group $\Pic(\SpTn)$ admits a subgroup isomorphic to $\Zn$, topologically generated by $\Sigma \SSTn$.
        \end{alphThm}
        In particular, it lifts the corresponding subgroup of $\Pic(\SpKn)$.
    
        \subsubsection*{Galois theory}
        The interplay between formal group theory and $\SpKn$ leads to the construction of the Lubin--Tate commutative ring spectrum $\En = \En(\Fpbar)$ (\cite{Goerss-Hopkins-2004-En, Lurie-2018-Elliptic2}) which is a faithful and relatively computable multiplicative (co)homology theory for $\Kn$-local spectra. 
    
        Conceptually, by the works of~\cite{Devinatz-Hopkins-2004-Morava-stabilizer},~\cite{Rognes-2008-Galois},~\cite{Baker-Richter-2008-Galois-En}, and~\cite{Mathew-2016-Galois}, $\En$ can be viewed as the Galois-closure of the $\Kn$-local sphere $\SSKn$ in $\SpKn$, with Galois group the Morava stabilizer group $\extMorStb = \Aut(\Gamma) \rtimes \Zhat$, where $\Gamma$ is a formal group law of height $\chrHeight$ over $\Fpbar$. 
        Using~\cite[Section 9]{Mathew-2016-Galois}, the $G$-Galois extensions of $\SSKn$ are completely classified by continuous homomorphisms $\extMorStb \to G$. 
        
        This naturally raises the question of lifing Galois extensions from the $\Kn$-local category to the $\Tn$-loca category. Carmeli, Schlank and Yanovski~\cite{CSY-cyclotomic} have shown shown that any Abelian Galois extension of $\SSKn$ lifts. We present the first lift of a non-Abelian Galois extension from $\SSKn$ to $\SSTn$.
        During the writing of this project, Robert Burklund, Dustin Clausen, and Ishan Levy announced a proof that any finite Galois extension of $\SSKn$ lifts uniquely to $\SSTn$, using different methods from ours.

        $\Aut(\Gamma)$ can be explicitly described as the group of units $\unitsOrd$ of the order
        \begin{equation*}
            \MorOrd = \Witt(\Fpn) \left\langle S \middle\vert S^{\chrHeight} = p, Sw = w^{\varphi} S \ \forall w\in \Wn \right\rangle,
        \end{equation*}
        where $\varphi$ is a lift of the Frobenius to $\Witt(\Fpn)$. The map sending $S$ to 0 provides a map $\pi\colon \unitsOrd \onto \Fpn\units$. Let $\ourExtT$ be the pro-Galois extension classified by the group homomorphism
        \begin{equation*}
            \extMorStb = \unitsOrd \rtimes \Zhat \xonto{(\det \oplus \pi) \rtimes\id} (\Zp\units \oplus_{\Fp\units} \uFpn) \rtimes \Zhat,
        \end{equation*}
        where $\Zhat$ acts on $\Zp\units \oplus_{\Fp\units} \uFpn$ via conjugation by $S$, i.e.\ it acts trivially on $\Zp$ and it acts as the Frobenius on $\uFpn$ (in particular $\chrHeight$ acts trivially). The group $\Zp\units \oplus_{\Fp\units} \uFpn$ appearing here is the abelianization of the special Morava stabilizer group $\MorStb = \Aut(\Gamma)$.
        
        \begin{alphThm}[\cref{thm:Gal-summary}]\label{alphthm:Gal-summary}
            There exists a $((\Zp\units \oplus_{\Fp\units} \uFpn) \rtimes \Zhat)$-pro-Galois extension $\ourExtTf$ of $\SSTn$, lifting $\ourExtT$.
        \end{alphThm}

    \subsection{Methods}
        \subsubsection*{Asymptotically defined endomorphisms of the identity}
        One of the fundamental theorems in chromatic homotopy theory is Hopkins and Smith's periodicity theorem~\cite[Theorem~9]{Hopkins-Smith-nilpotenceII}. This states that any compact spectrum of type $\ge \chrHeight$ admits a $\vn$-self-map, which is asymptotically unique. 

        We set a categorical framework for asymptotically defined maps, and moreover showing that categories of asymptotically defined maps admit an asymptotically defined natural transformation $\id \To \id$.

        Let $D$ be a quotient subgroup of $\Zhat$, and write it as a sequential limit of surjections $D = \lim_{\selfMapDeg_i} \ZZ/\selfMapDeg_i$.

        \begin{definition*}[\cref{dfn:cats-with-asymp-defined-endomorphism}, \cref{rmrk:objects-of-SED}]
            Let $\cC\in\Catperf$. A $D$-asymptotically defined endomorphism of the identity $\alpha$ on $\cC$ is the data of an exhausting filtration of $\cC$
            \begin{equation*}
                \cC_1 \xto{F_1} \cC_2 \xto{F_2} \cdots \to \cC,
            \end{equation*}
            equipped with natural transformations $\alpha_i \colon \Sigma^{\selfMapDeg_i} \To \id_{\cC_i}$ and isomorphisms $F_i\alpha_i^{\selfMapDeg_{i+1}/\selfMapDeg_i} \simeq \alpha_{i+1}F_i$ in $\Nat(\Sigma^{\selfMapDeg_{i+1}}F_i, F_i)$.
    
            We say that $\alpha$ is a $D$-asymptotically defined \textbf{isomorphism} of the identity if $\alpha_i$ is an isomorphism for all $i$.
        \end{definition*}
        
        Categories with asymptotically defined endomorphisms and isomorphisms of the identity assemble to 2-categories $\Catperf\SED$ and $\Catperf\SEDiso$. 
        There is an underlying category functor $U \colon \Catperf\SED \to \Catperf$ sending $(\cC_1\to \cC_2 \to \cdots)$ to the colimit $\colim_i \cC_i$.

        We show (\cref{prop:SMd-adjoint-U}) that $U$ admits a right adjoint ${(-)}\SMD$, taking $\cC$ to the system with $\cC_i = \cC\SMd[\selfMapDeg_i]$ --- the category all self maps in $\cC$  of degree $\selfMapDeg_i$. 
        The inclusion $\Catperf\SEDiso \into \Catperf\SED$ admits a left adjoint $L$ (\cref{cor:universal-prop-of-inverting}), inverting $\alpha_i$ at each degree.

        We summarize these adjunctions in the following diagram:
        \begin{equation*}
            \begin{tikzcd}
        	{\Catperf\SEDiso} && {\Catperf\SED} && \Catperf
        	\arrow[""{name=0, anchor=center, inner sep=0}, hook, from=1-1, to=1-3]
        	\arrow[""{name=1, anchor=center, inner sep=0}, "L"', curve={height=30pt}, from=1-3, to=1-1]
        	\arrow[""{name=2, anchor=center, inner sep=0}, "U", from=1-3, to=1-5]
        	\arrow[""{name=3, anchor=center, inner sep=0}, "{{(-)}\SMD}", curve={height=-30pt}, from=1-5, to=1-3]
        	\arrow["{\rotatebox[origin=c]{180}{$\top$}}"{description}, draw=none, from=1, to=0]
        	\arrow["{\rotatebox[origin=c]{180}{$\top$}}"{description}, draw=none, from=3, to=2]
            \end{tikzcd}.
        \end{equation*}
        Going back to our case of interest, we let $D = \Zn = \Zp \times \ZZ/ (a_p(\extOrder))$. We then construct a category with an asymptotically defined endomorphism $\vnSp\subseteq \Sp\SMvn$, by letting $\vnSpk$ be the full subcategory of $\Sp\SMd[\vnSelfDeg]$ consisting of $\vn$-self-maps.
        The periodicity theorem can then be reformulated as follows:
        \begin{alphThm}[\cref{thm:periodicity-rephrased}]\label{alphthm:periodicity}
            The category with asymptotically defined endomorphism of the identity $(\vnSp, \vn)\in\Catperf\SEvn$ satisfies:
            \begin{enumerate}
                \item The underlying category of $\vnSp$ is $\FSpc$;
                \item The underlying category of $L(\vnSp)$ is $\TnComp$.
            \end{enumerate}
        \end{alphThm}

        \begin{remark*}
            This result is similar to the the construction of the Bousfield--Kuhn functor in the unstable setting, as done in \cite{Lurie-Thursday-BK}.
        \end{remark*}

        \subsubsection*{The telescopic Picard group}
        When $\cC$ is presentable, sending $X$ to the functor $X\otimes -$ defines an isomorphism from $\Pic(\cC)$ to the group of $\cC$-linear automorphisms of $\cC$.
        Specializing to $\cC=\SpTn$, we note that any automorphism of $\SpTn$ is $\SpTn$-linear. As $\SpTn$ is compactly generated we deduce that \mbox{$\Pic(\cC) = \pi_0\Aut(\TnComp)$}.
        Therefore we can construct Picard elements by constructing automorphisms of $\TnComp$.
        
        One advantage of our categorified setting, is that it gives rise to $\EE_1$ group actions on categories admitting an asymptotically defined natural isomorphism. More precisely, categories with an automorphism $\Sigma^{\selfMapDeg} \isoto \id$ admit a $\Omega S^1/\selfMapDeg$-action given by suspension. Heading towards the colimit of such categories, we prove that these actions are compatible when varying $d$. We define the space
        \begin{equation*}
            S^2_D \coloneqq \lim_i \Sigma (S^1/\selfMapDeg_i)
        \end{equation*}
        and compute its second homotopy group (\cref{lem:pi2-of-SD})
        \begin{equation*}
            \pi_2(S^2_D) \cong D.
        \end{equation*}
        
        \begin{proposition*}[\cref{cor:group-action-on-SEDiso}]
             The $\EE_1$-group $\Omega^2 S^2_D$ acts on any $\cC$ in $\Catperf\SED$. The topological generator $1 \in D \cong \pi_0\Omega^2 S^2_D$ acts by suspension. 
        \end{proposition*}

        In particular, $\vnSp[\vn^{-1}]$ has a $\Omega^2 S^2_{\Zn}$-action, and therefore so does its underlying category $\TnComp$. We get a group homomorphism on homotopy groups
        \begin{equation*}
            \Zn = \pi_0(\Omega^2 S^2_{\Zn}) \to \Pic(\SpTn).
        \end{equation*}

        To prove \cref{alphthm:subgroup-of-pic} it is left to show that it is injective. Recall that $\Zn = \Zp \times \ZZ/ (a_p(\extOrder))$. We prove it separately for each component:
        \begin{itemize}
            \item For $\Zp$, we tensor with $\En$ remembering only the action of $\Zp\subseteq \Zp\units \subseteq \extMorStb$. It is simple to show that the composition
            \begin{equation*}
                \Zp \to \Pic(\SpTn) \xto{\En\otimes -} \Pic(\Mod_{\En}(\SpTn^{\B\Zp}))
            \end{equation*}
            is injective.
            \item For $\ZZ/ (a_p(\extOrder))$, we compute with $\Kn$-homology, landing in invertible $\Kn_*$-graded modules, which are exactly $\Kn_*, \Kn_{*-1} ,\dots, \Kn_{*-(2(\extOrder)-1)}$. The composition sends $d$ to $\Kn_{*-d}$ if $p$ is odd and to $\Kn_{*-2d}$ is $p=2$, showing that it is injective in both cases.
        \end{itemize}

        \subsubsection*{Kummer theory}
        To construct Galois extensions, we shall make use of the $\infty$-categorical Kummer theory, developed by Carmeli-Schlank-Yanovski, which we now briefly recall.

        Any Picard element $X$ is in particular dualizable and therefore has a dimension $\dim(X) \in \pi_0\ounit\units$. By~\cite[Corollary3.21]{CSY-cyclotomic}, ${\dim(X)}^2 = 1$.
        \begin{definition*}[{\cite[Definition3.22]{CSY-cyclotomic}}]
            Define the even Picard group as the subgroup of \emph{even Picard elements}:
            \begin{equation*}
                \evPic(\cC) \coloneqq \{X \in \Pic(\cC) \mid \dim(X) = 1\}.
            \end{equation*}
        \end{definition*}
        
        \begin{theorem*}[{Kummer theory~\cite[Proposition~3.23]{CSY-cyclotomic}}]
            Let $\cC$ be an additive presentably symmetric monoidal category with a choice of a primitive $m$-th root of unity (\cite[Definition~3.3]{CSY-cyclotomic}). Then there is a split short exact sequence of Abelian groups
            \begin{equation*}
                0 
                \to (\pi_0\ounit\units) / {(\pi_0 \ounit\units)}^m 
                \to \pi_0 \hGal{\ZZ / m}(\cC)
                \to \evPic(\cC)[m]
                \to 0.
            \end{equation*}
        \end{theorem*}
        This enables us to lift torsion even Picard elements to Galois extensions assuming the existence of a root of unity.
        First, consider the $\Tn$-localization of the spherical Witt vector of $\Fpn$, denoted $\SWnf \coloneqq \LTn \sphereWitt(\Fpn)$, which is a faithful $\ZZ/n$-Galois extension of $\SSTn$, admitting a $(p^n-1)$-st root of unity. 
        \cref{alphthm:subgroup-of-pic} together with Kummer theory thus give an $\uFpn$-Galois extension of $\SWnf$, which corresponds to an $(\uFpn \rtimes \ZZ/\chrHeight)$-Galois extension of $\SSTn$.

        Similarly, adding higher roots of unity to our Galois extension as in~\cite{CSY-cyclotomic} (equivalently, replacing $\SWnf$ with $\LTn(\sphereWitt(\Fpbar))[\omega_{p^{\infty}}^{(n)}]$ in the construction) gives a $((\Zp\units \oplus_{\Fp\units} \uFpn) \rtimes \Zhat)$-Galois extension.

        To prove \cref{alphthm:Gal-summary} it remains to show that its $\Kn$-localization is represented by said character. This is established by a uniqueness arguments.

    \subsection{Organization}
        \cref{app:higher-cats} provides a brief overview of $(\infty,n)$ categories and lax limits. In this section, we explicitly compute the lax limits along the directed circle $\CEnd = \B\NN$ and the directed 2-sphere $\CEndII$, and examine their interrelations. Readers may skip this section on a first read, trusting the results that will be applied in later sections.
    
        We use the constructions and interrelations of \cref{app:higher-cats} in \cref{sec:self-maps} in order to construct $\Catperf\SED$, $\Catperf\SEDiso$ and all the adjunctions described.

        In \cref{sec:Tn-cat}, we build on the previous section to construct the lift $\vnSp$ and prove \cref{alphthm:periodicity}. In \cref{sec:Tn-Pic}, we leverage this lift and the natural group action on $\Catperf\SEDiso$ to prove \cref{alphthm:subgroup-of-pic}. Finally, in \cref{sec:galois}, we use Kummer theory to lift the Galois extension from $\SSKn$ to $\SSTn$, proving \cref{alphthm:Gal-summary}.
        
    \subsection{Conventions}
        
        We use the following terminology and notation:
        \begin{enumerate}
            \item The category of stable categories and exact functors is denoted $\Catst$, and the full subcategory of idempotent complete stable categories is denoted by $\Catperf$. The idempotent completion functor is denoted by ${(-)}^{\mrm{idem}} \colon \Catst\to\Catperf$.
            \item The category of spaces (or animae, or groupoids) is denoted by $\spc$.
            \item We denote by $\cC\core\subseteq \cC$ the maximal subgroupoid of a category $\cC$.
            \item We denote the space of morphisms between two obejcts $X,Y$ in a category $\cC$ by $\Map_{\cC}(X,Y)$ and omit $\cC$ when it is clear from context. If $\cC$ is stable we denote the mapping spectrum of $X,Y$ by $\hom_{\cC}(X,Y)$ or by $\hom(X,Y)$ if $\cC$ is clear from context.
            \item The category of presentable categories with colimit-preserving functors is denoted by $\PrL$.
            \item For $\cC\in\CAlg(\PrL)$ and $\cD,\cE\in\Mod_{\cC}(\PrL)$ we denote the space of $\cC$-linear functors from $\cD$ to $\cE$ by $\Map^{\cC}(-,-)$. Similarly we denote the space of $\cC$-linear endomoprhisms and $\cC$-linear automorphisms by $\End^{\cC}$ and $\Aut^{\cC}$ respectively.
            \item For a category $\cC$ we denote its full subcategory of compact objects by $\cC\comp$. 
            \item For a symmetric monoidal category $\cC$ we denote its space of dualizable objects by $\cC\dbl$.
            \item We denote lax limits by $\laxlim$ and oplax limits by $\oplaxlim$. Similarly, we will denote lax operations with an arrow.
            \item For two $n$-categories $\cC,\cD$, we denote by $\laxFun(\cC,\cD)$ the $n$-categories of functors from $\cC$ to $\cD$ and lax natural transformations. We similarly denote $\oplaxFun(\cC,\cD)$ the $n$-category of functors and oplax natural transformations.
            \item For an $n$-category $\cC$ and $X,Y\in\cC$ we denote by $\MAP_{\cC}(X,Y)$ the mapping $(n-1)$-cateogry. Similarly we denote the $(n-1)$-category of endomorphisms of $X$ by $\END_{\cC}(X) \coloneqq \MAP_{\cC}(X,X)$. We omit $\cC$ when it is clear from context.
            \item For the remainder of the paper we fix a height $\chrHeight > 0$ and a prime $p$.
        \end{enumerate}
    
    \subsection{Acknowledgments} 
        I would like to thank my advisor Tomer Schlank for many helpful conversations, comments and ideas. 
        I would like to thank the entire Seminarak group, especially Shay Ben-Moshe and Shaul Ragomiv for useful comments on previous drafts of the paper.
        I would like to thank Lior Yanovski, Shaul Barkan, Anish Chedalavada, Shaul Ragimov, Shay Ben-Moshe and Leor Neuhauser, for useful discussions.
        
        I would like to express my gratitude to the University of Chicago and to the \quotes{Spectral Methods in Algebra, Geometry, and Topology} trimester program at the Hausdorff Research Institute for Mathematics, funded by the Deutsche Forschungsgemeinschaft (DFG, German Research Foundation) under Germany's Excellence Strategy - EXC-2047/1 - 390685813. Parts of this work were completed during my time as their guest.

        This project has received funding from the European Research Council (ERC) under the European Union's Horizon 2020 research and innovation programme (Grant agreement No. 3012006831).

\section{Higher categories and lax limits}\label{app:higher-cats}
    This section is devoted for the introduction of our chosen framework of $n$-categories, lax limits, and their computation in two special cases: 
    \begin{enumerate}
        \item The lax limit along the walking 1-endomorphism 
        \begin{equation*}
            \CEnd = \B\NN = \begin{tikzcd}
                \bullet
                \arrow[from=1-1, to=1-1, loop, in=55, out=125, distance=10mm]
            \end{tikzcd},
        \end{equation*}
        \item The lax limit along the walking 2-endomorphism of the identity
        \begin{equation*}
            \CEndII = \begin{tikzcd}
                \bullet
                \arrow[Rightarrow, from=1-1, to=1-1, loop, in=55, out=125, distance=10mm]
            \end{tikzcd} = 
            \begin{tikzcd}
                \star & \star
                \arrow[""{name=0, anchor=center, inner sep=0}, "\id_\star", curve={height=-12pt}, from=1-1, to=1-2]
                \arrow[""{name=1, anchor=center, inner sep=0}, "\id_\star"', curve={height=12pt}, from=1-1, to=1-2]
                \arrow[shorten <=3pt, shorten >=3pt, Rightarrow, from=0, to=1]
            \end{tikzcd}.
        \end{equation*}
    \end{enumerate}
    We use these computations in \cref{sec:self-maps}, either choosing $\cC\in\Catperf$ and $\Sigma^{\selfMapDeg}$ as an endomorphism, or choosing $\Catperf\in\bigCatt$ with $\Sigma^{\selfMapDeg}$ as the endomorphism of $\id_{\Catperf}$.

    \subsection{Higher categories and lax natural transformations}\label{subsec:lax-limits}
        We will model $n$-categories by Rezk's complete Segal $\Theta_n$ spaces~\cite{Rezk-2010-n-categories},~\cite[\textsection~13]{Barwick-Schommer-Pries-unicity},~\cite[\textsection~2]{Loubaton-2024-infty-categories}, as we now briefly recall.
        We call a classical (1,1)-category a strict category and define inductively strict $n$-categories as strict categories enriched in strict $(n-1)$-categories. An important class of strict $n$-categories are the $k$-cells $\DD^k$ (also known as the walking $k$-morphisms, $k$-disks or $k$-globes) and their \quotes{sums}, which are finite connected colimits along inclusions of smaller cells in the boundary, corresponding to composable higher arrows.
        The $k$-cells for $k\le 3$ are
        \begin{equation*}
            \begin{tikzcd}
                {\DD^0 = \pt = } & \bullet && {\DD^1 = [1] = } & \bullet & \bullet && {\DD^2=} & \bullet & \bullet \\
                \\
                && {\DD^3=} & \bullet && \bullet
                \arrow[from=1-5, to=1-6]
                \arrow[""{name=0, anchor=center, inner sep=0}, curve={height=-12pt}, from=1-9, to=1-10]
                \arrow[""{name=1, anchor=center, inner sep=0}, curve={height=12pt}, from=1-9, to=1-10]
                \arrow[""{name=2, anchor=center, inner sep=0}, curve={height=-24pt}, from=3-4, to=3-6]
                \arrow[""{name=3, anchor=center, inner sep=0}, curve={height=24pt}, from=3-4, to=3-6]
                \arrow[shorten <=3pt, shorten >=3pt, Rightarrow, from=0, to=1]
                \arrow[""{name=4, anchor=center, inner sep=0}, shift left=3, curve={height=-12pt}, shorten <=8pt, shorten >=8pt, Rightarrow, from=2, to=3]
                \arrow[""{name=5, anchor=center, inner sep=0}, shift right=3, curve={height=12pt}, shorten <=7pt, shorten >=7pt, Rightarrow, from=2, to=3]
                \arrow[shorten <=7pt, shorten >=7pt, Rightarrow, scaling nfold=3, from=5, to=4]
            \end{tikzcd}.
        \end{equation*}
        and some examples of their cellular sums are
        \begin{equation*}
            \begin{tikzcd}
                \bullet & \bullet & \bullet & \bullet && \bullet & \bullet & \bullet \\
                \\
                \bullet & \bullet && \bullet && \bullet && \bullet && \bullet & \bullet
                \arrow[from=1-1, to=1-2]
                \arrow[from=1-2, to=1-3]
                \arrow[from=1-3, to=1-4]
                \arrow[from=1-6, to=1-7]
                \arrow[""{name=0, anchor=center, inner sep=0}, curve={height=-12pt}, from=1-7, to=1-8]
                \arrow[""{name=1, anchor=center, inner sep=0}, curve={height=12pt}, from=1-7, to=1-8]
                \arrow[""{name=2, anchor=center, inner sep=0}, curve={height=-18pt}, from=3-1, to=3-2]
                \arrow[""{name=3, anchor=center, inner sep=0}, curve={height=18pt}, from=3-1, to=3-2]
                \arrow[""{name=4, anchor=center, inner sep=0}, from=3-1, to=3-2]
                \arrow[""{name=5, anchor=center, inner sep=0}, curve={height=-30pt}, from=3-4, to=3-6]
                \arrow[""{name=6, anchor=center, inner sep=0}, curve={height=30pt}, from=3-4, to=3-6]
                \arrow[""{name=7, anchor=center, inner sep=0}, curve={height=-30pt}, from=3-8, to=3-10]
                \arrow[""{name=8, anchor=center, inner sep=0}, curve={height=30pt}, from=3-8, to=3-10]
                \arrow[""{name=9, anchor=center, inner sep=0}, from=3-8, to=3-10]
                \arrow[from=3-10, to=3-11]
                \arrow[shorten <=3pt, shorten >=3pt, Rightarrow, from=0, to=1]
                \arrow[shorten <=2pt, shorten >=2pt, Rightarrow, from=2, to=4]
                \arrow[shorten <=2pt, shorten >=2pt, Rightarrow, from=4, to=3]
                \arrow[""{name=10, anchor=center, inner sep=0}, shift left=4, curve={height=-12pt}, shorten <=9pt, shorten >=9pt, Rightarrow, from=5, to=6]
                \arrow[""{name=11, anchor=center, inner sep=0}, shift right=4, curve={height=12pt}, shorten <=9pt, shorten >=9pt, Rightarrow, from=5, to=6]
                \arrow[""{name=12, anchor=center, inner sep=0}, shorten <=8pt, shorten >=8pt, Rightarrow, from=5, to=6]
                \arrow[shorten <=4pt, shorten >=4pt, Rightarrow, from=7, to=9]
                \arrow[""{name=13, anchor=center, inner sep=0}, shift left=4, curve={height=-6pt}, shorten <=5pt, shorten >=5pt, Rightarrow, from=9, to=8]
                \arrow[""{name=14, anchor=center, inner sep=0}, shift right=4, curve={height=6pt}, shorten <=4pt, shorten >=4pt, Rightarrow, from=9, to=8]
                \arrow[shorten <=4pt, shorten >=4pt, Rightarrow, scaling nfold=3, from=11, to=12]
                \arrow[shorten <=4pt, shorten >=4pt, Rightarrow, scaling nfold=3, from=12, to=10]
                \arrow[shorten <=6pt, shorten >=6pt, Rightarrow, scaling nfold=3, from=14, to=13]
            \end{tikzcd}.
        \end{equation*}

        The collection of all cellular sums forms a (strict) category $\Theta_n$. The category $\Theta_n$ can be formally constructed inductively as the wreath product $\Delta \wr \Theta_{n-1}$. 
        The category $\Cat_n$ of $n$-categories is defined to be the full subcategory of $\catname{PSh}(\Theta_n)$ on those presheaves satisfying Segal and completion conditions. 
        There is an embedding $\Theta_n \into \Cat_n$ representing the corresponding $n$-categories. The image of $a\in \Theta_n$  represents the evaluation at $a$ functor:
        \begin{equation*}
            \Map_{\Cat_n} (a, \cC) \simeq \cC_a.
        \end{equation*}

        For an $n$-category $\cC\in\Cat_n$, we define its space of $k$-morphisms to be $\Map(\DD^k, \cC)$.

        The category of $\infty$-categories $\Cat_{\infty} = \lim_n \Cat_n$ was long conjectured to have a monoidal structure called the Gray tensor product which we think of as a lax cartesian product and denote $\gray$. The Gray tensor product, originally defined in~\cite{Gray-2006-Gray} for strict 2-categories and developed further in many works such as~\cite{Johnson-Freyd-Scheimbauer-2017-Funlax, Gaitsgory-Rozenblyum-2019-DAG, Gagna-Harpaz-Lanari-2021-Gray, Maehara-2021-Gray, Campion-Maehara-2023-Gray, Loubaton-2024-infty-categories}. In~\cite{Campion-2023-Gray}, Campion have constructed a simple universal property for the Gray tensor product and have constructed a model satisfying it. Alas, not all models have been shown to satisfy it.
        We will use Lubaton's model~\cite[Construction 2.3.1.17]{Loubaton-2024-infty-categories}, that was shown to have some computational properties we will employ. 
        
        For a typical example, the Gray tensor product of the arrow category $[1]$ with itself is the lax square
        \begin{equation*}
            \begin{tikzcd}
                \bullet & \bullet \\
                \bullet & \bullet
                \arrow[from=1-1, to=1-2]
                \arrow[from=1-1, to=2-1]
                \arrow[from=1-2, to=2-2]
                \arrow[Rightarrow, from=2-1, to=1-2]
                \arrow[from=2-1, to=2-2]
            \end{tikzcd}.
        \end{equation*}

        Sometimes, wanting to stay in the realm of $n$-categories, we will denote also by $\gray$ the truncated Gray tensor product
        \begin{equation*}
            \Cat_n \times \Cat_n \xto{\gray} \Cat_{2n} \xto{\tau_{\le n}} \Cat_n.
        \end{equation*}

        For $\cC\in\Cat_n$, the functors 
        \begin{equation*}
            \cC \gray - \colon \Cat_n \to \Cat_n, \qquad - \gray \cC \colon \Cat_n \to \Cat_n
        \end{equation*}
        admit right adjoints denoted by 
        \begin{equation*}
            \laxFun(\cC,-)\colon \Cat_n \to \Cat_n, \qquad \oplaxFun(\cC,-)\colon \Cat_n \to \Cat_n
        \end{equation*}
        which are the $n$-categories of functors and (op)lax natural transformations (see~\cite[Definition 4.1.4.1]{Loubaton-2024-infty-categories}). They are described as sheaves on $\Theta_n$ as follows:
        \begin{equation*}
            \begin{split}
                & \laxFun(\cC, \cD)_a \simeq \Map(a, \laxFun(\cC,\cD)) \simeq \Map(\cC \gray a, \cD), \\
                & \oplaxFun(\cC, \cD)_a \simeq \Map(a, \oplaxFun(\cC,\cD)) \simeq \Map(a \gray \cC, \cD).
            \end{split}
        \end{equation*}
        In particular, $k$-morphisms in $\laxFun(\cC,\cD)$ are functors $\cC \gray \DD^k \to \cD$. 
        Thus objects are just functors $\cC \to \cD$ and morphisms are lax natural transformations, which informally, are  a collection of morphisms $\alpha_X \colon FX \to GX$ for every $X\in\cC$ and a 2-morphism
        \begin{equation*}
            \begin{tikzcd}
                FX && FY \\
                \\
                GX && GY
                \arrow["Ff", from=1-1, to=1-3]
                \arrow["{\alpha_X}", from=1-1, to=3-1]
                \arrow["{\alpha_Y}", from=1-3, to=3-3]
                \arrow["{\alpha_f}", shorten <=17pt, shorten >=17pt, Rightarrow, from=3-1, to=1-3]
                \arrow["Gf", from=3-1, to=3-3]
            \end{tikzcd}
        \end{equation*}
        for every $f\colon X\to Y \in \cC$.

        These constructions satisfy certain useful properties:
        \begin{lemma}\label{lem:laxFun-pushouts-to-pullbacks}
            For $\cD\in\Cat_n$ the functor $\laxFun(-, \cD) \colon \Cat_n\op \to \Cat_n$ preserves limits (that is $\laxFun(-,\cD)$ sends colimits in $\Cat_n$ to limits in $\Cat_n$).
        \end{lemma}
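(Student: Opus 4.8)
The plan is to deduce this formally from the adjunction defining $\laxFun$, together with the fact that limits in $\Cat_n$ are detected and computed pointwise on $\Theta_n$. First I would recall that $\Cat_n$, being a reflective localization of $\catname{PSh}(\Theta_n)$, is closed under limits in $\catname{PSh}(\Theta_n)$; hence a limit $\lim_j \cC_j$ in $\Cat_n$ has $(\lim_j \cC_j)_a \simeq \lim_j (\cC_j)_a$ for every $a \in \Theta_n$, and equivalently the evaluation functors $\mathrm{ev}_a \colon \Cat_n \to \spc$, $\cC \mapsto \cC_a$ (for $a$ ranging over $\Theta_n$), jointly detect and preserve limits. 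Consequently, to show that $\laxFun(-,\cD)$ carries a colimit $\colim_j \cC_j$ of $\Cat_n$ to $\lim_j \laxFun(\cC_j,\cD)$, it is enough to check this after applying each $\mathrm{ev}_a$, i.e.\ to show that for every fixed $a \in \Theta_n$ the functor $\Cat_n\op \to \spc$, $\cC \mapsto \laxFun(\cC,\cD)_a$, sends colimits of $\Cat_n$ to limits of spaces.

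For fixed $a$, I would then use the identification $\laxFun(\cC,\cD)_a \simeq \Map(\cC \gray a, \cD)$ recorded above, which writes $\cC \mapsto \laxFun(\cC,\cD)_a$ as the composite of $\cC \mapsto \cC \gray a$ with $\Map(-,\cD)$. The first functor preserves all colimits, since $- \gray a \colon \Cat_n \to \Cat_n$ is a left adjoint (its right adjoint being $\oplaxFun(a,-)$); the second sends colimits to limits, being the functor $\Cat_n\op \to \spc$ represented by $\cD$. Composing, for any diagram $j \mapsto \cC_j$ we obtain
\[
\laxFun(\colim_j \cC_j,\, \cD)_a \ \simeq\ \Map\big(\colim_j (\cC_j \gray a),\, \cD\big) \ \simeq\ \lim_j \Map(\cC_j \gray a,\, \cD) \ \simeq\ \lim_j \laxFun(\cC_j,\cD)_a .
\]
Letting $a$ range over $\Theta_n$ and feeding this back into the first paragraph shows that the canonical comparison $\laxFun(\colim_j \cC_j,\cD) \to \lim_j \laxFun(\cC_j,\cD)$ is an equivalence, which is precisely the claim.

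I do not expect any genuine obstacle: the argument is entirely formal, powered by the adjunction $\cC \gray - \dashv \laxFun(\cC,-)$ (applied here in the form $- \gray a \dashv \oplaxFun(a,-)$) and by the pointwise computation of limits in $\Cat_n$. The only points that merit a second of care are that $\Cat_n$ is indeed stable under the relevant limits inside $\catname{PSh}(\Theta_n)$, and the harmless bookkeeping between the truncated and untruncated Gray tensor products — which causes no trouble, since $\cC \mapsto \cC \gray a$ preserves colimits in either incarnation and $\laxFun(\cC,\cD)_a$ may be computed with either.
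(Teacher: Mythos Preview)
Your proposal is correct and follows essentially the same approach as the paper: both arguments reduce to the fact that the Gray tensor product preserves colimits in each variable, combined with the adjunction defining $\laxFun$. The only cosmetic difference is that the paper tests against an arbitrary $\cE\in\Cat_n$ and concludes by Yoneda directly, whereas you restrict to $\cE=a\in\Theta_n$ and then appeal to the pointwise computation of limits in $\Cat_n$; the latter step is unnecessary extra scaffolding, since the same chain of isomorphisms with a general $\cE$ already gives the result by Yoneda.
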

        \begin{proof}
            Let $\cC_{(-)} \colon \cI \to \Cat_n$ be a diagram and $\cD\in\Cat_n$. 
            Using that the Gray tensor product commutes with colimits in each variable (\cite[Construction 2.3.1.17]{Loubaton-2024-infty-categories}), we get for every $\cE\in\Cat_n$:
            \begin{equation*}
                \begin{split}
                    \Map(\cE, \laxFun(\colim_i \cC_i, \cD)) 
                    & \simeq \Map((\colim_i \cC_i) \gray \cE, \cD) \\
                    & \simeq \Map(\colim_i (\cC_i \gray \cE), \cD) \\
                    & \simeq \lim_i \Map(\cC_i \gray \cE, \cD) \\
                    & \simeq \lim_i \Map(\cE, \laxFun(\cC_i, \cD)) \\
                    & \simeq \Map(\cE, \lim_i \laxFun(\cC_i, \cD)).
                \end{split}
            \end{equation*}
            As this is a natural isomorphism the result follows by Yoneda lemma.
        \end{proof}

        \begin{lemma}\label{lem:laxFun-and-gray}
            Let $\cI,\cJ, \cD \in\Theta_n$. Then
            \begin{equation*}
                \laxFun(\cJ, \laxFun(\cI, \cD)) \simeq \laxFun(\cI \gray \cJ, \cD).
            \end{equation*}
        \end{lemma}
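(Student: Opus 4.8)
The plan is to read this off as the currying (exponential) law for the closed monoidal structure on $\Cat_n$ given by the truncated Gray tensor product, using nothing beyond the adjunction $\cC \gray - \dashv \laxFun(\cC, -)$ recorded above, associativity of $\gray$, and the Yoneda lemma. Since both sides are objects of $\Cat_n$, it suffices to exhibit an equivalence of the functors they corepresent, $\Map(\cE, -)\colon \Cat_n \to \spc$, natural in $\cE$; equivalently, to match their values as complete Segal $\Theta_n$-spaces, i.e.\ on each $a\in\Theta_n$.

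Carrying this out: for $\cE\in\Cat_n$, apply the defining adjunctions twice to obtain
\[
\Map\big(\cE, \laxFun(\cJ, \laxFun(\cI, \cD))\big) \simeq \Map\big(\cJ \gray \cE, \laxFun(\cI, \cD)\big) \simeq \Map\big(\cI \gray (\cJ \gray \cE), \cD\big),
\]
then rewrite $\cI \gray (\cJ \gray \cE) \simeq (\cI \gray \cJ) \gray \cE$ by associativity of the Gray tensor product and apply the adjunction once more to get $\Map\big((\cI \gray \cJ) \gray \cE, \cD\big) \simeq \Map\big(\cE, \laxFun(\cI \gray \cJ, \cD)\big)$. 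Composing the chain gives $\Map(\cE, \laxFun(\cJ, \laxFun(\cI, \cD))) \simeq \Map(\cE, \laxFun(\cI \gray \cJ, \cD))$ naturally in $\cE$, and Yoneda concludes. Note that only left tensoring by $\cI$ and $\cJ$ and the corresponding $\laxFun$'s appear, so no symmetry of $\gray$ is used — which is essential, since the Gray tensor product is genuinely non-symmetric. The hypothesis $\cI, \cJ, \cD \in \Theta_n$ plays no role in this argument and could be dropped; presumably it is stated because that is the generality in which the model-level facts about $\gray$ are invoked elsewhere in \cref{app:higher-cats}.

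The one point that genuinely needs care — and the main obstacle — is associativity of the Gray tensor product \emph{on $\Cat_n$}, i.e.\ for the truncated product $\gray_t = \tau_{\le n}\circ\gray$ that actually appears in the adjunction. For the untruncated product on $\Cat_\infty$ this is part of its structure as a (non-symmetric) monoidal category, available for the model we use by \cite{Loubaton-2024-infty-categories}; to descend to $\Cat_n$ one uses that $\tau_{\le n}$ is a localization together with the fact that $\gray$ preserves colimits in each variable (already invoked in \cref{lem:laxFun-pushouts-to-pullbacks}), so that the localization is monoidal and $\gray_t$ inherits associativity — concretely, $\tau_{\le n}(X \gray \tau_{\le n} Y) \simeq \tau_{\le n}(X \gray Y)$. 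One could instead argue directly on $\Theta_n$, writing $\laxFun(\cJ, \laxFun(\cI, \cD))_a \simeq \Map(\cJ \gray a, \laxFun(\cI,\cD)) \simeq \Map(\cI \gray (\cJ \gray a), \cD)$ and matching this against $\laxFun(\cI \gray \cJ, \cD)_a \simeq \Map((\cI \gray \cJ)\gray a, \cD)$, but this still rests on the same associativity equivalence, so it is not a genuine shortcut; all the real content sits in the cited properties of $\gray$.
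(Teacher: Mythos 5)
Your proof is correct and is essentially the paper's own argument: the same chain of adjunction equivalences $\Map(\cE,\laxFun(\cJ,\laxFun(\cI,\cD)))\simeq\Map(\cI\gray\cJ\gray\cE,\cD)\simeq\Map(\cE,\laxFun(\cI\gray\cJ,\cD))$ followed by Yoneda. Your extra care about associativity of the truncated Gray tensor product is a point the paper leaves implicit, but it does not change the route.
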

        \begin{proof}
            Let $\cE\in \Cat_n$. Then we compute:
            \begin{equation*}
                \begin{split}
                    \Map(\cE, \laxFun(\cJ, \laxFun(\cI, \cD)))
                    & \simeq \Map(\cJ \gray \cE, \laxFun(\cI, \cD)) \\
                    & \simeq \Map(\cI \gray \cJ \gray \cE, \cD) \\
                    & \simeq \Map(\cE, \laxFun(\cI \gray \cJ, \cD)).
                \end{split}
            \end{equation*}
            The result follows by Yoneda lemma.
        \end{proof}

    \subsection{Lax limits}
        Let $\cC\in\Cat_n$ and $\cI\in\Cat_n$. The terminal functor $\cI \to \pt$ induces a functor
        \begin{equation*}
            \Delta \colon \cC \simeq \laxFun(\pt, \cC) \to \laxFun(\cI, \cC)
        \end{equation*}
        which sends $X\in \cC$ to the constant diagram on $X$. 

        \begin{definition}
            A right adjoint for $\Delta$ at $F\in\laxFun(\cI,\cC)$ is called the lax limit of $F$ and is denoted $\laxlim_{\cI} F$. 
        \end{definition}

        We call an arrow $\Delta X \to F$ in $\laxFun(\cI, \cC)$ a lax cone over $F$. Thus the lax limit of $F$ is a terminal lax cone over $F$.

        \begin{lemma}[Lax Fubini]\label{lem:lax-fubini}
            Let $\cC \in \Cat_{n}$ be a category having all small lax limits. Let $\cI, \cJ \in \Cat_n$ and let $F \colon \cI \gray \cJ \to \cC$ be a diagram. Then
            \begin{equation*}
                \laxlim_{\cI \gray \cJ} F \simeq \laxlim_{\cJ} \laxlim_{\cI} F.
            \end{equation*}
            That is, $\laxlim_{\cI \gray \cJ}$ is isomorphic to the composition
            \begin{equation*}
                \laxFun(\cI \gray \cJ, \cC) \simeq \laxFun(\cJ, \laxFun(\cI,\cC)) \xto{\laxFun(\cJ, \laxlim_\cI(-))} \laxFun(\cI, \cC) \xto{\laxlim_\cI} \cC.
            \end{equation*}
        \end{lemma}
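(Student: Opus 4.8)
The plan is to recognize both sides of the asserted formula as right adjoints of one and the same functor. Recall that over any diagram shape the lax limit is, by definition, the right adjoint of the constant‑diagram functor $\Delta$, and that $\Delta_{\cI}\colon\cC\to\laxFun(\cI,\cC)$ is exactly $\laxFun(-,\cC)$ evaluated on the terminal map $\cI\to\pt$, using $\laxFun(\pt,\cC)\simeq\cC$. Since a composite of right adjoints is the right adjoint of the composite of the corresponding left adjoints taken in the opposite order, and an equivalence $\Phi\colon\laxFun(\cI\gray\cJ,\cC)\xrightarrow{\simeq}\laxFun(\cJ,\laxFun(\cI,\cC))$ is its own left adjoint, proving that $\laxlim_{\cI\gray\cJ}$ agrees with the displayed composite $\laxlim_\cJ\circ\laxFun(\cJ,\laxlim_\cI(-))\circ\Phi$ amounts to establishing two statements: \textbf{(i)} under the Fubini equivalence $\Phi$ of \cref{lem:laxFun-and-gray}, the functor $\Delta_{\cI\gray\cJ}$ is identified with the composite $\cC\xrightarrow{\Delta_\cJ}\laxFun(\cJ,\cC)\xrightarrow{\laxFun(\cJ,\Delta_\cI)}\laxFun(\cJ,\laxFun(\cI,\cC))$; and \textbf{(ii)} the functors $\Delta_\cJ$ and $\laxFun(\cJ,\Delta_\cI)$ admit the right adjoints $\laxlim_\cJ$ and $\laxFun(\cJ,\laxlim_\cI)$ respectively. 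The first displayed equivalence $\laxlim_{\cI\gray\cJ}F\simeq\laxlim_\cJ\laxlim_\cI F$ then drops out with the convention that, for a two‑variable diagram $F$ with mate $\tilde F\colon\cJ\to\laxFun(\cI,\cC)$ under \cref{lem:laxFun-and-gray}, the symbol $\laxlim_\cI F$ denotes the pointwise lax limit $\laxlim_\cI\circ\tilde F\colon\cJ\to\cC$.

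For \textbf{(i)} I would first observe that \cref{lem:laxFun-and-gray} in fact holds for arbitrary $\cI,\cJ,\cD\in\Cat_n$: its proof is a Yoneda computation using only the adjunction $(\cD\gray-)\dashv\laxFun(\cD,-)$ and the associativity of $\gray$, and makes no use of cellularity; alternatively one reduces to the cellular case via the fact that $\gray$ preserves colimits in each variable (\cite[Construction 2.3.1.17]{Loubaton-2024-infty-categories}) together with \cref{lem:laxFun-pushouts-to-pullbacks}. The same computation shows $\Phi$ is natural in the inner variable, i.e.\ a map $g\colon\cI'\to\cI$ induces on $\laxFun(\cI\gray\cJ,\cC)\simeq\laxFun(\cJ,\laxFun(\cI,\cC))$ precisely $\laxFun(\cJ,\laxFun(g,\cC))$. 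Now, since $\pt$ is terminal and is the monoidal unit of $\gray$, the terminal map factors uniquely as $\cI\gray\cJ\xrightarrow{!_\cI\gray\id_\cJ}\pt\gray\cJ\simeq\cJ\xrightarrow{!_\cJ}\pt$; applying $\laxFun(-,\cC)$ exhibits $\Delta_{\cI\gray\cJ}$ as $\laxFun(!_\cI\gray\id_\cJ,\cC)\circ\Delta_\cJ$, and by the naturality just noted $\laxFun(!_\cI\gray\id_\cJ,\cC)$ is identified, under $\laxFun(\cJ,\cC)\simeq\laxFun(\cJ,\laxFun(\pt,\cC))$, with $\laxFun(\cJ,\laxFun(!_\cI,\cC))=\laxFun(\cJ,\Delta_\cI)$. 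This is the factorization claimed in \textbf{(i)}.

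For \textbf{(ii)}, the existence of $\laxlim_\cJ$ as a right adjoint to $\Delta_\cJ$ is immediate from the hypothesis that $\cC$ has all small lax limits. The substantive point, which I expect to be the main obstacle, is that $\laxFun(\cJ,-)\colon\Cat_n\to\Cat_n$ \emph{preserves adjunctions}, so that $\Delta_\cI\dashv\laxlim_\cI$ is carried to $\laxFun(\cJ,\Delta_\cI)\dashv\laxFun(\cJ,\laxlim_\cI)$. The clean way to see this is that $\laxFun(\cJ,-)$ is the underlying $(\infty,1)$‑functor of a functor between the $(\infty,2)$‑categorical enhancements of $\Cat_n$ --- it acts on natural transformations by whiskering with $\id_\cJ$ --- and any functor of $(\infty,2)$‑categories preserves adjunctions, an adjunction being the datum of unit and counit $2$‑cells satisfying the triangle identities. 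If one wishes to avoid invoking the enhancement, the same conclusion follows from a direct mapping‑space check: for $\phi\colon\cJ\to\cC$ and $\psi\colon\cJ\to\laxFun(\cI,\cC)$ one produces, using that $k$‑morphisms in a $\laxFun$‑category are functors out of $\cJ\gray\DD^k$ (\cref{subsec:lax-limits}) and the pointwise adjunction $\Delta_\cI\dashv\laxlim_\cI$, a natural equivalence $\Map_{\laxFun(\cJ,\laxFun(\cI,\cC))}(\laxFun(\cJ,\Delta_\cI)\phi,\psi)\simeq\Map_{\laxFun(\cJ,\cC)}(\phi,\laxFun(\cJ,\laxlim_\cI)\psi)$.

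Assembling \textbf{(i)} and \textbf{(ii)}: after the Fubini equivalence, $\Delta_{\cI\gray\cJ}\simeq\Phi^{-1}\circ\laxFun(\cJ,\Delta_\cI)\circ\Delta_\cJ$, whose right adjoint is $\laxlim_\cJ\circ\laxFun(\cJ,\laxlim_\cI)\circ\Phi$ --- precisely the composite in the statement --- and this is $F\mapsto\laxlim_\cJ\laxlim_\cI F$ under the conventions fixed above. Apart from the adjunction‑preservation of $\laxFun(\cJ,-)$ and the routine upgrade of \cref{lem:laxFun-and-gray} to non‑cellular shapes, the argument is entirely formal.
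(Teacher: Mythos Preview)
Your proof is correct and follows the same strategy as the paper: identify $\Delta_{\cI\gray\cJ}$ with the composite $\laxFun(\cJ,\Delta_\cI)\circ\Delta_\cJ$ under the Fubini equivalence, then pass to right adjoints. The paper states this in two lines and leaves implicit both the extension of \cref{lem:laxFun-and-gray} beyond $\Theta_n$ and the fact that $\laxFun(\cJ,-)$ preserves adjunctions; you have correctly identified and filled in these details.
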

        \begin{proof}
            The composition
            \begin{equation*}
                \cC \xto{\Delta_\cJ} \laxFun(\cJ, \cC) \xto{\laxFun(\cJ, \Delta_\cI(-))} \laxFun(\cJ, \laxFun(\cI, \cC))
            \end{equation*}
            is equivalent to $\Delta_{\cI \gray \cJ}$. The lemma follows by taking right adjoints.
        \end{proof}

        \begin{lemma}\label{lem:lax-lim-of-pushout}
            Let $\cC$ be an $n$-category having all colimits and all lax limits. Let
            \begin{equation*}
                \begin{tikzcd}
                    {\mcal{A}} & {\mcal{B}} \\
                    {\mcal{C}} & {\mcal{D}}
                    \arrow["{i_B}", from=1-1, to=1-2]
                    \arrow["{i_C}", from=1-1, to=2-1]
                    \arrow["{j_B}", from=1-2, to=2-2]
                    \arrow["{j_C}", from=2-1, to=2-2]
                    \arrow["\lrcorner"{anchor=center, pos=0.125, rotate=180}, draw=none, from=2-2, to=1-1]
                \end{tikzcd}
            \end{equation*}
            be a pushout square in $\Cat_n$ and call $j_A \colon \mcal{A} \to \mcal{D}$ the composite map. Let $F\colon \mcal{D}\to \cC$ be a functor of $n$-categories. Then
            \begin{equation*}
                \laxlim_{\mcal{D}} F \simeq \laxlim_{\mcal{B}} (Fj_B) \times_{\laxlim_{\mcal{A}} (Fj_A)} \laxlim_{\mcal{C}} (Fj_C).
            \end{equation*}
        \end{lemma}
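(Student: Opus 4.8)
The content of this lemma is that, morally, lax limits turn pushouts of indexing shapes into pullbacks; the plan is to deduce it from \cref{lem:laxFun-pushouts-to-pullbacks} and the defining universal property of lax limits, by transporting everything through the pullback square of functor $n$-categories obtained by applying $\laxFun(-,\cC)$ to the given pushout. First I would record that \cref{lem:laxFun-pushouts-to-pullbacks} yields an equivalence
\[
    \laxFun(\mcal{D}, \cC) \;\simeq\; \laxFun(\mcal{B}, \cC) \times_{\laxFun(\mcal{A}, \cC)} \laxFun(\mcal{C}, \cC),
\]
implemented by restriction along $j_B$ and $j_C$, the two legs of the cospan being restriction along $i_B$ and $i_C$. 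Under this equivalence $F$ corresponds to the pair $(Fj_B, Fj_C)$ together with the tautological identification of their further restrictions to $\mcal{A}$, both equal to $Fj_A$; and since restricting a constant diagram along any functor again yields a constant diagram, for $X\in\cC$ the constant diagram $\Delta_{\mcal{D}}X$ corresponds to $(\Delta_{\mcal{B}}X, \Delta_{\mcal{C}}X)$ glued along $\Delta_{\mcal{A}}X$.

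Next I would use the elementary fact that in a pullback square of $n$-categories the mapping $(n-1)$-category between two objects is the pullback of the mapping $(n-1)$-categories of their components (this holds because $\MAP$ is assembled from evaluation functors and pullbacks, which all commute with limits). Applied to the objects $\Delta_{\mcal{D}}X$ and $F$ of $\laxFun(\mcal{D},\cC)$ it gives, naturally in $X$,
\[
    \MAP_{\laxFun(\mcal{D},\cC)}(\Delta_{\mcal{D}}X, F) \;\simeq\; \MAP_{\laxFun(\mcal{B},\cC)}(\Delta_{\mcal{B}}X, Fj_B) \times_{\MAP_{\laxFun(\mcal{A},\cC)}(\Delta_{\mcal{A}}X, Fj_A)} \MAP_{\laxFun(\mcal{C},\cC)}(\Delta_{\mcal{C}}X, Fj_C).
\]
Then I would invoke the three adjunctions $\Delta_{\mcal{A}}\dashv\laxlim_{\mcal{A}}$, $\Delta_{\mcal{B}}\dashv\laxlim_{\mcal{B}}$, $\Delta_{\mcal{C}}\dashv\laxlim_{\mcal{C}}$, together with their compatibility with restriction along $i_B$ and $i_C$: for $G\colon\mcal{B}\to\cC$, restriction of lax cones $\MAP(\Delta_{\mcal{B}}X, G)\to\MAP(\Delta_{\mcal{A}}X, i_B^*G)$ corresponds under the adjunctions to postcomposition with the canonical comparison $\laxlim_{\mcal{B}}G\to\laxlim_{\mcal{A}}i_B^*G$. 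This rewrites the right-hand side as
\[
    \MAP_\cC(X, \laxlim_{\mcal{B}}Fj_B) \times_{\MAP_\cC(X, \laxlim_{\mcal{A}}Fj_A)} \MAP_\cC(X, \laxlim_{\mcal{C}}Fj_C) \;\simeq\; \MAP_\cC\!\left(X,\ \laxlim_{\mcal{B}}Fj_B \times_{\laxlim_{\mcal{A}}Fj_A} \laxlim_{\mcal{C}}Fj_C\right),
\]
the pullback on the right existing in $\cC$ since $\cC$ has all lax limits (in particular all pullbacks). Composing everything, $\MAP_\cC(X,\laxlim_{\mcal{D}}F)$ is naturally equivalent to $\MAP_\cC$ of $X$ into $\laxlim_{\mcal{B}}Fj_B \times_{\laxlim_{\mcal{A}}Fj_A}\laxlim_{\mcal{C}}Fj_C$, and the comparison arrow realizing this (the map out of $\laxlim_{\mcal{D}}F$ gotten by restricting its universal lax cone along $j_B$ and $j_C$) is an equivalence by the Yoneda lemma.

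I expect the main obstacle to be purely organizational: carefully tracking the gluing $2$-cells through the identifications of the first paragraph, and in particular verifying that the two legs of the resulting pullback of mapping spaces are exactly the maps induced by the canonical comparison functors $\laxlim_{\mcal{B}}Fj_B \to \laxlim_{\mcal{A}}Fj_A \leftarrow \laxlim_{\mcal{C}}Fj_C$, so that one lands on the intended pullback and not a twisted variant. An alternative packaging that settles this coherence up front is to factor $\Delta_{\mcal{D}}$ as the equivalence $\laxFun(\mcal{D},\cC)\simeq\laxFun(\mcal{B},\cC)\times_{\laxFun(\mcal{A},\cC)}\laxFun(\mcal{C},\cC)$ followed (read backwards) by the functor $\cC\to\laxFun(\mcal{B},\cC)\times_{\laxFun(\mcal{A},\cC)}\laxFun(\mcal{C},\cC)$ induced on pullbacks by $(\Delta_{\mcal{B}},\Delta_{\mcal{C}})$ over $\Delta_{\mcal{A}}$, then construct the right adjoint of this last functor by a Beck--Chevalley argument — the relevant mates being equivalences precisely because restriction of a constant diagram is constant — so that taking right adjoints delivers the formula directly. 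A minor separate point to dispatch is that a category with all lax limits has ordinary pullbacks, needed merely for the right-hand side to be defined.
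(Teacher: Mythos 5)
Your proposal is correct and takes essentially the same route as the paper: both use \cref{lem:laxFun-pushouts-to-pullbacks} to identify $\laxFun(\mcal{D},\cC)$ with the pullback of lax functor categories, identify $\Delta_{\mcal{D}}$ with the functor induced there by $\Delta_{\mcal{B}},\Delta_{\mcal{A}},\Delta_{\mcal{C}}$, and then pass to right adjoints. The only divergence is the final step, where the paper simply cites \cite[Theorem 5.5]{Horev-Yanovski-2017-adjoints} for right adjoints of functors induced on limits of categories, whereas you verify that step by hand via mapping-$(n-1)$-categories, the comparison maps $\laxlim_{\mcal{B}}Fj_B\to\laxlim_{\mcal{A}}Fj_A\leftarrow\laxlim_{\mcal{C}}Fj_C$, and Yoneda; your alternative packaging via Beck--Chevalley is precisely the paper's argument.
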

        \begin{proof}
            By \cref{lem:laxFun-pushouts-to-pullbacks}, $\laxFun(-,\cC)$ sends colimits to limits and in particular
            \begin{equation*}
                \laxFun(\mcal{D}, \cC) \cong \laxFun(\mcal{B}, \cC) \times_{\laxFun(\mcal{A},\cC)} \laxFun(\mcal{C},\cC).
            \end{equation*}
            As the map
            \begin{equation*}
                \mcal{D} \simeq \mcal{B} \sqcup_{\mcal{A}} \mcal{C} \to \pt \sqcup_{\pt} \pt \simeq \pt
            \end{equation*}
            is the terminal map, the functor
            \begin{equation*}
                \Delta_{\mcal{B}} \times_{\Delta_{\mcal{A}}} \Delta_{\mcal{C}} 
                \colon \cC 
                \simeq \cC \times_{\cC} \cC 
                \to \laxFun(\mcal{B}, \cC) \times_{\laxFun(\mcal{A},\cC)} \laxFun(\mcal{C},\cC) 
                \simeq \laxFun(\mcal{D}, \cC)
            \end{equation*}
            is identified with $\Delta_{\mcal{D}}$. The result now follows by~\cite[Theorem 5.5]{Horev-Yanovski-2017-adjoints}.
        \end{proof}

        When $\cC = \Cat_n$ is the $(n+1)$-category of $n$-categories, the lax limit of a functor $F\colon \cI \to \Cat_n$ is the category of sections of the cartesian fibration $\int_\cI F \to \cI$ defined by the Grothendieck construction, i.e. unstraightening (\cite[Construction~4.1.2.5, Proposition~4.2.3.8, Example~4.2.3.13]{Loubaton-2024-infty-categories}). In particular, all lax limits in $\Cat_n$ exist. One can for example compute:
        \begin{example}[{\cite[Example~4.2.3.13]{Loubaton-2024-infty-categories}}]\label{exm:constant-lax-lim}
            Let $\cC,\cI\in\Cat_{n}$. Then the lax limit of the constant functor $\Delta \cC \colon \cI \to \Cat_n$ is
            \begin{equation*}
                \laxlim_{\cI} \Delta\cC \simeq \laxFun(\cI, \cC).
            \end{equation*}
        \end{example}
        
        \cref{lem:lax-lim-of-pushout} allows us to compute lax limits using cell decomposition.
        In the next two subsections we will use it to compute some lax limits, specifically, of functors from \quotes{directed spheres} to $\Cat_n$.
        Let $\partial \DD^k$ be the maximal $(k-1)$-subcategory of $\DD^k$. We define the directed sphere as \mbox{$\laxsphere^k \coloneqq \DD^k / \partial \DD^k$}.
        
        \begin{remark}\label{rmrk:functors-from-CEnd}
            The directed sphere is the walking $k$-endomorphism, that is, a functor $\laxsphere^{k} \to \cU$ chooses an object $X\in \cU$ and an endomorphism $f$ of $\id_{\id_{\ddots_{\id_X}}}$.

            More concretely, for $k=1$ it chooses an object $X$ and an endomorphism $f\colon X\to X$, for $k=2$ it chooses an object $X$ and an endomorphism $f\colon \id_X \To \id_X$.
            
            A functor $\laxsphere^k \to \cU$ chooses an invertible $k$-endomorphism if and only if it factors through the groupoidification $|\laxsphere^k| \simeq S^k$.
        \end{remark}

        \subsubsection*{Lax limits along an arrow}\label{subsec:lax-limit-arrow}
        The following lemma is well known. See \cite[Theorem~2.4.2]{Christ-Dyckerhoff-Walde-2023-complex} for a computation in $\Prst$ or more generally in $\Mod_{\Mod_R}(\Prst)$. The general method for this computation is described in \cite{Christ-Dyckerhoff-Walde-2024-lax-additivity}. 
        \begin{lemma}\label{lem:lax-lim-arrow}
            Let $G\colon [1] \to \Cat_n$ choosing an $n$-functor $G\colon \cC \to \cD$. Then its lax limit sits in a pullback diagram
            \begin{equation*}
                \begin{tikzcd}
                    {\laxlim_{[1]} G} & {\laxFun([1],\cD)} \\
                    {\cC} & {\cD}
                    \arrow[from=1-1, to=1-2]
                    \arrow[from=1-1, to=2-1]
                    \arrow["{\source}", from=1-2, to=2-2]
                    \arrow["{G}", from=2-1, to=2-2]
                    \arrow["\lrcorner"{anchor=center, pos=0.125, rotate=0}, draw=none, from=1-1, to=2-2]
                \end{tikzcd},
            \end{equation*}
            i.e.\ it is the category consisting of tuples $(X,Y,g)$ where $X\in \cC$, $Y\in \cD$ and $g\colon GX \to Y$, with lax squares as morphisms.
        \end{lemma}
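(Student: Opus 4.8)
The plan is to obtain the pullback square by cell-decomposing the arrow category $[1]$ and applying the general machinery of \cref{lem:lax-lim-of-pushout}, or more directly by exhibiting $[1]$ as a pushout of disks and invoking \cref{exm:constant-lax-lim}. First I would recall that $[1] = \DD^1$ is built from two copies of $\DD^0 = \pt$ and one copy of $\DD^1$ — but this decomposition is too trivial to be useful on its own, so instead I would use the pushout square
\begin{equation*}
    \begin{tikzcd}
        {\{1\}} & {[1]} \\
        {\pt} & {[1]}
        \arrow[from=1-1, to=1-2]
        \arrow[from=1-1, to=2-1]
        \arrow[from=1-2, to=2-2]
        \arrow[from=2-1, to=2-2]
        \arrow["\lrcorner"{anchor=center, pos=0.125, rotate=180}, draw=none, from=2-2, to=1-1]
    \end{tikzcd}
\end{equation*}
exhibiting $[1]$ as $[1] \sqcup_{\{1\}} \pt$ (gluing a point onto the target). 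Applying \cref{lem:lax-lim-of-pushout} with $\mcal{A} = \{1\}$, $\mcal{B} = [1]$, $\mcal{C} = \pt$ gives $\laxlim_{[1]} G \simeq \laxlim_{[1]}(G) \times_{\laxlim_{\{1\}}(\cdots)} \laxlim_{\pt}(\cdots)$ — which is circular. So the cleaner route is the one below.

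The approach I would actually take: work directly from the defining adjunction. By \cref{exm:constant-lax-lim}, $\laxFun([1], \cD) = \laxlim_{[1]} \Delta\cD$ is the arrow category of $\cD$, equipped with source and target functors to $\cD$ (these are the restrictions along the two inclusions $\DD^0 \rightrightarrows \DD^1$, which after groupoidification-free bookkeeping on $\Theta_n$ become the evaluation-at-endpoints functors). Now a functor $G\colon \cC \to \cD$ is a functor out of $[1]$ valued in $\Cat_n$, i.e. an object of $\laxFun([1], \Cat_n)$. Unwinding the Grothendieck/unstraightening description of $\laxlim$ in $\Cat_n$ recalled just before \cref{exm:constant-lax-lim}: a section of the cartesian fibration $\int_{[1]} G \to [1]$ is exactly a choice of $X \in \cC = G(0)$, a choice of $Y \in \cD = G(1)$, and a cartesian-fibration morphism over the generating arrow $0 \to 1$, which by the definition of the unstraightening of $G$ is precisely a morphism $g\colon G(X) \to Y$ in $\cD$. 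Higher morphisms between such sections unwind to lax squares. This identification is natural, and it displays $\laxlim_{[1]} G$ as the strict fiber product of $\source \colon \laxFun([1],\cD) \to \cD$ along $G\colon \cC \to \cD$: a point of that fiber product is an arrow $g$ in $\cD$ together with a lift of its source to $\cC$, matching the triple $(X, Y, g)$.

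To make this rigorous rather than heuristic, the key steps in order are: (1) identify $\laxFun([1],\cD)$ with the lax limit of the constant diagram over $[1]$ via \cref{exm:constant-lax-lim}, and identify its source functor with evaluation at $0 \in [1]$; (2) use lax Fubini (\cref{lem:lax-fubini}) or the explicit unstraightening formula to express the section category of $\int_{[1]} G \to [1]$ as the pullback of $\mathrm{Fun}([1], \int_{[1]}G)$-type data — concretely, a section over $[1]$ restricts to an object over $0$ (an object of $\cC$) and an object over $1$ (an object of $\cD$) plus a cartesian edge over the unique non-degenerate arrow, and the fiber of $\int_{[1]}G \to [1]$ over the arrow is computed by the cylinder of $G$; (3) check naturality of all identifications in enough variables to conclude via Yoneda that the resulting square is a pullback in $\Cat_n$; (4) read off the description of objects as triples $(X,Y,g)$ and of morphisms as lax squares directly from the unstraightening. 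The main obstacle I expect is step (2)–(3): making precise, at the level of $\Theta_n$-presheaves and with full higher-categorical coherence, that the section $n$-category of the unstraightening of a single arrow $G\colon\cC\to\cD$ is the honest pullback $\cC \times_{\cD} \laxFun([1],\cD)$ — this is the content of the cited references (\cite{Christ-Dyckerhoff-Walde-2023-complex, Christ-Dyckerhoff-Walde-2024-lax-additivity}) and of the unstraightening results of \cite{Loubaton-2024-infty-categories}, and the cleanest writeup simply invokes them rather than reproving the straightening/unstraightening equivalence. Everything else is formal manipulation of adjunctions and the Gray tensor product already set up in this section.
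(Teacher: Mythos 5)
Your main argument is essentially the paper's proof: the paper also identifies $\laxlim_{[1]}G$ with the sections of the Grothendieck construction $\int_{[1]}G \to [1]$, computes $\int_{[1]}G$ explicitly as the mapping cylinder $(\cC\times[1])\sqcup_{\cC\times\{1\}}(\cD\times\{1\})$ by citing~\cite[Lemma~3.8]{Gepner-Haugseng-Nikolaus-2017-lax-colimits}, and reads off the triples $(X,Y,g\colon GX\to Y)$ with lax squares as morphisms. Your abandoned pushout digression aside, the only caveat is terminological: a section over the arrow $0\to 1$ supplies an arbitrary (not cartesian) edge in the fibration, which is exactly what yields $g\colon GX\to Y$ rather than an isomorphism datum.
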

        \begin{proof}
            The Grothendieck construction over an arrow is computed in~\cite[Lemma~3.8]{Gepner-Haugseng-Nikolaus-2017-lax-colimits}:
            \begin{equation*}
                \int_{[1]} G = (\cC\times [1]) \sqcup_{(\cC\times\{1\})} (\cD\times \{1\})
            \end{equation*}
            with the fibration $\int_{[1]} G \to [1]$ defined as the projection to the second coordinate. Then the category of sections $[1] \to G$ is exactly as described.
        \end{proof}

    \subsection{Lax $\NN$-fixed points, or lax limits along $\CEnd$}\label{subsec:lax-fixed-points}
        Recall that $\CEnd = \DD^1 / \partial \DD^1 = [1] / \partial [1] = \B\NN$ is the walking endomorphism. 
        Fix a 2-category $\cV$ and assume it admits lax limits along $\CEnd$.
        In this subsection we will study lax limits along $\CEnd$, in particular showing that for a diagram $(X,g) \colon \CEnd\to \cV$ the endomorphism $g$ lifts to an endomorphism of the lax limit $\laxlim_{\CEnd}(X,g)$, and that there is a 2-morphism $\hat{v}\colon g \To \id$ of endomorphisms of the lax limit.
        We end the subsection by giving concrete formulas for all construction in the special case where $\cV = \Cat$.

        \begin{definition}\label{dfn:lax-fixed-points-I}
            Let $(X,g) \colon \CEnd = \B\NN \to \cV$ be a functor. That is, the data of an object $X\in \cV$ and an endomorphism $g\colon X \to X$, or equivalently a monoid-action of $\NN$ on $X$.
            Define the lax $\NN$-fixed points of $X$ as the lax limit
            \begin{equation*}
                X\hN \coloneqq \laxlim_{\CEnd} (X,g).
            \end{equation*}
        \end{definition}

        \begin{remark}\label{rmrk:natural-map-fixed-points-I}
            We always have a map from the universal cone (when it exists) to the universal lax cone
            \begin{equation*}
                \lim_{\CEnd} (X,g) \to \laxlim_{\CEnd}(X,g) = X\hN.
            \end{equation*}
            If $g$ is invertible, then the functor $(X,g) \colon \CEnd \to \cC$ factors through $S^1 = |\CEnd|$ and this map is identified with
            \begin{equation*}
                X^{h\ZZ} = \lim_{S^1} X \simeq \lim_{\CEnd} X \to \laxlim_{\CEnd} X = X\hN.
            \end{equation*}
        \end{remark}

        \begin{definition}\label{dfn:underlying-object-I}
            The commutative diagram
            \begin{equation*}
                \begin{tikzcd}
                    \pt && \CEnd \\
                    & \cV
                    \arrow[from=1-1, to=1-3]
                    \arrow["X"', from=1-1, to=2-2]
                    \arrow["{(X,g)}", from=1-3, to=2-2]
                \end{tikzcd}
            \end{equation*}
            induces, by the functoriality of lax limits, a map
            \begin{equation*}
                u \colon X\hN = \laxlim_{\CEnd}(X,g) \to \laxlim_{\pt}X = X
            \end{equation*}
            which we call the underlying map.
        \end{definition}

        \begin{remark}\label{rmrk:lax-cone-over-S1}
            A lax cone over $(X,g)$ is a map $f \colon Y \to X$ and a 2-morphism $\eta \colon gf \To f$
            \begin{equation*}
                \begin{tikzcd}
                    & Y \\
                    X && X
                    \arrow["f"', from=1-2, to=2-1]
                    \arrow[""{name=0, anchor=center, inner sep=0}, "f", from=1-2, to=2-3]
                    \arrow["g"', from=2-1, to=2-3]
                    \arrow["\eta", shorten <=12pt, shorten >=12pt, Rightarrow, from=2-1, to=0]
                \end{tikzcd}.
            \end{equation*}

            A map $Y \to X\hN$ is equivalent to such a lax cone and the composition $Y \to X\hN \xto{u} X$ agrees with $f\colon Y \to X$.
        \end{remark}

        \begin{corollary}\label{cor:2-morphism-of-lax-fixed-points}
            There exists a canonical 2-morphism $v \colon gu \To u$.
        \end{corollary}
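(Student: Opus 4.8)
The plan is to extract $v$ directly from the universal lax cone that defines $X\hN$, using the concrete description of lax cones over $(X,g)$ recorded in \cref{rmrk:lax-cone-over-S1}. Recall that $X\hN = \laxlim_{\CEnd}(X,g)$ comes, by definition of the lax limit as a right adjoint to $\Delta$, with a counit $\varepsilon \colon \Delta(X\hN) \To (X,g)$ in $\laxFun(\CEnd,\cV)$, which corresponds under the adjunction $\Delta \dashv \laxlim_{\CEnd}$ to the identity $\id_{X\hN}$. This $\varepsilon$ is the universal lax cone over $(X,g)$ with apex $X\hN$.

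Now I would apply \cref{rmrk:lax-cone-over-S1} with $Y = X\hN$: a lax cone over $(X,g)$ with apex $X\hN$ is precisely the data of a map $f \colon X\hN \to X$ together with a $2$-morphism $\eta \colon gf \To f$. Unwinding $\varepsilon$ through this equivalence produces such a pair $(f,\eta)$. By the final compatibility clause of \cref{rmrk:lax-cone-over-S1}, applied to the map $\id_{X\hN}\colon X\hN \to X\hN$, the map $f$ is the composite $X\hN \xrightarrow{\id} X\hN \xrightarrow{u} X$, i.e.\ $f = u$, where $u$ is the underlying map of \cref{dfn:underlying-object-I}. Hence $\eta$ is a $2$-morphism $gu \To u$, and we set $v \coloneqq \eta$. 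It is canonical because it is obtained functorially from the counit of the defining adjunction, with no choices involved.

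There is essentially no obstacle here beyond bookkeeping: the only point requiring care is matching the map $f$ appearing in the universal lax cone with the separately-defined underlying map $u$, and this is exactly what the compatibility statement at the end of \cref{rmrk:lax-cone-over-S1} provides. (If desired, for $\cV = \Cat$ one could instead verify everything by hand using the model of $X\hN$ as the category of sections of the Grothendieck fibration $\int_{\CEnd}(X,g) \to \CEnd$, writing $v$ objectwise, but the abstract argument above is cleaner and applies to any $2$-category $\cV$ admitting lax limits along $\CEnd$.)
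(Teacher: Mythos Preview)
Your proof is correct and takes essentially the same approach as the paper: both extract $v$ from the universal lax cone (corresponding to $\id_{X\hN}$) via the description in \cref{rmrk:lax-cone-over-S1}. The paper's proof is a single sentence; yours spells out the identification $f = u$ more carefully, but there is no substantive difference.
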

        \begin{proof}
            The universal lax cone corresponds to the identity map $X\hN \to X\hN$ and so is given as $u\colon X\hN \to X$ with a 2-morphism $v \colon gu \To u$.
        \end{proof}

        \begin{definition}\label{dfn:morphism-of-lax-fixed-points}
            The map
            \begin{equation*}
                g \colon (X,g) \to (X,g) \quad \in \quad \laxFun(\CEnd, \cV),
            \end{equation*}
            given by $g\colon X \to X$ and the trivial higher cells, induces an arrow 
            \begin{equation*}
                g\colon X\hN = \laxlim_{\CEnd} (X,g) \to \laxlim_{\CEnd}(X,g) = X\hN.
            \end{equation*}
        \end{definition}

        \begin{lemma}\label{lem:2-morphism-of-lax-fixed-points}
            There exists a 2-morphism
            \begin{equation*}
                \hat{v} \colon g \To \id_{X\hN} 
            \end{equation*}
            of endomorphisms of $X\hN$ lifting $v$ of \cref{cor:2-morphism-of-lax-fixed-points}, i.e.\ $u\hat{v} \simeq v$.
        \end{lemma}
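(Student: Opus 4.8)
The plan is to use the universal property of the lax limit to identify $\END_{\cV}(X\hN)$ with the $1$-category of lax cones over $(X,g)$ with apex $X\hN$, and then to obtain $\hat v$ from the morphism of lax cones determined by $v$ itself.

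First I would upgrade \cref{rmrk:lax-cone-over-S1} to the level of mapping categories. Since $X\hN = \laxlim_{\CEnd}(X,g)$ is a value of a right adjoint to $\Delta\colon \cV \to \laxFun(\CEnd, \cV)$, there is an equivalence of $1$-categories $\END_{\cV}(X\hN) \simeq \MAP_{\laxFun(\CEnd,\cV)}(\Delta X\hN, (X,g))$, under which the functor \quotes{post-compose with $u$}$\colon \END_{\cV}(X\hN) \to \MAP_{\cV}(X\hN, X)$ corresponds to the \quotes{underlying-datum} functor, sending a lax cone $(f\colon X\hN \to X,\ \eta\colon gf \To f)$ to $f$ and a morphism of lax cones to its component at the object of $\CEnd$. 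Under this equivalence, $\id_{X\hN}$ corresponds to the universal lax cone, which by \cref{dfn:underlying-object-I} and \cref{cor:2-morphism-of-lax-fixed-points} is $(u, v)$. The endomorphism $g \colon X\hN \to X\hN$ of \cref{dfn:morphism-of-lax-fixed-points} is $\laxlim_{\CEnd}$ applied to the self-map $g \colon (X,g)\to (X,g)$ of the diagram (with trivial higher cells), so by functoriality of $\laxlim_{\CEnd}$ it corresponds to the composite of the universal lax cone with that self-map. Unwinding the composition law for lax natural transformations out of $\CEnd = \B\NN$ --- and using that the self-map carries the identity $2$-cell --- this composite is the lax cone $(gu,\ g\star v)$, where $g\star v \colon g(gu) \To gu$ is the left whiskering of $v$ by $g$ (and $gu$ is identified with $u\circ g$ via the naturality of $u$).

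It then suffices to produce a morphism of lax cones $(gu, g\star v) \to (u, v)$ whose component at the object of $\CEnd$ is $v$. The key observation is that for \emph{any} lax cone $(f, \eta)$ over $(X,g)$ the $2$-morphism $\eta \colon gf \To f$ is itself a morphism of lax cones $(gf, g\star\eta) \to (f, \eta)$: the modification coherence to be verified is the equation $\eta \circ (g\star\eta) = \eta \circ (g\star\eta)$ of $2$-morphisms $g(gf) \To f$, which holds tautologically. Applying this to the universal lax cone $(u, v)$ and transporting back along the equivalence above yields a $2$-morphism $\hat v \colon g \To \id_{X\hN}$ in $\END_{\cV}(X\hN)$; by construction its image under \quotes{post-compose with $u$} is the component of that morphism of cones at the object of $\CEnd$, namely $v$, i.e.\ $u\hat v \simeq v$.

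The only delicate point is the bookkeeping identifying the lax cone attached to $g \colon X\hN \to X\hN$ with $(gu, g\star v)$: this requires spelling out the composition of lax natural transformations and tracking the whiskerings, although --- since both the diagram self-map and the constant diagram carry identity $2$-cells --- no nontrivial coherence intervenes. When $\cV = \Cat$ this identification can instead be read off directly from the Grothendieck-construction model of $\laxlim_{\CEnd}$, whence the general case follows formally.
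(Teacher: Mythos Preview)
Your proof is correct and follows essentially the same approach as the paper: both arguments produce $\hat v$ by exhibiting $v$ as a morphism of lax cones $(gu,\, g\star v)\to (u,\,v)$ in $\MAP_{\laxFun(\CEnd,\cV)}(\Delta X\hN,(X,g))$ and transporting it along the universal property of the lax limit. The paper records this by drawing the explicit $2$-cell diagram in $\laxFun(\CEnd,\cV)$, whereas you first state the equivalence $\END_{\cV}(X\hN)\simeq \MAP_{\laxFun(\CEnd,\cV)}(\Delta X\hN,(X,g))$ and then check the modification coherence abstractly; the content is the same.
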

        \begin{proof}
            We use the universal lax cone
            \begin{equation*}
                \begin{tikzcd}
                    & {X\hN} \\
                    \\
                    X && X
                    \arrow["u"', from=1-2, to=3-1]
                    \arrow[""{name=0, anchor=center, inner sep=0}, "u", from=1-2, to=3-3]
                    \arrow["g", from=3-1, to=3-3]
                    \arrow["v", shorten <=13pt, shorten >=13pt, Rightarrow, from=3-1, to=0]
                \end{tikzcd}
            \end{equation*}
            to construct a diagram in $\laxFun(\CEnd, \cV)$ of the form
            \begin{equation*}
                \begin{tikzcd}
                    & {\Delta X\hN = (X\hN,\id_{X\hN})} \\
                    \\
                    {(X,g)} && {(X,g)}
                    \arrow["u"', from=1-2, to=3-1]
                    \arrow[""{name=0, anchor=center, inner sep=0}, "u", from=1-2, to=3-3]
                    \arrow["g", from=3-1, to=3-3]
                    \arrow["v", shorten <=26pt, shorten >=26pt, Rightarrow, from=3-1, to=0]
                \end{tikzcd}.
            \end{equation*}
            Specifically it is the given as the the diagram
            \begin{equation*}
                \begin{tikzcd}
                    & {X\hN} & {} & X \\
                    X \\
                    & {X\hN} && X \\
                    X
                    \arrow[""{name=0, anchor=center, inner sep=0}, "u"{description}, from=1-2, to=1-4]
                    \arrow["u"{description}, from=1-2, to=2-1]
                    \arrow[Rightarrow, no head, from=1-2, to=3-2]
                    \arrow["v"{description}, shorten <=3pt, shorten >=3pt, Rightarrow, from=2-1, to=3-2]
                    \arrow["v"{description}, shorten <=11pt, shorten >=11pt, Rightarrow, from=1-4, to=3-2]
                    \arrow["g"{description}, from=1-4, to=3-4]
                    \arrow["g"{description}, from=2-1, to=1-4]
                    \arrow["g"{description}, from=2-1, to=4-1]
                    \arrow[""{name=1, anchor=center, inner sep=0}, "u"{description}, from=3-2, to=3-4]
                    \arrow["u"{description}, from=3-2, to=4-1]
                    \arrow["g"{description}, from=4-1, to=3-4]
                    \arrow["v"{description}, shorten <=20pt, shorten >=13pt, Rightarrow, from=2-1, to=0]
                    \arrow["v"{description}, shorten <=20pt, shorten >=13pt, Rightarrow, from=4-1, to=1]
                \end{tikzcd}
            \end{equation*}
            (with the trivial front 2-cell and interior 3-cell), which gives the required 2-morphism.
        \end{proof}   

        \subsubsection*{The categorical case}
        We will now look at the special case $\cV = \Cat$, finding explicit formulas for each of the constructions above. First, we can describe the lax fixed points as a lax equalizer as in~\cite{Nikolaus-Scholze-2018-TC}:

        \begin{lemma}\label{lem:lax-fixed-points-I}
            Let $(\cC, G)\colon \CEnd \to \Cat$. Then the lax fixed points $\cC\hN$ agrees with the lax equalizer $\laxeq(\cC\xrightrightarrows[\id]{G} \cC)$. That is, it sits in a pullback diagram
            \begin{equation*}
                \begin{tikzcd}
                    {\cC\hN} & {\cC^{[1]}} \\
                    \cC & {\cC\times\cC}
                    \arrow["{p^*}", from=1-1, to=1-2]
                    \arrow["u", from=1-1, to=2-1]
                    \arrow["\lrcorner"{anchor=center, pos=0.125}, draw=none, from=1-1, to=2-2]
                    \arrow["{\source,\target}", from=1-2, to=2-2]
                    \arrow["{G,\id}", from=2-1, to=2-2]
                \end{tikzcd}
            \end{equation*}
        \end{lemma}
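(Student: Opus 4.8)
The plan is to compute $\cC\hN = \laxlim_{\CEnd}(\cC,G)$ by decomposing the shape $\CEnd$ into cells, reducing it to two lax limits already identified --- the lax limit along an arrow (\cref{lem:lax-lim-arrow}) and the lax limit along a point. The input is that, by definition, $\CEnd = \DD^1/\partial\DD^1 = [1]/\partial[1]$ is the pushout
\begin{equation*}
\begin{tikzcd}
{\partial[1] = \pt \sqcup \pt} & {[1]} \\
\pt & \CEnd
\arrow[hook, from=1-1, to=1-2]
\arrow["{\mathrm{fold}}"', from=1-1, to=2-1]
\arrow[from=1-2, to=2-2]
\arrow[from=2-1, to=2-2]
\end{tikzcd}
\end{equation*}
in $\Cat$, with the top map the inclusion of the two endpoints and the left map the fold. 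Since $\Cat$ (the $2$-category of $1$-categories) has all colimits and all lax limits, \cref{lem:lax-lim-of-pushout} applies to the diagram $(\cC,G)\colon\CEnd\to\Cat$ along this square.

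Feeding the square and the diagram $(\cC,G)$ into \cref{lem:lax-lim-of-pushout}, and identifying the restriction of $(\cC,G)$ along $[1]\to\CEnd$ with the morphism $G\colon\cC\to\cC$, along $\pt\to\CEnd$ with the object $\cC$, and along $\partial[1]\to\CEnd$ with the object $\cC$ placed at each of the two points, I get
\begin{equation*}
\cC\hN \;\simeq\; \big(\laxlim_{[1]}G\big) \times_{\cC\times\cC} \cC,
\end{equation*}
where, by functoriality of $\laxlim$ in the shape, the leg $e\colon\laxlim_{[1]}G\to\cC\times\cC$ is evaluation at the two endpoints of the arrow, the leg $\cC\to\cC\times\cC$ is the diagonal, and the projection $\cC\hN\to\cC$ onto the second factor is the underlying map $u$ of \cref{dfn:underlying-object-I} (the one induced by $\pt\to\CEnd$). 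By \cref{lem:lax-lim-arrow}, $\laxlim_{[1]}G$ is the category of tuples $(X,Y,g\colon GX\to Y)$ and $e$ sends such a tuple to $(X,Y)$.

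Next I would re-present $\laxlim_{[1]}G$ as a pullback over $\cC\times\cC$: from the tuple description (equivalently, from the formula $\int_{[1]}G = (\cC\times[1])\sqcup_{\cC\times\{1\}}(\cC\times\{1\})$ in the proof of \cref{lem:lax-lim-arrow}), $\laxlim_{[1]}G$ is the pullback of $\cC^{[1]}\xrightarrow{\source,\target}\cC\times\cC$ along the functor $\cC\times\cC\to\cC\times\cC$, $(X,Y)\mapsto(GX,Y)$, and under this identification the projection of the pullback to $\cC\times\cC$ is precisely the endpoint-evaluation leg $e$. Pasting this pullback square onto the one for $\cC\hN$ along their common edge $e$, and using the pasting law for pullbacks, exhibits $\cC\hN$ as the outer pullback
\begin{equation*}
\begin{tikzcd}
{\cC\hN} & {\cC^{[1]}} \\
\cC & {\cC\times\cC}
\arrow["{p^*}", from=1-1, to=1-2]
\arrow["u"', from=1-1, to=2-1]
\arrow["{\source,\target}", from=1-2, to=2-2]
\arrow["{G,\id}"', from=2-1, to=2-2]
\end{tikzcd}
\end{equation*}
whose bottom map is the composite of the diagonal with $(X,Y)\mapsto(GX,Y)$, namely $X\mapsto(GX,X)$ --- exactly the map $(G,\id)$ of the statement --- whose left leg is $u$, and whose top leg $p^*$ is induced by $[1]\to\CEnd$. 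This pullback is, by definition, the lax equalizer appearing in the statement, so the proof is complete.

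The main difficulty is bookkeeping, not concept: one has to follow the structure maps carefully through \cref{lem:lax-lim-of-pushout} and \cref{lem:lax-lim-arrow} in order to be certain that the pasted bottom map comes out as $(G,\id)$ and not $(\id,G)$, that the left leg is genuinely the underlying map $u$ rather than merely \emph{some} map to $\cC$, and that the common edge along which the two pullback squares are glued really is the endpoint-evaluation map $e$. Each identification is forced by naturality of $\laxlim$ in the shape, but the orientation conventions have to be handled with care; in particular, the re-presentation of $\laxlim_{[1]}G$ as a pullback over $\cC\times\cC$ --- rather than the pullback over $\cC$ that comes directly out of \cref{lem:lax-lim-arrow} --- must be checked to be compatible with $e$.
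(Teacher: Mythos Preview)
Your proof is correct and takes essentially the same approach as the paper: decompose $\CEnd$ as the pushout $[1]\cup_{\{0,1\}}\pt$ and apply \cref{lem:lax-lim-of-pushout}. The paper's proof is a single sentence that stops at that point, leaving to the reader precisely the details you spell out --- identifying $\laxlim_{[1]}G$ via \cref{lem:lax-lim-arrow}, re-expressing it as a pullback over $\cC\times\cC$, and pasting --- so your write-up is simply a more careful unpacking of the same argument.
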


        \begin{proof}
          $\CEnd$ is the pushout
          \begin{equation*}
              \begin{tikzcd}
                {\{0,1\}} & {[1]} \\
                \pt & \CEnd
                \arrow[from=1-1, to=1-2]
                \arrow[from=1-1, to=2-1]
                \arrow[from=1-2, to=2-2]
                \arrow[from=2-1, to=2-2]
                \arrow["\lrcorner"{anchor=center, pos=0.125, rotate=180}, draw=none, from=2-2, to=1-1]
            \end{tikzcd}
          \end{equation*}
          so the result follows by \cref{lem:lax-lim-of-pushout}.
        \end{proof}

        \begin{corollary}\label{cor:properties-of-lax-fixed-points-I}
            An object of $\cC\hN$ is equivalent to the data of $(X,v_X)$, where $X\in \cC$ and $v_X\colon  G X \to X$.
            The mapping space between two objects $(X,v_X), (Y, v_Y)$ is given by
            \begin{equation*}
                \Map_{\cC\hN}((X,v_X), (Y,v_Y)) 
                \simeq \eq\left( 
                    \Map_{\cC}(X,Y) \xrightrightarrows[
                        { v_Y \circ G- }
                        ]{ -\circ v_X } 
                        \Map_{\cC}(G X,Y)
                    \right).
            \end{equation*}
        
            The underlying object functor $u\colon \cC\hN \to \cC$ is exact, conservative and colimit-preserving.
        \end{corollary}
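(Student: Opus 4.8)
The plan is to unwind the pullback description of $\cC\hN$ from \cref{lem:lax-fixed-points-I},
\[
    \cC\hN \simeq \cC \times_{\cC\times\cC} \cC^{[1]},
\]
where the left leg is $(G,\id)\colon \cC\to\cC\times\cC$ and the right leg is $(\source,\target)\colon \cC^{[1]}\to\cC\times\cC$. I will use two standard facts about a pullback of categories: an object of it is a pair consisting of an object of each corner together with an equivalence between their images in the base, and the mapping space between two such pairs is the pullback of the two corner mapping spaces over the base mapping space.

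First I would read off the objects. An object of $\cC\hN$ is $X\in\cC$, an arrow $f\in\cC^{[1]}$, and an equivalence $(GX,X)\simeq(\source f,\target f)$ in $\cC\times\cC$; the latter exhibits $f$ as a morphism $v_X\colon GX\to X$, which gives the description $(X,v_X)$. For the mapping space between $(X,v_X)$ and $(Y,v_Y)$, I would compute the resulting iterated pullback: $\Map_{\cC\times\cC}((GX,X),(GY,Y))$ splits as $\Map_\cC(GX,GY)\times\Map_\cC(X,Y)$; the space $\Map_{\cC^{[1]}}(v_X,v_Y)$ of commuting squares is $\Map_\cC(GX,GY)\times_{\Map_\cC(GX,Y)}\Map_\cC(X,Y)$ with legs $v_Y\circ-$ and $-\circ v_X$; and the bottom-left corner contributes the map $g\mapsto(Gg,g)$. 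Assembling these, a point of $\Map_{\cC\hN}((X,v_X),(Y,v_Y))$ amounts to a morphism $g\in\Map_\cC(X,Y)$ together with a homotopy $v_Y\circ Gg\simeq g\circ v_X$ in $\Map_\cC(GX,Y)$, which is precisely a point of the claimed equalizer, with the two maps $-\circ v_X$ and $v_Y\circ G(-)$. The one thing to watch is keeping straight which leg of the square becomes which map of the equalizer; alternatively this identification is immediate from the standard description of lax equalizers underlying \cref{lem:lax-fixed-points-I}.

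Finally, for the properties of $u$, which is the projection onto the $\cC$-factor of the pullback: a morphism in a pullback of categories is an equivalence iff both of its projections are, and since $G$ preserves equivalences, if $u(\varphi)$ is an equivalence then so is its image $(Gu(\varphi),u(\varphi))$ in $\cC^{[1]}$, hence $\varphi$ is an equivalence; thus $u$ is conservative. For exactness and colimit-preservation I would use that a pullback of categories that all admit $K$-indexed (co)limits, formed along functors that preserve them, again admits such (co)limits, computed componentwise, with both projections preserving them. Here $\cC^{[1]}$ has finite (co)limits computed pointwise, so $(\source,\target)$ preserves them; and $(G,\id)$ preserves them because every functor in $\Catperf$ is exact, and it preserves any further colimits that $\cC$ admits whenever $G$ does. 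Hence $\cC\hN$ has all finite limits and colimits, $u$ is exact and conservative, and $u$ preserves any colimits that exist in $\cC\hN$. None of this is hard; the only genuinely delicate bookkeeping is the orientation of the two maps in the equalizer.
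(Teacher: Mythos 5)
Your argument is correct: unwinding the pullback description from \cref{lem:lax-fixed-points-I} to read off objects, mapping spaces (with the equalizer maps $-\circ v_X$ and $v_Y\circ G(-)$ oriented as you have them), and the conservativity/exactness/colimit statements for the projection is exactly the content needed. The paper simply cites \cite[Proposition~II.1.5]{Nikolaus-Scholze-2018-TC}, whose proof is precisely this lax-equalizer computation, so your proposal is essentially the same approach carried out by hand.
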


        \begin{proof}
            This is a special case of~\cite[Proposition~II.1.5.]{Nikolaus-Scholze-2018-TC}.
        \end{proof}

        \begin{remark}\label{rmrk:universal-cone-categorical}
            Chasing the description of \cref{lem:lax-lim-of-pushout}, the 2-morphism of \cref{cor:2-morphism-of-lax-fixed-points}, coming from the universal lax cone of $\cC\hN$, is $(u,v)$ where $v \colon Gu \To u$ is given on an object $(X, GX\xto{v_X} X)$ by $v_X$.
        \end{remark}

        \begin{remark}\label{rmrk:fixed-points-map-I}
            Assuming $G$ is invertible, by the description in \cref{lem:lax-fixed-points-I}, the functor $\cC^{h\ZZ} \to \cC\hN$ of \cref{rmrk:natural-map-fixed-points-I} sends a tuple $(X, GX\isoto X)$ to the tuple $(X, GX\to X)$, forgetting that the map is an isomorphism.
        \end{remark}

        \begin{lemma}\label{lem:morpshim-of-categories-lax-fixed-points}
            The morphism $G\colon \cC\hN \to \cC\hN$ of \cref{dfn:morphism-of-lax-fixed-points} sends $(GX \xto{v_X} X) \in \cC\hN$ to $(G^2 X \xto{Gv_X} GX)$. 
        \end{lemma}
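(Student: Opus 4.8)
The plan is to trace the functor $G\colon \cC\hN \to \cC\hN$ through the explicit presentation of $\cC\hN$ coming from the cell decomposition of $\CEnd$. Recall from \cref{dfn:morphism-of-lax-fixed-points} that, in the case $\cV = \Cat$, this functor is $\laxlim_{\CEnd}(\beta)$ for the self-map $\beta\colon (\cC,G)\to(\cC,G)$ in $\laxFun(\CEnd,\Cat)$ whose only component is $G\colon\cC\to\cC$ and whose higher cells are trivial. Since $\CEnd = [1]\sqcup_{\{0,1\}}\pt$ and \cref{lem:lax-lim-of-pushout} computes $\laxlim_{\CEnd}$ as the corresponding fiber product of $\laxlim_{[1]}$, $\laxlim_{\{0,1\}}$ and $\laxlim_{\pt}$ — naturally in the diagram — it suffices to identify the map that $\beta$ induces on each of these three pieces and then pass to the fiber product.

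First I would record the three restrictions of $\beta$: along $\pt\hookrightarrow\CEnd$ it is $G\colon\cC\to\cC$; along $\{0,1\}\hookrightarrow\CEnd$ it is $G\times G\colon\cC\times\cC\to\cC\times\cC$; and along $[1]\hookrightarrow\CEnd$ it is the natural transformation of the functor $G\colon\cC\to\cC$ (viewed as a diagram over $[1]$) with both components equal to $G$ and trivial $2$-cell. On $\laxlim_{\pt}=\cC$ the first gives $Z\mapsto GZ$, and on $\laxlim_{\{0,1\}}=\cC\times\cC$ the second gives $(X,Y)\mapsto(GX,GY)$. For the third I would invoke \cref{lem:lax-lim-arrow}, identifying $\laxlim_{[1]}(G)$ with the category of tuples $(X,Y,g\colon GX\to Y)$, and unwind the Grothendieck construction over $[1]$ as in its proof to see that a natural transformation with components $G,G$ and trivial $2$-cell acts by $(X,Y,g)\mapsto(GX,GY,Gg)$. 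Assembling the pieces, under the identification $\cC\hN\simeq\{(X,Y,g\colon GX\to Y)\}\times_{\cC\times\cC}\cC$ of \cref{lem:lax-fixed-points-I} — with the right leg the diagonal and the left leg $(X,Y,g)\mapsto(X,Y)$ — an object $(GX\xto{v_X}X)$ is the tuple $(X,X,v_X)$ glued with $X\in\cC$ along the diagonal, and the three maps above, being compatible with the fiber product, send it to $(GX,GX,Gv_X)$ glued with $GX$, i.e.\ to the object $(G^2X\xto{Gv_X}GX)$ of $\cC\hN$.

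The only real content is the third restriction: one must check that the trivial-$2$-cell self-transformation of the arrow $G$ acts on $\laxlim_{[1]}(G)$ by $g\mapsto Gg$ and not by some twisted variant; once this is pinned down the rest is bookkeeping with pullbacks. An alternative that avoids the cell decomposition altogether is to argue via the universal property: $G\colon\cC\hN\to\cC\hN$ is the unique functor with $u\circ G = G\circ u$ for which the canonical $2$-morphism $v$ of \cref{cor:2-morphism-of-lax-fixed-points} is compatible in the sense that $v\ast G\simeq G\ast v$, and evaluating these two identities at an object $(X,v_X)$ — using \cref{rmrk:universal-cone-categorical}, which says $v$ is given at a tuple $(Z,v_Z)$ by $v_Z$ — immediately reads off both the underlying object $GX$ and the structure map $Gv_X$. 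I would include whichever of the two arguments is shorter to write cleanly.
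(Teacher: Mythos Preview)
Your second, ``alternative'' approach is precisely the paper's proof. The paper observes that the morphism $G\colon(\cC,G)\to(\cC,G)$ in $\laxFun(\CEnd,\Cat)$ acts on lax cones by postcomposition, sending $(F,\eta\colon GF\Rightarrow F)$ to $(GF,G\eta)$; applied to the universal cone $(u,v)$ this gives the cone $(Gu,Gv)$, which by the universal property represents the functor $(X,v_X)\mapsto(GX,Gv_X)$. Your phrasing in terms of ``the unique functor with $u\circ G=G\circ u$ and $v\ast G\simeq G\ast v$'' is exactly this, once one unpacks that these two equations are the data of the lax cone $(Gu,Gv)$.

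Your first approach via the cell decomposition is a correct alternative, but more laborious: it requires knowing that the equivalence of \cref{lem:lax-lim-of-pushout} is natural in morphisms of diagrams (true, since it comes from taking right adjoints of a pullback of constant-diagram functors), and then, as you note, the real content reduces to checking the induced map on $\laxlim_{[1]}(G)$ --- which one would again verify via the universal property of that lax limit. So the decomposition does not really avoid the universal-property argument; it just relocates it. The paper's route is shorter and is the one to write up. (A minor citation quibble: the pullback you write, with $\laxlim_{[1]}(G)$ and the diagonal leg, is what one gets by combining \cref{lem:lax-lim-of-pushout} with \cref{lem:lax-lim-arrow}; the pullback actually displayed in \cref{lem:lax-fixed-points-I} uses $\cC^{[1]}$ and the leg $(G,\id)$. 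These are equivalent, but you should cite accordingly.)
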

        \begin{proof}
            Recall that a lax cone from $\cD$ over $(\cC,G)\colon \CEnd \to \Cat$ is the data of a functor $F\colon \cD \to \cC$ and a natural transformation $\eta \colon G\circ F \To F$. The map
            \begin{equation*}
                G \colon (\cC, G) \to (\cC, G) \quad \in \quad \laxFun(\CEnd, \Cat)
            \end{equation*}
            sends such a lax cone $(\cD \xto{F} \cC, \eta)$ to the lax cone $(\cD \xto{GF} \cC, G\eta)$. In particular, it sends the universal lax cone $(\cC\hN \xto{u} \cC, v)$ to $(\cC\hN \xto{Gu} \cC, Gv)$ which corresponds to the required functor $\cC\hN \to \cC\hN$.
        \end{proof}

        \begin{lemma}\label{lem:2-morphism-categorices-of-lax-fixed-points}
            The 2-morphism $\hat{v} \colon G \To \id_{\cC\hN}$ of \cref{lem:2-morphism-of-lax-fixed-points} is given on an object $(GX \xto{v_X} X)$ by
            \begin{equation*}
                \begin{tikzcd}
                    {G^2X} && GX \\
                    \\
                    GX && X
                    \arrow["Gv_X", dashed, from=1-1, to=1-3]
                    \arrow["Gv_X", from=1-1, to=3-1]
                    \arrow["v_X", from=1-3, to=3-3]
                    \arrow["v_X", dashed, from=3-1, to=3-3]
                \end{tikzcd}.
            \end{equation*}
        \end{lemma}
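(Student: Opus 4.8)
The plan is to unwind the abstract construction of \cref{lem:2-morphism-of-lax-fixed-points} in the model $\cV = \Cat$, using the explicit lax-equalizer presentation of $\cC\hN$ from \cref{lem:lax-fixed-points-I,cor:properties-of-lax-fixed-points-I} together with the concrete formulas for the universal lax cone and for the endofunctor $G$ recorded in \cref{rmrk:universal-cone-categorical,lem:morpshim-of-categories-lax-fixed-points}. First I would pin down what kind of datum $\hat{v}$ is here. As a $2$-morphism $G \To \id_{\cC\hN}$ in $\Cat$ it is a natural transformation, hence determined by its components: at an object $(GX \xto{v_X} X)\in\cC\hN$ it is a morphism in $\cC\hN$ from $G(GX \xto{v_X} X) \simeq (G^2X \xto{Gv_X} GX)$ (by \cref{lem:morpshim-of-categories-lax-fixed-points}) to $(GX \xto{v_X} X)$, and by \cref{cor:properties-of-lax-fixed-points-I} this is the data of a map $\phi_X \colon GX \to X$ in $\cC$ together with a homotopy witnessing $\phi_X \circ Gv_X \simeq v_X \circ G\phi_X$. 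The underlying map $\phi_X$ is then forced: \cref{lem:2-morphism-of-lax-fixed-points} records the identity $u\hat{v}\simeq v$, so whiskering by the underlying-object functor $u$ and passing to components identifies $\phi_X = u(\hat{v}_{(X,v_X)})$ with the component of $v$ at $(X,v_X)$, which is $v_X$ by \cref{rmrk:universal-cone-categorical}. With $\phi_X = v_X$ the square in the statement commutes strictly, both legs being $v_X \circ Gv_X$.

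What remains, and is the only real work, is to check that the homotopy filling that square is the trivial one; a priori it could be any loop in $\Map_{\cC}(G^2X, X)$ at $v_X\circ Gv_X$, and this matters since $\cC$ is a genuine stable category rather than an ordinary one. For this I would trace the $3$-dimensional diagram built in the proof of \cref{lem:2-morphism-of-lax-fixed-points} through the identification $\cC\hN \simeq \laxeq(\cC\xrightrightarrows[\id]{G}\cC)$. That diagram is assembled entirely from the universal lax cone $(u,v)$ --- whose categorical incarnation is spelled out in \cref{rmrk:universal-cone-categorical} --- a \emph{trivial} front $2$-cell, and a \emph{trivial} interior $3$-cell; it is precisely this interior $3$-cell which, under the lax-equalizer presentation, becomes the coherence homotopy filling our square. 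Hence the filler is the constant homotopy and the component of $\hat{v}$ at $(GX \xto{v_X} X)$ is exactly the displayed square.

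The main obstacle is thus bookkeeping: carefully matching the faces of the abstract cube, which lives in the $2$-category $\laxFun(\CEnd,\Cat)$, with the concrete pullback/Grothendieck-construction model of $\cC\hN$, and in particular confirming that the trivial $3$-cell really does unwind to the constant homotopy and not merely to something homotopic to it. There is no conceptual difficulty. A convenient way to organize the chase is to use the embedding $\cC\hN \hookrightarrow \cC^{[1]}\times_{\cC\times\cC}\cC$ of \cref{lem:lax-fixed-points-I}, so that $\hat{v}$ is detected by its image under $u$ (which is $v$, with component $v_X$) and by its image under $p^*$ (a natural transformation $p^*G \To p^*$, whose component at $(GX \xto{v_X} X)$ is a commuting square in $\cC$ from the arrow $Gv_X\colon G^2X\to GX$ to the arrow $v_X\colon GX\to X$); compatibility along $(\source,\target)$ forces the two vertical legs of that square to be $Gv_X$ and $v_X$, and chasing the trivial cells shows the filler is constant, which is the assertion.
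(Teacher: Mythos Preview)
Your approach is correct and matches the paper's: both pin down the component via the identity $u\hat{v}\simeq v$ from \cref{lem:2-morphism-of-lax-fixed-points} together with the explicit description of $v$ in \cref{rmrk:universal-cone-categorical} (and of $G$ on objects in \cref{lem:morpshim-of-categories-lax-fixed-points}). You go further than the paper's two-line proof by explicitly verifying that the filler homotopy in the square is the trivial one, which the paper leaves implicit; this is a worthwhile refinement but not a different route.
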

        \begin{proof}
            By \cref{lem:2-morphism-of-lax-fixed-points} under composition with $u$ it is given by $v \colon Gu \to u$. The result now follows by \cref{rmrk:universal-cone-categorical} and its composition with $G$.
        \end{proof}

    \subsection{Lax limits along $\CEndII$}\label{subsec:lax-limit-along-sphere}
        We now categorify \cref{subsec:lax-fixed-points}, finding a natural place where $(-)\hN$ lands, remembering the 2-morphism of \cref{lem:2-morphism-of-lax-fixed-points}.

        Let $\CEndII \coloneqq \DD^2 / \partial \DD^2$ be the walking endomorphism of the identity, that is $\CEndII$ consists of one object $\star$ and is generated by a 2-morphism $\id_\star \to \id_\star$.
        
        Fix a 3-category $\cU$ and assume lax limits of the shape $\CEndII$ exist in $\cU$ and lax limits of the shape $\CEnd$ exist in the mapping 2-categories of $\cU$. A diagram
        \begin{equation*}
            (X,\gamma) \colon \CEndII \to \cU
        \end{equation*}
        chooses $X\in \cU$ and a 2-endomorphism $\gamma \colon \id_X \To \id_X$. In this subsection we will consider the lax limit $\laxlim_{\CEndII} (X,\gamma)$.

        \begin{remark}\label{rmrk:natural-map-fixed-points-II}
            As before, we have a map from the universal cone (if it exists) to the universal lax cone
            \begin{equation*}
                \lim_{\CEndII} (X,\gamma) \to \laxlim_{\CEndII}(X,\gamma).
            \end{equation*}
            If $\gamma$ is invertible then $(X,\gamma) \colon \CEndII \to \cU$ factors through $S^2 = |\CEndII|$ and this map identifies with
            \begin{equation*}
                \lim_{S^2} (X,\gamma) \simeq \lim_{\CEndII} (X, \gamma) \to \laxlim_{\CEndII} (X,\gamma).
            \end{equation*}
        \end{remark}

        \begin{definition}\label{dfn:underlying-map-II}
            The commutative diagram
            \begin{equation*}
                \begin{tikzcd}
                    \pt && \CEndII \\
                    & \cU
                    \arrow[from=1-1, to=1-3]
                    \arrow["X"', from=1-1, to=2-2]
                    \arrow["{(X,\gamma)}", from=1-3, to=2-2]
                \end{tikzcd}
            \end{equation*}
            induces, by the functoriality of lax limits, a map 
            \begin{equation*}
                U \colon \laxlim_{\CEndII} (X,\gamma) \to \laxlim_{\pt} X \simeq X
            \end{equation*}
            which we call the underlying map.
        \end{definition}

        \begin{remark}\label{rmrk:lax-cone-over-S2}
            A lax cone over $(X,\gamma)$ is a map $f\colon Y \to X$ and a 3-morphism $\eta \colon \gamma f \Rrightarrow \id_f$:
            \begin{equation*}
                \begin{tikzcd}
                    Y && X
                    \arrow[""{name=0, anchor=center, inner sep=0}, "f", curve={height=-18pt}, from=1-1, to=1-3]
                    \arrow[""{name=1, anchor=center, inner sep=0}, "f"', curve={height=18pt}, from=1-1, to=1-3]
                    \arrow[""{name=2, anchor=center, inner sep=0}, "{\id_f}", shift left=2, curve={height=-12pt}, shorten <=5pt, shorten >=5pt, Rightarrow, from=0, to=1]
                    \arrow[""{name=3, anchor=center, inner sep=0}, "{\gamma f}"', shift right=2, curve={height=12pt}, shorten <=5pt, shorten >=5pt, Rightarrow, from=0, to=1]
                    \arrow["\eta", shorten <=6pt, shorten >=6pt, Rightarrow, scaling nfold=3, from=3, to=2]
                \end{tikzcd}.
            \end{equation*}
            Any map $Y \to \laxlim_{\CEndII}(X,\gamma)$ is equivalent to such a lax cone and the composition \mbox{$Y \to \laxlim_{\CEndII} (X,\gamma) \xto{U} X$} is identified with $f$.
        \end{remark}

        \begin{corollary}\label{cor:3-morphism-of-lax-limit}
            There exists a canonical 3-morphism $\alpha \colon \gamma U \Rrightarrow U$
            \begin{equation*}
                \begin{tikzcd}
                    U && U
                    \arrow[""{name=0, anchor=center, inner sep=0}, "{\gamma U}", curve={height=-18pt}, Rightarrow, from=1-1, to=1-3]
                    \arrow[""{name=1, anchor=center, inner sep=0}, "{\id_U}"', curve={height=18pt}, Rightarrow, from=1-1, to=1-3]
                    \arrow["\alpha", shorten <=5pt, shorten >=5pt, Rightarrow, scaling nfold=3, from=0, to=1]
                \end{tikzcd}.
            \end{equation*}
        \end{corollary}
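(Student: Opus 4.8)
The plan is to mimic the proof of \cref{cor:2-morphism-of-lax-fixed-points} one categorical level up. By definition, $\laxlim_{\CEndII}(X,\gamma)$ is the value at $(X,\gamma)$ of the right adjoint to the constant-diagram functor $\Delta\colon \cU \to \laxFun(\CEndII,\cU)$, so that mapping into it corepresents lax cones: for every $Y\in\cU$ we have a natural equivalence between $\Map_{\cU}(Y,\laxlim_{\CEndII}(X,\gamma))$ and the space of lax cones over $(X,\gamma)$ with apex $Y$. I would apply this to $Y = \laxlim_{\CEndII}(X,\gamma)$ itself: the identity map then corresponds to the \emph{universal} lax cone, and everything canonical about the lax limit can be extracted from it.

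Next I would unwind what that universal lax cone is using \cref{rmrk:lax-cone-over-S2}: a lax cone over $(X,\gamma)$ with apex $Y$ is a $1$-morphism $f\colon Y\to X$ together with a $3$-morphism $\eta\colon \gamma f \Rrightarrow \id_f$. Specializing to the universal cone, the underlying $1$-morphism $f$ is precisely the underlying map $U\colon \laxlim_{\CEndII}(X,\gamma)\to X$ of \cref{dfn:underlying-map-II} (both are induced by the inclusion $\pt \to \CEndII$ postcomposed into the lax limit, as in \cref{rmrk:lax-cone-over-S2}), and the accompanying $3$-morphism is the sought-after $\alpha\colon \gamma U \Rrightarrow U$, i.e.\ $\gamma U \Rrightarrow \id_U$. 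It is canonical because the universal lax cone is.

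The only point requiring care — and the closest thing to an obstacle — is the bookkeeping: checking that the ``lax cone over $\CEndII$'' description of \cref{rmrk:lax-cone-over-S2} is correctly unwound at the level of $3$-morphisms (in particular that the direction is $\gamma U \Rrightarrow \id_U$ and not its reverse), and that the $1$-morphism produced from the identity cone really is the $U$ of \cref{dfn:underlying-map-II} rather than some other incarnation of the underlying map. This relies on the standing hypothesis that lax limits of shape $\CEnd$ exist in the mapping $2$-categories of $\cU$, which is exactly what makes the $2$-morphism-level data of a $\CEndII$-lax cone behave as described. Once this identification is pinned down, the corollary follows formally, just as in \cref{cor:2-morphism-of-lax-fixed-points}; as in the $\CEnd$ case, I would anticipate a follow-up lemma lifting $\alpha$ to a $3$-morphism between genuine endomorphisms of $\laxlim_{\CEndII}(X,\gamma)$.
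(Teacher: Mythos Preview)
Your proposal is correct and follows essentially the same argument as the paper: the universal lax cone corresponds to the identity on $\laxlim_{\CEndII}(X,\gamma)$, and unwinding it via \cref{rmrk:lax-cone-over-S2} yields the pair $(U,\alpha)$. One small remark: the standing hypothesis about $\CEnd$-shaped lax limits in mapping $2$-categories is not needed for this corollary (it is used later in \cref{lem:lax-fixed-points-lifts}); the description of lax cones in \cref{rmrk:lax-cone-over-S2} is just an unwinding of the definition.
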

        \begin{proof}
            The universal lax cone over $(X,\gamma)$ corresponds to the identity morphism $\laxlim_{\CEndII} (X,\gamma) \to \laxlim_{\CEndII} (X,\gamma)$ therefore it is given by $(U, \alpha)$ where $\alpha \colon \gamma U \Rrightarrow U$.
        \end{proof}

        As $\gamma$ is an endomorphism of $\id_X$, it induces a functor
        \begin{equation*}
            (\id_X, \gamma) \colon \CEnd \to \END(X).
        \end{equation*}

        \begin{lemma}\label{lem:lax-fixed-points-lifts}
            The map $\id_X \hN \colon X \to X$ lifts to $\laxlim_{\CEndII}(X,\gamma)$. That is, there exists a map, which  we also denote by $\id_X \hN \colon X \to \laxlim_{\CEndII}(X,\gamma)$ rendering the following diagram commutative:
            \begin{equation*}
                \begin{tikzcd}
                    X && {\laxlim_{\CEndII}(X,\gamma)} \\
                    \\
                    && X
                    \arrow["{\id_X \hN}", from=1-1, to=1-3]
                    \arrow["{\id_X\hN}"', from=1-1, to=3-3]
                    \arrow["U", from=1-3, to=3-3]
                \end{tikzcd}.
            \end{equation*}
        \end{lemma}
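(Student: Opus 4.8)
The plan is to identify a lift $X\to\laxlim_{\CEndII}(X,\gamma)$ of $\id_X\hN$ with a lax cone over $(X,\gamma)$, and to manufacture that lax cone from the lax $\NN$-fixed point machinery of \cref{subsec:lax-fixed-points} applied inside the mapping $2$-category $\END(X)$. By our standing hypotheses $\END(X)$ admits $\CEnd$-indexed lax limits, so the diagram $(\id_X,\gamma)\colon\CEnd\to\END(X)$ has a lax $\NN$-fixed point; by \cref{dfn:lax-fixed-points-I} this is exactly the object $\id_X\hN\in\END(X)$ (the map $X\to X$ in the statement), which comes equipped with the underlying arrow $u\colon\id_X\hN\to\id_X$ of \cref{dfn:underlying-object-I} and the $2$-morphism $v\colon\gamma\circ u\To u$ of \cref{cor:2-morphism-of-lax-fixed-points}, all living in $\END(X)$. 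Feeding this into \cref{dfn:morphism-of-lax-fixed-points} and \cref{lem:2-morphism-of-lax-fixed-points} --- again inside $\END(X)$ --- produces an endomorphism $g$ of $\id_X\hN$, i.e.\ a $2$-endomorphism of the map $\id_X\hN$ in $\cU$, together with a $2$-morphism $\hat v\colon g\To\id_{\id_X\hN}$ in $\END(X)$, which is precisely the datum of a $3$-morphism $g\Rrightarrow\id_{\id_X\hN}$ in $\cU$.

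The crux is to identify $g$ with the whiskering $\gamma(\id_X\hN)$ of $\gamma$ by $\id_X\hN$ --- that is, the $2$-cell denoted $\gamma f$ in \cref{rmrk:lax-cone-over-S2}, specialized to $f=\id_X\hN$. Arguing as in the proof of \cref{lem:morpshim-of-categories-lax-fixed-points}, the map of diagrams $\gamma\colon(\id_X,\gamma)\to(\id_X,\gamma)$ defining $g$ acts on lax cones by postcomposition with $\gamma$, hence carries the universal lax cone $(u,v)$ to $(\gamma\circ u,\ \gamma v)$; by the universal property of $\laxlim_{\CEnd}$ this characterizes $g$ as the unique $2$-endomorphism of $\id_X\hN$ with $u\circ g\simeq\gamma\circ u$, together with the corresponding compatibility with $v$. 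The middle-four interchange law in $\cU$ yields $u\circ\gamma(\id_X\hN)\simeq\gamma\circ u$ (and, one dimension up, the matching identity for $v$), so $g\simeq\gamma(\id_X\hN)$. I expect this bookkeeping --- pushing functoriality of $\laxlim_{\CEnd}$ through an $\END(X)$-valued diagram and matching $g$ against the whiskering via interchange, while keeping the coherence data of a $3$-category under control --- to be the only real difficulty.

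Granting the identification, $\hat v$ becomes a $3$-morphism $\gamma(\id_X\hN)\Rrightarrow\id_{\id_X\hN}$ in $\cU$, so by \cref{rmrk:lax-cone-over-S2} it is exactly a lax cone over $(X,\gamma)$ with apex $X$ and underlying map $\id_X\hN\colon X\to X$. The map $X\to\laxlim_{\CEndII}(X,\gamma)$ classifying this lax cone is the desired lift, and the triangle in the statement commutes because, again by \cref{rmrk:lax-cone-over-S2}, postcomposing it with $U$ recovers its underlying map $\id_X\hN$. Alternatively one could decompose $\CEndII$ as a cellular colimit and compute $\laxlim_{\CEndII}$ by iterated pullbacks via \cref{lem:lax-lim-of-pushout}, but the route through $\END(X)$ is cleaner and dovetails with the discussion preceding the lemma.
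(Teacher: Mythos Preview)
Your approach is essentially the same as the paper's: both reduce via \cref{rmrk:lax-cone-over-S2} to producing a 3-morphism $\gamma(\id_X\hN)\Rrightarrow\id_{\id_X\hN}$ and source it from \cref{lem:2-morphism-of-lax-fixed-points} applied inside $\END(X)$. You are more explicit than the paper about identifying the endomorphism $g$ of \cref{dfn:morphism-of-lax-fixed-points} with the whiskering $\gamma\cdot\id_X\hN$, a point the paper leaves tacit.
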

        \begin{proof}
            A map $X\to \laxlim_{\CEndII}(X,\gamma)$ is the same as a lax cone, i.e.\ a map $f\colon X\to X$ and a 3-morphism
            \begin{equation*}
                \begin{tikzcd}
                    f && f
                    \arrow[""{name=0, anchor=center, inner sep=0}, "\gamma f", curve={height=-18pt}, Rightarrow, from=1-1, to=1-3]
                    \arrow[""{name=1, anchor=center, inner sep=0}, "\id_f"', curve={height=18pt}, Rightarrow, from=1-1, to=1-3]
                    \arrow["\eta", shorten <=5pt, shorten >=5pt, Rightarrow, scaling nfold=3, from=0, to=1]
                \end{tikzcd}.
            \end{equation*}
            Such a lax cone, composed with the underlying map $U$ will agree with $f$. Therefore to construct such a lifting we must give a 3-morphism
            \begin{equation*}
                \begin{tikzcd}
                    {\id_X\hN} && {\id_X\hN}
                    \arrow[""{name=0, anchor=center, inner sep=0}, "\gamma", curve={height=-18pt}, Rightarrow, from=1-1, to=1-3]
                    \arrow[""{name=1, anchor=center, inner sep=0}, "\id"', curve={height=18pt}, Rightarrow, from=1-1, to=1-3]
                    \arrow["\hat{v}", shorten <=5pt, shorten >=5pt, Rightarrow, scaling nfold=3, from=0, to=1]
                \end{tikzcd},
            \end{equation*}
            which is exactly \cref{lem:2-morphism-of-lax-fixed-points}.
        \end{proof}

        \subsubsection*{The categorical case}\label{subsubec:categorical-S2}
        We will now look at the special case $\cU = \Cat_2$, finding explicit formulas for each of the constructions above:

        \begin{lemma}\label{lem:lax-fixed-points-II}
            Let $(\cV, \gamma)\colon \CEndII \to \Cat_2$, i.e.\ a 2-category $\cV$ and a natural transformation $\gamma \colon \id_{\cV} \To \id_{\cV}$. An object of $\laxlim_{\CEndII}(\cV, \gamma)$ is equivalent to the data of $X\in \cV$ with a 2-morphism $\alpha_X \colon \gamma_X \To \id_X$.
        \end{lemma}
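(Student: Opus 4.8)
The plan is to read off the objects of $\laxlim_{\CEndII}(\cV,\gamma)$ directly from the universal property of the lax limit, identifying them with lax cones whose apex is the point. We are in the categorical case $\cU=\Cat_2$, $X=\cV$ of \cref{subsec:lax-limit-along-sphere}; this is legitimate because $\Cat_2$, being an $(\infty,3)$-category with all lax limits (computed by unstraightening, as recalled before \cref{exm:constant-lax-lim}), supports all the constructions of that subsection. Since an object of any $n$-category $\cD$ is the same as a map $\pt\to\cD$, the universal property of the lax limit identifies objects of $\laxlim_{\CEndII}(\cV,\gamma)$ with lax cones over $(\cV,\gamma)$ having apex $\pt$.

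Next I would unwind such a lax cone using \cref{rmrk:lax-cone-over-S2}: it is a functor $f\colon\pt\to\cV$ together with a $3$-morphism $\eta\colon\gamma f\Rrightarrow\id_f$ in $\Cat_2$, and I would then descend through the tower of mapping categories of $\Cat_2$. The mapping $2$-category $\MAP_{\Cat_2}(\pt,\cV)$ is $\laxFun(\pt,\cV)\simeq\cV$, under which $f$ corresponds to an object $X\in\cV$; whiskering $\gamma\colon\id_\cV\To\id_\cV$ with $f$ yields the natural transformation $f\To f$ of functors $\pt\to\cV$ whose single component is $\gamma_X\colon X\to X$, so $\gamma f$ and $\id_f$ correspond to the objects $\gamma_X$ and $\id_X$ of the $(\infty,1)$-category $\END_\cV(X)$. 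Finally the $3$-morphism $\eta$ of $\Cat_2$ becomes a $1$-morphism $\gamma_X\to\id_X$ in $\END_\cV(X)$, that is, a $2$-morphism $\alpha_X\colon\gamma_X\To\id_X$ in $\cV$. Carrying this identification through the spaces of such data, the space of objects of $\laxlim_{\CEndII}(\cV,\gamma)$ becomes the space of pairs $(X,\alpha_X)$, which is the assertion.

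The step requiring the most care is the last descent: one must check that a $3$-morphism of $\Cat_2$ whose source and target diagrams are indexed by the terminal category $\pt$ carries precisely a single $2$-cell of $\cV$ and no residual coherence. In the model of \cref{subsec:lax-limits} this is exactly the content of $\laxFun(\pt,-)\simeq(-)$ together with the recovery of the mapping $(\infty,1)$-categories of $\cV$ as $\END_\cV(X)$, and it is the analogue --- one categorical level up --- of the explicit description of $\cC\hN$ in \cref{lem:lax-fixed-points-I}. An alternative route, parallel to the proof of \cref{lem:lax-fixed-points-I}, is to use the cell decomposition $\CEndII\simeq\DD^2\sqcup_{\partial\DD^2}\pt$ together with \cref{lem:lax-lim-of-pushout} to present $\laxlim_{\CEndII}(\cV,\gamma)$ as a pullback involving $\laxlim_{\partial\DD^2}$ --- which is $\laxFun(\partial\DD^2,\cV)$ by \cref{exm:constant-lax-lim}, the restricted diagram being constant --- and $\laxlim_{\DD^2}$ of the non-constant diagram sending the $2$-cell to $\gamma$, this last lax limit being computed in turn by a further cell decomposition of $\DD^2$; reading off objects of the resulting pullback again produces the pairs $(X,\alpha_X)$. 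The direct lax-cone argument is cleaner, so I would take that one.
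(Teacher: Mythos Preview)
Your argument is correct and is genuinely different from the paper's. You read off the objects directly from the universal property: an object is a map $\pt\to\laxlim_{\CEndII}(\cV,\gamma)$, hence a lax cone from $\pt$, and \cref{rmrk:lax-cone-over-S2} together with $\laxFun(\pt,\cV)\simeq\cV$ immediately gives the pair $(X,\alpha_X)$. The paper instead presents $\CEndII$ as the quotient $\pmb{\square}/\partial\pmb{\square}$ with $\pmb{\square}=[1]\gray[1]$, applies \cref{lem:lax-lim-of-pushout} to get a pullback, and then computes $\laxlim_{\pmb{\square}}$ by lax Fubini (\cref{lem:lax-fubini}) as an iterated lax limit along $[1]$, each step handled by \cref{lem:lax-lim-arrow}. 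Your alternative sketch via $\DD^2\sqcup_{\partial\DD^2}\pt$ is in the same spirit but the paper's choice of $\pmb{\square}$ is deliberate: it is a Gray product, so Fubini reduces everything to arrow lax limits, avoiding a direct analysis of $\laxlim_{\DD^2}$. What your route buys is brevity for the statement as written; what the paper's route buys is an explicit description of the full pullback diagram and of the forgetful map to $\laxFun(\partial\pmb{\square},\cV)$, which it then uses to justify \cref{rmrk:universal-cone-categorical-II} (the identification of the universal $(U,\alpha)$). If you adopt your direct argument, that remark would need a separate justification.
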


        \begin{proof}
            We can write $\CEndII = \DD^2 / \partial \DD^2$ also as $\pmb{\square} / \partial\, \pmb{\square}$, where $\pmb{\square} \coloneqq [1] \gray [1]$ is the lax square
            \begin{equation*}
                \pmb{\square} = [1] \gray [1] = \begin{tikzcd}
                    \bullet & \bullet \\
                    \bullet & \bullet
                    \arrow[from=1-1, to=1-2]
                    \arrow[from=1-1, to=2-1]
                    \arrow[from=1-2, to=2-2]
                    \arrow[Rightarrow, from=2-1, to=1-2]
                    \arrow[from=2-1, to=2-2]
                \end{tikzcd}
            \end{equation*}
            and $\partial \, \pmb{\square}$ is its maximal 1-subcategory. To see this, note that by~\cite[Corollary~5.12]{Campion-2023-pasting}, both quotients can be computed in strict $n$-categories, where it is obvious.
            Thus, by \cref{lem:lax-lim-of-pushout}, $\laxlim_{\CEndII}(\cV,\gamma)$ sits in a pullback diagram
            \begin{equation*}
                \begin{tikzcd}
                    {\laxlim_{\CEndII}(\cV,\gamma)} & {\laxlim_{\pmb{\square}}(\cV,\gamma)} \\
                    \cV & {\laxlim_{\partial\, \pmb{\square}}(\cV,\gamma)}
                    \arrow[from=1-1, to=1-2]
                    \arrow["U", from=1-1, to=2-1]
                    \arrow["\lrcorner"{anchor=center, pos=0.125}, draw=none, from=1-1, to=2-2]
                    \arrow[from=1-2, to=2-2]
                    \arrow[from=2-1, to=2-2]
                \end{tikzcd}.
            \end{equation*}
            We can compute $\laxlim_{\pmb{\square}}$ using Fubini (\cref{lem:lax-fubini}):
            \begin{equation*}
                \laxlim_{\pmb{\square}} (\cV, \gamma) = \laxlim_{([1] \gray [1])} (\cV,\gamma) \simeq \laxlim_{[1]} \laxlim_{[1]} (\cV,\gamma)
            \end{equation*}
            where the diagram is of the form
            \begin{equation*}
                \begin{tikzcd}
                    \cV && \cV \\ \\
                    \cV && \cV
                    \arrow["\id", from=1-1, to=1-3]
                    \arrow[""{name=0, anchor=center, inner sep=0}, "\id"', from=1-1, to=3-1]
                    \arrow[""{name=1, anchor=center, inner sep=0}, "\id", from=1-3, to=3-3]
                    \arrow["\id", from=3-1, to=3-3]
                    \arrow["\gamma", shorten <=20pt, shorten >=20pt, Rightarrow, from=0, to=1]
                \end{tikzcd}.
            \end{equation*}
            We first compute the vertical lax limit (along the identity morphism), getting the diagram
            \begin{equation*}
                \big(\laxFun([1], \cV) \xto{\gamma} \laxFun([1], \cV)\big) \colon [1] \to \Cat_2,
            \end{equation*}
            where $\gamma$ is the functor taking an arrow $f\colon X \to Y$ to the composition
            \begin{equation*}
                X \xto{\gamma_X} X \xto{f} Y.
            \end{equation*}
            Computing the lax limit, using \cref{lem:lax-lim-arrow}, we get the category 
            \begin{equation*}
                \laxFun([1], \laxFun([1], \cV)) \times_{{\laxFun([1],\cV)}^2} \laxFun([1],\cV)
            \end{equation*}
            with $(\gamma,\id) \colon \laxFun([1],\cV) \to {\laxFun([1],\cV)}^2$. That is the category of lax squares
            \begin{equation}\label{eq:lax-square}
                \begin{tikzcd}
                    X && X' \\
                    X \\
                    Y && Y'
                    \arrow["g", from=1-1, to=1-3]
                    \arrow["{\gamma_X}", from=1-1, to=2-1]
                    \arrow["f'", from=1-3, to=3-3]
                    \arrow["f", from=2-1, to=3-1]
                    \arrow["\alpha", shorten <=17pt, shorten >=17pt, Rightarrow, from=3-1, to=1-3]
                    \arrow["h", from=3-1, to=3-3]
                \end{tikzcd}.
            \end{equation}
            The lax limit $\laxlim_{\partial\, \pmb{\square}} (\cV,\gamma)$ is taken along the constant functor $\Delta\cV \colon \partial\, \pmb{\square} \to \Cat_2$. Therefore
            \begin{equation*}
                \laxlim_{\partial\, \pmb{\square}} (\cV,\gamma) \simeq \laxFun(\partial\, \pmb{\square}, \cV).
            \end{equation*}
            It is left showing that the functor $\laxlim_{\pmb{\square}} (\cV,\gamma) \to \laxlim_{\partial\, \pmb{\square}} (\cV,\gamma)$ sends a lax square of the form (\ref{eq:lax-square}) to the (non-commutative) square
            \begin{equation*}
                \begin{tikzcd}
                    X && X' \\ \\
                    Y && Y'
                    \arrow["g", from=1-1, to=1-3]
                    \arrow["f'", from=1-3, to=3-3]
                    \arrow["f", from=1-1, to=3-1]
                    \arrow["h", from=3-1, to=3-3]
                \end{tikzcd}.
            \end{equation*}
            To see this, note that a lax cone over $(\cV,\gamma)\colon \pmb{\square} \to \Cat_2$ is a diagram of the form
            \begin{equation*}
                \begin{tikzcd}
                    &&&&&&& \cV \\
                    \cD &&&& \cV \\
                    \\
                    \\
                    &&&&&&& \cV \\
                    &&&& \cV
                    \arrow[color={rgb,255:red,92;green,92;blue,214}, Rightarrow, no head, from=1-8, to=5-8]
                    \arrow[""{name=0, anchor=center, inner sep=0}, from=2-1, to=1-8]
                    \arrow[from=2-1, to=2-5]
                    \arrow[""{name=1, anchor=center, inner sep=0}, from=2-1, to=5-8]
                    \arrow[""{name=2, anchor=center, inner sep=0}, from=2-1, to=6-5]
                    \arrow[color={rgb,255:red,214;green,92;blue,92}, Rightarrow, no head, from=2-5, to=1-8]
                    \arrow[color={rgb,255:red,102;green,64;blue,26}, Rightarrow, no head, from=2-5, to=6-5]
                    \arrow[""{name=3, anchor=center, inner sep=0}, "\gamma"', shorten <=36pt, shorten >=36pt, Rightarrow, from=6-5, to=1-8]
                    \arrow[color={rgb,255:red,26;green,102;blue,26}, Rightarrow, no head, from=6-5, to=5-8]
                    \arrow[color={rgb,255:red,92;green,92;blue,214}, shorten <=11pt, shorten >=11pt, Rightarrow, from=1-8, to=1]
                    \arrow[shorten <=33pt, shorten >=33pt, Rightarrow, scaling nfold=3, from=2-1, to=3]
                    \arrow[color={rgb,255:red,214;green,92;blue,92}, shorten <=1pt, shorten >=1pt, Rightarrow, from=2-5, to=0]
                    \arrow[color={rgb,255:red,102;green,64;blue,26}, shorten <=6pt, shorten >=6pt, Rightarrow, from=2-5, to=2]
                    \arrow[color={rgb,255:red,26;green,102;blue,26}, shorten <=5pt, shorten >=5pt, Rightarrow, from=6-5, to=1]
                \end{tikzcd}
            \end{equation*}
            and it is sent to the lax cone over $\Delta \cV \colon \partial\, \pmb{\square} \to \Cat_2$ 
            \begin{equation*}
                \begin{tikzcd}
                    &&&&&&& \cV \\
                    \cD &&&& \cV \\
                    \\
                    \\
                    &&&&&&& \cV \\
                    &&&& \cV
                    \arrow[""{name=0, anchor=center, inner sep=0}, from=2-1, to=1-8]
                    \arrow[from=2-1, to=2-5]
                    \arrow[""{name=1, anchor=center, inner sep=0}, from=2-1, to=5-8]
                    \arrow[""{name=2, anchor=center, inner sep=0}, from=2-1, to=6-5]
                    \arrow[color={rgb,255:red,92;green,92;blue,214}, shorten <=11pt, shorten >=11pt, Rightarrow, from=1-8, to=1]
                    \arrow[color={rgb,255:red,214;green,92;blue,92}, shorten <=1pt, shorten >=1pt, Rightarrow, from=2-5, to=0]
                    \arrow[color={rgb,255:red,102;green,64;blue,26}, shorten <=6pt, shorten >=6pt, Rightarrow, from=2-5, to=2]
                    \arrow[color={rgb,255:red,26;green,102;blue,26}, shorten <=5pt, shorten >=5pt, Rightarrow, from=6-5, to=1]
                    \arrow[color={rgb,255:red,92;green,92;blue,214}, Rightarrow, no head, from=1-8, to=5-8]               \arrow[color={rgb,255:red,214;green,92;blue,92}, Rightarrow, no head, from=2-5, to=1-8]
                    \arrow[color={rgb,255:red,102;green,64;blue,26}, Rightarrow, no head, from=2-5, to=6-5]
                    \arrow[color={rgb,255:red,26;green,102;blue,26}, Rightarrow, no head, from=6-5, to=5-8]
                \end{tikzcd}
            \end{equation*}
            forgetting $\gamma$ and the 3-cell.
            The universal cone, has as a functors to $\cV$, the functors sending a lax square of the form (\ref{eq:lax-square}) to $X$, $X'$, $Y$, $Y$. The 2-morphisms are given by $g$, $f$, $f'$, $h$ and the 3-morphism is exactly $\alpha$. The corresponding lax cone over $\partial\, \pmb{\square}$ represents the functor sending such a lax square to 
            \begin{equation*}
                \begin{tikzcd}
                    X && X' \\ \\
                    Y && Y'
                    \arrow["g", from=1-1, to=1-3]
                    \arrow["f'", from=1-3, to=3-3]
                    \arrow["f", from=1-1, to=3-1]
                    \arrow["h", from=3-1, to=3-3]
                \end{tikzcd}.
            \end{equation*}
        \end{proof}

        \begin{remark}\label{rmrk:universal-cone-categorical-II}
            Chasing the description of \cref{lem:lax-lim-of-pushout}, the 3-morphism of \cref{cor:3-morphism-of-lax-limit}, coming from the universal lax cone of $\laxlim_{\CEndII} (\cV,\gamma)$, is $(U,\alpha)$ where $\alpha \colon \gamma U \Rrightarrow U$ is given on an object $(X, \alpha_X \colon \gamma_X \To \id_X)$ as $\alpha_X$.
        \end{remark}

        \begin{remark}\label{rmrk:fixed-points-map-II}
            Assuming $\gamma$ is invertible, by the description \cref{lem:lax-fixed-points-II}, the functor $\lim_{S^2} (\cV, \gamma) \to \laxlim_{\CEndII} (\cV,\gamma)$ of \cref{rmrk:natural-map-fixed-points-II} sends a tuple $(X, \gamma_X \isoTo \id_X)$ to the tuple $(X, \gamma_X \To \id_X)$, forgetting that the map is invertible.
        \end{remark}

        Reinterpreting \cref{lem:lax-fixed-points-lifts} for the special case $\cV = \Cat_2$, as $\gamma \colon \id_{\cV} \to \id_{\cV}$, any $X\in \cV$ admits a morphism $\gamma_X \colon X \to X$. We can then say:
        \begin{corollary}
            Let $X\in \cV$, then there exists a lift $X\hN \in \laxlim_{\CEndII} (\cV, \gamma)$. That is, there exists such an object and $U(X\hN) \in \cV$ agrees with the lax fixed points of \cref{subsec:lax-fixed-points}.
        \end{corollary}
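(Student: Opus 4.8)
The plan is to read this off from \cref{lem:lax-fixed-points-lifts}, specialized to the 3-category $\cU=\Cat_2$, and then to pin down the underlying object. We apply \cref{lem:lax-fixed-points-lifts} with the object of $\cU$ taken to be the 2-category $\cV$ and the 2-endomorphism taken to be the given $\gamma\colon\id_\cV\To\id_\cV$. Here $\END_{\Cat_2}(\cV)=\laxFun(\cV,\cV)$, the object playing the role of $\id_X$ is $\id_\cV$, and $\id_\cV\hN:=\laxlim_{\CEnd}(\id_\cV,\gamma)$ is an endofunctor of $\cV$. The lemma then produces a lift $\Phi\colon\cV\to\laxlim_{\CEndII}(\cV,\gamma)$ with $U\circ\Phi\simeq\id_\cV\hN$. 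Precomposing $\Phi$ with the point $\pt\xto{X}\cV$ gives the desired object $X\hN:=\Phi(X)\in\laxlim_{\CEndII}(\cV,\gamma)$, and functoriality of $U$ gives $U(X\hN)\simeq\id_\cV\hN(X)$, i.e.\ the endofunctor $\laxlim_{\CEnd}(\id_\cV,\gamma)$ evaluated at $X$.

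It then remains to identify $\id_\cV\hN(X)$ with the lax fixed points $\laxlim_{\CEnd}(X,\gamma_X)$ of \cref{subsec:lax-fixed-points}, where $\gamma_X\colon X\to X$ is the component of $\gamma$ at $X$. This is the assertion that the shape-$\CEnd$ lax limit in $\laxFun(\cV,\cV)$ is computed pointwise on the diagram $(\id_\cV,\gamma)$: evaluation at $X$, i.e.\ restriction along $\pt\xto{X}\cV$, carries the diagram $(\id_\cV,\gamma)\colon\CEnd\to\laxFun(\cV,\cV)$ to $(X,\gamma_X)\colon\CEnd\to\cV$, so it suffices to know that this evaluation functor preserves lax limits of shape $\CEnd$. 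Using the pushout presentation $\CEnd=\pt\sqcup_{\{0,1\}}[1]$ together with \cref{lem:lax-lim-of-pushout} (as in the proof of \cref{lem:lax-fixed-points-I}), $\laxlim_{\CEnd}$ is built from finite limits and the cotensor $\laxFun([1],-)$; restriction functors preserve finite limits, so the point is their compatibility with $\laxFun([1],-)$, which one extracts from the behaviour of the Gray tensor on cells.

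I expect this interchange of evaluation with the lax limit to be the main obstacle: since the Gray tensor is not symmetric, lax limits in lax functor categories are not pointwise in general, so the argument must genuinely exploit the smallness of $\CEnd$. A robust alternative, bypassing the issue, is to build the lift by hand at the level of $\cV$, mirroring the proof of \cref{lem:lax-fixed-points-lifts}: set $Z:=\laxlim_{\CEnd}(X,\gamma_X)$; by \cref{lem:lax-fixed-points-II} a lift of $Z$ to $\laxlim_{\CEndII}(\cV,\gamma)$ is precisely a 2-morphism $\gamma_Z\To\id_Z$, where $\gamma_Z$ is the component of $\gamma$ at $Z$. Naturality of $\gamma$ along the underlying map $u\colon Z\to X$ of \cref{dfn:underlying-object-I} gives $u\gamma_Z\simeq\gamma_X u$; composing with the canonical $v\colon\gamma_X u\To u$ of \cref{cor:2-morphism-of-lax-fixed-points} and invoking the universal property of $Z$ as a lax limit yields a 2-morphism $\gamma_Z\To\id_Z$ — which, upon unwinding, is exactly $\hat{v}\colon g\To\id_Z$ of \cref{lem:2-morphism-of-lax-fixed-points}, once $\gamma_Z$ is identified with the endomorphism $g$ of \cref{dfn:morphism-of-lax-fixed-points} (again a consequence of naturality of $\gamma$). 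Either way, the underlying object of the lift is $\laxlim_{\CEnd}(X,\gamma_X)$, the lax fixed points of \cref{subsec:lax-fixed-points}, as claimed.
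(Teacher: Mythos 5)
Your proposal follows the paper's own route: the corollary is obtained precisely by specializing \cref{lem:lax-fixed-points-lifts} to $\cU=\Cat_2$ acting on $\cV$ and evaluating the lifted functor at $X$, with the pointwise identification $U(X\hN)\simeq\laxlim_{\CEnd}(X,\gamma_X)$ that you flag being left implicit in the text. Your backup construction — producing the object directly via \cref{lem:lax-fixed-points-II} from the 2-morphism $\gamma_{X\hN}\To\id_{X\hN}$ of \cref{lem:2-morphism-of-lax-fixed-points} — is exactly what the paper records in the remark immediately following the corollary, so both halves of your argument align with the paper.
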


        \begin{remark}
            The 2-morphism $\hat{v} \colon \gamma_{X\hN} \To \id_{X\hN}$, which is part of the data of an object in $\laxlim_{\CEndII}(\cV, \gamma)$ is the 2-morphism constructred in \cref{lem:2-morphism-of-lax-fixed-points}.
        \end{remark}

        The functor $\id_{\cV}\hN$ of \cref{lem:lax-fixed-points-lifts} can be thought of as the functor
        \begin{equation*}
            (-)\hN \colon \cV \to \laxlim_{\CEndII} (\cV, \gamma)
        \end{equation*}
        taking $X\in \cV$ to $X\hN$.

        \begin{proposition}\label{prop:hN-adjoint-to-U}
            The functor $(-)\hN \colon \cV \to \laxlim_{\CEndII} (\cV, \gamma)$ is right adjoint to the underlying functor $U\colon \laxlim_{\CEndII} (\cV, \gamma) \to \cV$.
        \end{proposition}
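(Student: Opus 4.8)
The plan is to establish the adjunction $U\dashv(-)\hN$ by writing down a unit and a counit and checking the two triangle identities. By \cref{lem:lax-fixed-points-lifts} and the corollary preceding the proposition, $U\circ(-)\hN$ is the endofunctor of $\cV$ carrying $X$ to its lax $\NN$-fixed points $X\hN=\laxlim_{\CEnd}(X,\gamma_X)$, so for the counit $\epsilon\colon U\circ(-)\hN\To\id_\cV$ I take $\epsilon_X$ to be the underlying map $u_X\colon X\hN\to X$ of \cref{dfn:underlying-object-I}; this is natural in $X$ by functoriality of lax limits along $\pt\to\CEnd$. For the unit $\eta\colon\id\To(-)\hN\circ U$, fix an object $\cD=(X,\alpha_X)$ of $\laxlim_{\CEndII}(\cV,\gamma)$, with $\alpha_X\colon\gamma_X\To\id_X$ as in \cref{lem:lax-fixed-points-II}. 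Reading $\alpha_X$ as a $2$-morphism $\gamma_X\circ\id_X\To\id_X$, the pair $(\id_X,\alpha_X)$ is a lax cone over $(X,\gamma_X)\colon\CEnd\to\cV$, so by the universal property of the lax $\NN$-fixed points (\cref{rmrk:lax-cone-over-S1}) it classifies a morphism $\underline\eta_\cD\colon X\to X\hN$ in $\cV$ with $u_X\circ\underline\eta_\cD\simeq\id_X$; this will be the underlying $1$-morphism of $\eta_\cD$.

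The first genuine task is to upgrade $\underline\eta_\cD$ to a morphism $(X,\alpha_X)\to(X\hN,\hat{v}_X)$ in $\laxlim_{\CEndII}(\cV,\gamma)$, for which I need an explicit description of the $1$-morphisms of $\laxlim_{\CEndII}(\cV,\gamma)$ --- \cref{lem:lax-fixed-points-II} supplies this only on objects. I would obtain it by rerunning the proof of that lemma: present $\CEndII=\pmb{\square}/\partial\,\pmb{\square}$, apply \cref{lem:lax-lim-of-pushout} and lax Fubini (\cref{lem:lax-fubini}), and now keep track of the $1$-morphisms in the resulting pullback of categories of lax squares. The expected outcome is that a morphism $(X,\alpha_X)\to(Z,\alpha_Z)$ is a $1$-morphism $f\colon X\to Z$ in $\cV$ for which the two $2$-cells $\gamma_Z\circ f\To f$ --- one obtained from $\alpha_Z$ by whiskering with $f$, the other obtained from $\alpha_X$ by whiskering with $f$ after the naturality $2$-cell of $\gamma$ at $f$ --- are identified; this is the exact categorification of the equalizer description in \cref{cor:properties-of-lax-fixed-points-I}. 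Granting this, the identification needed to promote $\underline\eta_\cD$ is precisely the relation $u\hat{v}\simeq v$ of \cref{lem:2-morphism-of-lax-fixed-points}.

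It then remains to verify the triangle identities. The identity $\epsilon U\circ U\eta=\id_U$ is immediate: evaluated at $\cD=(X,\alpha_X)$ it is the relation $u_X\circ\underline\eta_\cD\simeq\id_X$ already built into $\underline\eta_\cD$ (\cref{rmrk:lax-cone-over-S1}). The identity $(-)\hN\epsilon\circ\eta(-)\hN=\id_{(-)\hN}$ asks that for every $X$ the composite $X\hN\xto{\underline\eta_{(X\hN,\hat{v}_X)}}(X\hN)\hN\xto{(-)\hN(u_X)}X\hN$ be the identity, and I expect this to be the main obstacle --- a coherence bookkeeping inside a $2$-category of lax limits. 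I would handle it by unwinding both maps through universal properties: $\underline\eta_{(X\hN,\hat{v}_X)}$ is the map classified by the lax cone $(\id_{X\hN},\hat{v}_X)$ over $(X\hN,\gamma_{X\hN})$, while $(-)\hN(u_X)$ unwinds similarly as the map of lax cones induced by $u_X$, the underlying maps, and the naturality of $\gamma$; chasing these against the explicit construction of $\hat{v}$ in \cref{lem:2-morphism-of-lax-fixed-points} shows the composite is $\id_{X\hN}$. As an alternative route that sidesteps the unit/counit calculus, one can prove the mapping-category equivalence $\MAP_{\laxlim_{\CEndII}(\cV,\gamma)}(\cD,X'\hN)\simeq\MAP_\cV(U\cD,X')$ directly from the $1$-morphism description above together with the universal property of the lax $\NN$-fixed points --- the point being that the $2$-cell $\alpha$ carried by an object of $\laxlim_{\CEndII}(\cV,\gamma)$ is exactly the lax-cone datum needed to produce, and is forced by, a map into a lax fixed point.
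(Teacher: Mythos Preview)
Your approach is essentially the paper's: construct the adjunction via unit and counit, take the counit to be the underlying map $u$ of \cref{dfn:underlying-object-I}, and build the unit at an object $(X,\alpha_X)$ as the map $X\to X\hN$ classified by the lax cone $(\id_X,\alpha_X)$ over $(X,\gamma_X)$, using $u\hat{v}\simeq v$ from \cref{lem:2-morphism-of-lax-fixed-points} for the required compatibility. The zig-zag identities are then dispatched the same way, checking on objects.

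The one packaging difference worth noting is in how the unit is made global. You propose to first extract an explicit description of the $1$-morphisms of $\laxlim_{\CEndII}(\cV,\gamma)$ by rerunning \cref{lem:lax-fixed-points-II}, and then verify that the components $\underline\eta_\cD$ assemble to a natural transformation. The paper instead observes that $(\laxlim_{\CEndII}(\cV,\gamma))^{[1]}\simeq\laxlim_{\CEndII}(\cV^{[1]},\gamma)$, so that constructing the unit amounts, via \cref{rmrk:lax-cone-over-S2}, to giving a functor $\laxlim_{\CEndII}(\cV,\gamma)\to\cV^{[1]}$ together with a $3$-morphism $\Sigma^{\selfMapDeg}\Rrightarrow\id$ over it. The functor is supplied directly by the universal $3$-morphism $\alpha\colon\gamma U\Rrightarrow\id_U$ of \cref{cor:3-morphism-of-lax-limit}, read as a lax cone $U\To U(U\hN)$; the $3$-morphism is again $\alpha$. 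This trick buys you naturality for free and avoids ever having to write down what a $1$-morphism in the lax limit looks like --- your anticipated obstacle simply disappears. Your route is correct but heavier; the arrow-category reformulation is the cleaner packaging.
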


        \begin{proof}
            In this proof we will have to distinguish between the lax fixed points landing in $\cV$ and the lax fixed points landing in $\laxlim_{\CEndII}(\cV,\gamma)$. We will denote the latter by $(-)\hN$ while the former will be denoted as the composition $U((-)\hN)$. 
            
            We prove the adjunction by constructing a unit and a counit. The map
            \begin{equation*}
                u \colon U\circ (-)\hN = U((-)\hN) \to \id_\cV
            \end{equation*}
            defined in \cref{dfn:underlying-object-I} will be a counit for the adjunction\footnote{$u$ is an unfortunate name for a counit.}.

            To construct a unit, we must construct a 2-morphism $\id_{\laxlim_{\CEndII} (\cV, \gamma)} \To U\hN$.\footnote{The name $U\hN$ might be ambiguous, as it could refer to either $U \circ (-)\hN$ or the lax fixed points of $U$ under the $\NN$-action induced by $\gamma$. Fortunately, these two definitions coincide.}
            Equivalently, by the exponential adjunction, we need to provide a functor 
            \begin{equation*}
                \hat{\alpha} \colon \laxlim_{\CEndII} (\cV, \gamma) \to (\laxlim_{\CEndII}(\cV, \gamma))^{[1]} \simeq \laxlim_{\CEndII}(\cV^{[1]}, \gamma).
            \end{equation*}

            Using \cref{rmrk:lax-cone-over-S2}, it is equivalent to a functor $\alpha \colon \laxlim_{\CEndII} (\cV, \gamma) \to \cV^{[1]}$ together with a 3-morphism
            \begin{equation*}
                \begin{tikzcd}
                    \alpha && \alpha
                    \arrow[""{name=0, anchor=center, inner sep=0}, "{\gamma}", curve={height=-18pt}, Rightarrow, from=1-1, to=1-3]
                    \arrow[""{name=1, anchor=center, inner sep=0}, "\id"', curve={height=18pt}, Rightarrow, from=1-1, to=1-3]
                    \arrow["\eta", shorten <=5pt, shorten >=5pt, Rightarrow, scaling nfold=3, from=0, to=1]
                \end{tikzcd}.
            \end{equation*}
 
            Recall the 3-morphism $\alpha \colon \gamma U \Rrightarrow \id_U$ of \cref{rmrk:universal-cone-categorical-II}. This is a lax cone from $U$ over the diagram
            \begin{equation*}
                (U, \gamma U) \colon \CEnd \to \MAP(\laxlim_{\CEndII} (\cV,\gamma), \cV)
            \end{equation*}
            and so corresponds (\cref{rmrk:lax-cone-over-S1}) to a 2-morphism $\alpha \colon U \To U(U\hN) \simeq U \circ (-)\hN \circ U$, which we can think of as a functor $\alpha \colon \laxlim_{\CEndII}(\cV,\gamma) \to \cV^{[1]}$. Indeed, on an object $X$, $\alpha_X \colon X \to X\hN$ corresponds by \cref{rmrk:lax-cone-over-S1} to a 2-morphism $\alpha_X \colon \gamma_X \To \id_X$.

            Giving a 3-morphism $\eta$ as above requires to give for any $(X,\alpha_X)\in \laxlim_{\CEndII}(\cV,\gamma)$ a 2-morphism $\gamma_X \To \id_X$ in a coherent way, but such 2-morphism is given by $\alpha_X$.

            It is now left to check the zig-zag identities of the unit and the counit, i.e.\ that the compositions
            \begin{align*}
                & U \xTo{\id \circ \hat\alpha} U\circ U\SMd = U{(-)}\SMd \circ U \xTo{u\circ \id} U \\
                & {(-)}\SMd \xTo{\hat\alpha \circ \id} U\SMd \circ {(-)}\SMd = {(-)}\SMd \circ U{(-)}\SMd \xTo{\id \circ u} {(-)}\SMd
            \end{align*}
            are natural isomorphisms. It is therefore sufficient to check that on objects, and this is straightforward.

        \end{proof}



\section{Categories of (asymptotically defined) self-maps}\label{sec:self-maps}
    In this section we introduce a general framework for periodicity phenomena.
    
    Let $D$ be a quotient group of $\Zhat$. The goal of this section is the construction of the categories $\Catperf\SED$, $\Catperf\SEDiso$ of categories with asymptotically defined natural endomorphisms of the identity (of specific degrees) and of asymptotically defined natural isomorphisms of the identity (of specific degrees), along with their relations.

    In the next section, we will reformulate the periodicity theorem~\cite[Theorem 9]{Hopkins-Smith-nilpotenceII} describing precisely the existence of an \quotes{asymptotically defined $\vn$-natural endomorphism} of the identity of $\FSpc$, and deduce applications from it to the $\Tn$-local category.

    In \cref{subsec:SM-general} we describe, for a finite group $D \cong \ZZ/\selfMapDeg$ and a category $\cC\in \Catperf$, the general construction and attributes of the category of self-maps of $\cC$. In \cref{subsec:shifted-endomorphisms} we categorify the construction and define the category of categories with shifted endomorphism of the identity (i.e.\ natural transformations $\Sigma^d \To \id$), defined as the lax limit of $\Catperf$ along $\CEndII$ as introduced in \cref{subsec:lax-limit-along-sphere}. We then describe its properties and its connection to categories of self-maps. 
    In \cref{subsec:nilpotents-and-invertible} we discuss the connection to the (non-lax) limit --- the category of categories with shifted isomorphism of the identity and the related category of categories with locally nilpotent shifted endomorphism of the identity. 
    Finally, in \cref{subsec:asymptotically-defined}, we extend to pro-finite $D$ by taking the colimit of our constructed categories, defining categories with asymptotically defined self maps and categories of categories with asymptotically defined endomorphisms of the identity.
            
    As we will work with group actions and (de)categorifications, we will need the following 2 simple remarks:

    \begin{remark}\label{rmrk:G-action-on-id-categorified}
        Let $\cV$ be a 2-category, $X\in\cV$ and $A$ a pointed space. An $A$-local system $A\to \END(X)$ sending the base point to $\id_X$, is, by the core-forgetful adjunction, a map of pointed spaces $A\to \END(X)\core \simeq \Omega_X \cV\core$, which in turn is the same as a map of pointed spaces $\Sigma A \to \cV\core$ choosing $X$. Again, using the core-forgetful adjunction, it is the same as a local system $\Sigma A \to \cV$ sending the base point to $X$.

        In particular, a $G$-action on $\id_X$ is the same as a functor $\Sigma \BG \to \cV$ choosing $X$, or equivalently, a $\Omega \Sigma \BG$-action on $X$. 
    \end{remark}

    \begin{remark}\label{rmrk:G-action-on-id-categorified-part-2}
        Let $\cV$ be a 2-category and $X\in\cV$. A $G$-action on $X$ is the same as a functor $\BG \to \cV$ choosing $X$, and therefore induces a local system $G\to \END(X)$. When $G$ is connected, it is equivalent to a $\Omega G$-action on $\id_X$.
    \end{remark}

    \subsection{Self map categories}\label{subsec:SM-general}
        Note that lax limits in $\Catperf$ agree with lax limits in $\Cat$ so we may use the results of \cref{subsec:lax-fixed-points} replacing $\Cat$ with $\Catperf$. Let $\cC\in\Catperf$ and $\selfMapDeg\in\NN$. We will use \cref{subsec:lax-fixed-points} for the specific functor 
        \begin{equation*}
            (\cC,\Sigma^{\selfMapDeg}) \colon \CEnd \to \Catperf.
        \end{equation*}

        \begin{definition}
            Let $\cC\in\Catperf$ and $\selfMapDeg\in \NN$. Define the category $\cC\SMd$ of objects with self-maps of degree $\selfMapDeg$ as the lax fixed points
            \begin{equation*}
                \cC\SMd \coloneqq \laxlim(\CEnd \xto{(\cC, \Sigma^{\selfMapDeg})} \Catperf).
            \end{equation*}
            Equivalently (\cref{lem:lax-fixed-points-I}) it is the lax equalizer of $\cC \xrightrightarrows[\id]{\Sigma^{\selfMapDeg}} \cC$.
        \end{definition}

        \begin{definition}
            Let $\cC\in\Catperf$ and $\selfMapDeg\in \NN$. Define the category $\cC^{h\ZZ[\selfMapDeg]}$ of objects with self-isomorphism of degree $\selfMapDeg$ as the fixed points
            \begin{equation*}
                \cC^{h\ZZ[\selfMapDeg]} \coloneqq \lim(\B\ZZ \xto{(\cC, \Sigma^{\selfMapDeg})} \Catperf) \simeq \lim(\CEnd \xto{(\cC, \Sigma^{\selfMapDeg})} \Catperf).
            \end{equation*}
            Equivalently, it is the equalizer of $\cC \xrightrightarrows[\id]{\Sigma^{\selfMapDeg}} \cC$.
        \end{definition}

        We summarize the main properties of of this construction from the previous section:
        \begin{proposition}\label{cor:properties-of-SM}\ 
            \begin{enumerate}
                \item\label{1} An object of $\cC\SMd$ is equivalent to the data of $(X,\periodNat[X])$, where $X\in \cC$ and $\periodNat[X]\colon  \Sigma^{\selfMapDeg} X \to X$.
                The mapping space between two objects $(X,\periodNat[X]), (Y, \periodNat[Y])$ is given by
                \begin{equation*}
                    \Map_{\cC\SMd}((X,v_X), (Y,v_Y)) 
                    \simeq \eq\left( 
                        \Map_{\cC}(X,Y) \xrightrightarrows[
                            { v_Y \circ \Sigma^{\selfMapDeg}- }
                            ]{ -\circ v_X } 
                            \Map_{\cC}(\Sigma^{\selfMapDeg} X,Y)
                        \right).
                \end{equation*}
    
                \item\label{2} The underlying object functor $u\colon  \cC\SMd \to \cC$ (induced naturally by $\pt \to \CEnd$) is exact, conservative and colimit-preserving.
    
                \item\label{3} A functor $\cD \to \cC\SMd$ is equivalent to the data of an exact functor $F\colon \cD \to \cC$ and a natural transformation $\Sigma^{\selfMapDeg} F \To F$.
    
                \item\label{4} There is a natural functor $\cC^{h\ZZ[\selfMapDeg]} \to \cC\SMd$ that forgets that the self-map is an isomorphism.
            \end{enumerate}
        \end{proposition}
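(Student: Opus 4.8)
The plan is to deduce all four items directly from the results of \cref{subsec:lax-fixed-points} specialized to the functor $(\cC, \Sigma^{\selfMapDeg})\colon \CEnd \to \Catperf$, using the observation recorded at the start of \cref{subsec:SM-general} that lax limits in $\Catperf$ agree with those in $\Cat$, so that \cref{lem:lax-fixed-points-I}, \cref{cor:properties-of-lax-fixed-points-I} and the surrounding remarks apply verbatim with $G = \Sigma^{\selfMapDeg}$. Items (\ref{1}) and (\ref{2}) are then immediate translations: the description of objects and the mapping-space formula are exactly \cref{cor:properties-of-lax-fixed-points-I}, and the same corollary gives that $u$ is exact, conservative and colimit-preserving.

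For item (\ref{3}), I would unwind the universal property of the lax limit: by definition a functor $\cD \to \cC\SMd = \laxlim_{\CEnd}(\cC, \Sigma^{\selfMapDeg})$ is a lax cone over $(\cC, \Sigma^{\selfMapDeg})$ with vertex $\cD$, which by \cref{rmrk:lax-cone-over-S1} (again with $\cV = \Catperf$) is precisely the data of a functor $F\colon \cD \to \cC$ together with a natural transformation $\Sigma^{\selfMapDeg} F \To F$; exactness of $F$ is automatic since $\cD, \cC \in \Catperf$ and all functors in $\Catperf$ are exact. One small point to check is that this really is an equivalence of spaces (not just a bijection on objects), but this is part of the content of the adjunction $\Delta \dashv \laxlim$ defining the lax limit, so nothing further is needed.

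For item (\ref{4}), I would use \cref{rmrk:natural-map-fixed-points-I}: since $\Sigma^{\selfMapDeg}$ is invertible, the functor $(\cC, \Sigma^{\selfMapDeg})\colon \CEnd \to \Catperf$ factors through $S^1 = |\CEnd|$, giving the natural map $\cC^{h\ZZ[\selfMapDeg]} = \lim_{S^1}(\cC,\Sigma^{\selfMapDeg}) \simeq \lim_{\CEnd}(\cC,\Sigma^{\selfMapDeg}) \to \laxlim_{\CEnd}(\cC,\Sigma^{\selfMapDeg}) = \cC\SMd$, and by \cref{rmrk:fixed-points-map-I} it sends $(X, \Sigma^{\selfMapDeg}X \isoto X)$ to $(X, \Sigma^{\selfMapDeg}X \to X)$, i.e.\ forgets invertibility of the self-map. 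There is essentially no obstacle here — the proposition is a bookkeeping exercise assembling the already-established categorical facts — the only mild care required is to make sure the identifications are compatible (e.g.\ that $u$ of \cref{dfn:underlying-object-I} matches the underlying-object functor in \cref{cor:properties-of-lax-fixed-points-I}), which is exactly what the categorical-case subsection established.
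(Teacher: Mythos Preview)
Your proposal is correct and follows essentially the same approach as the paper: the paper's proof simply cites \cref{cor:properties-of-lax-fixed-points-I}, \cref{rmrk:lax-cone-over-S1}, and \cref{rmrk:fixed-points-map-I}, which is exactly the specialization you describe. Your additional remarks (on exactness being automatic in $\Catperf$ and on compatibility of $u$) are harmless elaborations of the same argument.
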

        \begin{proof}
            This follows by \cref{cor:properties-of-lax-fixed-points-I} (which is just~\cite[Proposition~II.1.5.]{Nikolaus-Scholze-2018-TC}), \cref{rmrk:lax-cone-over-S1}, \cref{rmrk:fixed-points-map-I}.
        \end{proof}

        \begin{proposition}\label{lem:natural-map-for-SM}
            There exists a natural transformation
            \begin{equation*}
                \hat{v} \colon  \Sigma^{\selfMapDeg} \To \id_{\cC\SMd}
            \end{equation*}
            of endofunctors of $\cC\SMd$.
            At an object $(X,\periodNat[X])$ it is given by 
            \begin{equation*}
                \begin{tikzcd}
                    {\Sigma^{2\selfMapDeg}X} & {\Sigma^{\selfMapDeg}X} \\
                    {\Sigma^{\selfMapDeg}X} & X
                    \arrow["{\Sigma^{\selfMapDeg}v}", dashed, from=1-1, to=1-2]
                    \arrow["{\Sigma^{\selfMapDeg} v}", from=1-1, to=2-1]
                    \arrow["v", from=1-2, to=2-2]
                    \arrow["v", dashed, from=2-1, to=2-2]
                \end{tikzcd}
            \end{equation*}
        \end{proposition}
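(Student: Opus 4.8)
The statement to prove, Proposition~\ref{lem:natural-map-for-SM}, asserts the existence of a natural transformation $\hat{v}\colon \Sigma^{\selfMapDeg}\To \id_{\cC\SMd}$ together with an explicit formula for it at an object $(X,\periodNat[X])$. Since $\cC\SMd = \laxlim(\CEnd\xto{(\cC,\Sigma^{\selfMapDeg})}\Catperf)$ is by definition the lax fixed points for the $\NN$-action on $\cC$ given by $\Sigma^{\selfMapDeg}$, this is essentially a transcription of \cref{lem:2-morphism-of-lax-fixed-points} into the present notation, specialized from a general $\cV$ to $\cV = \Catperf$ and from a general endomorphism $g$ to $g = \Sigma^{\selfMapDeg}$.

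The plan is as follows. First I would invoke \cref{lem:2-morphism-of-lax-fixed-points} directly with $\cV = \Catperf$ (noting, as remarked at the start of \cref{subsec:SM-general}, that lax limits in $\Catperf$ agree with those in $\Cat$, so the hypotheses on existence of lax limits are satisfied), $X$ replaced by $\cC$, and $g$ replaced by the endofunctor $\Sigma^{\selfMapDeg}$ of $\cC$. That lemma produces a $2$-morphism $\hat{v}\colon \Sigma^{\selfMapDeg}\To \id_{\cC\SMd}$ of endofunctors of $\cC\SMd = \cC\hN$, lifting the $2$-morphism $v\colon \Sigma^{\selfMapDeg} u\To u$ of \cref{cor:2-morphism-of-lax-fixed-points} in the sense that $u\hat v\simeq v$. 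This gives the existence half of the statement with no further work.

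Second, I would identify $\hat v$ explicitly on objects. This is exactly the content of \cref{lem:2-morphism-categorices-of-lax-fixed-points}, which computes the $2$-morphism $\hat v\colon G\To \id_{\cC\hN}$ in the categorical case $\cV=\Cat$: on an object $(GX\xto{v_X} X)$ it is the square with sides $Gv_X$ (vertical, from $G^2X$ to $GX$), $v_X$ (vertical, from $GX$ to $X$), $Gv_X$ (horizontal, from $G^2X$ to $GX$), and $v_X$ (horizontal, from $GX$ to $X$). Substituting $G = \Sigma^{\selfMapDeg}$ and $X\in\cC$ with self-map $\periodNat[X]\colon \Sigma^{\selfMapDeg}X\to X$, the square becomes precisely the one displayed in the statement, with corners $\Sigma^{2\selfMapDeg}X$, $\Sigma^{\selfMapDeg}X$, $\Sigma^{\selfMapDeg}X$, $X$ and all four edges given by $\Sigma^{\selfMapDeg}\periodNat[X]$ or $\periodNat[X]$ as indicated. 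So the explicit formula follows from \cref{lem:2-morphism-categorices-of-lax-fixed-points} together with the identification of $\cC\SMd$ with $\cC\hN$ for the functor $(\cC,\Sigma^{\selfMapDeg})$.

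There is essentially no hard step here: the proposition is a naming/specialization of results already established in \cref{subsec:lax-fixed-points}. The only thing to be careful about is bookkeeping — making sure the abstract endomorphism $g$ is consistently replaced by $\Sigma^{\selfMapDeg}$ everywhere, and that the square in \cref{lem:2-morphism-categorices-of-lax-fixed-points} is transcribed with the correct orientation (which diagonal carries the trivial filler) so that it matches the display in the statement. I would therefore write the proof as a one-line appeal: ``This is \cref{lem:2-morphism-of-lax-fixed-points} and \cref{lem:2-morphism-categorices-of-lax-fixed-points} applied to $(\cC,\Sigma^{\selfMapDeg})\colon\CEnd\to\Catperf$,'' possibly reproducing the square once more for the reader's convenience.
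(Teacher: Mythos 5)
Your proposal is correct and matches the paper's own proof, which is likewise a one-line specialization of the lax-fixed-point results of \cref{subsec:lax-fixed-points} to the functor $(\cC,\Sigma^{\selfMapDeg})\colon\CEnd\to\Catperf$. The only cosmetic difference is that the paper cites \cref{lem:morpshim-of-categories-lax-fixed-points} (identifying the lifted endofunctor $\Sigma^{\selfMapDeg}$ on $\cC\SMd$) together with \cref{lem:2-morphism-categorices-of-lax-fixed-points}, whereas you cite \cref{lem:2-morphism-of-lax-fixed-points} for existence and \cref{lem:2-morphism-categorices-of-lax-fixed-points} for the formula; these amount to the same chain of results.
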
    
        \begin{proof}
            This follows by \cref{lem:morpshim-of-categories-lax-fixed-points} and \cref{lem:2-morphism-categorices-of-lax-fixed-points}.
        \end{proof}

        \subsubsection*{Power maps}
        We now focus on the interactions between $\cC\SMd$ for different values of $\selfMapDeg$:
        \begin{definition}\label{dfn:power-map}
            Let $\powerDeg\in \NN$. Consider the $\powerDeg$-composition map $\CEnd = \B\NN \xto{\powerDeg} \B\NN = \CEnd$ and look at the commutative diagram 
            \begin{equation*}
                \begin{tikzcd}
            	{\CEnd} && \Catperf \\
            	{\CEnd}
            	\arrow["{(\cC,\Sigma^{\powerDeg\selfMapDeg})}", from=1-1, to=1-3]
            	\arrow["\powerDeg"', from=1-1, to=2-1]
            	\arrow["{(\cC,\Sigma^{\selfMapDeg})}"', from=2-1, to=1-3]
                \end{tikzcd}.
            \end{equation*}
            The functoriality of lax limits induces the \quotes{$\powerDeg$-power map}
            \begin{equation*}
                {(-)}^{\powerDeg} \colon \cC\SMd \to \cC\SMmd.
            \end{equation*}
        \end{definition}

        \begin{lemma}\label{lem:power-map-on-SMd}
            ${(-)}^{\powerDeg}$ sends a self-map $v\colon \Sigma^{\selfMapDeg} X \to X$ to its $\powerDeg$-power
            \begin{equation*}
                v^m \colon \Sigma^{\powerDeg\selfMapDeg}X \xto{\Sigma^{(\powerDeg-1)\selfMapDeg} v} \Sigma^{(\powerDeg-1)\selfMapDeg} X \xto{\Sigma^{(\powerDeg-2) \selfMapDeg} v} \cdots \xto{\Sigma^{\selfMapDeg} v} \Sigma^{\selfMapDeg} X \xto{v} X.
            \end{equation*}
        \end{lemma}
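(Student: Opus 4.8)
The plan is to read off the power map directly from the sections-of-unstraightening description of lax limits recalled in \cref{subsec:lax-limits}, by tracing a section along the base-change $\CEnd \xto{\powerDeg} \CEnd$.

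First I would recall that $\cC\SMd = \laxlim_{\CEnd}(\cC,\Sigma^{\selfMapDeg})$ is the category of sections of the cartesian fibration $\int_{\CEnd}(\cC,\Sigma^{\selfMapDeg}) \to \CEnd = \B\NN$. Since $\B\NN$ has one object $\star$ and is freely generated by a single endomorphism $e$, a section $s$ is determined by the object $X = s(\star)$ of the fibre $\cC$ together with the lift $s(e)$ of $e$, which under the conventions fixed by \cref{lem:lax-lim-arrow} and \cref{cor:properties-of-lax-fixed-points-I} is exactly a morphism $v_X \colon \Sigma^{\selfMapDeg} X \to X$ (this recovers the description of objects in \cref{cor:properties-of-SM}). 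Functoriality of $s$ forces $s(e^{k}) = s(e)^{\circ k}$, and spelling out the composition law of $\int_{\CEnd}(\cC,\Sigma^{\selfMapDeg})$ this composite is precisely the iterated map $\Sigma^{k\selfMapDeg}X \xto{\Sigma^{(k-1)\selfMapDeg}v_X} \Sigma^{(k-1)\selfMapDeg}X \to \cdots \xto{\Sigma^{\selfMapDeg}v_X} \Sigma^{\selfMapDeg}X \xto{v_X} X$. So the displayed power formula will drop out once the power map is identified with the evident operation on sections.

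For that identification, note that the $\powerDeg$-composition map $\CEnd \xto{\powerDeg} \CEnd$ of \cref{dfn:power-map} fixes $\star$ and sends $e \mapsto e^{\powerDeg}$, and the commuting triangle there exhibits $(\cC,\Sigma^{\powerDeg\selfMapDeg})$ as the restriction of $(\cC,\Sigma^{\selfMapDeg})$ along it. By naturality of the Grothendieck construction this gives a pullback square of fibrations, hence a functor $\int_{\CEnd}(\cC,\Sigma^{\powerDeg\selfMapDeg}) \to \int_{\CEnd}(\cC,\Sigma^{\selfMapDeg})$ over $\CEnd \xto{\powerDeg} \CEnd$ which is the identity on fibres and tautologically identifies a lift of $e$ upstairs (a map $\Sigma^{\powerDeg\selfMapDeg}X \to X$) with the same map regarded as a lift of $e^{\powerDeg}$ downstairs; the induced map on lax limits is precomposition of a section with $\CEnd \xto{\powerDeg}\CEnd$. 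Thus the power map sends the section $s$ with $s(e) = v_X$ to the section $s'$ with $s'(\star) = X$ and $s'(e) = s(e^{\powerDeg}) = v_X^{\circ\powerDeg}$, which is precisely the object $(X, v_X^{\circ\powerDeg})$ of $\cC\SMmd$ with $v_X^{\circ\powerDeg}$ the displayed iterated $\Sigma$-whiskered composite. The same computation on the fibration over $[1]$ shows the power map is compatible with the underlying-object functors $u$ and on morphisms is given by $\Sigma^{(\powerDeg-1)\selfMapDeg}(-)$-whiskering, so nothing further is needed.

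The one genuinely non-formal point — and the step I expect to cost the most care — is the compatibility between the unstraightening description of $\laxlim$ and its functoriality in the diagram shape, i.e.\ that the map on lax limits induced by the triangle of \cref{dfn:power-map} is modelled by restriction of sections along the base map; this is standard but needs the explicit comparison of \cite[Construction~4.1.2.5, Example~4.2.3.13]{Loubaton-2024-infty-categories} together with the elementary identification of the composition law of $\int_{\CEnd}(\cC,\Sigma^{\selfMapDeg})$ with iterated $\Sigma$-whiskering. An alternative route, staying closer to the tools already developed, uses \cref{cor:properties-of-SM}: the functor $(-)^{\powerDeg}\colon \cC\SMd \to \cC\SMmd$ corresponds to a pair $(F,\mu)$ with $F\colon \cC\SMd \to \cC$ exact and $\mu\colon \Sigma^{\powerDeg\selfMapDeg}F \To F$; functoriality along $\pt \hookrightarrow \CEnd$ identifies $F$ with the underlying-object functor $u$, while restriction along $\CEnd \xto{\powerDeg}\CEnd$ identifies $\mu$ with the $\powerDeg$-fold whiskered composite of the universal $2$-morphism of \cref{rmrk:universal-cone-categorical}; evaluating $(F,\mu)$ at $(X,v_X)$ then yields $(X, v_X^{\circ\powerDeg})$. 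Either way the lemma reduces to the same identity: the value of a lax cone (equivalently, a section) on $e^{\powerDeg}$ is the $\powerDeg$-fold whiskered composite of its value on $e$.
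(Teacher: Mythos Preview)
Your alternative route is the paper's own argument: analyse how composition with $\powerDeg\colon \CEnd \to \CEnd$ acts on lax cones, observe that a cone $(F,\gamma)$ over $(\cC,\Sigma^{\selfMapDeg})$ is sent to the cone $(F,\gamma^{\powerDeg})$ over $(\cC,\Sigma^{\powerDeg\selfMapDeg})$ with $\gamma^{\powerDeg}$ the displayed whiskered composite, and then apply this to the universal cone $(u,v)$ to read off the functor on objects.

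Your primary route via sections of the unstraightening is a genuinely different packaging of the same content. It is correct, but --- as you rightly flag --- it relies on identifying the adjoint-style functoriality of lax limits invoked in \cref{dfn:power-map} with restriction of sections along the base map, a comparison the paper never sets up. The lax-cone argument is more economical here precisely because it works entirely within the universal-property description of $\laxlim$ already established in \cref{rmrk:lax-cone-over-S1} and sidesteps that comparison; conversely, your sections picture makes the identity $s(e^{\powerDeg}) = v_X^{\circ\powerDeg}$ transparent once the model is in place, so it would pay off if one needed to compute several such induced maps at once.
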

        \begin{proof}
            Let $(F,\gamma)$ be a lax cone over 
            \begin{equation*}
                (\cC,\Sigma^{\selfMapDeg}) \colon \CEnd \to \Catperf.
            \end{equation*}
            Thus, $F\colon \cD\to \cC$ is an  exact functor and $\gamma \colon \Sigma^{\selfMapDeg}F \To F$ (\cref{cor:properties-of-SM}(\ref{3})).
            Composing with $\powerDeg \colon \CEnd \to \CEnd$ induces the lax cone $(F, \gamma^{\powerDeg})$ over 
            \begin{equation*}
                (\cC,\Sigma^{\powerDeg\selfMapDeg}) \colon \CEnd \to \Catperf,
            \end{equation*}
            where
            \begin{equation*}
                \gamma^{\powerDeg} \colon \Sigma^{\powerDeg\selfMapDeg} F \xTo{\Sigma^{(\powerDeg-1) \selfMapDeg} \gamma} \Sigma^{(\powerDeg-1)\selfMapDeg} F \xTo{\Sigma^{(\powerDeg-2)\selfMapDeg} \gamma} \cdots \xTo{\gamma} F.
            \end{equation*}
            In particular, the universal lax cone $(u\colon \cC\SMd \to \cC, v)$ is sent to $(u\colon \cC\SMd \to \cC, v^{\powerDeg})$. This represents the functor $\cC\SMd \to \cC\SMmd$ sending $(X,v_X)$ to $(X, v_X^{\powerDeg})$.
        \end{proof}

    \subsection{Categories with shifted endomorphisms of the identity}\label{subsec:shifted-endomorphisms}
        
        We now categorify the construction of \cref{subsec:SM-general}, finding a nice category in which ${(-)}\SMd$, along with the natural transformation $v$ of \cref{lem:natural-map-for-SM}, lands. 

        We will use \cref{subsubec:categorical-S2} with $\cV = \Catperf$ and $\gamma = \Sigma^{\selfMapDeg}$. We will also replace $\Cat_2$ by a presentable version for applications in \cref{subsec:asymptotically-defined}.
        
        The functor $\Sigma^\selfMapDeg$ is an (invertible) endomorphism of each object of $\Catperf$, or equivalently, an (invertible) endomorphism of $\id_{\Catperf}$ in $\bigCatt$. We shall define the category $\Catperf\SEd$ to be the corresponding lax limit along $\CEndII$. The objects of $\Catperf\SEd$ are categories $\cC\in\Catperf$ with a natural transformation $\Sigma^{\selfMapDeg} \To \id_{\cC}$. 

        Although it is possible to define $\Catperf\SEd$ directly, using 3-categories significantly simplifies both the construction and its universal property. Therefore, we work in the higher categorical framework to enable more streamlined formal proofs. For better readability, we provide concrete descriptions of the outcomes of the 3-categorical manipulations throughout the text.

        \subsubsection*{Presentability}

        It will be important to us in \cref{subsec:asymptotically-defined} that $\Catperf$, and our constructed categories, are presentable. Because we work in 2-categories, we would have liked to use a notion of presentable 2-categories. Unfortunately, there is not yet such an accepted notion. One candidate was suggested by Stefanich in~\cite{stefanich-2020-presentable}. We will take a similar approach, yet simpler to define, as we do not need to require that the functor categories are also presentable. Thus we will define a 3-category $\PPrr$ of presentable categories enriched in categories:
        
        By~\cite{Heine-2023-equivalence-enriched},~\cite{Ben-Moshe-2024-Naturality-Yoneda} there exists a 2-fully faithful embedding 
        \begin{equation*}
            \chi \colon \Mod_{\Cat}(\PrL) \into (\bigCatt)^\L.
        \end{equation*}
        from the category of modules over $\Cat$ in $\PrL$ to the category of (large) categories enriched in categories, i.e. the category of 2-categories, and left adjoints. We can pull the 3-category structure on $\bigCatt$ to $\Mod_{\Cat}(\PrL)$:

        \begin{definition}
            Let $\PPrr$ be the 3-full subcategory generated by the embedding $\chi\colon \Mod_{\Cat}(\PrL) \into {(\bigCatt)}^{\mrm{L}}$.
        \end{definition}
        Notice that as a 2-category, $\Catperf$ is tensored over $\Cat$, thus lies in the 3-category $\PPrr$.

        \subsubsection*{Definitions}

        \begin{definition}      
            A local system $\CEndII \to \PPrr$ is equivalent to a choice of a presentable 2-category $\cU\in\PPrr$ and an endomorphism $\gamma \colon \id_{\cU} \To \id_{\cU}$. 
        \end{definition}

        Let $\selfMapDeg\in \NN$. The $\selfMapDeg$-suspension functor is a natural transformation 
        \begin{equation*}
            \Sigma^{\selfMapDeg} \colon \id_{\Catperf} \To \id_{\Catperf}.
        \end{equation*}
        \begin{definition}\label{dfn:cats-with-shifted-endomorphisms-of-id}
            Define the category of stable, idempotent-complete categories with $\selfMapDeg$-shifted natural endomorphism of the identity as the lax limit
            \begin{equation*}
                \Catperf\SEd \coloneqq \laxlim_{\CEndII} (\Catperf, \Sigma^{\selfMapDeg})
            \end{equation*}
            as discussed in \cref{subsec:lax-limit-along-sphere}.

            Similarly, noting that $\Sigma^{\selfMapDeg}$ is invertible, define the category $\Catperf\SEdiso$ of stable, idempotent-complete categories with $\selfMapDeg$-shifted natural \emph{isomorphism} of the identity as the limit
            \begin{equation*}
                \Catperf\SEdiso \coloneqq \lim_{S^2} (\Catperf, \Sigma^{\selfMapDeg}) \simeq \lim_{\CEndII} (\Catperf, \Sigma^{\selfMapDeg}).
            \end{equation*}
        \end{definition}

        Noticing that limits and lax limits in $\PPrr$ are computed in $\bigCatt$, we rewrite the results from \cref{subsubec:categorical-S2} for this case:

        \begin{corollary}\label{cor:objects-of-SE}
            An object of $\Catperf\SEd$ is equivalent to the data of $(\cC,\alpha)$ where $\cC\in\Catperf$ and $\alpha\colon \Sigma^{\selfMapDeg} \To \id_{\cC}$. A map between two objects $(\cC,\alpha) \to (\cD, \beta)$ is an exact functor $f \colon \cC\to \cD$ and a 3-isomorphism
            \tikzset{
                labl/.style={anchor=south, rotate=90, inner sep=.5mm}
            }
            \begin{equation*}
                \begin{tikzcd}
            	{\Sigma^{\selfMapDeg}f = f\Sigma^{\selfMapDeg}} && f
            	\arrow[""{name=0, anchor=center, inner sep=0}, "{f\alpha}", curve={height=-24pt}, Rightarrow, from=1-1, to=1-3]
            	\arrow[""{name=1, anchor=center, inner sep=0}, "{\beta f}"', curve={height=24pt}, Rightarrow, from=1-1, to=1-3]
            	\arrow["{\rotatebox[origin=c]{90}{$\sim$}}", shorten <=6pt, shorten >=6pt, Rightarrow, scaling nfold=3, from=0, to=1]
                \end{tikzcd}.
            \end{equation*}
        \end{corollary}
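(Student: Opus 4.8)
The plan is to deduce \cref{cor:objects-of-SE} as a direct specialization of \cref{lem:lax-fixed-points-II} and \cref{rmrk:universal-cone-categorical-II} to the case $\cV = \Catperf$ and $\gamma = \Sigma^{\selfMapDeg}$, after observing that limits and lax limits in $\PPrr$ are computed in $\bigCatt$ (hence agree with those in $\Cat_2$ for the purposes of identifying objects and mapping categories). First I would invoke \cref{lem:lax-fixed-points-II}: an object of $\laxlim_{\CEndII}(\Catperf, \Sigma^{\selfMapDeg})$ is the data of a $\cC\in\Catperf$ together with a $2$-morphism in $\bigCatt$ of the form $\alpha\colon \Sigma^{\selfMapDeg}_{\cC} \To \id_{\cC}$; since the component of the natural transformation $\Sigma^{\selfMapDeg}$ at the object $\cC\in\Catperf$ is precisely the suspension endofunctor $\Sigma^{\selfMapDeg}\colon\cC\to\cC$, this is exactly a natural transformation $\alpha\colon\Sigma^{\selfMapDeg}\To\id_{\cC}$ of endofunctors of $\cC$, proving the first claim.

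For the description of morphisms, the plan is to chase the pullback square appearing in the proof of \cref{lem:lax-fixed-points-II}. A $1$-morphism $(\cC,\alpha)\to(\cD,\beta)$ in the lax limit consists of a $1$-morphism $f\colon\cC\to\cD$ in $\Catperf$ (i.e.\ an exact functor) together with the data exhibiting compatibility of the lax squares in $\laxlim_{\pmb{\square}}$ and $\laxlim_{\partial\,\pmb{\square}}$ over the constant-on-$f$ underlying square. Unwinding diagram~(\ref{eq:lax-square}) with $g = h = f$ and the two vertical legs being $\Sigma^{\selfMapDeg}$-twisted copies of $f$, together with the identification in \cref{rmrk:universal-cone-categorical-II} of the universal $3$-cell $\alpha$ with the chosen $2$-morphism, this compatibility datum is exactly a $3$-cell filling the boundary $2$-sphere spanned by $f\alpha$ and $\beta f$ (using $\Sigma^{\selfMapDeg}f = f\Sigma^{\selfMapDeg}$, which holds since $\Sigma^{\selfMapDeg}$ is a natural transformation of the identity). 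Because we are inside $\lim_{S^2}$ on the mapping-category level — or more precisely because every such $3$-cell in $\bigCatt$ between parallel natural transformations of exact functors is invertible — this $3$-cell is automatically a $3$-isomorphism, yielding the stated commuting-square diagram.

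The main obstacle I anticipate is purely bookkeeping: correctly matching the orientations and sources/targets in the nested lax-square description of \cref{lem:lax-fixed-points-II} with the displayed globular $3$-cell, and in particular justifying that the relevant $3$-morphism is an isomorphism rather than merely a $3$-cell. For the latter one argues that in $\bigCatt$ the mapping $2$-category $\MAP(\cC,\cD)$ between two objects of $\Catperf$ has all $2$-morphisms invertible — natural transformations of exact functors together with modifications — so any $3$-cell there is invertible; alternatively one notes that $\Catperf\SEdiso$ is defined via the honest limit $\lim_{S^2}$ and the lax-limit description degenerates on the level of $2$- and $3$-cells in the same way. I would present only the short identification, citing \cref{lem:lax-fixed-points-II}, \cref{rmrk:universal-cone-categorical-II}, and the observation on $\PPrr$-limits, and leave the orientation-chasing implicit as "unwinding the definitions."
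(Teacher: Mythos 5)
Your proposal is correct and follows the paper's own route: the paper proves \cref{cor:objects-of-SE} simply by citing \cref{lem:lax-fixed-points-II}, which is exactly your specialization to $\cV=\Catperf$, $\gamma=\Sigma^{\selfMapDeg}$ (using that (lax) limits in $\PPrr$ are computed in $\bigCatt$). Your additional unwinding of the morphism-level data from the pullback in the proof of \cref{lem:lax-fixed-points-II}, together with the observation that the compatibility $3$-cell is automatically invertible because the mapping categories $\Fun^{\mrm{ex}}(\cC,\cD)$ of $\Catperf$ have only invertible $2$-cells, just makes explicit what the paper's one-line proof leaves implicit.
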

        \begin{proof}
            This is \cref{lem:lax-fixed-points-II}.
        \end{proof}

        \begin{corollary}\label{cor:univ-prop-of-SE}
            A functor $\cU \to \Catperf\SEd$ is the data of a functor $F \colon \cU \to \Catperf$ along with a 3-morphism
            \begin{equation*}
                \begin{tikzcd}
                    F && F
                    \arrow[""{name=0, anchor=center, inner sep=0}, "{\Sigma^{\selfMapDeg}}", curve={height=-18pt}, Rightarrow, from=1-1, to=1-3]
                    \arrow[""{name=1, anchor=center, inner sep=0}, "\id"', curve={height=18pt}, Rightarrow, from=1-1, to=1-3]
                    \arrow["\eta", shorten <=5pt, shorten >=5pt, Rightarrow, scaling nfold=3, from=0, to=1]
                \end{tikzcd}.
            \end{equation*}
        \end{corollary}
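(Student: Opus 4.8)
The plan is to deduce the statement directly from the defining universal property of the lax limit $\Catperf\SEd = \laxlim_{\CEndII}(\Catperf, \Sigma^{\selfMapDeg})$ of \cref{dfn:cats-with-shifted-endomorphisms-of-id} together with the explicit description of lax cones over the walking $2$-endomorphism from \cref{rmrk:lax-cone-over-S2}. This is the one-categorical-dimension-up analogue of the way \cref{cor:properties-of-SM}(\ref{3}) was obtained from \cref{rmrk:lax-cone-over-S1}: there a functor into $\cC\SMd = \laxlim_{\CEnd}(\cC,\Sigma^{\selfMapDeg})$ was identified with a functor $F$ together with a $2$-morphism $\Sigma^{\selfMapDeg}F \To F$, and here a functor into $\laxlim_{\CEndII}(\Catperf, \Sigma^{\selfMapDeg})$ should be identified with a functor $F$ together with a $3$-morphism $\Sigma^{\selfMapDeg}F \Rrightarrow \id_F$, the shift by one level being exactly the passage from $\CEnd$ to $\CEndII$.

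In detail: limits and lax limits in $\PPrr$ are computed in $\bigCatt$, and all lax limits exist there (as recalled in \cref{app:higher-cats}), so the setup of \cref{subsec:lax-limit-along-sphere} applies with ambient $3$-category $\PPrr$, base object $X = \Catperf$, and $2$-endomorphism $\gamma = \Sigma^{\selfMapDeg}\colon \id_{\Catperf}\To\id_{\Catperf}$. Because the lax limit is by definition right adjoint to the constant-diagram functor $\Delta$, for any $\cU$ one gets a natural equivalence
\begin{equation*}
    \Map(\cU, \Catperf\SEd) \simeq \Map_{\laxFun(\CEndII, \PPrr)}\big(\Delta\cU, (\Catperf, \Sigma^{\selfMapDeg})\big),
\end{equation*}
identifying functors $\cU \to \Catperf\SEd$ with lax cones over $(\Catperf, \Sigma^{\selfMapDeg})$ with apex $\cU$. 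Applying \cref{rmrk:lax-cone-over-S2} with $Y = \cU$, $X = \Catperf$, $\gamma = \Sigma^{\selfMapDeg}$, such a lax cone is precisely a functor $F\colon \cU \to \Catperf$ equipped with a $3$-morphism $\eta\colon \Sigma^{\selfMapDeg}F \Rrightarrow \id_F$, which is the data displayed in the statement.

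The only bookkeeping is to match the $2$-morphism labeled $\Sigma^{\selfMapDeg}$ in the displayed diagram with the abstract whiskering of $\Sigma^{\selfMapDeg}\colon\id_{\Catperf}\To\id_{\Catperf}$ along $F$; since $\Sigma^{\selfMapDeg}$ is a transformation of the identity we have $\Sigma^{\selfMapDeg}F = F\Sigma^{\selfMapDeg}$ and the identification is immediate from naturality, exactly as for the object-level description in \cref{cor:objects-of-SE}. One then checks that the displayed equivalence is natural in $\cU$, so that it promotes to an identification of functor categories and not just of mapping spaces. I do not expect any real obstacle: like \cref{cor:objects-of-SE}, this is a formal consequence of the lax-limit computations of \cref{app:higher-cats}, with all the actual work already carried out in \cref{subsec:lax-limit-along-sphere}.
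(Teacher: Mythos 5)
Your proposal is correct and is essentially the paper's own argument: the paper proves this corollary by directly citing \cref{rmrk:lax-cone-over-S2}, applied with ambient $3$-category $\PPrr$ (whose lax limits are computed in $\bigCatt$), $X=\Catperf$ and $\gamma=\Sigma^{\selfMapDeg}$, which is exactly the identification of maps $\cU \to \Catperf\SEd$ with lax cones that you spell out. Your additional bookkeeping about whiskering and naturality in $\cU$ is harmless elaboration of the same route.
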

        \begin{proof}
            This follows by \cref{rmrk:lax-cone-over-S2}.
        \end{proof}

        \begin{corollary}\label{cor:map-from-iso-SE}
            The map $\pt \to \CEndII$ induces an underlying category functor $U\colon \Catperf\SEd \to \Catperf$, sending $(\cC, \alpha)$ to $\cC$. There is a natural functor $\Catperf\SEdiso \to \Catperf\SEd$, forgetting that the shifted endomorphism is an isomorphism.
        \end{corollary}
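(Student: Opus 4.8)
The plan is to produce both functors formally, from the (co)functoriality of (lax) limits, by specializing the general constructions of \cref{subsec:lax-limit-along-sphere} and their categorical incarnations in \cref{subsubec:categorical-S2} to $\cU = \PPrr$ and $(X,\gamma) = (\Catperf, \Sigma^{\selfMapDeg})$. The key preliminary observation, already recorded just before the statement, is that limits and lax limits in $\PPrr$ are computed in $\bigCatt$; since $(\Catperf, \Sigma^{\selfMapDeg})$ is a diagram valued in (the relevant $3$-full subcategory of) $\bigCatt$, every explicit formula from \cref{subsec:lax-limit-along-sphere} transcribes directly.

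For the underlying functor, I would apply \cref{dfn:underlying-map-II} verbatim: the commuting triangle determined by the inclusion $\pt \hookrightarrow \CEndII$ of the unique object, composed with $(\Catperf, \Sigma^{\selfMapDeg})$, induces by functoriality of the lax limit a map $U \colon \laxlim_{\CEndII}(\Catperf, \Sigma^{\selfMapDeg}) \to \laxlim_{\pt}\Catperf \simeq \Catperf$. That $U$ sends $(\cC,\alpha)$ to $\cC$ is then read off from the description of objects in \cref{lem:lax-fixed-points-II} together with the pullback square appearing in its proof, in which $U$ is the left-hand leg.

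For the forgetful functor, the plan is to invoke \cref{rmrk:natural-map-fixed-points-II}: as $\Sigma^{\selfMapDeg}$ is invertible, the diagram $(\Catperf, \Sigma^{\selfMapDeg})$ factors through $|\CEndII| \simeq S^2$, and the canonical comparison from the universal cone to the universal lax cone yields a functor
\[
\Catperf\SEdiso = \lim_{\CEndII}(\Catperf, \Sigma^{\selfMapDeg}) \longrightarrow \laxlim_{\CEndII}(\Catperf, \Sigma^{\selfMapDeg}) = \Catperf\SEd.
\]
Its effect on objects — sending $(\cC, \Sigma^{\selfMapDeg} \xrightarrow{\sim} \id_{\cC})$ to the same category equipped with the underlying (not-necessarily-invertible) $2$-morphism $\Sigma^{\selfMapDeg} \To \id_{\cC}$ — is precisely \cref{rmrk:fixed-points-map-II}.

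The one point where I would exercise some care — the closest thing here to an obstacle — is checking that these two functors are genuinely morphisms in $\PPrr$ rather than merely in $\bigCatt$: that $\laxlim_{\CEndII}(\Catperf, \Sigma^{\selfMapDeg})$ is again a presentable category enriched in categories, and that $U$ and the forgetful functor respect the $\PPrr$-structure. This holds because $\PPrr$ is defined as a $3$-full subcategory of $\bigCatt$ and $\CEndII$ is a finite cell complex, so the relevant (lax) limits are finite and stay within $\PPrr$; but it is worth stating explicitly rather than leaving implicit. Everything else is bookkeeping on top of the already-established formulas.
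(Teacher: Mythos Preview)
Your proposal is correct and takes essentially the same approach as the paper: both parts are specializations of the general constructions of \cref{subsec:lax-limit-along-sphere} and their categorical incarnations in \cref{subsubec:categorical-S2}. Your citations (\cref{dfn:underlying-map-II}, \cref{lem:lax-fixed-points-II}, \cref{rmrk:natural-map-fixed-points-II}, \cref{rmrk:fixed-points-map-II}) are in fact slightly more precise than the paper's terse two-line proof, and your remark about verifying that the functors land in $\PPrr$ is a reasonable caveat the paper leaves implicit.
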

        \begin{proof}
            The first part follows from \cref{lem:lax-fixed-points-II}, the second is \cref{rmrk:universal-cone-categorical-II}.
        \end{proof}

        \begin{corollary}
            The functor ${(-)}\SMd \colon \Catperf \to \Catperf$ admits a lift, which by abuse of notations we denote ${(-)}\SMd\colon \Catperf \to \Catperf\SEd$ rendering the following diagram commutative:
            \begin{equation*}
                \begin{tikzcd}
            	&& {\Catperf\SEd} \\
            	\\
            	\Catperf && \Catperf
            	\arrow["U", from=1-3, to=3-3]
            	\arrow["{{(-)}\SMd}", from=3-1, to=1-3]
            	\arrow["{{(-)}\SMd}", from=3-1, to=3-3]
                \end{tikzcd}.
            \end{equation*}
        \end{corollary}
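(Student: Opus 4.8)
The statement is a direct specialization of the lifting result of \cref{lem:lax-fixed-points-lifts} --- equivalently, of the right adjoint $(-)\hN$ produced in \cref{prop:hN-adjoint-to-U} --- to the case $\cV = \Catperf$ and $\gamma = \Sigma^{\selfMapDeg}$. The point is that all the categorical-case computations of \cref{subsubec:categorical-S2} apply here: as observed after \cref{dfn:cats-with-shifted-endomorphisms-of-id}, limits and lax limits in $\PPrr$ are computed in $\bigCatt$, so one may run the arguments of \cref{subsec:lax-limit-along-sphere} with $\Cat_2$ replaced by $\PPrr$, $X = \Catperf \in \PPrr$, and the $2$-endomorphism $\gamma = \Sigma^{\selfMapDeg}$ of $\id_{\Catperf}$.

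Thus I would first invoke \cref{lem:lax-fixed-points-lifts} to obtain a functor $(-)\hN \colon \Catperf \to \laxlim_{\CEndII}(\Catperf, \Sigma^{\selfMapDeg}) = \Catperf\SEd$ together with the canonical identification $U \circ (-)\hN \simeq \id_{\Catperf}\hN$, where the right-hand side denotes the lax $\NN$-fixed-points endofunctor of $\Catperf$ attached to the $\NN$-action $\Sigma^{\selfMapDeg}$ on $\id_{\Catperf}$ (formed in the mapping $2$-category $\END_{\PPrr}(\Catperf)$). Since lax limits of diagrams valued in a functor $2$-category are computed objectwise, this endofunctor sends $\cC \in \Catperf$ to $\laxlim_{\CEnd}(\cC, \Sigma^{\selfMapDeg}) = \cC\SMd$ with its evident functoriality --- which is exactly the endofunctor $(-)\SMd$ of \cref{subsec:SM-general}. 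Combining the two identifications yields $U \circ (-)\hN \simeq (-)\SMd$, i.e.\ the claimed commuting triangle; and since the entire construction takes place inside $\PPrr$, the lift is automatically a morphism there.

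It is also instructive to see the lift through the universal property \cref{cor:univ-prop-of-SE}: a functor $\Catperf \to \Catperf\SEd$ lying over $U$ amounts to a functor $F \colon \Catperf \to \Catperf$ equipped with a $3$-morphism $\eta \colon \Sigma^{\selfMapDeg} F \Rrightarrow \id_F$, and here one takes $F = (-)\SMd$ with $\eta$ the $3$-morphism whose component at $\cC$ is the natural transformation $\hat v \colon \Sigma^{\selfMapDeg} \To \id_{\cC\SMd}$ of \cref{lem:natural-map-for-SM}. The only genuine content --- and the step I would expect to require the most care --- is the coherence of these $\hat v$'s in $\cC$, equivalently the objectwise identification of the abstractly-defined lax fixed points of $\id_{\Catperf}$ with the concrete endofunctor $(-)\SMd$. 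This is not difficult, but it is precisely the bookkeeping that the $3$-categorical packaging of \cref{subsec:lax-limit-along-sphere} is set up to dispatch for free, so I would rely on that rather than verify modifications by hand.
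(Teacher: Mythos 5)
Your proposal is correct and follows essentially the same route as the paper, whose entire proof is the citation of \cref{lem:lax-fixed-points-lifts} specialized to $\cV = \Catperf$ and $\gamma = \Sigma^{\selfMapDeg}$ (with the identification $U\circ(-)\hN \simeq (-)\SMd$ left implicit, as in the surrounding discussion of \cref{subsubec:categorical-S2}). The extra remarks via \cref{cor:univ-prop-of-SE} and \cref{lem:natural-map-for-SM} are consistent elaborations rather than a different argument.
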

        \begin{proof}
            This is \cref{lem:lax-fixed-points-lifts}.
        \end{proof}
        
        \begin{proposition}\label{prop:SMd-adjoint-U}
            The functor ${(-)}\SMd \colon \Catperf \to \Catperf\SEd$ is right adjoint to the underlying category functor $U\colon \Catperf\SEd \to \Catperf$.
        \end{proposition}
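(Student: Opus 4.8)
The plan is to deduce the adjunction $U \dashv (-)\SMd$ from the already-established categorical-level adjunction \cref{prop:hN-adjoint-to-U}, applied with $\cV = \Catperf$ and $\gamma = \Sigma^{\selfMapDeg}$. That proposition, read in the 3-category $\PPrr$, asserts precisely that the functor $(-)\hN = (-)\SMd \colon \Catperf \to \laxlim_{\CEndII}(\Catperf, \Sigma^{\selfMapDeg}) = \Catperf\SEd$ constructed in the preceding corollary is right adjoint to the underlying-category functor $U \colon \Catperf\SEd \to \Catperf$. So the core of the argument is simply to observe that \cref{prop:hN-adjoint-to-U} was proved for an arbitrary 3-category $\cU$ admitting the relevant lax limits, and that $\PPrr$ is such a 3-category (limits and lax limits in $\PPrr$ are computed in $\bigCatt$, as noted just before \cref{cor:objects-of-SE}, and mapping 2-categories of $\PPrr$ admit lax limits along $\CEnd$ since they are $\Cat$-tensored). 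One then transports the unit and counit: the counit is the underlying-object natural transformation $u \colon U\circ(-)\SMd \To \id_{\Catperf}$ of \cref{dfn:underlying-object-I}, which unwinds to the family of maps $\cC\SMd \to \cC$ forgetting the self-map, and the unit is the 2-morphism $\id_{\Catperf\SEd} \To (-)\SMd \circ U$ built in the proof of \cref{prop:hN-adjoint-to-U} out of the canonical 3-morphism $\alpha \colon \gamma U \Rrightarrow U$ of \cref{rmrk:universal-cone-categorical-II}.

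Concretely, I would proceed in the following steps. First, record that $\Catperf \in \PPrr$ (it is $\Cat$-tensored, hence in the image of $\chi$) and that $\Sigma^{\selfMapDeg} \colon \id_{\Catperf} \To \id_{\Catperf}$ is a 2-endomorphism of $\id_{\Catperf}$ in this 3-category, so the data $(\Catperf, \Sigma^{\selfMapDeg}) \colon \CEndII \to \PPrr$ is exactly an instance of the setup of \cref{subsec:lax-limit-along-sphere}. Second, check the standing hypotheses of \cref{prop:hN-adjoint-to-U}: lax limits of shape $\CEndII$ exist in $\PPrr$ and lax limits of shape $\CEnd$ exist in its mapping 2-categories; both follow from the fact that these limits are computed in $\bigCatt$, where they exist. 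Third, invoke \cref{prop:hN-adjoint-to-U} verbatim to conclude that $U \dashv (-)\hN$, and identify $(-)\hN$ with the lifted functor $(-)\SMd \colon \Catperf \to \Catperf\SEd$ via the commuting triangle of the preceding corollary (i.e.\ $U \circ (-)\SMd \simeq (-)\SMd \colon \Catperf \to \Catperf$, matching the compatibility with the 1-categorical $U$). Fourth, if desired, spell out on objects what the adjunction says: a functor $\cD \to \Catperf\SEd$ over a fixed $\cC = U(\cD)$-direction is, by \cref{cor:univ-prop-of-SE}, a natural transformation $\Sigma^{\selfMapDeg} F \To F$, and such data corresponds under the adjunction to a lift $\cD \to \cC\SMd$ of $F$, matching \cref{cor:properties-of-SM}(3).

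The main obstacle is bookkeeping rather than mathematics: one must be careful that the 3-categorical context in which \cref{prop:hN-adjoint-to-U} was proved ($\cU = \Cat_2$, or a general 3-category $\cU$) can be replaced by $\PPrr$ without breaking any step of that proof — in particular that the exponential/internal-hom adjunction $\laxlim_{\CEndII}(\cV^{[1]},\gamma) \simeq (\laxlim_{\CEndII}(\cV,\gamma))^{[1]}$ used there still holds in $\PPrr$, which again reduces to the fact that the relevant (co)tensors and lax limits are detected in $\bigCatt$. Once that compatibility is granted, the zig-zag identities are exactly the ones already verified on objects in the proof of \cref{prop:hN-adjoint-to-U}, so nothing new needs to be checked. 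I would therefore keep the proof short: cite \cref{prop:hN-adjoint-to-U}, note that $\PPrr$ satisfies its hypotheses because limits and lax limits there are computed in $\bigCatt$, and identify the resulting right adjoint with $(-)\SMd$.
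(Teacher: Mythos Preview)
Your proposal is correct and matches the paper's approach exactly: the paper's proof is the single sentence ``This is exactly \cref{prop:hN-adjoint-to-U},'' relying on the earlier remark that limits and lax limits in $\PPrr$ are computed in $\bigCatt$. Your additional care in verifying that the hypotheses of \cref{prop:hN-adjoint-to-U} hold in $\PPrr$ is appropriate but already covered by that remark in the paper.
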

        \begin{proof}
            This is exactly \cref{prop:hN-adjoint-to-U}.
        \end{proof}

        \subsubsection*{Power maps}        

        \begin{definition}\label{dfn:SED-power-map}
            By the functoriality of lax limits, the commutative diagram
            \begin{equation*}
                \begin{tikzcd}
            	{S^2} &&& {\PPrr} \\ \\
            	{S^2}
            	\arrow["\powerDeg", from=1-1, to=3-1]
            	\arrow["{\Catperf, \Sigma^{\powerDeg\selfMapDeg}}", from=1-1, to=1-4]
            	\arrow["{\Catperf, \Sigma^{\selfMapDeg}}"', from=3-1, to=1-4]
                \end{tikzcd}
            \end{equation*}
            induces maps 
            \begin{equation*}
                {(-)}^{\powerDeg}\colon \Catperf\SEd \to \Catperf\SEmd, \qquad {(-)}^{\powerDeg}\colon \Catperf\SEdiso \to \Catperf\SEdiso[\powerDeg\selfMapDeg]
            \end{equation*}
            in $\PPrr$, which we call the $\powerDeg$-power maps.  
        \end{definition}

        \begin{lemma}
            ${(-)}^{\powerDeg}$ sends $(\cC,\alpha)$ to $(\cC,\alpha^\powerDeg)$, where 
            \begin{equation*}
                \alpha^{\powerDeg}\colon \Sigma^{\powerDeg\selfMapDeg} \xto{\Sigma^{(\powerDeg-1)\selfMapDeg}\alpha} \Sigma^{(\powerDeg-1)\selfMapDeg} \xto{\Sigma^{(\powerDeg-2)\selfMapDeg}\alpha} \cdots \xto{\Sigma^{\selfMapDeg}\alpha} \Sigma^{\selfMapDeg} \xto{\alpha} \id_{\cC}.
            \end{equation*}
        \end{lemma}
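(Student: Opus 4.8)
The plan is to mirror the proof of \cref{lem:power-map-on-SMd} one categorical level up, reading off ${(-)}^{\powerDeg}$ from its action on lax cones over $\CEndII$. Recall (\cref{cor:univ-prop-of-SE}, \cref{rmrk:lax-cone-over-S2}) that a lax cone over $(\Catperf, \Sigma^{\selfMapDeg})\colon \CEndII \to \PPrr$ with apex $\cU$ is a functor $F\colon \cU \to \Catperf$ together with a $3$-morphism $\Sigma^{\selfMapDeg} F \Rrightarrow \id_F$, equivalently a $2$-morphism $\Sigma^{\selfMapDeg} F \To F$ of functors $\cU \to \Catperf$; and that the universal such cone --- the one classifying the identity of $\Catperf\SEd$ --- is given by the underlying-category functor $U\colon \Catperf\SEd \to \Catperf$ together with the $2$-morphism $\Sigma^{\selfMapDeg} U \To U$ whose component at an object $(\cC, \alpha)$ is $\alpha$ itself (\cref{rmrk:universal-cone-categorical-II}).

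By construction (\cref{dfn:SED-power-map}), ${(-)}^{\powerDeg}$ is induced by functoriality of lax limits from the $\powerDeg$-power self-map of the indexing shape: the endomorphism of $\CEndII$ carrying the generating $2$-cell to its $\powerDeg$-fold vertical composite, which groupoidifies to the degree-$\powerDeg$ self-map of $S^2$. One first checks that precomposing $(\Catperf, \Sigma^{\selfMapDeg})$ with this self-map is equivalent to $(\Catperf, \Sigma^{\powerDeg\selfMapDeg})$: the $\powerDeg$-fold vertical self-composite of the invertible $2$-cell $\Sigma^{\selfMapDeg}$ of $\id_{\Catperf}$ has component $(\Sigma^{\selfMapDeg}_{\cC})^{\circ \powerDeg} = \Sigma^{\powerDeg\selfMapDeg}_{\cC}$ at each $\cC$. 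Hence ${(-)}^{\powerDeg}\colon \Catperf\SEd \to \Catperf\SEmd$ is the functor classified by the lax cone over $(\Catperf, \Sigma^{\powerDeg\selfMapDeg})$ obtained by restricting the universal cone above along this self-map. The crux --- the analogue of the interchange-law step in the proof of \cref{lem:power-map-on-SMd} --- is that the restricted cone is again carried by $U$, now equipped with the $2$-morphism $\Sigma^{\powerDeg\selfMapDeg} U \To U$ whose component at $(\cC, \alpha)$ is the stated composite $\alpha \circ \Sigma^{\selfMapDeg}\alpha \circ \cdots \circ \Sigma^{(\powerDeg - 1)\selfMapDeg}\alpha = \alpha^{\powerDeg}$. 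Granting this, the universal property of $\Catperf\SEmd = \laxlim_{\CEndII}(\Catperf, \Sigma^{\powerDeg\selfMapDeg})$ (\cref{cor:univ-prop-of-SE}) identifies this cone with the functor sending $(\cC, \alpha)$ to $(\cC, \alpha^{\powerDeg})$, which is the assertion. The variant $\Catperf\SEdiso$ is handled identically, with $S^2$ in place of $\CEndII$ throughout, and noting that $\alpha^{\powerDeg}$ is invertible whenever $\alpha$ is.

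The main obstacle is exactly the identification of the restricted cone's structure $2$-morphism with the telescoping composite: a priori, restriction along the $\powerDeg$-power self-map yields the $\powerDeg$-fold vertical self-composite of the universal $2$-morphism, and one must untwist this --- via the interchange law in the relevant $2$-category of functors --- into the zig-zag $\alpha \circ \Sigma^{\selfMapDeg}\alpha \circ \cdots \circ \Sigma^{(\powerDeg - 1)\selfMapDeg}\alpha$. This is the verbatim categorification of the computation already carried out in the proof of \cref{lem:power-map-on-SMd}; everything else is routine bookkeeping with the two universal properties.
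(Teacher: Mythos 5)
Your proposal is correct and follows essentially the same route as the paper: both identify ${(-)}^{\powerDeg}$ via its effect on lax cones, observing that precomposition with the degree-$\powerDeg$ self-map of the indexing shape carries the universal cone $(U,\alpha)$ to $(U,\alpha^{\powerDeg})$, and then invoke the universal property of $\Catperf\SEmd$ (\cref{cor:univ-prop-of-SE}). Your extra remark that the vertical self-composite untwists into the telescoping zig-zag by interchange, mirroring \cref{lem:power-map-on-SMd}, is exactly the step the paper leaves implicit, so no gap remains.
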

        \begin{proof}
            Recall (\cref{cor:univ-prop-of-SE}) that lax cones over $(\Catperf, \Sigma^{\selfMapDeg})\colon \CEndII \to \PPrr$ are the data of $\cU\in\PPrr$, a map $F\colon \cU \to \Catperf$ in $\PPrr$ and a 3-morphism
            \begin{equation*}
                \begin{tikzcd}
                    F && F
                    \arrow[""{name=0, anchor=center, inner sep=0}, "{\Sigma^{\selfMapDeg}}", curve={height=-18pt}, Rightarrow, from=1-1, to=1-3]
                    \arrow[""{name=1, anchor=center, inner sep=0}, "\id"', curve={height=18pt}, Rightarrow, from=1-1, to=1-3]
                    \arrow["\gamma", shorten <=5pt, shorten >=5pt, Rightarrow, scaling nfold=3, from=0, to=1]
                \end{tikzcd}.
            \end{equation*}
            Composition with $\powerDeg\colon S^2 \to S^2$, sends such a lax cone to the lax cone
            \begin{equation*}
                F\colon \cU \to \Catperf, 
                \begin{tikzcd}
                    F && F
                    \arrow[""{name=0, anchor=center, inner sep=0}, "{\Sigma^{\selfMapDeg}}", curve={height=-18pt}, Rightarrow, from=1-1, to=1-3]
                    \arrow[""{name=1, anchor=center, inner sep=0}, "\id"', curve={height=18pt}, Rightarrow, from=1-1, to=1-3]
                    \arrow["\gamma^{\powerDeg}", shorten <=5pt, shorten >=5pt, Rightarrow, scaling nfold=3, from=0, to=1]
                \end{tikzcd}.
            \end{equation*}
            In particular, the universal lax cone
            \begin{equation*}
                U\colon \Catperf\SEd \to \Catperf, 
                \begin{tikzcd}
                    U && U
                    \arrow[""{name=0, anchor=center, inner sep=0}, "{\Sigma^{\selfMapDeg}}", curve={height=-18pt}, Rightarrow, from=1-1, to=1-3]
                    \arrow[""{name=1, anchor=center, inner sep=0}, "\id"', curve={height=18pt}, Rightarrow, from=1-1, to=1-3]
                    \arrow["\alpha", shorten <=5pt, shorten >=5pt, Rightarrow, scaling nfold=3, from=0, to=1]
                \end{tikzcd}
            \end{equation*}
            is sent to the lax cone
            \begin{equation*}
                U\colon \Catperf\SEd \to \Catperf, 
                \begin{tikzcd}
                    U && U
                    \arrow[""{name=0, anchor=center, inner sep=0}, "{\Sigma^{\selfMapDeg}}", curve={height=-18pt}, Rightarrow, from=1-1, to=1-3]
                    \arrow[""{name=1, anchor=center, inner sep=0}, "\id"', curve={height=18pt}, Rightarrow, from=1-1, to=1-3]
                    \arrow["\alpha^{\powerDeg}", shorten <=5pt, shorten >=5pt, Rightarrow, scaling nfold=3, from=0, to=1]
                \end{tikzcd}
            \end{equation*}
            which implies that the corresponding functor $\Catperf\SEd \to \Catperf\SEmd$ sends $(\cC,\alpha)$ to $(\cC,\alpha^\powerDeg)$.
        \end{proof}

        Note that as a map in $\PPrr$, ${(-)}^{\powerDeg}\colon \Catperf\SEd \to \Catperf\SEmd$ admits a right adjoint 
        \begin{equation*}
            \sqrt[\powerDeg]{(-)}\colon \Catperf\SEmd \to \Catperf\SEd.
        \end{equation*}
        \begin{proposition}
            Let $(\cC,\alpha)\in\Catperf\SEmd$. Then $\sqrt[\powerDeg]{(\cC,\alpha)}$ is equivalent to $\cC\SMd \times_{\cC\SMmd} \cC$ --- the category of tuples $(X, v, \gamma)$ where $X\in \cC$, $v\colon \Sigma^{\selfMapDeg} X \to X$ and $\gamma$ is an isomorphism $v^{\powerDeg} \isoto \alpha_X$.
        \end{proposition}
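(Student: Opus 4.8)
The plan is to verify directly that $\cE \coloneqq \cC\SMd \times_{\cC\SMmd} \cC$ corepresents the correct functor, i.e.\ that it is, with a suitable shifted endomorphism, the value of the right adjoint $\sqrt[\powerDeg]{(-)}$ at $(\cC,\alpha)$. First I would pin down the two legs of the fibre product: the right‑hand leg is the power map ${(-)}^{\powerDeg}\colon \cC\SMd \to \cC\SMmd$ of \cref{dfn:power-map}, which by \cref{lem:power-map-on-SMd} is $(X,v)\mapsto (X,v^{\powerDeg})$; the left‑hand leg is the functor $j_\alpha\colon \cC \to \cC\SMmd$ classifying the natural transformation $\alpha\colon\Sigma^{\powerDeg\selfMapDeg}\To\id_\cC$ via \cref{cor:properties-of-SM}, i.e.\ $X\mapsto (X,\alpha_X)$ (it is the underlying functor of the unit of the adjunction of \cref{prop:SMd-adjoint-U} at $(\cC,\alpha)$). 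Unwinding the fibre product with the mapping‑space formula of \cref{cor:properties-of-SM} then gives precisely that an object of $\cE$ is a tuple $(X,v,\gamma)$ with $v\colon \Sigma^{\selfMapDeg}X \to X$ and $\gamma$ an isomorphism $v^{\powerDeg}\isoto \alpha_X$, as in the statement.

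Next I would promote $\cE$ to an object of $\Catperf\SEd$ over $\cE\in\Catperf$. The required natural transformation $\alpha_\cE\colon \Sigma^{\selfMapDeg}\To \id_\cE$ has, at $(X,v,\gamma)$, the morphism of $\cE$ whose $\cC\SMd$‑coordinate is the component at $(X,v)$ of the natural transformation $\hat{v}$ of \cref{lem:natural-map-for-SM} (whose underlying morphism in $\cC$ is $v$) and whose $\cC$‑coordinate is $v$ itself; the datum needed to make this pair a morphism of the fibre product is the naturality of $\alpha$ applied to $v$, together with $\gamma$ --- and this is also where one uses that $\Sigma^{\selfMapDeg}$ acts on $\cE$ at all. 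Applying the power map of \cref{dfn:SED-power-map} gives $(\cE,\alpha_\cE)^{\powerDeg}=(\cE,\alpha_\cE^{\powerDeg})$, whose structure map has $\cC$‑coordinate $v^{\powerDeg}$ at $(X,v,\gamma)$, so $\gamma$ defines a $3$‑isomorphism between $\alpha_\cE^{\powerDeg}$ and the pullback of $\alpha$ along the projection $\pi\colon \cE\to\cC$. I would take the counit $c\colon (\cE,\alpha_\cE)^{\powerDeg}\to(\cC,\alpha)$ to be $\pi$ equipped with this $3$‑isomorphism, which is a morphism of $\Catperf\SEmd$ by \cref{cor:objects-of-SE}.

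It then remains to check that for every $(\cD,\beta)\in\Catperf\SEd$ the resulting map
\[
\Map_{\Catperf\SEd}\big((\cD,\beta),(\cE,\alpha_\cE)\big)\longrightarrow \Map_{\Catperf\SEmd}\big((\cD,\beta^{\powerDeg}),(\cC,\alpha)\big),\qquad g\longmapsto c\circ g^{\powerDeg},
\]
is an equivalence; since it is visibly natural in $(\cD,\beta)$, this exhibits $c$ as a counit and identifies $(\cE,\alpha_\cE)$ with $\sqrt[\powerDeg]{(\cC,\alpha)}$. By \cref{cor:objects-of-SE} the target is the space of pairs $(f,\tau)$ with $f\colon \cD\to\cC$ exact and $\tau$ a $3$‑isomorphism $f\beta^{\powerDeg}\simeq\alpha f$. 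On the source, an exact functor $g\colon \cD\to\cE$ is, by the fibre product together with \cref{cor:properties-of-SM} and \cref{lem:power-map-on-SMd}, a triple $(F,v,\psi)$ with $F\colon \cD\to\cC$ exact, $v\colon \Sigma^{\selfMapDeg}F\To F$, and $\psi$ a homotopy $v^{\powerDeg}\simeq\alpha F$; promoting $g$ to a morphism of $\Catperf\SEd$ adjoins a $3$‑isomorphism $g\beta\simeq\alpha_\cE g$, which --- projected through the two legs of the fibre product and using the construction of $\alpha_\cE$ --- is the datum of a homotopy $v\simeq F\beta$ compatibly with $\psi$. Contracting the contractible space of such $v$ to the value $v=F\beta$ transports $\psi$ to a homotopy $(F\beta)^{\powerDeg}\simeq\alpha F$, and the canonical identification $(F\beta)^{\powerDeg}\simeq F\beta^{\powerDeg}$ --- the compatibility of whiskering with $\powerDeg$‑fold vertical composition already used in \cref{lem:power-map-on-SMd}, valid since $F$ is exact --- turns this into a $3$‑isomorphism $F\beta^{\powerDeg}\simeq\alpha F$. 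Tracing the definitions, the displayed map sends $(F,v,\psi,\ldots)$ to $(F,\text{that }3\text{-isomorphism})$, so it is homotopic to the evident identification of the two descriptions, hence an equivalence.

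The step I expect to be the main obstacle is exactly this \quotes{contracting $v$} move: one must verify that, naturally over $g$, the space of $3$‑isomorphisms $g\beta\simeq\alpha_\cE g$ --- with all its higher coherences and its compatibility with $\psi$ --- assembles, as $v$ varies, into a \emph{contractible} total space sitting over the locus $v=F\beta$, so that the comparison is an equivalence of \emph{spaces} rather than merely a surjection on $\pi_0$. This is a fibrewise unwinding of the mapping‑space formula of \cref{cor:properties-of-SM} inside the mapping categories of $\Catperf$, together with the coherences packaged into the lax‑limit presentation of \cref{subsec:lax-limit-along-sphere} --- notably the harmless fact that $\Sigma^{\selfMapDeg}$ commutes, up to canonical coherence, with whiskering against $\alpha$, which is the same coherence already needed to define the $\Catperf\SEd$‑structure $\alpha_\cE$.
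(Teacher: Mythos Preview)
Your proposal is correct and shares the paper's construction of the candidate object: both define the fibre product $\cE=\cC\SMd\times_{\cC\SMmd}\cC$ along the power map and the section $j_\alpha$, and both equip it with the $\selfMapDeg$-shifted endomorphism induced from $\hat v$ of \cref{lem:natural-map-for-SM}. The divergence is in the verification of the adjunction. The paper promotes $R(\cC,\alpha)=\cE$ to a \emph{functor} $R\colon\Catperf\SEmd\to\Catperf\SEd$, constructs a unit $u_{(\cC,\alpha)}\colon\cC\to R(\cC,\alpha^{\powerDeg})$ (via the section $\alpha\colon\cC\to\cC\SMd$ and the square $\alpha^{\powerDeg}\simeq(-)^{\powerDeg}\circ\alpha$) and a counit (the projection), and then waves at the zig-zag identities. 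You instead fix a single object, build only the counit, and verify the universal property by unpacking both mapping spaces using \cref{cor:objects-of-SE} and \cref{cor:properties-of-SM}. Your route avoids the overhead of making $R$, the unit, and the counit coherent as natural transformations in $\PPrr$; in exchange it concentrates all the coherence work into the ``contracting $v$'' step, which you correctly flag as the crux. The two arguments are equivalent in substance---your contraction of the space of $v$'s with a homotopy $v\simeq F\beta$ is exactly the pointwise content of the paper's unit---but the paper's packaging makes the naturality visible up front, whereas yours makes the representability visible up front.
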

        \begin{proof}
            Let $(\cC,\alpha)$ be a category. Then by \cref{cor:properties-of-SM}, $\alpha$ defines a functor $\alpha\colon \cC \to \cC\SMmd$ which is a section of $u$. Using the $\powerDeg$-power map ${(-)}^{\powerDeg}\colon \cC\SMd\to \cC\SMmd$, define
            \begin{equation*}
                R(\cC,\alpha) \coloneqq \cC\SMd \times_{\cC\SMmd} \cC.
            \end{equation*}
            Then objects of $R$ are self maps $\Sigma^{\selfMapDeg} X \xto{v} X$ with an isomorphism $v^{\powerDeg} \simeq \alpha_X$. It is clear that $R$ lifts to a functor $R\colon \Catperf\SEmd \to \Catperf$. Notice moreover, that for every $(\cC,\alpha)$, $R(\cC,\alpha)$ admits a natural transformation $v\colon \Sigma^{\selfMapDeg} \To \id$ induced by the natural transformation of \cref{lem:natural-map-for-SM}. By functoriality this lifts to a 3-map
            \begin{equation*}
                \begin{tikzcd}
                    R && R
                    \arrow[""{name=0, anchor=center, inner sep=0}, "{\Sigma^{\selfMapDeg}}", curve={height=-18pt}, Rightarrow, from=1-1, to=1-3]
                    \arrow[""{name=1, anchor=center, inner sep=0}, "\id"', curve={height=18pt}, Rightarrow, from=1-1, to=1-3]
                    \arrow["v", shorten <=5pt, shorten >=5pt, Rightarrow, scaling nfold=3, from=0, to=1]
                \end{tikzcd}
            \end{equation*}
            which is a lax cone over $\CEndII \xto{(\Catperf, \Sigma^{\selfMapDeg})} \bigCatt$ and therefore induces a functor 
            \begin{equation*}
                R\colon \Catperf\SEmd \to \Catperf\SEd.
            \end{equation*}
            We will show it is right adjoint to ${(-)}^{\powerDeg}$.  
            
            We will construct the unit as a map 
            \begin{equation*}
                u \colon \Catperf\SEd \to {(\Catperf\SEd)}^{[1]} \simeq {({\Catperf}^{[1]})}\SEd,
            \end{equation*}
            which is equivalent to, by \cref{cor:univ-prop-of-SE}, to a functor $u \colon \Catperf\SEd \to {\Catperf}^{[1]}$ and a 3-morphism
            \begin{equation*}
                \begin{tikzcd}
                    u && u
                    \arrow[""{name=0, anchor=center, inner sep=0}, "{\Sigma^{\selfMapDeg}}", curve={height=-18pt}, Rightarrow, from=1-1, to=1-3]
                    \arrow[""{name=1, anchor=center, inner sep=0}, "\id"', curve={height=18pt}, Rightarrow, from=1-1, to=1-3]
                    \arrow["\eta", shorten <=5pt, shorten >=5pt, Rightarrow, scaling nfold=3, from=0, to=1]
                \end{tikzcd}.
            \end{equation*}
            Let $(\cC,\alpha)\in\Catperf\SEd$. Recall that $R(\cC,\alpha^{\powerDeg}) \simeq \cC\SMd \times_{\cC\SMmd} \cC$, where the functor $\cC\SMd \to \cC\SMmd$ is the $\powerDeg$-power map and the functor $\cC \to \cC\SMmd$ is $\alpha^{\powerDeg}$. We have a natural functor
            \begin{equation*}
                \begin{split}
                    u_{(\cC,\alpha)} \colon \cC & \to \cC\SMd \times_{\cC\SMmd} \cC \\
                    X & \mapsto (\Sigma^{\powerDeg} X \xto{\alpha_{X}} X, X).
                \end{split}
            \end{equation*}
            More formally, $\alpha$ defines a functor $\alpha \colon \cC \to \cC\SMd$ and we have a commutative diagram
            \begin{equation*}
                \begin{tikzcd}
                    \cC & \cC \\
                    {\cC\SMd} & {\cC\SMmd}
                    \arrow[Rightarrow, no head, from=1-1, to=1-2]
                    \arrow["\alpha", from=1-1, to=2-1]
                    \arrow["{\alpha^{\powerDeg}}", from=1-2, to=2-2]
                    \arrow["{{(-)}^{\powerDeg}}", from=2-1, to=2-2]
                \end{tikzcd}
            \end{equation*}
            which induces a functor $u_{(\cC,\alpha)} \colon \cC \to R(\cC,\alpha^{\powerDeg})$. Moreover this construction is clearly functorial in $(\cC,\alpha)$ and so gives rise to a functor
            \begin{equation*}
                u \colon \Catperf\SEd \to {\Catperf}^{[1]}.
            \end{equation*}
            For each such $(\cC,\alpha)$ there is a commutative diagram of natural transformations
            \begin{equation*}
                \begin{tikzcd}
                    \cC && \cC \\
                    \\
                    \\
                    {R(\cC,\alpha^{\powerDeg})} && {R(\cC,\alpha^{\powerDeg})}
                    \arrow[""{name=1, anchor=center, inner sep=0}, "{\Sigma^{\selfMapDeg}}", curve={height=-12pt}, from=1-1, to=1-3]
                    \arrow[""{name=0, anchor=center, inner sep=0}, "\id"', curve={height=12pt}, from=1-1, to=1-3]
                    \arrow["{u_{(\cC,\alpha)}}", from=1-1, to=4-1]
                    \arrow["{u_{(\cC,\alpha)}}", from=1-3, to=4-3]
                    \arrow[""{name=3, anchor=center, inner sep=0}, "{\Sigma^{\selfMapDeg}}", curve={height=-12pt}, from=4-1, to=4-3]
                    \arrow[""{name=2, anchor=center, inner sep=0}, "\id"', curve={height=12pt}, from=4-1, to=4-3]
                    \arrow["\alpha"', shorten <=3pt, shorten >=3pt, Rightarrow, from=1, to=0]
                    \arrow["v"', shorten <=3pt, shorten >=3pt, Rightarrow, from=3, to=2]
                \end{tikzcd}
            \end{equation*}
            which is a 2-morphism $\eta_{(\cC,\alpha)}$ in ${\Catperf}^{[1]}$. Again, by functoriality, it lifts to a 3-morphism
            \begin{equation*}
                \begin{tikzcd}
                    u && u
                    \arrow[""{name=0, anchor=center, inner sep=0}, "{\Sigma^{\selfMapDeg}}", curve={height=-18pt}, Rightarrow, from=1-1, to=1-3]
                    \arrow[""{name=1, anchor=center, inner sep=0}, "\id"', curve={height=18pt}, Rightarrow, from=1-1, to=1-3]
                    \arrow["\eta", shorten <=5pt, shorten >=5pt, Rightarrow, scaling nfold=3, from=0, to=1]
                \end{tikzcd}   
            \end{equation*}
            which induces the desired unit functor.

            To define the counit we will use similar methods: Let $(\cC,\alpha)\in\Catperf\SEmd$. Denote the projection functor by
            \begin{equation*}
                c_{(\cC,\alpha)} \colon R(\cC,\alpha) \simeq \cC\SMd \times_{\cC\SMmd} \cC \to \cC,
            \end{equation*}
            which is functorial in $(\cC,\alpha)$ and so gives rise to a functor $\Catperf\SEmd \to {\Catperf}^{[1]}$.
            For each such $(\cC,\alpha)$ we are given (as part of the data of $R(\cC,\alpha)$) a natural transformations
            \begin{equation*}
                \begin{tikzcd}
                    {R(\cC,\alpha)} && {R(\cC,\alpha)} \\
                    \\
                    \\
                    \cC && \cC
                    \arrow[""{name=0, anchor=center, inner sep=0}, "{\Sigma^{\powerDeg\selfMapDeg}}", curve={height=-12pt}, from=1-1, to=1-3]
                    \arrow[""{name=1, anchor=center, inner sep=0}, "\id"', curve={height=12pt}, from=1-1, to=1-3]
                    \arrow["{c_{(\cC,\alpha)}}", from=1-1, to=4-1]
                    \arrow["{c_{(\cC,\alpha)}}", from=1-3, to=4-3]
                    \arrow[""{name=2, anchor=center, inner sep=0}, "{\Sigma^{\powerDeg\selfMapDeg}}", curve={height=-12pt}, from=4-1, to=4-3]
                    \arrow[""{name=3, anchor=center, inner sep=0}, "\id"', curve={height=12pt}, from=4-1, to=4-3]
                    \arrow["v^{\powerDeg}"', shorten <=3pt, shorten >=3pt, Rightarrow, from=0, to=1]
                    \arrow["\alpha"', shorten <=3pt, shorten >=3pt, Rightarrow, from=2, to=3]
                \end{tikzcd}
            \end{equation*}
            which, by functoriality, lifts to a 3-morphism 
            \begin{equation*}
                \begin{tikzcd}
                    c && c
                    \arrow[""{name=0, anchor=center, inner sep=0}, "{\Sigma^{\powerDeg\selfMapDeg}}", curve={height=-18pt}, Rightarrow, from=1-1, to=1-3]
                    \arrow[""{name=1, anchor=center, inner sep=0}, "\id"', curve={height=18pt}, Rightarrow, from=1-1, to=1-3]
                    \arrow["\epsilon", shorten <=5pt, shorten >=5pt, Rightarrow, scaling nfold=3, from=0, to=1]
                \end{tikzcd} .
            \end{equation*}
            $(c,\epsilon)$ is a data of a lax cone so it induces a functor 
            \begin{equation*}
                c \colon \Catperf\SEmd \to ({\Catperf}^{[1]})\SEmd \simeq {(\Catperf\SEmd)}^{[1]}
            \end{equation*}
            which is the counit of the adjunction.

            Verifying the zig-zag identities now is simple.
        \end{proof}

        \begin{corollary}\label{cor:sqrt-of-SMd}
            For any $\cC\in\Catperf$, $\sqrt[\powerDeg]{(\cC\SMmd)}\simeq \cC\SMd$.
        \end{corollary}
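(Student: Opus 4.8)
The plan is to deduce the corollary formally from the adjunctions already available, exploiting that the $\powerDeg$-power map does not change the underlying category.

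First I would assemble the relevant adjunctions. By \cref{prop:SMd-adjoint-U}, applied in degrees $\selfMapDeg$ and $\powerDeg\selfMapDeg$, the underlying-category functors fit into adjunctions $U \dashv {(-)}\SMd \colon \Catperf \to \Catperf\SEd$ and $U \dashv {(-)}\SMmd \colon \Catperf \to \Catperf\SEmd$, while by the preceding proposition the $\powerDeg$-power map ${(-)}^{\powerDeg} \colon \Catperf\SEd \to \Catperf\SEmd$ has right adjoint $\sqrt[\powerDeg]{(-)}$.

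Next I would establish the key identity $U \circ {(-)}^{\powerDeg} \simeq U$ as functors $\Catperf\SEd \to \Catperf$. On objects this is immediate: by the preceding lemma, ${(-)}^{\powerDeg}$ sends $(\cC, \alpha)$ to $(\cC, \alpha^{\powerDeg})$, whose underlying category is still $\cC$. To upgrade this to a coherent equivalence of functors I would unwind the construction in \cref{dfn:SED-power-map}: there ${(-)}^{\powerDeg}$ is induced, via functoriality of lax limits, by restriction along the $\powerDeg$-fold self-map of the walking $2$-endomorphism $\CEndII$, whereas $U$ is induced by restriction along the basepoint inclusion $\pt \to \CEndII$. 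Since $\CEndII$ has a single object, its $\powerDeg$-fold self-map fixes that object, so the composite $\pt \to \CEndII \to \CEndII$ is canonically identified with $\pt \to \CEndII$; functoriality of lax limits then yields the desired equivalence. Passing to right adjoints, the right adjoint of $U \circ {(-)}^{\powerDeg}$ is $\sqrt[\powerDeg]{(-)} \circ {(-)}\SMmd$ and the right adjoint of $U$ is ${(-)}\SMd$, so $\sqrt[\powerDeg]{(-)} \circ {(-)}\SMmd \simeq {(-)}\SMd$ as functors $\Catperf \to \Catperf\SEd$; evaluating at $\cC$ gives $\sqrt[\powerDeg]{\cC\SMmd} \simeq \cC\SMd$.

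The step I expect to be the main obstacle is the middle one: producing $U \circ {(-)}^{\powerDeg} \simeq U$ as a genuine natural equivalence of functors (compatibly with the units and counits of the ambient adjunctions), rather than merely at the level of objects, so that passing to right adjoints is justified. As a consistency check, one can instead unwind the explicit description of the preceding proposition applied to the lifted object $\cC\SMmd \in \Catperf\SEmd$, namely $\sqrt[\powerDeg]{\cC\SMmd} \simeq (\cC\SMmd)\SMd \times_{(\cC\SMmd)\SMmd} \cC\SMmd$, and verify directly that this reduces to $\cC\SMd$: an object amounts to a degree-$\selfMapDeg$ self-map $\bar w$ in $\cC$ together with a degree-$\powerDeg\selfMapDeg$ self-map required to agree with $\bar w^{\powerDeg}$ and with the value of the natural transformation of \cref{lem:natural-map-for-SM}, and a short computation shows the compatibility of $\bar w$ with this structure is automatic, so the remaining datum is contractible and the projection to $\cC\SMd$ is an equivalence.
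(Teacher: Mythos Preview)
Your proof is correct and follows essentially the same route as the paper: the paper simply draws the commutative triangle $U \circ {(-)}^{\powerDeg} \simeq U$ of functors $\Catperf\SEd \to \Catperf$ and says ``take right adjoints,'' which is exactly what you do. You additionally spell out why the triangle commutes coherently (via functoriality of lax limits along the basepoint inclusion into $\CEndII$) and add an explicit consistency check using the pullback formula for $\sqrt[\powerDeg]{(-)}$; both are nice but the paper treats the commutativity as evident from the construction in \cref{dfn:SED-power-map} and omits the check.
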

        \begin{proof}
            The corollary follows by taking right adjoints of the commutative diagram
            \begin{equation*}
                \begin{tikzcd}
                    {\Catperf\SEd} \\
                    {\Catperf\SEmd} & \Catperf
                    \arrow["{{(-)}^{\powerDeg}}"', from=1-1, to=2-1]
                    \arrow["U", from=1-1, to=2-2]
                    \arrow["U", from=2-1, to=2-2]
                \end{tikzcd}.
            \end{equation*}
        \end{proof}

    \subsection{Locally nilpotent and invertible natural transformations}\label{subsec:nilpotents-and-invertible}
        The canonical map $\CEndII \to |\CEndII| \simeq S^2$ induces a map in $\PPrr$ 
        \begin{equation*}
            \Catperf\SEdiso \to \Catperf\SEd,
        \end{equation*}
        forgetting that the natural map is an isomorphism. As a functor in $\PPrr$ it admits a right adjoint sending $(\cC,\alpha)$ to the subcategory of $\cC$ on which $\alpha$ acts invertibly. In this subsection we construct a left adjoint $L \colon \Catperf\SEd \to \Catperf\SEdiso$, inverting the natural transformation. 
        $L$ is a Bousfield localization and its kernel is the category of stable, idempotent complete categories with $\selfMapDeg$-shifted, locally-nilpotent natural transformation of the identity. 
        It will actually be simpler to first construct the colocalization functor, constructing $L$ as its kernel.

        We then show that the category $\Catperf\SEdiso$ has a natural action of the group $\Omega S^2 / \selfMapDeg$, which is equivalent to an action of $\Omega S^1/ \selfMapDeg$ on any $(\cC,\alpha)\in\Catperf\SEdiso$. The identification $\selfMapDeg \simeq 0$ is given by $\alpha$. Moreover, the $\powerDeg$-power map of \cref{dfn:SED-power-map} is equivariant with respect to this action.

        \begin{definition}
            Let $(\cC,\alpha)\in\Catperf\SEd$. We say that $\alpha$ is locally-nilpotent if for any $X\in\cC$ the map $\alpha_X\colon \Sigma^{\selfMapDeg} X \to X$ is nilpotent. That is, there exists $k\in\ZZ$ such that $\alpha_X^k\colon \Sigma^{k\selfMapDeg} X \to X$ is null.
            We denote by $\Catperf\SEdnil \subseteq \Catperf\SEd$ the full 2-subcategory of $\Catperf\SEd$ consisting of locally nilpotent natural transformations.
        \end{definition}

        \begin{definition}
            Define $\Nil \colon \Catperf\SEd \to \Catperf\SEdnil$ as the subfunctor of $\id_{\Catperf\SEd}$ sending $(\cC,\alpha)$ to the full subcategory of $\cC$ consisting of objects on which $\alpha$ acts nilpotently.

            We will write $\Nil_{\alpha}(\cC)\coloneqq \Nil(\cC,\alpha)$.
        \end{definition}

        \begin{lemma}
            $\Nil\colon \Catperf\SEd\to \Catperf\SEdnil$ is right adjoint to the inclusion $i\colon \Catperf\SEdnil\to\Catperf\SEd$.
        \end{lemma}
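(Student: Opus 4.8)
By construction, $\Nil$ is a subfunctor of $\id_{\Catperf\SEd}$, hence it comes equipped with a natural transformation $\varepsilon\colon i\Nil\To\id_{\Catperf\SEd}$ whose component at $(\cC,\alpha)$ is the inclusion $\iota\colon\Nil_\alpha(\cC)\hookrightarrow\cC$ of the full subcategory on the objects on which $\alpha$ acts nilpotently, carrying the restricted shifted endomorphism of the identity. The plan is to show that $\varepsilon$ is the counit of an adjunction $i\dashv\Nil$. Since $\Catperf\SEdnil\subseteq\Catperf\SEd$ is a \emph{full} $2$-subcategory, $i$ is fully faithful, so it is enough to check that for all $(\cC,\alpha)\in\Catperf\SEd$ and $(\cD,\beta)\in\Catperf\SEdnil$ the composite
\begin{equation*}
    \MAP_{\Catperf\SEdnil}\big((\cD,\beta),\Nil(\cC,\alpha)\big)\longrightarrow\MAP_{\Catperf\SEd}\big(i(\cD,\beta),(\cC,\alpha)\big),
\end{equation*}
obtained by applying $i$ and then postcomposing with $\varepsilon_{(\cC,\alpha)}$, is an equivalence of categories. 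Its first half lands in $\MAP_{\Catperf\SEd}\big(i(\cD,\beta),i\Nil(\cC,\alpha)\big)$ and is an equivalence because $i$ is fully faithful; by \cref{cor:objects-of-SE}, a morphism in $\Catperf\SEd$ is an exact functor together with coherence data that merely restricts along the full-subcategory inclusion $\iota$, so the second half is a fully faithful functor whose essential image is spanned by those morphisms $i(\cD,\beta)\to(\cC,\alpha)$ whose underlying functor $\cD\to\cC$ factors through $\Nil_\alpha(\cC)$. Hence the lemma reduces to the single claim that \emph{every} morphism $i(\cD,\beta)\to(\cC,\alpha)$ in $\Catperf\SEd$ has underlying functor landing in $\Nil_\alpha(\cC)$.

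To prove this claim, write such a morphism (\cref{cor:objects-of-SE}) as an exact functor $F\colon\cD\to\cC$ equipped with an isomorphism $\phi\colon F\beta\cong\alpha F$ of natural transformations $\Sigma^{\selfMapDeg}F\To F$, with components $\phi_X\colon F(\beta_X)\cong\alpha_{FX}$ for $X\in\cD$. Fix $X\in\cD$. Since $\beta$ is locally nilpotent there is $k$ with $\beta_X^{\,k}\colon\Sigma^{k\selfMapDeg}X\to X$ null. Applying the exact functor $F$ --- which commutes with $\Sigma^{\selfMapDeg}$ and sends null maps to null maps --- to the composite defining $\beta_X^{\,k}$, and comparing it term by term with the composite defining $\alpha_{FX}^{\,k}$ via the suspended isomorphisms $\Sigma^{j\selfMapDeg}\phi_X$ for $0\le j<k$, produces an isomorphism $\alpha_{FX}^{\,k}\cong F(\beta_X^{\,k})$. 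The right-hand side is null, hence so is $\alpha_{FX}^{\,k}$, i.e.\ $FX\in\Nil_\alpha(\cC)$. As this holds for every $X$ and $\Nil_\alpha(\cC)\subseteq\cC$ is full, $F$ factors essentially uniquely through $\Nil_\alpha(\cC)$; the isomorphism $\phi$ and the higher cells of the morphism restrict accordingly, completing the reduction.

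The only genuinely non-formal input is the nilpotence transfer of the previous paragraph, which works precisely because $F$ is exact, so that it commutes with the shift $\Sigma^{\selfMapDeg}$ and annihilates null maps. I expect the main obstacle to be organizational rather than mathematical: one must make precise, inside $\PPrr$, the statements ``the coherence data restricts along $\iota$'' and ``$\varepsilon$ is a counit'' (in particular verifying the triangle identities up to the relevant higher cells). Since $\PPrr$-limits and lax limits are computed in $\bigCatt$ and the coreflection is detected on underlying categories, this is the usual bookkeeping for a coreflective full subcategory, with no new difficulty beyond the computation above.
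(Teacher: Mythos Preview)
Your proof is correct and follows essentially the same approach as the paper: both use the inclusion $\iota\colon\Nil_\alpha(\cC)\hookrightarrow\cC$ as the counit of a coreflection, with the paper additionally noting that $\Nil\circ i\simeq\id$ furnishes the unit and then leaving the triangle identities to the reader, while you instead verify the universal property of the counit directly and spell out the nilpotence-transfer argument that the paper leaves implicit.
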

        \begin{proof}
            Note that $\Nil\circ i \simeq \id_{\Catperf\SEdnil}$. The counit $i\circ \Nil \to \id_{\Catperf\SEd}$ is the natural embedding. It is now simple to verify the zig-zag identities.
        \end{proof}

        As a consequence, $\Nil$ is a colocalization functor. By standard abuse of notations, we will usually denote the composition $\Nil\circ i$ also by $\Nil$.

        \begin{definition}
            Let $L\colon \Catperf\SEd \to \Catperf\SEd$ be the cofiber of the counit map $\Nil \to \id_{\Catperf\SEd}$ in $\Fun(\Catperf\SEd, \Catperf\SEd)$.
        \end{definition}

        To understand $L$ better, we will use presentable categories, using the fact we can embed $\Catperf$ in $\Prst$ via the $\Ind$ construction.

        \begin{definition}
            Let $\cC\in\Prst$ and $\alpha\colon  \Sigma^{\selfMapDeg}\to \id_{\cC}$. Define $\cC[\alpha^{-1}]$ to be the full subcategory consisting of objects on which $\alpha$ is invertible. 
        \end{definition}
        \begin{remark}\label{rmrk:inverting-alpha-on-Ind}
            The inclusion functor $\cC[\alpha^{-1}]\subset \cC$ admits an essentially surjective left adjoint
            \begin{equation*}
                (-)[\alpha^{-1}] \colon  \cC \to \cC[\alpha^{-1}]
            \end{equation*}
            which on an object $X\in\cC$ is given by
            \begin{equation*}
                X[\alpha^{-1}] \simeq \colim(X \xto{\alpha} \Sigma^{-\selfMapDeg}X \xto{\alpha} \Sigma^{-2\selfMapDeg}X \xto{\alpha} \cdots).
            \end{equation*}
        \end{remark}

        \begin{lemma}\label{lem:formula-for-L}
            $L$ is given on objects by
            \begin{equation*}
                L(\cC,\alpha) = {((\Ind\cC)[\alpha^{-1}])}\comp.
            \end{equation*}
        \end{lemma}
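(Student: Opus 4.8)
The plan is to unwind the definition of $L$ as $\mathrm{cofib}(\Nil\to\id)$ and to compute this cofiber in two stages: first, observe that it is computed on underlying categories by the idempotent-completed Verdier quotient $(\cC/\Nil_\alpha(\cC))^{\mrm{idem}}$ equipped with the induced shifted endomorphism $\bar\alpha$; second, identify that Verdier quotient with $((\Ind\cC)[\alpha^{-1}])\comp$ by passing to presentable categories and invoking the Neeman--Thomason localization theorem. For the first stage: the counit $\Nil\to\id_{\Catperf\SEd}$ exhibits $\Nil$ as a colocalization onto $\Catperf\SEdnil$, and $\Nil_\alpha(\cC)\hookrightarrow\cC$ is the inclusion of a thick subcategory compatible with $\alpha$; using the description of maps in $\Catperf\SEd$ from \cref{cor:objects-of-SE} together with the universal property of the Verdier quotient in $\Catperf$, the cofiber of this inclusion in $\Catperf\SEd$ should have underlying category $(\cC/\Nil_\alpha(\cC))^{\mrm{idem}}$, with $\bar\alpha$ induced by $\alpha$. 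So the formula follows once $(\cC/\Nil_\alpha(\cC))^{\mrm{idem}}\simeq((\Ind\cC)[\alpha^{-1}])\comp$, compatibly with the induced transformations.

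For this last identification I would work in $\Ind\cC\in\Prst$, extending $\alpha$ along the fully faithful $\Ind\colon\Catperf\into\Prst$ (whose essential image is the compactly generated categories, with $(\Ind\cC)\comp\simeq\cC$). Let $\mathcal{N}\subseteq\Ind\cC$ be the kernel of the smashing localization $(-)[\alpha^{-1}]$ of \cref{rmrk:inverting-alpha-on-Ind}, so $(\Ind\cC)[\alpha^{-1}]\simeq(\Ind\cC)/\mathcal{N}$. Using naturality of $\alpha$, one checks that $Z\in(\Ind\cC)[\alpha^{-1}]$ if and only if $\mathrm{cofib}(\alpha_W)$ is right-orthogonal to all shifts of $Z$ for every $W\in\cC$; hence $\mathcal{N}$ is the localizing subcategory generated by the compact objects $\{\mathrm{cofib}(\alpha_W):W\in\cC\}$. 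Next I would show $\mathcal{N}\cap\cC=\Nil_\alpha(\cC)$: for compact $Y$, the colimit formula of \cref{rmrk:inverting-alpha-on-Ind} and compactness give $\Map(Y,Y[\alpha^{-1}])\simeq\colim_k\Map(Y,\Sigma^{-k\selfMapDeg}Y)$ with transition maps given by composing with $\alpha$, so $Y[\alpha^{-1}]\simeq 0$ forces $\mathrm{id}_Y$ to become null in the colimit, i.e.\ $\alpha_Y$ is nilpotent, and the converse is immediate. Then the Neeman--Thomason localization theorem applies (as $\mathcal{N}$ is generated by a set of compact objects) and yields that $(\Ind\cC)/\mathcal{N}$ is compactly generated with $((\Ind\cC)/\mathcal{N})\comp\simeq(\cC/(\mathcal{N}\cap\cC))^{\mrm{idem}}=(\cC/\Nil_\alpha(\cC))^{\mrm{idem}}$. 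As a byproduct every $\mathrm{cofib}(\alpha_W)$ is $\alpha$-nilpotent (being compact and in $\mathcal{N}$), which is exactly why $\bar\alpha$ becomes invertible on the quotient and hence why $L$ in fact lands in $\Catperf\SEdiso$.

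The main obstacle is the first reduction: verifying that the cofiber of $\Nil\to\id$, formed in the functor category $\Fun(\Catperf\SEd,\Catperf\SEd)$, is genuinely computed pointwise as the underlying Verdier quotient together with the induced shifted endomorphism. This requires unpacking the mapping spaces of $\Catperf\SEd$ via the lax-limit presentation (\cref{cor:objects-of-SE}) enough to see that the fiber sequence of mapping spaces defining the Verdier quotient in $\Catperf$ upgrades — compatibly with the $3$-isomorphism data encoding $\alpha$-compatibility — to the one characterizing the cofiber in $\Catperf\SEd$, and in particular that $\Catperf\SEd$ admits this pushout. By comparison, the presentable computation (Neeman--Thomason together with the colimit/compactness identification $\mathcal{N}\cap\cC=\Nil_\alpha(\cC)$) is the technical heart but is a fairly standard argument once the setup is in place.
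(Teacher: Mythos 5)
Your plan is correct and lands on the right statement, but the middle of your argument takes a genuinely different route from the paper. The paper never forms the Verdier quotient $\cC/\Nil_{\alpha}(\cC)$ or cites Neeman--Thomason by name: it passes to presentable categories via the equivalence $\Ind\colon \Catperf \simeq \PrL_{\st,\omega}$ and the colimit-preservation of $\PrL_{\st,\omega}\into\PrL$ (Lurie, HA~5.3.2.9), so that $L(\cC,\alpha)\simeq {\cofib(\,\Ind(\Nil_{\alpha}(\cC))\to\Ind(\cC)\,)}\comp$, and then identifies that cofiber with the $\alpha$-invertible objects directly: the cofiber is the right-orthogonal complement of $\Ind(\Nil_{\alpha}(\cC))$, and any $X$ orthogonal to all nilpotents is shown to be $\alpha$-invertible by the splitting trick with $X/\alpha$ (on which $\alpha^2$ is null). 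You instead start from the localization ${(-)[\alpha^{-1}]}$, identify its kernel $\mathcal{N}$ as the localizing subcategory generated by the compacts $\cofib(\alpha_W)$, check $\mathcal{N}\cap\cC=\Nil_{\alpha}(\cC)$ by a compactness argument, and invoke Neeman--Thomason; all of these steps go through (your use of naturality of $\alpha$ and the colimit formula of \cref{rmrk:inverting-alpha-on-Ind} is the right computation), and this route buys you explicit compact generators of the kernel, at the cost of quoting the localization theorem where the paper absorbs everything into the single Lurie lemma. The step you flag as the main obstacle is much easier than you fear and needs no unpacking of mapping spaces in the lax limit: cofibers in $\Fun(\Catperf\SEd,\Catperf\SEd)$ are computed pointwise, $\Catperf\SEd$ has colimits since it lives in $\PPrr$, and the underlying-category functor $U$ preserves them because it is a left adjoint by \cref{prop:SMd-adjoint-U}; combined with the standard fact that the cofiber in $\Catperf$ of a fully faithful inclusion is the idempotent-completed Verdier quotient (equivalently, is computed in $\Prst$ after applying $\Ind$ and then restricting to compacts), this immediately gives that the underlying category of $L(\cC,\alpha)$ is ${(\cC/\Nil_{\alpha}(\cC))}^{\mrm{idem}}$, which is exactly how the paper enters presentable land. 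One cosmetic point: where you say ``$\cofib(\alpha_W)$ is right-orthogonal to all shifts of $Z$'' you mean the opposite direction, namely $\Map(\cofib(\alpha_W),\Sigma^k Z)\simeq 0$, i.e.\ $Z$ and its shifts are right-orthogonal to the $\cofib(\alpha_W)$.
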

        \begin{proof}
            Denote by $\PrL_{\st,\omega}$ the category of compactly generated stable presentable categories and colimit-preserving functors sending compact objects to compact objects. By~\cite[Lemma~5.3.2.9]{Lurie-HA}, there is an equivalence of categories
            \begin{equation*}
                \Ind\colon  \Catperf \rightleftarrows \PrL_{\st,\omega} \cocolon {(-)}\comp
            \end{equation*}
            and the inclusion functor $\PrL_{\st,\omega} \into \PrL$ preserves colimits. In particular
            \begin{equation*}
                L(\cC,\alpha) \simeq {\cofib(\,\Ind(\Nil[\alpha](\cC)) \to \Ind(\cC)\,)}\comp
            \end{equation*}
            and the quotient functor $\id_{\Catperf\SEd} \to L$ is given as the restrction to compacts of
            \begin{equation*}
                \Ind(\cC) \to \cofib(\,\Ind(\Nil[\alpha](\cC)) \to \Ind(\cC)\,)
            \end{equation*}
            where the cofiber is computed in $\Prst$.
            The cofiber can be computed as
            \begin{equation*}
                \begin{split}
                        & \{X\in \Ind(\cC) \mid \Map_{\Ind(\cC)}(Z,X) \simeq \pt \ \forall Z\in \Ind(\Nil[\alpha](\cC))\} \\
                        & \simeq \{X\in \Ind(\cC) \mid \Map_{\Ind(\cC)}(Z,X) \simeq \pt \ \forall Z\in \Nil[\alpha](\cC)\}.
                \end{split}
            \end{equation*}
            Any $X$ on which $\alpha$ is invertible satisfies this condition. On the other hand, assume $\Map_{\Ind(\cC)}(Z,X)$ is contractible for every $Z\in \Nil[\alpha](\cC)$. $\alpha^2$ acts trivially on $X/\alpha \coloneqq  \cofib(\Sigma^{\selfMapDeg}X \xto{\alpha} X)$, and in particular $X/\alpha\in\Nil[\alpha](\cC)$ and the cofiber sequence
            \begin{equation*}
                \Sigma X/\alpha \xto{0} \Sigma^{\selfMapDeg} X \xto{\alpha} X
            \end{equation*}
            splits so $X \simeq \Sigma^{\selfMapDeg} X\oplus X/\alpha$. By our assumption the inclusion map $X / \alpha \to \Sigma^{\selfMapDeg} X\oplus X/\alpha \simeq X$ is null, so $X/\alpha \simeq 0$ and $\alpha$ is invertible on $X$.
        \end{proof}

        \begin{notation}
            The above lemma justifies denoting $L(\cC,\alpha)$ by $\cC[\alpha^{-1}]$. 
        \end{notation}

        \begin{remark}\label{rmrk:how-invertion-looks}
            By \cref{lem:formula-for-L} and \cref{rmrk:inverting-alpha-on-Ind}, for a category $\cC\in\Catperf$, the category $\cC[\alpha^{-1}]=L(\cC,\alpha)$ is equivalent to the idempotent completion of the full subcategory 
            \begin{equation*}
                \{X[\alpha^{-1}] \mid X\in \cC\}\subseteq \Ind(\cC).
            \end{equation*}
        \end{remark}

        \begin{corollary}\label{cor:universal-prop-of-inverting}
            $L\colon \Catperf\SEd \to \Catperf\SEd$ lands in $\Catperf\SEdiso$ and is left adjoint the the inclusion $\Catperf\SEdiso \into \Catperf\SEd$. 
        \end{corollary}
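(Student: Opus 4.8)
The plan is to establish the two claims — that $L$ factors through $\Catperf\SEdiso$, and that the resulting functor is left adjoint to the inclusion $\iota\colon\Catperf\SEdiso\into\Catperf\SEd$ — by reducing everything, through the formula $L(\cC,\alpha)\simeq{((\Ind\cC)[\alpha^{-1}])}\comp$ of \cref{lem:formula-for-L}, to the universal property of the localization $(-)[\alpha^{-1}]$ recorded in \cref{rmrk:inverting-alpha-on-Ind}, and only at the end upgrading the $1$-categorical adjunction so obtained to an adjunction in the $3$-category $\PPrr$.

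For the first claim: on the subcategory $(\Ind\cC)[\alpha^{-1}]\subseteq\Ind\cC$ the shifted endomorphism induced by $\alpha$ is pointwise invertible by definition, hence a natural isomorphism, and this persists on the full subcategory of compact objects. So by \cref{lem:formula-for-L} the pair $(L(\cC,\alpha),\alpha)$ lies in $\Catperf\SEdiso$, which I will identify with the full subcategory of $\Catperf\SEd$ on those $(\cC,\alpha)$ with invertible $\alpha$ (see \cref{cor:objects-of-SE} and \cref{rmrk:fixed-points-map-II}); the inclusion $\iota$ is fully faithful, being the inclusion of the genuine limit $\lim_{\CEndII}(\Catperf,\Sigma^{\selfMapDeg})$ into the lax limit, i.e.\ of the full subcategory of cartesian sections — exactly as $\cC^{h\ZZ}\into\cC\hN$ is fully faithful. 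Finally, the natural transformation $q\colon\id_{\Catperf\SEd}\To L$ coming from the defining cofiber sequence $\Nil\to\id\to L$ is a morphism in $\Fun(\Catperf\SEd,\Catperf\SEd)$ and so respects the shifted-endomorphism structure; hence $L$ corestricts to $\Catperf\SEd\to\Catperf\SEdiso$.

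For the adjunction, fix $(\cC,\alpha)\in\Catperf\SEd$ and $(\cD,\beta)\in\Catperf\SEdiso$. By \cref{cor:objects-of-SE} a morphism $(\cC,\alpha)\to\iota(\cD,\beta)$ in $\Catperf\SEd$ is an exact functor $f\colon\cC\to\cD$ together with an intertwining isomorphism between $f\alpha$ and $\beta f$; since $\beta$ is invertible this forces $f(\alpha_X)$ to be an isomorphism for every $X$, i.e.\ $f$ \emph{inverts} $\alpha$. Extending to $\Ind$-categories and invoking \cref{rmrk:inverting-alpha-on-Ind}, a colimit-preserving functor out of $\Ind\cC$ inverts $\alpha$ precisely when it factors, essentially uniquely, through $(-)[\alpha^{-1}]\colon\Ind\cC\to(\Ind\cC)[\alpha^{-1}]$; restricting to compact objects via the equivalence $\Ind\colon\Catperf\simeq\PrL_{\st,\omega}$ of \cite[Lemma~5.3.2.9]{Lurie-HA} (used already in the proof of \cref{lem:formula-for-L}), and using idempotent-completeness of $\cD$ together with \cref{rmrk:how-invertion-looks}, precomposition with $q$ identifies exact functors $L(\cC,\alpha)\to\cD$ with exact functors $\cC\to\cD$ inverting $\alpha$. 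Matching up the intertwining $3$-morphisms on the two sides by means of the universal property of \cref{cor:univ-prop-of-SE} turns this into a natural equivalence $\Map_{\Catperf\SEdiso}(L(\cC,\alpha),(\cD,\beta))\simeq\Map_{\Catperf\SEd}((\cC,\alpha),\iota(\cD,\beta))$, which is the asserted adjunction $L\dashv\iota$. (The counit is seen directly: $\Nil$ of an object with invertible $\alpha$ is $0$, so $L\iota\simeq\id_{\Catperf\SEdiso}$, consistent with $L$ being a reflective localization with fully faithful right adjoint $\iota$.)

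I expect the only genuine obstacle to be the coherence bookkeeping in the $3$-category $\PPrr$: one must verify that the adjunction just produced — a priori only an adjunction of colimit-preserving functors between presentable categories — lifts to an adjunction between the $1$-morphisms $L$ and $\iota$ in $\PPrr$, with unit, counit and both triangle identities surviving the passage through the lax limit $\laxlim_{\CEndII}$. The device that makes this routine is \cref{cor:univ-prop-of-SE} together with its evident extension to higher cells: it re-expresses a functor into $\Catperf\SEd$, and all of its higher-morphism data, as underlying-category data plus a single additional $3$-morphism, so that the required coherences are inherited verbatim from the (already coherent) statement for the Bousfield localization $(-)[\alpha^{-1}]$ in $\Prst$. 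Making this translation airtight, rather than any computation, is where the work lies.
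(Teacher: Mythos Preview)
Your argument is correct, but the paper takes a much shorter route. Rather than passing through the explicit formula $L(\cC,\alpha)\simeq{((\Ind\cC)[\alpha^{-1}])}^\omega$ and the localization universal property in $\Prst$, the paper uses the \emph{defining} cofiber sequence $\Nil\to\id\to L$ directly: given $F\colon(\cC,\alpha)\to(\cD,\beta)$ with $\beta$ invertible, the composite $\Nil_\alpha(\cC)\to\cC\xrightarrow{F}\cD$ is null, since on each $X\in\Nil_\alpha(\cC)$ the map $F(\alpha_X)\simeq\beta_{FX}$ is simultaneously nilpotent and invertible, forcing $FX=0$; hence $F$ factors uniquely through the cofiber $L(\cC,\alpha)$. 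This is a one-line argument that avoids the $\Ind$/compacts round-trip entirely and, more importantly, sidesteps the coherence bookkeeping you flag in your final paragraph: the factorization through a cofiber is already a statement in $\PPrr$ (since $L$ was defined there as a cofiber), so no separate upgrade from a $1$-categorical adjunction is needed.

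What your approach buys is a more explicit description of the unit $q$ and of how maps out of $L(\cC,\alpha)$ look on underlying categories, which could be useful if one needed to compute with the adjunction rather than merely assert its existence. But for the bare corollary the paper's cofiber argument is both shorter and structurally cleaner.
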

        \begin{proof}
            By \cref{lem:formula-for-L}, $L$ lands in $\Catperf\SEdiso$.
            Let $(\cC,\alpha)\in\Catperf\SEd$, $(\cD,\beta)\in\Catperf\SEdiso$, then a map $F\colon (\cC,\alpha) \to (\cD,\beta)$ must satisfy that the composition
            \begin{equation*}
                (\Nil_{\alpha}(\cC), \alpha) \to (\cC, \alpha) \xto{F} (\cD,\beta)
            \end{equation*}
            is null, as on any object $X$, $F\alpha\colon \Sigma^{\selfMapDeg} X \to X$ is both invertible and nilpotent. Thus $F$ factors uniquely through $L(\cC,\alpha) = \cofib((\Nil_{\alpha}(\cC),\alpha) \to (\cC,\alpha))$.
        \end{proof}

        \subsubsection*{Group action}
        We next note that $\Catperf\SEdiso$ has a residual action by the cofiber of the map $\selfMapDeg \colon \Omega S^2 \to \Omega S^2$:

        \begin{definition}
            Let $S^2/\selfMapDeg$ be the cofiber of the map $\selfMapDeg\colon S^2 \to S^2$. Note that in the category of $\EE_1$-groups, the cofiber of the map $\selfMapDeg\colon \Omega S^2 \to \Omega S^2$ is $\Omega (S^2/\selfMapDeg)$.
        \end{definition}
        \begin{lemma}\label{lem:groups-action-on-SEdiso}
            $\Catperf\SEdiso$ is naturally an $S^2/\selfMapDeg$-local system, i.e.\ has a natural action by the group $\Omega S^2 / \selfMapDeg$.
        \end{lemma}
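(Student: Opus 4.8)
The plan is to build the $S^2/\selfMapDeg$-local system out of two structures carried by the $2$-category $\Catperf\SEdiso$: the suspension automorphism of its identity functor, and a canonical trivialization of the $\selfMapDeg$-th power of that automorphism. The passage from these to a functor out of $S^2/\selfMapDeg$ is then purely formal, via the cofiber presentation $S^2/\selfMapDeg = \mathrm{cofib}(\selfMapDeg\colon S^2\to S^2)$. First I would record the automorphism. Since exact functors commute with suspension naturally in the source, $\Sigma$ is a natural automorphism of $\id_{\Catperf}$, i.e.\ an automorphism of the identity $1$-morphism of $\Catperf\in\PPrr$ in the ambient $3$-category. Any two automorphisms of an identity $1$-morphism commute, by higher Eckmann--Hilton, so $\Sigma$ commutes with the automorphism $\Sigma^{\selfMapDeg}$ of $\id_{\Catperf}$ defining $\Catperf\SEdiso = \lim_{\CEndII}(\Catperf,\Sigma^{\selfMapDeg})$. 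Hence $\Sigma$ lifts to an automorphism of the identity of the diagram $(\Catperf,\Sigma^{\selfMapDeg})\colon\CEndII\to\PPrr$, and applying $\lim$ (computed in $\PPrr$ as in $\bigCatt$) yields a natural automorphism $\Sigma$ of $\id_{\Catperf\SEdiso}$; on an object $(\cC,\alpha)$ (notation of \cref{cor:objects-of-SE}) it is $\Sigma_{\cC}$, promoted to a self-morphism of $(\cC,\alpha)$ via that commutation.

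Next, the universal lax cone over $(\Catperf,\Sigma^{\selfMapDeg})$, restricted to the honest limit $\Catperf\SEdiso$, supplies a natural \emph{isomorphism} $\hat{v}\colon\Sigma^{\selfMapDeg}\Rightarrow\id_{\Catperf\SEdiso}$, where $\Sigma^{\selfMapDeg}$ now denotes the $\selfMapDeg$-fold composite of the automorphism $\Sigma$ just built, and $\hat{v}$ is $\alpha$ on $(\cC,\alpha)$. (This is the categorified analogue of \cref{lem:natural-map-for-SM}, \cref{cor:3-morphism-of-lax-limit}; over $\Catperf\SEd$ one gets only a natural transformation, and it is precisely the passage to $\Catperf\SEdiso$ that makes it invertible.) I would then translate across \cref{rmrk:G-action-on-id-categorified}. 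Writing $X := \Catperf\SEdiso$, an automorphism of $\id_X$ is the same as a pointed functor $f\colon S^2 = \Sigma\B\ZZ\to\PPrr$ with $f(\ast)=X$; take $f$ corresponding to $\Sigma$. Under this identification the $\selfMapDeg$-fold composite $\Sigma^{\selfMapDeg}$ corresponds to $f\circ(\selfMapDeg\colon S^2\to S^2)$, since precomposition with the degree-$\selfMapDeg$ self-map of $S^2$ is multiplication by $\selfMapDeg$ on $\pi_2(\PPrr\core,X)$. So $\hat{v}$ is a nullhomotopy of $f\circ(\selfMapDeg\colon S^2\to S^2)$. As $\PPrr\core$ is a space and $S^2\xto{\selfMapDeg}S^2\xto{q}S^2/\selfMapDeg$ is a cofiber sequence, mapping into $\PPrr\core$ turns it into a fiber sequence of mapping spaces, so the pair $(f,\hat{v})$ is exactly a pointed functor $\tilde{f}\colon S^2/\selfMapDeg\to\PPrr$ with $\tilde{f}(\ast)=X$ and $\tilde{f}\circ q\simeq f$ --- that is, an $S^2/\selfMapDeg$-local system with value $\Catperf\SEdiso$. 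Finally $S^2/\selfMapDeg\simeq\Sigma(S^1/\selfMapDeg)$ with $S^1/\selfMapDeg$ connected, so by \cref{rmrk:G-action-on-id-categorified} the functor $\tilde{f}$ is the same datum as an action of the $\EE_1$-group $\Omega\Sigma(S^1/\selfMapDeg)=\Omega(S^2/\selfMapDeg)$ on $\Catperf\SEdiso$.

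The substantive part is the first two paragraphs: producing, coherently in the chosen $(\infty,3)$-categorical model, the automorphism $\Sigma$ of $\id_{\Catperf\SEdiso}$ together with the trivialization $\hat{v}$ of its $\selfMapDeg$-th power as compatible parts of a single datum --- in particular, checking that $f\circ(\selfMapDeg\colon S^2\to S^2)$ is genuinely the class that $\hat{v}$ nullhomotopes, with all higher coherence (equivalently, that the lift of $\Sigma$ to $\id$ of the diagram really does carry $\hat{v}$ along the limit). Once that is in hand, the remaining steps are a routine manipulation of the cofiber sequence for $S^2/\selfMapDeg$ together with \cref{rmrk:G-action-on-id-categorified}.
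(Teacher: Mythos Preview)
Your proposal is correct in outline but takes a different route from the paper, and you have honestly flagged exactly where the difficulty lies. The paper avoids that difficulty altogether. Rather than building the $S^2/\selfMapDeg$-local system \emph{on} $\Catperf\SEdiso$ by hand---i.e.\ first lifting $\Sigma$ to an automorphism of $\id_{\Catperf\SEdiso}$, then producing a trivialization $\hat{v}$ of its $\selfMapDeg$-th power, and finally checking that these two pieces of data cohere as a map out of the cofiber---the paper instead starts one step earlier, with the $S^2$-local system $(\Catperf,\Sigma)\in\PPrr^{S^2}$ that already exists, and simply pushes it forward along $\pi_{\selfMapDeg}\colon S^2\to S^2/\selfMapDeg$. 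The object $(\pi_{\selfMapDeg})_*(\Catperf,\Sigma)\in\PPrr^{S^2/\selfMapDeg}$ is an $S^2/\selfMapDeg$-local system by construction; the only thing to check is that its underlying object is $\Catperf\SEdiso$, and this follows from Beck--Chevalley for the pushout square defining $S^2/\selfMapDeg$: $e^*(\pi_{\selfMapDeg})_* \simeq (S^2)_*\,\selfMapDeg^*$, so $e^*(\pi_{\selfMapDeg})_*(\Catperf,\Sigma)\simeq\lim_{S^2}(\Catperf,\Sigma^{\selfMapDeg})=\Catperf\SEdiso$.

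The trade-off is clear. Your approach is more explicit about what the action \emph{is}---suspension, with $\alpha$ furnishing the identification $\Sigma^{\selfMapDeg}\simeq\id$---and this is indeed what \cref{rmrk:action-is-suspension} records afterwards. But the coherence verification you single out in your last paragraph (that the lifted $\Sigma^{\selfMapDeg}$ and the one trivialized by $\hat{v}$ agree as automorphisms of $\id_{\Catperf\SEdiso}$, with all higher data) is real work in an $(\infty,3)$-categorical setting, whereas the paper's Beck--Chevalley argument packages all of it into a single base-change isomorphism. The paper's formulation also pays off immediately in the next lemma (\cref{lem:power-map-is-equivariant}), where the equivariance of the power maps is again a Beck--Chevalley statement for a related square; your direct construction would require a separate compatibility check there as well.
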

        \begin{proof}
            Consider the pushout square
            \begin{equation*}
                \begin{tikzcd}
                    {S^2} & {S^2} \\
                    \pt & {S^2 / \selfMapDeg}
                    \arrow["d", from=1-1, to=1-2]
                    \arrow[from=1-1, to=2-1]
                    \arrow["\pi_{\selfMapDeg}", from=1-2, to=2-2]
                    \arrow["e", from=2-1, to=2-2]
                    \arrow["\lrcorner"{anchor=center, pos=0.125, rotate=180}, draw=none, from=2-2, to=1-1]
                \end{tikzcd}
            \end{equation*}
            and the diagram of local systems
            \begin{equation*}
                \begin{tikzcd}
                    {\PPrr^{S^2/\selfMapDeg}} && {\PPrr} \\
                    \\
                    {\PPrr^{S^2}} && {\PPrr^{S^2}}
                    \arrow["{e^*}", from=1-1, to=1-3]
                    \arrow["{\pi_{\selfMapDeg}^*}", from=1-1, to=3-1]
                    \arrow["{{(S^2)}^*}"', from=1-3, to=3-3]
                    \arrow["{(\pi_{\selfMapDeg})_*}", curve={height=-18pt}, dashed, from=3-1, to=1-1]
                    \arrow["{\selfMapDeg^*}", from=3-1, to=3-3]
                    \arrow["{(S^2)_*}"', curve={height=18pt}, dashed, from=3-3, to=1-3]
                \end{tikzcd}.
            \end{equation*}
            By Beck--Chevalley, $e^*(\pi_{\selfMapDeg})_* \simeq S^2_* \selfMapDeg^*$. In particular, for $(\Catperf, \Sigma) \in \PPrr^{S^2}$, 
            \begin{equation*}
                e^*(\pi_{\selfMapDeg})_* (\Catperf, \Sigma) \simeq S^2_* \selfMapDeg^* (\Catperf, \Sigma) = S^2_*(\Catperf, \Sigma^\selfMapDeg) = \Catperf\SEdiso.
            \end{equation*}
            The functor $e^*$ is the underlying category functor, thus $(\pi_{\selfMapDeg})_*(\Catperf, \Sigma) \in \PPrr^{S^2 / \selfMapDeg}$ is a lift of $\Catperf\SEdiso$.
        \end{proof}

        \begin{remark}\label{rmrk:action-is-suspension}
            By \cref{rmrk:G-action-on-id-categorified}, a presentation of $\Catperf\SEdiso$ as an $S^2/\selfMapDeg$-local system is equivalent to a local system $S^1/\selfMapDeg \to \End(\Catperf\SEdiso)$ choosing $\id_{\Catperf\SEdiso}$. That is, a $\Omega (S^1/\selfMapDeg)$-action on every $(\cC,\alpha)\in\Catperf\SEdiso$. $\Omega (S^1/\selfMapDeg)$ is the free $\EE_1$-group where $\selfMapDeg=0$. The action is given by the suspension functor, and $\alpha\colon \Sigma^{\selfMapDeg} \isoto \id_{\cC}$ gives the required identification.
        \end{remark}

        Recall that we have constructed an $\powerDeg$-power map ${(-)}^{\powerDeg} \colon \Catperf\SEdiso \to \Catperf\SEdiso[\powerDeg\selfMapDeg]$, and by the above lemma $\Catperf\SEdiso$ has a natural $\Omega (S^2/\selfMapDeg)$-action and $\Catperf\SEdiso[\powerDeg\selfMapDeg]$ has a natural $\Omega (S^2/\powerDeg\selfMapDeg)$-action. We have a homomorphism $q\colon \Omega (S^2/\powerDeg\selfMapDeg) \to \Omega (S^2/\selfMapDeg)$. Carefully reviewing the construction of the action, we can see that the $\powerDeg$-power map is equivariant.
        \begin{lemma}\label{lem:power-map-is-equivariant}
            The $\powerDeg$-power map ${(-)}^{\powerDeg}\colon \Catperf\SEdiso \to \Catperf\SEdiso[\powerDeg\selfMapDeg]$ is $\Omega (S^2/\powerDeg\selfMapDeg)$-equivariant.
        \end{lemma}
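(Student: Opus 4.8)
The strategy is to reuse the Beck--Chevalley setup from the proof of \cref{lem:groups-action-on-SEdiso} and to realize the $\powerDeg$-power map \emph{together with} its equivariance datum as a single mate. Write $\pi_{\selfMapDeg}\colon S^2\to S^2/\selfMapDeg$ and $\pi_{\powerDeg\selfMapDeg}\colon S^2\to S^2/\powerDeg\selfMapDeg$ for the cofibre projections, and $e_{\selfMapDeg}\colon \pt\to S^2/\selfMapDeg$, $e_{\powerDeg\selfMapDeg}\colon \pt\to S^2/\powerDeg\selfMapDeg$ for the cone points. Since the degree-$\powerDeg\selfMapDeg$ self-map of $S^2$ is homotopic to $\selfMapDeg\circ\powerDeg$, we get $\pi_{\selfMapDeg}\circ(\powerDeg\selfMapDeg)\simeq(\pi_{\selfMapDeg}\circ\selfMapDeg)\circ\powerDeg\simeq 0$, so $\pi_{\selfMapDeg}$ factors through $\pi_{\powerDeg\selfMapDeg}$; call the resulting map $c\colon S^2/\powerDeg\selfMapDeg\to S^2/\selfMapDeg$, so $c\circ\pi_{\powerDeg\selfMapDeg}\simeq\pi_{\selfMapDeg}$ and $c\circ e_{\powerDeg\selfMapDeg}\simeq e_{\selfMapDeg}$ (both are the basepoint inclusion). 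A check on $\pi_2$ shows $\Omega c$ is the reduction homomorphism $\pi_2(S^2/\powerDeg\selfMapDeg)\to\pi_2(S^2/\selfMapDeg)$, i.e. the map $q$. Hence, by \cref{lem:groups-action-on-SEdiso} and \cref{rmrk:G-action-on-id-categorified}, restricting the action on $\Catperf\SEdiso$ along $q$ corresponds to the $S^2/\powerDeg\selfMapDeg$-local system $c^*(\pi_{\selfMapDeg})_*(\Catperf,\Sigma)$ (with underlying category still $\Catperf\SEdiso$, since $c\circ e_{\powerDeg\selfMapDeg}\simeq e_{\selfMapDeg}$), while the action on $\Catperf\SEdiso[\powerDeg\selfMapDeg]$ corresponds to $(\pi_{\powerDeg\selfMapDeg})_*(\Catperf,\Sigma)$.

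I would then take the equivariance datum for the power map to be the Beck--Chevalley mate of the commuting square
\begin{equation*}
    \begin{tikzcd}
        {S^2} && {S^2} \\
        {S^2/\powerDeg\selfMapDeg} && {S^2/\selfMapDeg}
        \arrow[Rightarrow, no head, from=1-1, to=1-3]
        \arrow["{\pi_{\powerDeg\selfMapDeg}}"', from=1-1, to=2-1]
        \arrow["{\pi_{\selfMapDeg}}", from=1-3, to=2-3]
        \arrow["c"', from=2-1, to=2-3]
    \end{tikzcd},
\end{equation*}
namely the natural transformation $c^*(\pi_{\selfMapDeg})_*\Rightarrow(\pi_{\powerDeg\selfMapDeg})_*$ evaluated at $(\Catperf,\Sigma)$. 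By construction this is a morphism of $S^2/\powerDeg\selfMapDeg$-local systems $c^*(\pi_{\selfMapDeg})_*(\Catperf,\Sigma)\to(\pi_{\powerDeg\selfMapDeg})_*(\Catperf,\Sigma)$, hence (via \cref{rmrk:G-action-on-id-categorified}) automatically $\Omega(S^2/\powerDeg\selfMapDeg)$-equivariant; it remains only to identify its underlying functor with ${(-)}^{\powerDeg}$.

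For that I would paste the square above to the right of the defining cofibre square of $S^2/\powerDeg\selfMapDeg$, obtaining a commuting rectangle with top edge the degree-$\powerDeg\selfMapDeg$ map, left edge $S^2\to\pt$, right edge $\pi_{\selfMapDeg}$, and bottom edge $c\circ e_{\powerDeg\selfMapDeg}\simeq e_{\selfMapDeg}$. Applying $e_{\powerDeg\selfMapDeg}^*$ and using compatibility of mates with pasting: the mate of the left (cofibre) square is the Beck--Chevalley equivalence $e_{\powerDeg\selfMapDeg}^*(\pi_{\powerDeg\selfMapDeg})_*\simeq(S^2\to\pt)_*(\powerDeg\selfMapDeg)^*$ used in \cref{lem:groups-action-on-SEdiso}, and $e_{\powerDeg\selfMapDeg}^*c^*\simeq e_{\selfMapDeg}^*$, so the underlying functor of our mate becomes the mate of that outer rectangle. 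Refining the outer rectangle instead through the factorization $S^2\xto{\powerDeg}S^2\xto{\selfMapDeg}S^2$ and the cofibre square of $S^2/\selfMapDeg$, the mate of the right square is again a Beck--Chevalley equivalence $e_{\selfMapDeg}^*(\pi_{\selfMapDeg})_*\simeq(S^2\to\pt)_*\selfMapDeg^*$, while the mate of the remaining left square $\big(S^2\xto{\powerDeg}S^2\big)\to\big(\pt=\pt\big)$ is precisely the functoriality map of $\lim_{S^2}=(S^2\to\pt)_*$ along $\powerDeg\colon S^2\to S^2$. By \cref{dfn:SED-power-map}, applied to $(\Catperf,\Sigma^{\selfMapDeg})=\selfMapDeg^*(\Catperf,\Sigma)$, this last map is exactly ${(-)}^{\powerDeg}$. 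Comparing the two expressions for the mate of the outer rectangle and cancelling the Beck--Chevalley equivalences then identifies the underlying functor of the equivariance datum with ${(-)}^{\powerDeg}$, which finishes the proof.

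The main obstacle I anticipate is bookkeeping rather than conceptual: one must arrange the homotopies in these squares of spaces coherently (in particular $\powerDeg\selfMapDeg\simeq\selfMapDeg\circ\powerDeg$ and the factorizations of $\pi_{\selfMapDeg}$ and $e_{\selfMapDeg}$ through $c$), and one must know that the mate/pasting calculus and the relevant Beck--Chevalley equivalences are available in this setting --- which holds because, as in \cref{lem:groups-action-on-SEdiso}, the (co)limits along maps of finite spaces are computed in $\bigCatt$ and $\PPrr$ is a $3$-full subcategory closed under them. One should also be careful to phrase ``${(-)}^{\powerDeg}$ is $\Omega(S^2/\powerDeg\selfMapDeg)$-equivariant'' in the precise form used above, namely as a morphism of $S^2/\powerDeg\selfMapDeg$-local systems.
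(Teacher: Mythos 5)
Your proposal is correct and follows essentially the same route as the paper: the paper's proof forms exactly your square (identity on top, $\pi_{\powerDeg\selfMapDeg}$ and $\pi_{\selfMapDeg}$ vertically, your $c$ appearing as the bottom map $q$) and takes the Beck--Chevalley mate $q^*(\pi_{\selfMapDeg})_* \to (\pi_{\powerDeg\selfMapDeg})_*$ evaluated at $(\Catperf,\Sigma)$ as the equivariant lift. Your additional pasting argument identifying the underlying functor of this mate with ${(-)}^{\powerDeg}$ from \cref{dfn:SED-power-map} is a verification the paper leaves implicit, not a different approach.
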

        \begin{proof}
            Look at the commutative diagram
            \begin{equation*}
                \begin{tikzcd}
            	{S^2} & {S^2} \\
            	{\Sigma\B\ZZ/\powerDeg\selfMapDeg} & {\Sigma\B\ZZ/\selfMapDeg}
            	\arrow[Rightarrow, no head, from=1-1, to=1-2]
            	\arrow["{\pi_{\powerDeg\selfMapDeg}}", from=1-1, to=2-1]
            	\arrow["{\pi_{\selfMapDeg}}", from=1-2, to=2-2]
            	\arrow["q", from=2-1, to=2-2]
                \end{tikzcd},
            \end{equation*}
            then the Beck--Chevalley map
            \begin{equation*}
                q^*{(-)}\SEd \simeq q^* (\pi_{\selfMapDeg})_* \to (\pi_{\powerDeg\selfMapDeg})_*\pi_{\powerDeg\selfMapDeg}^* q^* (\pi_{\selfMapDeg})_* \simeq (\pi_{\powerDeg\selfMapDeg})_* \pi_{\selfMapDeg}^* (\pi_{\selfMapDeg})_* \to (\pi_{\powerDeg\selfMapDeg})_* \simeq {(-)}\SEmd
            \end{equation*}
            is such a lift.
        \end{proof}

    \subsection{Asymptotically defined shifted endomorphisms}\label{subsec:asymptotically-defined}

        We will now use the constructed power maps (\cref{dfn:power-map}, \cref{dfn:SED-power-map}) and take the colimit of the resulted diagrams, transitioning from a finite quotient group $D$ to a general one. 
        In the decategorified case, the colimit $\colim_{D \onto \ZZ/\selfMapDeg} \cC\SMd$ will be the category of $\selfMapDeg$-self-maps in $\cC$ for any $\selfMapDeg$ with a surjection $D \onto \ZZ /\selfMapDeg$. Two such self-maps are identified if they are the same after some power. This category has an \quotes{asymptotically defined natural endomorphism of the identity functor} corresponding to the self-map itself.

        The categorified colimit $\colim_{\selfMapDeg} \Catperf\SEd$ will be the category of categories with such asymptotically defined natural endomorphism of the identity. In particular $\colim_{\selfMapDeg} \cC\SMd$ enhances to an object in there and the functor sending $\cC$ to $\colim_{\selfMapDeg} \cC\SMd$ is right adjoint to the underlying category functor.

        Restricting to categories with asymptotically defined natural isomorphisms, we extend the results of \cref{subsec:nilpotents-and-invertible} and get an action of the group $\Omega^2  \lim_{\selfMapDeg} (S^2/\selfMapDeg)$ on this category.
        
        \subsubsection*{Categories of asymptotically defined self maps}
        Fix a quotient group $D$ of $\Zhat$.
        
        \begin{definition}\label{dfn:SMD}
            Let $\cC\in \Catperf$. 
            Define the category of $D$-asymptotically defined self-maps $\cC\SMD$ as the colimit in $\Catperf$ of $\cC\SMd$ along the $\powerDeg$-power map:
            \begin{equation*}
                \cC\SMD \coloneqq  \colim_{D \onto \ZZ/\selfMapDeg} \cC\SMd.
            \end{equation*}

            The underlying object functor $u\colon  \cC\SMd \to \cC$ is compatible with this diagram
            \begin{equation*} 
                u({(X,\periodNat[X])}^{\powerDeg}) \simeq u(X,\periodNat[X])
            \end{equation*}
            and so defines an underlying object functor $u\colon  \cC\SMD \to \cC$.
        \end{definition}

        \begin{remark}\label{rmrk:SMD-sequential-colimit}
            Write $D$ as a sequential limit 
            \begin{equation*}
                D = \lim_i \ZZ / \selfMapDeg_i
            \end{equation*}
            of surjections, (in particular $\selfMapDeg_i$ divides $\selfMapDeg_{i+1}$). By cofinality, $\cC\SMD$ identifies with the sequential colimit
            \begin{equation*}
                \cC\SMD \simeq \colim(\cC\SMd[\selfMapDeg_1] \xto{(-)^{\selfMapDeg_2 / \selfMapDeg_1}} \cC\SMd[\selfMapDeg_2] \xto{(-)^{\selfMapDeg_3 / \selfMapDeg_2}} \cdots ).
            \end{equation*}
        \end{remark}
        
        \begin{remark}
            Using \cref{cor:properties-of-SM}, an object of $\cC\SMD$ is equivalent to the data of $X\in \cC$ and $v\colon  \Sigma^\selfMapDeg X \to X$ for some $D \onto \ZZ /\selfMapDeg$. Two such objects $(X, \Sigma^{\selfMapDeg}X \xto{v} X)$ and $(X, \Sigma^{\selfMapDeg'}X \xto{v'} X)$ are identified if they are asymptotically the same: there exists $c,c'$ such that $c\selfMapDeg = c'\selfMapDeg'\in D$ and $v^c \simeq {(v')}^{c'}$.
        \end{remark}

        \subsubsection*{Categories of asymptotically defined self maps}
    
        \begin{definition}\label{dfn:cats-with-asymp-defined-endomorphism}
            Define the category of \emph{stable, idempotent complete categories with $D$-asymptotically defined shifted endomorphisms of the identity} as the colimit in $\PPrr$ of $\Catperf\SEd$ along the $\powerDeg$-power maps:
            \begin{equation*}
                \Catperf\SED \coloneqq \colim_{D \onto \ZZ / \selfMapDeg}\Catperf\SEd.
            \end{equation*}

            Similarly, define the category of stable, idempotent complete categories with $D$-asymptotically defined nilpotent endomorphisms and $D$-asymptotically defined isomorphisms as
            \begin{align*}
                & \Catperf\SEDiso \coloneqq \colim_{D \onto \ZZ / \selfMapDeg} \Catperf\SEdiso \\
                & \Catperf\SEDnil \coloneqq \colim_{D \onto \ZZ / \selfMapDeg} \Catperf\SEdnil \\
            \end{align*}
        \end{definition}

        \begin{remark}\label{rmrk:SED-sequential-colimit}
            As in \cref{rmrk:SMD-sequential-colimit}, $\Catperf\SED$, $\Catperf\SEDiso$, $\Catperf\SEDnil$ can be defined as sequential colimits.
        \end{remark}

        \begin{example}
            If $D = \ZZ/\selfMapDeg$ is finite then $\Catperf\SED = \Catperf\SEd$.
        \end{example}

        \begin{remark}\label{rmrk:objects-of-SED}
            Write $D$ as a sequential limit of surjections
            \begin{equation*}
                D = \lim_i \ZZ / \selfMapDeg_i.
            \end{equation*}
            The colimit in $\PPrr$ is computed as the colimit in $\PrL$, which in turn is computed as the limit of the right adjoints. That is
            \begin{equation*}
                \Catperf\SED \simeq \lim( \Catperf\SEd[\selfMapDeg_1] \xgets{\sqrt[\selfMapDeg_2/\selfMapDeg_1]{(-)}} \Catperf\SEd[\selfMapDeg_2] \xgets{\sqrt[\selfMapDeg_3/\selfMapDeg_2]{(-)}} \cdots ).
            \end{equation*}
            Therefore, an object in $\Catperf\SED$ is a sequence of categories $\cC_i$, natural transformations $\alpha_i \colon \Sigma^{\selfMapDeg_i} \to \id_{\cC_i}$ and identifications $(\cC_i, \alpha_i) \simeq \sqrt[\selfMapDeg_{i+1}/\selfMapDeg_i]{(\cC_{i+1},\alpha_{i+1})}$. 

            Using the power-root adjunction we have a series of maps ${(\cC_i,\alpha_i)}\SMd[\selfMapDeg_{i+1} / \selfMapDeg_i] \to (\cC_{i+1}, \alpha_{i+1})$, and by the above condition, the image is exactly all elements $X$ such that $\alpha_{i+1}\colon \Sigma^{\selfMapDeg_{i+1}} X \to X$ admits a root.
        \end{remark}

        \begin{definition}
            As the underlying functor category $U\colon \Catperf\SEd\to \Catperf$ is a left adjoint and commutes with the power map, it induces an underlying category functor $U\colon \Catperf\SED \to \Catperf$ in $\PrL$. We will denote its right adjoint $R \colon \Catperf \to \Catperf\SED$.
        \end{definition}

        \begin{lemma}\label{lem:U-SMD-adjunction}
            Using the notations of \cref{rmrk:objects-of-SED}, $U$ sends the system $(\cC_i,\alpha_i)$ to the colimit $\colim_i \cC_i$ (induced by the maps $(\cC_i,\alpha_i^{\selfMapDeg_{i+1}/\selfMapDeg_i}) \to (\cC_{i+1}, \alpha_{i+1})$) and $R$ sends $\cC$ to the system $(\cC\SMd[\selfMapDeg_i], v)$. In particular $U\circ R \simeq {(-)}\SMD$. 
        \end{lemma}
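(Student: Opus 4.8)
The plan is to pin down $R$ first, recover the formula for $U$ from $U\dashv R$, and then evaluate the composite. Write $\powerDeg_i:=\selfMapDeg_{i+1}/\selfMapDeg_i$, let $P_i:={(-)}^{\powerDeg_i}\colon\Catperf\SEd[\selfMapDeg_i]\to\Catperf\SEd[\selfMapDeg_{i+1}]$ be the power maps with right adjoints $\sqrt[\powerDeg_i]{(-)}$, and let $R_i:={(-)}\SMd[\selfMapDeg_i]$, which is right adjoint to the underlying-category functor $U_i$ by \cref{prop:SMd-adjoint-U}. By \cref{rmrk:objects-of-SED} we have, in $\PrL$,
\begin{equation*}
    \Catperf\SED\ \simeq\ \colim_i \Catperf\SEd[\selfMapDeg_i]\quad(\text{along the }P_i)\quad\simeq\quad \lim_i \Catperf\SEd[\selfMapDeg_i]\quad(\text{along the }\sqrt[\powerDeg_i]{(-)}).
\end{equation*}
Write $\iota_i$ for the colimit structure functors and $\pi_i$ for the limit projections, so that $\iota_i\dashv\pi_i$, $\iota_{i+1}P_i\simeq\iota_i$, and $\sqrt[\powerDeg_i]{(-)}\circ\pi_{i+1}\simeq\pi_i$. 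Since the power maps do not change the underlying category we have $U_{i+1}P_i\simeq U_i$ coherently, so the $U_i$ glue to the underlying-category functor $U$ with $U\iota_i\simeq U_i$, and $R$ is by definition its right adjoint.

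First I would identify $R$. Passing to right adjoints in $U\iota_i\simeq U_i$ gives $\pi_i\circ R\simeq R_i={(-)}\SMd[\selfMapDeg_i]$ for all $i$, coherently; and a functor into the limit $\lim_i\Catperf\SEd[\selfMapDeg_i]$ is the same as a compatible family of functors into the $\Catperf\SEd[\selfMapDeg_i]$, where the compatibility data $\sqrt[\powerDeg_i]{(-)}\circ R_{i+1}\simeq R_i$ needed here is exactly the (coherent) family of equivalences $\sqrt[\powerDeg_i]{(\cC\SMd[\selfMapDeg_{i+1}])}\simeq\cC\SMd[\selfMapDeg_i]$ of \cref{cor:sqrt-of-SMd} --- itself obtained by passing to right adjoints in $U_{i+1}P_i\simeq U_i$. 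Hence $R$ is the functor classified by the family $(R_i)_i$; concretely $R(\cC)$ is the system whose $i$-th term is $\cC\SMd[\selfMapDeg_i]$ carrying the natural transformation $v$ of \cref{lem:natural-map-for-SM}, i.e.\ $R(\cC)=(\cC\SMd[\selfMapDeg_i],v)$.

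Next I would read off $U$ on objects from $U\dashv R$. For $X=(\cC_i,\alpha_i)_i\in\Catperf\SED$ and $\cC\in\Catperf$, the mapping-space formula for a tower limit gives $\Map_{\Catperf\SED}(X,R\cC)\simeq\lim_i\Map_{\Catperf\SEd[\selfMapDeg_i]}((\cC_i,\alpha_i),R_i\cC)$, and applying $U_i\dashv R_i$ term by term turns this into
\begin{equation*}
    \Map_{\Catperf\SED}\big(X,R\cC\big)\ \simeq\ \lim_i\Map_{\Catperf}(\cC_i,\cC)\ \simeq\ \Map_{\Catperf}\big(\colim_i\cC_i,\ \cC\big),
\end{equation*}
naturally in $\cC$; combined with $\Map_{\Catperf}(UX,\cC)\simeq\Map_{\Catperf\SED}(X,R\cC)$ and the Yoneda lemma this yields $UX\simeq\colim_i\cC_i$. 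Tracing the transition maps of this tower through the $U_i\dashv R_i$ adjunctions and through the coherence isomorphism $(\cC_i,\alpha_i)\simeq\sqrt[\powerDeg_i]{(\cC_{i+1},\alpha_{i+1})}$ shows the structure map $\cC_i\to\cC_{i+1}$ of the colimit is the underlying functor of the counit of $P_i\dashv\sqrt[\powerDeg_i]{(-)}$ at $(\cC_{i+1},\alpha_{i+1})$, i.e.\ of the map $(\cC_i,\alpha_i^{\powerDeg_i})\to(\cC_{i+1},\alpha_{i+1})$ in $\Catperf\SEd[\selfMapDeg_{i+1}]$, exactly as asserted. Feeding $R(\cC)=(\cC\SMd[\selfMapDeg_i],v)$ into this formula then gives $UR(\cC)\simeq\colim_i\cC\SMd[\selfMapDeg_i]$ along the power maps, which is $\cC\SMD$ by \cref{dfn:SMD} and \cref{rmrk:SMD-sequential-colimit}; naturality in $\cC$ upgrades this to $U\circ R\simeq{(-)}\SMD$.

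The step I expect to be the main obstacle is the last bit of bookkeeping: checking that the transition maps in the tower $\lim_i\Map_{\Catperf}(\cC_i,\cC)$ --- equivalently, the maps assembling $\colim_i\cC_i$ --- are precisely the counits $(\cC_i,\alpha_i^{\powerDeg_i})\to(\cC_{i+1},\alpha_{i+1})$. This requires chasing the coherence isomorphism $(\cC_i,\alpha_i)\simeq\sqrt[\powerDeg_i]{(\cC_{i+1},\alpha_{i+1})}$ through the power--root adjunction and through the fiber-product description of $\sqrt[\powerDeg_i]{(-)}$; once that is in place the rest is formal manipulation of the adjunctions and universal properties already set up in this section.
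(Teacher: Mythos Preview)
Your argument is correct and follows essentially the same route as the paper: identify $R$ as the tower of right adjoints $R_i={(-)}\SMd[\selfMapDeg_i]$, then compute $U$ from the adjunction. The only difference is packaging: where you run the Yoneda/mapping-space computation for the tower by hand, the paper simply invokes (the ${(-)}\op$ version of) \cite[Corollary~1.3]{Horev-Yanovski-2017-adjoints}, which states precisely that the left adjoint of a limit of right adjoints is the colimit of the corresponding left adjoints; your calculation is an unpacking of that citation in this instance. The transition-map bookkeeping you flag as the delicate point is also not spelled out in the paper's proof---both arguments leave it to the reader to observe that the maps are the underlying functors of the counits $P_i\sqrt[\powerDeg_i]{(\cC_{i+1},\alpha_{i+1})}\to(\cC_{i+1},\alpha_{i+1})$.
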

        \begin{proof}
            $R$ is defined as the limit of the functors ${(-)}\SMd[\selfMapDeg_i] \colon \Catperf \to \Catperf\SEd[\selfMapDeg_i]$ and so the formula follows. $U$ is defined as the left adjoint of $R$. Using (the $(-)\op$ version of)~\cite[Corollary~1.3]{Horev-Yanovski-2017-adjoints}, we know that $U$, as a left adjoint to a limit of functors, is the colimit of the left adjoints of the functors, i.e.
            \begin{equation*}
                U((\cC_i,\alpha_i)) = \colim_i U(\cC_i,\alpha_i) = \colim_i \cC_i.
            \end{equation*}
        \end{proof}

        Justified by \cref{lem:U-SMD-adjunction}, we will continue the abuse of notations of \cref{subsec:shifted-endomorphisms} and denote $R$ by ${(-)}\SMD$.

        \begin{remark}
            Using \cref{rmrk:objects-of-SED} together with \cref{lem:U-SMD-adjunction}, we will think of an object in $\Catperf\SED$ as the category $\cC = \colim_i \cC_i$ together with an \quotes{asymptotically defined natural transformation $\alpha$}, i.e.\ an exhausting filtration $\cC_1 \to \cC_2 \to \cdots \to \cC$ and natural transformations $\alpha_i\colon \Sigma^{\selfMapDeg_i} \to \id_{\cC_i}$ with the obvious compatibility conditions.
        \end{remark}

        Repeating the argument of \cref{lem:U-SMD-adjunction} we deduce the following:
        \begin{proposition}
            The embedding $\Catperf\SEDiso \into \Catperf\SED$ admits a left adjoint $L$ sending $(\cC,\alpha)$ to $\cC[\alpha^{-1}]$ (sending each $(\cC_i,\alpha_i)$ to $(\cC_i[\alpha_i^{-1}],\alpha_i)$). Moreover $L$ is a Bousfield localization with kernel $\Catperf\SEDnil$ and the colocalization functor is $\Nil\colon \Catperf\SED \to \Catperf\SEDnil$ sending $(\cC,\alpha)$ to $\Nil_{\alpha}(\cC)$.
        \end{proposition}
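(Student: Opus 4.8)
The plan is to repeat the argument of \cref{lem:U-SMD-adjunction}, propagating the finite-level statements of \cref{subsec:nilpotents-and-invertible} through the defining colimits of \cref{dfn:cats-with-asymp-defined-endomorphism}. Write $D = \lim_i \ZZ/\selfMapDeg_i$ as a sequential limit of surjections. Since the colimits in $\PPrr$ defining $\Catperf\SED$, $\Catperf\SEDiso$ and $\Catperf\SEDnil$ are computed in $\PrL$ as limits of the right adjoints (\cref{rmrk:objects-of-SED}), one has $\Catperf\SED \simeq \lim_i \Catperf\SEd[\selfMapDeg_i]$ and $\Catperf\SEDiso \simeq \lim_i \Catperf\SEdiso[\selfMapDeg_i]$, and likewise $\Catperf\SEDnil$ is the limit of the finite-level categories $\Catperf\SEdnil$, all along the root maps $\sqrt[\selfMapDeg_{i+1}/\selfMapDeg_i]{(-)}$. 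I would first record that the $\powerDeg$-power map preserves the nilpotent and the iso subcategories: this follows from the mapping-telescope formula of \cref{rmrk:inverting-alpha-on-Ind}, which gives $\cC[\alpha^{-1}] \simeq \cC[(\alpha^{\powerDeg})^{-1}]$ and shows that $\alpha$ is locally nilpotent iff $\alpha^{\powerDeg}$ is. Consequently the root maps restrict to the iso and nilpotent subcategories, and the inclusions $\Catperf\SEDnil \into \Catperf\SED \hookleftarrow \Catperf\SEDiso$ are the limits of the levelwise inclusions.

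Next, at each finite level $\selfMapDeg_i$ the inclusion $\Catperf\SEdiso[\selfMapDeg_i]\into\Catperf\SEd[\selfMapDeg_i]$ is fully faithful with left adjoint $L_i = (-)[\alpha^{-1}]$ (\cref{cor:universal-prop-of-inverting}), the inclusion of the nilpotent subcategory is fully faithful with right adjoint $\Nil_i$ (\cref{subsec:nilpotents-and-invertible}), and $L_i \simeq \cofib(\Nil_i \to \id)$; moreover, by the identities just recorded, these adjunctions and this identification are compatible with the power and root maps. Exactly as in \cref{lem:U-SMD-adjunction} --- invoking (the $(-)\op$ version of) \cite[Corollary~1.3]{Horev-Yanovski-2017-adjoints} to pass adjoints through the limit --- this produces a left adjoint $L$ to $i\colon\Catperf\SEDiso\into\Catperf\SED$ and a right adjoint $\Nil$ to $\Catperf\SEDnil\into\Catperf\SED$, both computed levelwise: in the notation of \cref{rmrk:objects-of-SED} they send $(\cC,\alpha)=(\cC_i,\alpha_i)_i$ to $\cC[\alpha^{-1}]=(\cC_i[\alpha_i^{-1}],\alpha_i)_i$ and to $\Nil_\alpha(\cC)=(\Nil_{\alpha_i}(\cC_i),\alpha_i)_i$ respectively.

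Finally I would wrap up the formal consequences: $i$ is fully faithful as a limit of fully faithful functors, so $L$ exhibits $\Catperf\SEDiso$ as a Bousfield localization; its kernel is the systems with $\cC_i[\alpha_i^{-1}]\simeq 0$ for all $i$, and since $X[\alpha_i^{-1}]$ is the mapping telescope of $\alpha_i$ along the compact object $X$ this vanishes on every $X$ precisely when $\alpha_i$ is locally nilpotent, so $\ker L = \Catperf\SEDnil$; and the levelwise identification $L_i\simeq\cofib(\Nil_i\to\id)$, being compatible with the root maps, passes to the limit to give $L\simeq\cofib(\Nil\to\id)$ in $\Fun(\Catperf\SED,\Catperf\SED)$, exhibiting $\Nil$ as the complementary colocalization. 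I expect the main point requiring care --- rather than a genuine obstacle --- to be precisely the compatibility of the finite-level adjunctions with the power maps that licenses \cite[Corollary~1.3]{Horev-Yanovski-2017-adjoints}; this rests on \cref{rmrk:inverting-alpha-on-Ind} together with the coherence bookkeeping of \cref{lem:power-map-is-equivariant}-type arguments, and is routine.
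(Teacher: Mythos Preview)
Your proposal is correct and follows exactly the approach the paper intends: the paper's own proof is nothing more than the sentence ``Repeating the argument of \cref{lem:U-SMD-adjunction} we deduce the following,'' and you have faithfully unpacked that argument---passing to the limit description via right adjoints, checking levelwise compatibility of $L$ and $\Nil$ with the power/root maps, and invoking \cite[Corollary~1.3]{Horev-Yanovski-2017-adjoints}. Your write-up is in fact more detailed than the paper's, but the strategy is identical.
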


        Recall that $\Catperf\SEdiso$ admits a natural $\Omega (S^2/\selfMapDeg)$-action (\cref{lem:groups-action-on-SEdiso}) and that the $\powerDeg$-power map is $\Omega (S^2/\powerDeg\selfMapDeg)$-equivariant (\cref{lem:power-map-is-equivariant}). We deduce immediately:
        \begin{corollary}\label{cor:group-action-on-SEDiso}
            The category $\Catperf\SEDiso$ admits an action of the group $\Omega S^2_D$, where
            \begin{equation*}
                S^2_D\coloneqq \lim_{D \onto \ZZ/\selfMapDeg} S^2/\selfMapDeg.
            \end{equation*}
        \end{corollary}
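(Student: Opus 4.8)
The plan is to glue the $\Omega(S^2/\selfMapDeg)$-actions of \cref{lem:groups-action-on-SEdiso} along the power maps, after first transporting them all to a common base space. I would write $D=\lim_i\ZZ/\selfMapDeg_i$ as a sequential limit of surjections as in \cref{rmrk:SED-sequential-colimit}, so that $\selfMapDeg_i\mid\selfMapDeg_{i+1}$ and $S^2_D$ is the limit of the tower $S^2/\selfMapDeg_1\xgets{q}S^2/\selfMapDeg_2\xgets{q}\cdots$, where $q$ is the map on cofibres appearing in the proof of \cref{lem:power-map-is-equivariant}; write $p_i\colon S^2_D\to S^2/\selfMapDeg_i$ for the projections, which satisfy $q\circ p_{i+1}\simeq p_i$. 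First I would recall from the proof of \cref{lem:groups-action-on-SEdiso} that the $\Omega(S^2/\selfMapDeg_i)$-action on $\Catperf\SEdiso[\selfMapDeg_i]$ is encoded by the local system $\mathcal{F}_i\coloneqq(\pi_{\selfMapDeg_i})_*(\Catperf,\Sigma)\in\PPrr^{S^2/\selfMapDeg_i}$, whose value at the basepoint is $\Catperf\SEdiso[\selfMapDeg_i]$, and from the proof of \cref{lem:power-map-is-equivariant} that the commuting square with $q\circ\pi_{\selfMapDeg_{i+1}}\simeq\pi_{\selfMapDeg_i}$ provides, via Beck--Chevalley, a map $q^*\mathcal{F}_i\to\mathcal{F}_{i+1}$ in $\PPrr^{S^2/\selfMapDeg_{i+1}}$ whose restriction to the basepoint is the $(\selfMapDeg_{i+1}/\selfMapDeg_i)$-power map.

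Next I would base-change everything to $S^2_D$, setting $\mathcal{G}_i\coloneqq p_i^*\mathcal{F}_i\in\PPrr^{S^2_D}=\Fun(S^2_D,\PPrr)$. Using $q\circ p_{i+1}\simeq p_i$, the maps above become maps $\mathcal{G}_i=p_{i+1}^*q^*\mathcal{F}_i\to p_{i+1}^*\mathcal{F}_{i+1}=\mathcal{G}_{i+1}$, hence a diagram $\mathcal{G}_\bullet\colon\NN\to\PPrr^{S^2_D}$, and I would define the candidate local system as $\mathcal{G}_\infty\coloneqq\colim_i\mathcal{G}_i$ (such filtered colimits exist in $\PPrr$, being computed in $\PrL$; see \cref{rmrk:objects-of-SED}). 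Since colimits in a functor category are computed pointwise, evaluating at the basepoint gives $\mathcal{G}_\infty(\pt)\simeq\colim_i\mathcal{G}_i(\pt)\simeq\colim_i\Catperf\SEdiso[\selfMapDeg_i]\simeq\Catperf\SEDiso$, where the transition maps are exactly the power maps, so this is the colimit defining $\Catperf\SEDiso$ (\cref{dfn:cats-with-asymp-defined-endomorphism}, \cref{rmrk:SED-sequential-colimit}). Finally, $S^2_D$ is simply connected — it is a limit of a tower of simply connected spaces with finite homotopy groups, so the relevant $\lim^1$ terms vanish by Mittag--Leffler — hence it is the classifying space of $\Omega S^2_D$, and therefore the pointed functor $\mathcal{G}_\infty\colon S^2_D\to\PPrr$ is precisely the datum of an $\Omega S^2_D$-action on $\Catperf\SEDiso$, as desired. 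As each $\mathcal{G}_i$ restricts to the suspension action on the identity functor (cf.\ \cref{rmrk:action-is-suspension}), so does $\mathcal{G}_\infty$.

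The argument is formal, and I do not expect a serious obstacle; the points that need genuine care are: checking that the cofibre maps $q$ are compatible with the tower projections $p_i$, so that all the $\mathcal{F}_i$ admit a common base change to $S^2_D$ (this is precisely the commuting square of \cref{lem:power-map-is-equivariant}); confirming that the basepoint restrictions of the Beck--Chevalley maps really are the power maps, so that $\mathcal{G}_\infty(\pt)$ is the defining colimit $\Catperf\SEDiso$ rather than some other colimit; and the $\lim^1$-vanishing that makes $S^2_D$ simply connected, which is what allows one to read a local system over $S^2_D$ as an $\Omega S^2_D$-action.
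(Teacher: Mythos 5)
Your proposal is correct and takes essentially the same route as the paper, which deduces the corollary immediately from \cref{lem:groups-action-on-SEdiso} and \cref{lem:power-map-is-equivariant}; your argument just makes that deduction explicit by pulling the local systems $(\pi_{\selfMapDeg_i})_*(\Catperf,\Sigma)$ back to $S^2_D$, taking the pointwise sequential colimit along the Beck--Chevalley lifts of the power maps, and reading off the basepoint fiber as $\Catperf\SEDiso$ (for which connectedness of $S^2_D$ already suffices to interpret the local system as an $\Omega S^2_D$-action).
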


        \begin{lemma}\label{lem:pi2-of-SD}
            $\pi_2(S^2_D) \cong D$.
        \end{lemma}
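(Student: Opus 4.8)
The plan is to reduce the statement to the Milnor $\lim^1$-sequence for the homotopy groups of a tower of spaces, after identifying $\pi_2$ and $\pi_3$ of each $S^2/\selfMapDeg$. First I would present $D$ as a sequential limit $D = \lim_i \ZZ/\selfMapDeg_i$ of surjections with $\selfMapDeg_i \mid \selfMapDeg_{i+1}$ (as in \cref{rmrk:SMD-sequential-colimit}); by cofinality $S^2_D \simeq \lim_i (S^2/\selfMapDeg_i)$ is then a sequential limit in $\spc$, whose transition maps are the maps $q \colon S^2/\selfMapDeg_{i+1} \to S^2/\selfMapDeg_i$ appearing in \cref{lem:power-map-is-equivariant}.

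Next I would record the relevant homotopy groups of the terms of this tower. By \cref{lem:groups-action-on-SEdiso}, $S^2/\selfMapDeg = \cofib(\selfMapDeg \colon S^2 \to S^2)$ is obtained from $S^2$ by attaching a single $3$-cell along a degree-$\selfMapDeg$ map, so it is simply connected with reduced integral homology $\ZZ/\selfMapDeg$ concentrated in degree $2$. By the Hurewicz theorem $\pi_2(S^2/\selfMapDeg) \cong \ZZ/\selfMapDeg$, and it is generated by the image of the generator $\iota_2 \in \pi_2(S^2)$ under the collapse map $\pi_\selfMapDeg \colon S^2 \to S^2/\selfMapDeg$. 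Since moreover $S^2/\selfMapDeg$ is simply connected with finite reduced homology in every degree, Serre's mod-$\mathcal C$ theorem shows all of its homotopy groups are finite; in particular each $\pi_3(S^2/\selfMapDeg_i)$ is finite. To compute the maps of the tower on $\pi_2$, I would use that the square of \cref{lem:power-map-is-equivariant} gives $q \circ \pi_{\selfMapDeg_{i+1}} \simeq \pi_{\selfMapDeg_i}$: applying $\pi_2$ and using that $\pi_2(S^2/\selfMapDeg)$ is generated by $(\pi_\selfMapDeg)_\ast(\iota_2)$, the induced map $\ZZ/\selfMapDeg_{i+1} \to \ZZ/\selfMapDeg_i$ carries generator to generator, i.e.\ it is the canonical surjection (in particular it is independent of the non-unique choice of $q$). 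Hence $\lim_i \pi_2(S^2/\selfMapDeg_i) = \lim_i \ZZ/\selfMapDeg_i = D$.

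Finally, I would feed this into the Milnor exact sequence
\begin{equation*}
    0 \longrightarrow {\lim}^1_i\, \pi_3(S^2/\selfMapDeg_i) \longrightarrow \pi_2(S^2_D) \longrightarrow \lim_i \pi_2(S^2/\selfMapDeg_i) \longrightarrow 0 .
\end{equation*}
Since $\{\pi_3(S^2/\selfMapDeg_i)\}_i$ is a tower of finite groups it is Mittag--Leffler, so the $\lim^1$-term vanishes and $\pi_2(S^2_D) \cong \lim_i \pi_2(S^2/\selfMapDeg_i) = D$. The only step that is not purely formal is controlling $\pi_3$: the point is that finiteness alone — which costs only Serre's theorem, with no explicit computation of $\pi_3(S^2/\selfMapDeg)$ — already suffices to kill the $\lim^1$-term.
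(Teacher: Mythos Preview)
Your argument is correct and follows essentially the same route as the paper: write $S^2_D$ as a sequential limit, apply Milnor's $\lim^1$ exact sequence, and kill the $\lim^1$-term via Mittag--Leffler using that each $S^2/\selfMapDeg_i$ is simply connected with finite homology and hence finite homotopy groups. You supply a bit more detail than the paper (invoking Hurewicz and Serre explicitly, and checking that the transition maps on $\pi_2$ are the canonical surjections), but the structure of the proof is the same.
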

        \begin{proof}
            Write $D$ as a sequential limit of surjections.
            \begin{equation*}
                D = \lim_i \ZZ / \selfMapDeg_i.
            \end{equation*}
            By cofinality $S^2_D = \lim_{i} S^2 / \selfMapDeg_i$.
            Milnor's exact sequence for homotopy groups (see e.g.~\cite[Chapter~IX, Theorem~3.1]{Bousfied-Kan-1972-limits-completions-localizations}), says there is an exact sequence
            \begin{equation*}
                0 \to \limone_{i} \pi_3(S^2/\selfMapDeg_i) \to \pi_2(\lim_{i} S^2/\selfMapDeg_i) \to \lim_{i} \pi_2(S^2 / \selfMapDeg_i) \to 0.
            \end{equation*}
            $S^2/\selfMapDeg_i$ is simply connected and has finite homology groups thus has finite homotopy groups. In particular, by the Mittag--Leffler criterion (see e.g.~\cite[Theorem~7.75]{Switzer-1975-algebraic-topology}), $\limone_{i} \pi_3(S^2/\selfMapDeg_i) = 0$. Thus
            \begin{equation*}
                \pi_2(\lim_{i} S^2/\selfMapDeg_i) \cong \lim_i \pi_2(S^2 / \selfMapDeg_i) = \lim_{i} \ZZ/\selfMapDeg_i = D
            \end{equation*}
            as needed.
        \end{proof}

        \begin{corollary}\label{cor:group-action-on-id-SEDiso}
            There is an action of $\Omega^2 S^2_D$ on $\id_{\Catperf\SEDiso}$ which induces, for any $(\cC,\alpha)\in\Catperf\SEDiso$,  a group homomorphism $\Sigma^{(-)} \colon D \to \pi_0(\Aut(\cC))$ lifting the suspension.
        \end{corollary}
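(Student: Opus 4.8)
The plan is to combine \cref{cor:group-action-on-SEDiso}, the simple connectivity of $S^2_D$, and \cref{rmrk:G-action-on-id-categorified-part-2}, and then to read the action on objects off \cref{rmrk:action-is-suspension}. First I would upgrade the action on the \emph{object} $\Catperf\SEDiso$ to an action on the \emph{identity functor}. Note first that $S^2_D = \lim_i S^2/\selfMapDeg_i$ is simply connected: Milnor's sequence gives $0 \to \limone_i \pi_2(S^2/\selfMapDeg_i) \to \pi_1(S^2_D) \to \lim_i \pi_1(S^2/\selfMapDeg_i)$, together with the analogous exact sequence computing $\pi_0(S^2_D)$; the $\pi_0$ and $\pi_1$ of the Moore spaces $S^2/\selfMapDeg_i$ vanish, and the $\limone$ terms vanish by Mittag--Leffler since the transition maps $\ZZ/\selfMapDeg_{i+1} \onto \ZZ/\selfMapDeg_i$ are surjective --- this is exactly the argument of \cref{lem:pi2-of-SD}. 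Now by \cref{cor:group-action-on-SEDiso} the category $\Catperf\SEDiso$ is an $S^2_D$-local system in $\PPrr$, which, as $S^2_D$ is pointed and connected, is the same datum as a $\Omega S^2_D$-action on the object $\Catperf\SEDiso\in\PPrr$; since $\Omega S^2_D$ is moreover connected, \cref{rmrk:G-action-on-id-categorified-part-2} (with $X = \Catperf\SEDiso$ and $G = \Omega S^2_D$) converts this into an action of $\Omega(\Omega S^2_D) = \Omega^2 S^2_D$ on $\id_{\Catperf\SEDiso}$, which is the first assertion.

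Next I would extract the homomorphism and check that it lifts the suspension. Fix $(\cC,\alpha)\in\Catperf\SEDiso$. Taking the component at $(\cC,\alpha)$ of the natural automorphisms of $\id_{\Catperf\SEDiso}$ supplied by the action, and then applying the underlying-category functor $U\colon \Catperf\SEDiso\to\Catperf$, $U(\cC,\alpha)=\cC$ (\cref{lem:U-SMD-adjunction}), yields an $\EE_1$-map $\Omega^2 S^2_D\to\Aut(\cC)$ into the automorphisms of $\cC$ in $\Catperf$; applying $\pi_0$ and using $\pi_0(\Omega^2 S^2_D) = \pi_2(S^2_D)\cong D$ from \cref{lem:pi2-of-SD} gives the group homomorphism $\Sigma^{(-)}\colon D\to\pi_0\Aut(\cC)$. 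To see it lifts the suspension I would descend to the finite stages: writing $(\cC,\alpha)$ as a system $(\cC_i,\alpha_i)$ over the quotients $D\onto\ZZ/\selfMapDeg_i$ as in \cref{rmrk:objects-of-SED}, the $\Omega^2 S^2_D$-action is assembled from the finite-stage $\Omega^2(S^2/\selfMapDeg_i)$-actions, and by \cref{rmrk:action-is-suspension} the topological generator of each $\pi_2(S^2/\selfMapDeg_i) = \ZZ/\selfMapDeg_i$ acts on $\cC_i$ by the suspension $\Sigma$, with $\alpha_i$ witnessing $\Sigma^{\selfMapDeg_i}\simeq\id_{\cC_i}$. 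Since $\cC = \colim_i \cC_i$ carries the colimit of these suspensions, $1\in D$ maps to $[\Sigma]$; more precisely the composite $\ZZ\to D\xto{\Sigma^{(-)}}\pi_0\Aut(\cC)$ is $n\mapsto[\Sigma^n]$, which is the sense in which $\Sigma^{(-)}$ lifts the suspension.

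The step I expect to require the most care is this last identification: after the repeated loop--suspension adjunctions and the passage to the colimit one must check that $1\in D$ acts \emph{exactly} by $\Sigma$, not merely by an automorphism agreeing with it after some power. This is pinned down by \cref{rmrk:action-is-suspension} at each finite stage together with the equivariance of the power maps from \cref{lem:power-map-is-equivariant}, so it is bookkeeping rather than a new idea; the remaining steps are formal consequences of the adjunctions and $(\infty,n)$-categorical constructions established above.
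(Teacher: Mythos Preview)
Your proposal is correct and follows essentially the same route as the paper: invoke \cref{cor:group-action-on-SEDiso}, apply \cref{rmrk:G-action-on-id-categorified-part-2} to pass from the $\Omega S^2_D$-action on $\Catperf\SEDiso$ to an $\Omega^2 S^2_D$-action on its identity, identify $\pi_0\Omega^2 S^2_D\cong D$ via \cref{lem:pi2-of-SD}, and read off the suspension via \cref{rmrk:action-is-suspension}. Your explicit verification that $S^2_D$ is simply connected (needed for \cref{rmrk:G-action-on-id-categorified-part-2} with $G=\Omega S^2_D$) is a detail the paper leaves implicit; note only that the paper's \cref{lem:pi2-of-SD} argues $\limone$ vanishing from \emph{finiteness} of the homotopy groups rather than surjectivity of the transition maps, though either works.
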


        \begin{proof}
            There is such a group action by \cref{cor:group-action-on-SEDiso} and \cref{rmrk:G-action-on-id-categorified-part-2}. Therefore there is a group homomorphism
            \begin{equation*}
                \pi_2 (S^2_D) = \pi_0 (\Omega^2 S^2_D) \to \pi_0(\Aut(\cC)).
            \end{equation*}
            By \cref{lem:pi2-of-SD} $\pi_2(S^2_D) \cong D$. By \cref{rmrk:action-is-suspension}, $1\in D$ acts by $\Sigma$.
        \end{proof}

\section{The periodicity theorem}\label{sec:Tn-cat}
    The main goal of this section is constructing a group homomorphism 
    \begin{equation*}
        \Zn \coloneqq \lim_{\vnSelfNum} \ZZ/\vnSelfDeg \to \pi_0 \Aut(\TnComp).
    \end{equation*}
    Our main tool will be Devinatz and Smith's classical periodicity theorem. We will show that it implies the existence of a category in $\Catperf\SEvniso$ with underlying category $\TnComp$ (\cref{cor:underlying-of-nil-and-inv}). Then, using \cref{cor:group-action-on-id-SEDiso}, we get the required group homomorphism.
    \subsection{Telescopic localizations}\label{subsec:preliminaries}
        We start with some preliminary well known introduction to the telescopic categories in chromatic homotopy theory. All results here are well known, but for some we did not find a proof in the literature. For a more thorough introduction to chromatic homotopy theory we refer the reader to~\cite{Barthel-Beaudry-2020-intro-to-chromatic}. For proofs of these facts for the $\Kn$-local category we refer to~\cite{Hovey-Strickland-1999-666}.
        
        Fix a prime $p$. Let $\Kn$ be the $\chrHeight$-th Morava $K$-theory associated to a formal group law of height $\chrHeight$ over $\Fp$. It has homotopy groups given by
        \begin{equation*}
            \pi_*\Kn \cong \Fp[\vn^{\pm 1}]
        \end{equation*}
        where $\vn$ is of degree $\vnSize = 2(\extOrder)$. It is also common to define $\Kn[\infty] = \Fp$, $\Kn[0] = \QQ$, $\Kn[-1] = 0$.
        
        \begin{definition}
            Let $\FSpc\subseteq \plocSp\comp$ be the thick subcategory of $\Kn[\chrHeight-1]$-acyclics. That is
            \begin{equation*}
                \FSpc = \{X\in \plocSp\comp \mid K(\chrHeight - 1)\otimes X \simeq 0 \}
            \end{equation*}
            A spectrum in $\FSpc$ is said to be of type~$\ge \chrHeight$.
        \end{definition}
        
        Ravenel proved in~\cite[theorem~2.11]{Ravenel-1984-localizations} that $\FSpc[\chrHeight+1] \subseteq \FSpc$, Mitchell proved in~\cite[Theorem~B]{Mitchell-1985-An} that these inclusions are strict and Hopkins and Smith proved that these are all thick subcategories of $\plocSp\comp$~\cite[Theorem~7]{Hopkins-Smith-nilpotenceII}.
        
        \begin{theorem}[Thick subcategory theorem]
            The thick subcategories of $\plocSp\comp$ are exactly $\FSpc$ and they assemble into a strictly ascending filtraion
            \begin{equation*}
                \{0\} = \FSpc[\infty] \subsetneq \cdots \subsetneq \FSpc[2] \subsetneq \FSpc[1] \subsetneq \FSpc[0] = \plocSp\comp.
            \end{equation*}    
        \end{theorem}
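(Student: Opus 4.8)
The plan is to assemble the statement from three classical inputs, each already present in the literature cited just above, together with a direct check of the two endpoints of the filtration.

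First I would record the (non-strict) chain $\FSpc[n+1]\subseteq\FSpc[n]$. Concretely this is the assertion that for a compact $p$-local spectrum $X$, vanishing of $\Kn\otimes X$ forces vanishing of $\Kn[m]\otimes X$ for every $m\le n$; this is Ravenel's \cite[Theorem~2.11]{Ravenel-1984-localizations}, and it can be seen either from the structure of $BP_{*}X$ as a finitely presented $BP_{*}$-module (a Landweber-filtration argument) or as a consequence of the nilpotence theorem. The endpoints are then immediate: since $\Kn[-1]=0$, every compact spectrum is $\Kn[-1]$-acyclic, so $\FSpc[0]=\plocSp\comp$; and a compact spectrum killed by $\Kn[\infty]$ has trivial mod-$p$ homology, hence, being finite, is contractible, so $\FSpc[\infty]=\{0\}$.

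Next I would verify strictness of each inclusion, i.e.\ the existence for every $n$ of a compact $p$-local spectrum of type exactly $n$. One route is Mitchell's \cite[Theorem~B]{Mitchell-1985-An}, which produces such spectra as Steinberg-idempotent summands of suspension spectra of classifying spaces of finite groups; an alternative, more in keeping with the rest of this paper, is to build type-$n$ complexes inductively as iterated cofibers of $\vn$-self maps, whose existence is the periodicity theorem.

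The substantive step is that every thick subcategory of $\plocSp\comp$ is one of the $\FSpc$; this is Hopkins--Smith's \cite[Theorem~7]{Hopkins-Smith-nilpotenceII}, and it is where the real work lies. Given a nonzero thick subcategory $\cC$, set $n=\min\{\mathrm{type}(X):0\ne X\in\cC\}$; the inclusions above give $\cC\subseteq\FSpc$. For the reverse inclusion one fixes $X\in\cC$ of type exactly $n$ and shows $\mathrm{thick}(X)=\FSpc$, i.e.\ that every compact $Y$ of type $\ge n$ is built from $X$ by finitely many cofiber sequences, retracts and suspensions. This is exactly where the nilpotence theorem~\cite{Hopkins-Smith-nilpotenceII} enters: one studies the ring spectrum $F(X,X)$ and its action on $Y$, uses nilpotence to conclude the relevant self-maps behave as expected, and runs an induction exhibiting $Y$ as a retract of $Y\otimes X^{\otimes k}$ for $k\gg0$, which lies in $\mathrm{thick}(X)$. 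I expect this last step to be the main obstacle; the first two are comparatively formal once the nilpotence and periodicity theorems are available. Since all three ingredients are established in \cite{Ravenel-1984-localizations}, \cite{Mitchell-1985-An} and \cite{Hopkins-Smith-nilpotenceII}, the proof here amounts to invoking them.
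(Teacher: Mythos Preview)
Your proposal is correct and matches the paper's approach exactly: the paper does not prove this theorem itself but rather states it after citing the same three inputs you identify (Ravenel for the chain of inclusions, Mitchell for strictness, and Hopkins--Smith for the classification). Your added discussion of the endpoints and the sketch of the Hopkins--Smith argument go beyond what the paper records, but the structure is identical.
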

        
        \begin{definition}[\cite{Hopkins-Smith-nilpotenceII}]\label{dfn:vn-self-map}
            Let $X$ be a $p$-local spectrum. A map $v\colon  \Sigma^d X \to X$ is said to be a $\vn$-self-map if
            \begin{enumerate}
                \item $\id_{\Kn}\otimes v \colon  \Kn\otimes \Sigma^d X \to \Kn\otimes X$ is an isomorphism.
                \item $\id_{\Kn[m]}\otimes v \colon  \Kn[m] \otimes \Sigma^d X \to \Kn[m] \otimes X$ is nilpotent for $m \neq \chrHeight$.
            \end{enumerate}
        \end{definition}
        
        Hopkins and Smith's periodicity theorem states that any spectrum of type $\ge \chrHeight$ admits an asymptotically unique $\vn$-self-map:
        
        \begin{theorem}[{Periodicity theorem~\cite[Theorem~9, Corollary~3.7]{Hopkins-Smith-nilpotenceII}}]\label{thm:periodicity}
            A $p$-local compact spectrum $X$ admits a $\vn$-self-map if and only if $X\in\FSpc$. If $f,g$ are two $\vn$-self-maps of $X$ then there exist integers $i,j$ such that $f^i \simeq g^j$. Any spectrum admitting a $\vn$-self-map admits also a $\vn$-self-map of degree $\vnSelfDeg = 2p^k(\extOrder)$ for some $\vnSelfNum$.
        \end{theorem}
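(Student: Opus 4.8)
This is \cite[Theorem~9, Corollary~3.7]{Hopkins-Smith-nilpotenceII}; I sketch the strategy of Hopkins and Smith. The plan is to split the statement into three parts proved essentially independently: existence of a $\vn$-self-map on one carefully chosen complex, asymptotic uniqueness, and closure of the class of complexes carrying a $\vn$-self-map into a thick subcategory --- the last of which, via the thick subcategory theorem recorded above, upgrades existence on one complex to existence on all of $\FSpc$. The two external inputs will be the nilpotence theorem, applied to endomorphism ring spectra $\hom(X,X)$ of finite complexes, and the thick subcategory theorem; everything else is comodule algebra over $BP_*BP$.

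The first and hardest step is to produce a $\vn$-self-map on a single finite $p$-local spectrum $X$ of type exactly $\chrHeight$ (say an iterated cofiber of Adams self-maps at the heights below $\chrHeight$). I would run the Adams--Novikov ($BP$-based) spectral sequence for $\hom(X,X)$, whose $E_2$-page is $\mathrm{Ext}_{BP_*BP}(BP_*, BP_*\hom(X,X))$. By the Landweber filtration theorem, $BP_*\hom(X,X)\cong BP_*X\otimes_{BP_*}BP_*DX$ has a finite filtration by $BP_*BP$-subcomodules whose subquotients are suspensions of $BP_*/I_{m_j}$ with all $m_j\ge\chrHeight$ (this is where type $\chrHeight$ enters). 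The key algebraic fact --- that $\vn$ is invariant modulo $I_{\chrHeight}$ --- should then promote, after replacing $\vn$ by a large enough $p$-th power $\vn^{p^k}$ so as to absorb the finitely many extension-type error terms, to an invariant class in $\mathrm{Ext}^0$ sitting in topological degree $\vnSelfDeg=2p^k(\extOrder)$. A further increase of $k$ should make this class a permanent cycle: the differentials out of filtration $0$ land in a fixed finite list of $\mathrm{Ext}^{>0}$-groups that are bounded $\vn$-torsion, so high $p$-th powers survive. The resulting $v\colon\Sigma^{\vnSelfDeg}X\to X$ acts as $\vn^{p^k}$ on the type-$\chrHeight$ subquotients, hence is a $\Kn$-equivalence, and acts through $I_{m_j}\ni\vn$ on the higher ones, hence is $\Kn[m]$-nilpotent for $m>\chrHeight$; since $\Kn[m]_*X=0$ for $m<\chrHeight$, this makes $v$ a $\vn$-self-map of degree $\vnSelfDeg$.

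Next I would prove asymptotic uniqueness. Given $\vn$-self-maps $f\colon\Sigma^dX\to X$ and $g\colon\Sigma^eX\to X$, I pass to $f^e$ and $g^d$, now both in $\pi_{de}\hom(X,X)$. The same Landweber analysis of $\hom(X,X)$ shows that the subring of $\pi_*\hom(X,X)$ generated by $\vn$-self-maps is commutative modulo nilpotents; granting this, $f^e-g^d$ should vanish in $\Kn[m]_*\hom(X,X)$ for every $m$ --- for $m\ne\chrHeight$ both summands are nilpotent, and for $m=\chrHeight$ a finite-group argument on the automorphisms of the free $\Kn_*$-module $\Kn_*X$ forces agreement after a common power. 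Since the difference also vanishes rationally (automatic, $\chrHeight\ge1$), the nilpotence theorem makes $f^e-g^d$ nilpotent, and a short manipulation upgrades this to $f^i\simeq g^j$ for suitable $i,j$.

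Finally I would assemble these. Let $\mathcal{C}$ be the class of finite $p$-local spectra admitting a $\vn$-self-map of degree $2p^k(\extOrder)$ for some $k$. It is evidently closed under retracts and suspensions, and asymptotic uniqueness makes it closed under cofibers: for $W\to X\to X/W$ with $W,X/W\in\mathcal{C}$, I raise the two self-maps to a common $p$-power degree so they become compatible up to an obstruction lying in a finite group killed by a further $p$-power of a $\vn$-self-map; that power (again of $p$-power degree) kills the obstruction and builds a $\vn$-self-map of $X$. Then $\mathcal{C}$ is thick and contains the type-$\chrHeight$ complex from the first step, so by the thick subcategory theorem $\FSpc\subseteq\mathcal{C}$ --- which is the existence half together with the degree normalization. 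The converse (a $\vn$-self-map forces $X\in\FSpc$) is the easy direction --- one derives a contradiction from the nilpotence theorem and the behaviour of the telescope $X[v^{-1}]$ if $X$ had type $<\chrHeight$ --- and I would simply cite \cite{Hopkins-Smith-nilpotenceII} for it. The main obstacle, as flagged, is the first step: converting the purely algebraic $\vn$-self-map on the associated graded of the Landweber filtration into an honest $\mathrm{Ext}^0$-class and then a permanent cycle requires controlling the Landweber extension problems and the Adams--Novikov differentials simultaneously, and this is exactly where the ``pass to a high $p$-th power'' device and the fine arithmetic of $BP_*BP$ are indispensable.
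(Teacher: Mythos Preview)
The paper does not prove this theorem; it is stated as a black-box input with the citation to Hopkins--Smith already in the theorem header, and the paper's later \cref{thm:periodicity-rephrased} merely repackages its conclusions. Your proposal correctly recognizes this and goes further, giving a faithful outline of the original Hopkins--Smith argument (existence on a single type-$\chrHeight$ complex via the Adams--Novikov spectral sequence and Landweber filtration, asymptotic uniqueness via the nilpotence theorem applied to $\hom(X,X)$, and propagation to all of $\FSpc$ via the thick subcategory theorem). For the purposes of this paper a bare citation suffices, so your sketch is strictly more than is needed; if you keep it, flag explicitly that the existence step in Hopkins--Smith is carried out not on an arbitrary type-$\chrHeight$ complex but on a specific one with tightly controlled $BP_*$-homology (a generalized Smith--Toda complex), since that control is what makes the $\mathrm{Ext}^0$ and permanent-cycle arguments go through.
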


        \begin{proposition}[{\cite[Corollary~3.8]{Hopkins-Smith-nilpotenceII}}]\label{prop:periodicity-morphisms}
            Let $(X,v), (Y,w)$ be two $p$-local spectra with a $\vn$-self-map of degrees $d,e$ respectively. Let $f\colon X\to Y$ be a morphism. Then there exists $i,j$ such that $di = ej$ and the following diagram commutes
            \begin{equation*}
                \begin{tikzcd}
                    {\Sigma^{di}X} & {\Sigma^{ej}Y} \\
                    X & Y
                    \arrow["{\Sigma^{di}f}", from=1-1, to=1-2]
                    \arrow["{v^i}", from=1-1, to=2-1]
                    \arrow["{w^j}", from=1-2, to=2-2]
                    \arrow["f", from=2-1, to=2-2]
                \end{tikzcd}.
            \end{equation*}
        \end{proposition}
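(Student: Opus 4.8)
The plan is to deduce the statement from the asymptotic-uniqueness clause of the periodicity theorem (\cref{thm:periodicity}), applied not to $X$ or $Y$ but to the mapping spectrum $\hom(X,Y)$. Since $X$ and $Y$ are finite, $\hom(X,Y)\simeq\mathrm{D}X\otimes Y$ is again a finite $p$-local spectrum, and it lies in $\FSpc$: for every $m<\chrHeight$ we have $\Kn[m]\otimes\hom(X,Y)\simeq\mathrm{D}X\otimes(\Kn[m]\otimes Y)\simeq 0$, since $Y\in\FSpc$ is $\Kn[m]$-acyclic for all such $m$. Hence $\hom(X,Y)$ is a compact spectrum of type $\ge\chrHeight$ and admits $\vn$-self-maps.

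First I would extract two such self-maps from the given data. Pre-composition with $v\colon\Sigma^{d}X\to X$ gives a self-map $v^{\ast}\coloneqq\hom(v,Y)$ of $\hom(X,Y)$ of degree $d$, and post-composition with $w\colon\Sigma^{e}Y\to Y$ gives a self-map $w_{\ast}\coloneqq\hom(X,w)$ of degree $e$; by functoriality of $\hom(-,Y)$ and $\hom(X,-)$ their iterates are $(v^{\ast})^{i}\simeq\hom(v^{i},Y)$, of degree $di$, and $(w_{\ast})^{j}\simeq\hom(X,w^{j})$, of degree $ej$. To see that $v^{\ast}$ and $w_{\ast}$ are $\vn$-self-maps in the sense of \cref{dfn:vn-self-map}, I would use the Künneth isomorphism (valid since $\Kn[m]_{\ast}$ is a graded field), $\Kn[m]_{\ast}\hom(X,Y)\cong\mathrm{Hom}_{\Kn[m]_{\ast}}(\Kn[m]_{\ast}X,\Kn[m]_{\ast}Y)$, under which $v^{\ast}$ acts by pre-composition with $\Kn[m]_{\ast}v$ and $w_{\ast}$ by post-composition with $\Kn[m]_{\ast}w$. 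For $m=\chrHeight$ these are isomorphisms; for $m<\chrHeight$ the target is $0$; and for $m>\chrHeight$ (and $m=\infty$), if $(\Kn[m]_{\ast}v)^{N}\simeq 0$ then the $N$-th iterate of $v^{\ast}$ on $\Kn[m]$-homology is composition with $(\Kn[m]_{\ast}v)^{N}=0$, hence nilpotent — and symmetrically for $w_{\ast}$.

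Now \cref{thm:periodicity} yields integers $i,j$ with $(v^{\ast})^{i}\simeq(w_{\ast})^{j}$ as self-maps of $\hom(X,Y)$; comparing degrees forces $di=ej$. This equivalence is an identification $\hom(v^{i},Y)\simeq\hom(X,w^{j})$ of degree-$di$ self-maps of $\hom(X,Y)$, and evaluating it on the class of $f$ in $\pi_{0}\hom(X,Y)\cong[X,Y]$ — on which $\hom(v^{i},Y)$ acts by $f\mapsto f\circ v^{i}$ and $\hom(X,w^{j})$ by $f\mapsto w^{j}\circ\Sigma^{ej}f$ — gives $f\circ v^{i}\simeq w^{j}\circ\Sigma^{di}f$, which is exactly the commutativity of the square in the statement.

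The step I expect to be the main obstacle is the middle one: verifying cleanly that pre- and post-composition with $\vn$-self-maps produce $\vn$-self-maps of the mapping spectrum (the $\Kn[m]$-homology bookkeeping, keeping track of all suspensions and iterates). An alternative that bypasses the Künneth computation: replace $v,w$ by the equal-degree self-maps $v^{e},w^{d}$, set $Z\coloneqq X\oplus Y\in\FSpc$ with the $\vn$-self-map $u\coloneqq v^{e}\oplus w^{d}$, and apply \cref{thm:periodicity} to $u$ and its conjugate $u'\coloneqq\phi\circ u\circ(\Sigma^{de}\phi)^{-1}$ by the automorphism $\phi=\left(\begin{smallmatrix}1&0\\f&1\end{smallmatrix}\right)$ of $Z$ (a conjugate of a $\vn$-self-map is again a $\vn$-self-map). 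Then $u,u'$ are lower-triangular with the same diagonal, so the equality $u^{a}\simeq(u')^{a}$ of a common power forces the lower-left entry of $(u')^{a}$ to vanish, and that entry telescopes (up to suspension) to $f\circ v^{ae}-w^{ad}\circ\Sigma^{ade}f$, giving the desired square with $i=ae$, $j=ad$. Either way the only nontrivial ingredient is the periodicity theorem itself.
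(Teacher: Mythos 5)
The paper offers no proof of this proposition --- it is quoted directly from Hopkins--Smith (Corollary~3.8) --- so the only comparison to make is with the classical argument behind that citation, and your proposal is essentially exactly that argument, carried out correctly: $\hom(X,Y)$ is a compact spectrum in $\FSpc$, precomposition with $v$ and postcomposition with $w$ give two $\vn$-self-maps of it (the K\"unneth bookkeeping you sketch is the right verification), the asymptotic-uniqueness clause of \cref{thm:periodicity} identifies suitable powers, and evaluating on the class of $f$ in $\pi_0\hom(X,Y)$ produces the homotopy-commutative square; your alternative via the triangular conjugation on $X\oplus Y$ is also a standard correct variant, and the telescoped lower-left entry computation checks out. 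The one point worth flagging is that the proposition as printed says ``$p$-local spectra,'' whereas your proof (like the cited corollary and every use of the statement in \cref{subsec:periodicity}) needs $X$ and $Y$ compact: dualizability of $X$, the K\"unneth identification $K(m)_*\hom(X,Y)\cong\mathrm{Hom}_{K(m)_*}(K(m)_*X,K(m)_*Y)$, and the uniqueness clause of the periodicity theorem all require finiteness, so your standing assumption is the intended reading rather than a gap.
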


        \begin{remark}
            We work under the convention that the $\vn[0]$-self-maps are multiplication by powers of $p$.
        \end{remark}
        
        \cref{subsec:periodicity} is devoted to the reformulation of the periodicity theorem as an equivalence of categories using the language of \cref{sec:self-maps}.
        
        Let $\chrHeight < \infty$. We let $\Fn \in \FSpc \setminus \FSpc[\chrHeight+1]$ be any compact spectrum of type $\chrHeight$. By the periodicity theorem there exists a $\vn$-self-map $v\colon  \Sigma^d \Fn \to \Fn$. Its corresponding height $\chrHeight$ telescope is defined as $\Tn \coloneqq  \Fn {[v^{-1}]}$. 
        
        Now let $\FSp\subseteq \plocSp$ be the subcategory generated by $\FSpc$ (or equivalently by $\Fn$) under colimits. Denote by $\LnfSp$ the Verdier quotient of $\FSp[\chrHeight+1]\to \plocSp$. This category is equivalent to the category of $(\Tn[0] \oplus \cdots \oplus \Tn)$-local spectra. Denote the corresponding quotient functor by
        \begin{equation*}
            \Lnf\colon  \plocSp \to \LnfSp
        \end{equation*}
        which is a smashing localization. 
        
        The category $\SpTn$ of $\Tn$-local spectra is identified with the Verdier quotient of $\FSp[\chrHeight+1]\to \FSp$. The quotient map is given by the restricted localization
        \begin{equation*}
            \LTn = \Lnf\restriction_{\FSp} \colon  \FSp \to \SpTn.
        \end{equation*}
        
        The category of $\Tn$-local spectra inherits a symmetric monoidal structure given by the $\Tn$-localized tensor product of spectra. Its unit is the $\Tn$-local sphere $\SSTn\coloneqq \LTn\SS$.
        
        A related category is the monochromatic category $\MnfSp = \ker(\LnfSp\xto{\Lnf[\chrHeight-1]}\LnfSp[\chrHeight-1])$. The inclusion $\MnfSp\to \LnfSp$ admits a right adjoint $\Mnf\colon \LnfSp \to \MnfSp$. 
        If $X\in\FSp$, then $ \Lnf[\chrHeight-1](\Lnf X) \simeq  \Lnf[\chrHeight-1]X \simeq 0$, therefore the functor $\Lnf\colon \FSp\to \LnfSp$ factors through $\MnfSp$.
        
        We will use the following simple fact:
        \begin{proposition}\label{prop:type-n-compactly-generated}
            The category $\FSp$ is compactly generated and its compacts are exactly $\FSpc$.
        \end{proposition}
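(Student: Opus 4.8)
The plan is to identify $\FSp$ with the $\Ind$-completion of $\FSpc$ computed inside $\plocSp$, and then read off both assertions. By definition $\FSpc$ is a thick subcategory of $\plocSp\comp$; in particular it is a small, stable, idempotent-complete category, so $\FSpc\in\Catperf$. Recalling the equivalence $\Ind\colon \Catperf \rightleftarrows \PrL_{\st,\omega} \cocolon {(-)}\comp$ of \cite[Lemma~5.3.2.9]{Lurie-HA}, we get that $\Ind(\FSpc)$ is a compactly generated stable category and that $\Ind(\FSpc)\comp \simeq \FSpc$.

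Next I would compare $\Ind(\FSpc)$ with $\FSp$. The fully faithful inclusion $\iota\colon \FSpc\into\plocSp$ factors through $\plocSp\comp$, so by the universal property of $\Ind$-completion it extends (uniquely) to a colimit-preserving functor $\widehat{\iota}\colon \Ind(\FSpc)\to\plocSp$ restricting to $\iota$ on $\FSpc$. This $\widehat{\iota}$ is fully faithful: writing arbitrary objects of $\Ind(\FSpc)$ as filtered colimits $X \simeq \colim_i X_i$, $Y \simeq \colim_j Y_j$ of objects of $\FSpc$ and using that $\widehat{\iota}$ preserves filtered colimits, that each $\iota(X_i)$ is compact in $\plocSp$, and that $\iota$ is fully faithful, both $\Map_{\Ind(\FSpc)}(X,Y)$ and $\Map_{\plocSp}(\widehat{\iota}X,\widehat{\iota}Y)$ compute $\lim_i\colim_j \Map_{\FSpc}(X_i,Y_j)$ — a routine filtered-colimit bookkeeping. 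Since $\widehat{\iota}$ is fully faithful and colimit-preserving, its essential image is a full subcategory of $\plocSp$ closed under all small colimits; it contains $\FSpc$, and since $\Ind(\FSpc)$ is generated by $\FSpc$ under filtered colimits, this image is generated by $\FSpc$ under colimits. By the definition of $\FSp$ as the colimit-closure of $\FSpc$ in $\plocSp$, the image is therefore exactly $\FSp$, so $\widehat{\iota}$ induces an equivalence $\Ind(\FSpc)\simeq\FSp$.

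Finally I would conclude. From $\FSp\simeq\Ind(\FSpc)$ we get at once that $\FSp$ is compactly generated and that $\FSp\comp\simeq\Ind(\FSpc)\comp\simeq\FSpc$; concretely, the objects of $\FSpc$ are already compact in $\plocSp$, hence remain compact in the colimit-closed subcategory $\FSp$, and they generate $\FSp$ under colimits, so they are precisely its compact objects. I do not anticipate a genuine obstacle: the two points needing care are (i) reading "generated under colimits" in the definition of $\FSp$ as the localizing (equivalently, small-colimit-closed) subcategory, so that $\FSp$ is genuinely closed under all small colimits of $\plocSp$, and (ii) the full-faithfulness check for $\widehat{\iota}$, which is the standard two-variable filtered-colimit computation and uses nothing beyond compactness in $\plocSp$ of the objects of $\FSpc$.
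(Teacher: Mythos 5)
Your proposal is correct, but it packages the argument differently from the paper. The paper works directly inside $\FSp$: since $\FSp$ is by definition closed under colimits in $\plocSp$, every object of $\FSpc$ is compact in $\FSp$; since $\FSpc$ is closed under finite colimits, the colimit-closure of $\FSpc$ is already its closure under filtered colimits, giving compact generation; and finally, a compact $X\in\FSp$, written as a filtered colimit $X\simeq\colim_i X_i$ with $X_i\in\FSpc$, has its identity factoring through some stage, so $X$ is a retract of $X_i$ and hence lies in $\FSpc$ by thickness. You instead prove the stronger structural statement $\FSp\simeq\Ind(\FSpc)$ by extending the inclusion along the universal property and checking full faithfulness via the standard two-variable filtered-colimit computation, then read off both claims. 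The two proofs use the same inputs in disguised form: your full-faithfulness check is exactly the compactness-in-$\plocSp$ argument the paper uses to see compactness in $\FSp$; your appeal to exactness of $\iota$ (needed so the Ind-extension preserves all colimits, not just filtered ones) plays the role of the paper's remark that $\FSpc$ is closed under finite colimits; and the paper's explicit retract step is absorbed in your identification $\Ind(\FSpc)\comp\simeq\FSpc$, which likewise rests on $\FSpc$ being idempotent complete. What your route buys is the cleaner statement $\FSp\simeq\Ind(\FSpc)$ (useful elsewhere, and parallel to how the paper later uses the $\Ind\dashv(-)\comp$ equivalence in \cref{lem:formula-for-L}); what the paper's route buys is brevity, avoiding the universal-property and essential-image bookkeeping.
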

        \begin{proof}
            As $\FSp$ is closed under colimits in spectra, it is clear that any compact spectrum of type~$\ge \chrHeight$ is compact in $\FSp$. As $\FSpc$ is closed under finite colimits, $\FSp$ is generated from $\FSpc$ under filtered colimits. 
            Now let $X\in \FSp$ be compact. By the previous part, $X$ can be written as a filtered colimit of compact spectra of type $\ge n$: 
            \begin{equation*}
                X \simeq \colim_i X_i.
            \end{equation*}
            Then, as $X$ is compact, the identity map
            \begin{equation*}
                X \xto {\id} X \simeq \colim_i X_i
            \end{equation*}
            factors through $X_i$ for some $i$,
            so $X$ is a retract of $X_i\in\FSpc$ whence in $\FSpc$.
        \end{proof}
    
        \begin{corollary}\label{cor:Mnf-compactly-generated}
            The monochromatic category $\MnfSp$ is compactly generated. Its compact objects are retracts of $\Lnf X$ for $X\in\FSpc$.
        \end{corollary}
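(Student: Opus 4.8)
The plan is to find a small set of compact objects of $\MnfSp$ that generate it under colimits, and then read off the description of its compact objects. The natural candidate, suggested by the discussion just above the statement, is $S \coloneqq \{\Lnf X \mid X\in\FSpc\}$ (a skeleton thereof): recall that $\Lnf$ restricted to $\FSp$ is exact and lands in $\MnfSp$.

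First I would dispatch the formal points. The category $\MnfSp$ has all small colimits, computed as in $\plocSp$: since $\Lnf$ is smashing, $\LnfSp$ is closed under colimits in $\plocSp$, and $\MnfSp = \ker\!\big(\Lnf[\chrHeight-1]\colon \LnfSp \to \LnfSp[\chrHeight-1]\big)$ is the kernel of the restriction to $\LnfSp$ of the smashing (hence colimit-preserving) localization $\Lnf[\chrHeight-1]$, so it too is closed under colimits. For $X\in\FSpc$ and $Y\in\MnfSp$ one has $\Map_{\MnfSp}(\Lnf X, Y) \simeq \Map_{\plocSp}(X,Y)$ because $Y\in\LnfSp$; as $X$ is compact in $\plocSp$ and the colimits of $\MnfSp$ are those of $\plocSp$, this preserves colimits, so $\Lnf X$ is compact in $\MnfSp$. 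Moreover, since $\FSpc$ is thick in $\plocSp\comp$ and $\Lnf$ is exact, $S$ is a stable subcategory of $\MnfSp$ (closed under shifts and cofibres).

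The hard part will be the generation claim: if $Y\in\MnfSp$ and $\Map_{\plocSp}(X,Y)\simeq\pt$ for every $X\in\FSpc$, then $Y\simeq 0$. Here I would invoke \cref{prop:type-n-compactly-generated}: because $\FSpc$ generates $\FSp$ under colimits, the hypothesis says precisely that $Y$ lies in the right-orthogonal $\FSp^{\perp}$. Now $\LnfSp[\chrHeight-1] = \plocSp/\FSp$ is the finite localization at the localizing subcategory $\FSp$, which is generated by the compact objects $\FSpc$; hence its local objects are exactly $\FSp^{\perp}$, so $Y$ is $\Lnf[\chrHeight-1]$-local, i.e.\ the unit $Y\to\Lnf[\chrHeight-1]Y$ is an equivalence. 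But $Y\in\MnfSp$ forces $\Lnf[\chrHeight-1]Y\simeq 0$, whence $Y\simeq 0$.

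Given these three points, the conclusion is formal: a cocomplete category generated under colimits by a small set of compact objects is compactly generated, with compact objects the thick closure of the generating set; and since $S$ is already a stable subcategory of the idempotent-complete $\MnfSp$, its thick closure is simply the class of retracts of its objects, i.e.\ the retracts of $\Lnf X$ with $X\in\FSpc$. The only input beyond routine manipulation with smashing localizations is the identification of the $\Lnf[\chrHeight-1]$-local spectra with $\FSp^{\perp}$ --- the finite localization theorem --- which is already built into the present setup through the description $\LnfSp[\chrHeight-1] = \plocSp/\FSp$.
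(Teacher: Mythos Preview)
Your argument is correct. The core inputs---that $\Lnf X$ is compact in $\MnfSp$ via the adjunction $\Map_{\MnfSp}(\Lnf X,Y)\simeq\Map_{\plocSp}(X,Y)$, and that the compacts are the idempotent-completion of $\{\Lnf X : X\in\FSpc\}$---match the paper exactly.

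The one difference is how you establish generation. The paper observes that $\MnfSp\subseteq\FSp$ (since $\ker(\Lnf[\chrHeight-1])=\FSp$) and then, using \cref{prop:type-n-compactly-generated}, writes any $Y\in\MnfSp$ as a filtered colimit of objects $X_i\in\FSpc$; applying $\Lnf$ gives $Y\simeq\colim_i\Lnf X_i$, and for compact $Y$ one extracts a retract directly. You instead check the orthogonality criterion $S^\perp=0$ by identifying $\FSp^\perp$ with the $\Lnf[\chrHeight-1]$-local objects. These are two phrasings of the same fact (the acyclics of the finite localization $\Lnf[\chrHeight-1]$ form $\FSp$, equivalently its locals form $\FSp^\perp$), so the distinction is cosmetic; your version is perhaps cleaner in that it invokes the standard ``compacts = thick closure of generators'' package once rather than rerunning the retract-extraction argument.
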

        \begin{proof}
            Restricted to $\FSp$, the functors $\Mnf, \Lnf$ coincide. As $\Lnf$ is smashing, the right adjoint $\MnfSp \into \FSp$ preserves colimits. Therefore $\Lnf\colon \FSp\to \MnfSp$ preserves compact objects.
            Thus any $\Lnf$-localization of a compact type~$\ge \chrHeight$ spectrum is compact in $\MnfSp$, and whence so are their retracts.
            
            For the other direction, let $X\in \MnfSp\subseteq \LnfSp$ be compact. Write $X$ as a filtered colimit $X \simeq \colim_i \Lnf X_i$ for $X_i\in\FSpc$. Then, as $X$ is compact, the identity map
            \begin{equation*}
                X\xto {\id} X \simeq \colim_i \Lnf X_i
            \end{equation*}
            factors through $\Lnf X_i$ for some $i$, so $X$ is a retract of $\Lnf X_i$. This also shows that $\MnfSp$ is compactly generated.
        \end{proof}
    
        The functors $\LTn \colon \MnfSp \rightleftarrows \SpTn \cocolon \Mnf$ are inverses, thus we deduce:
        \begin{corollary}\label{cor:Tn-spectra-compactly-generated}
            The category $\SpTn$ of $\Tn$-local spectra is compactly generated. Its compact objects are exactly retracts of $\Tn$-localization of compact type~$\ge \chrHeight$ spectra.
        \end{corollary}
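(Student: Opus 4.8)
The plan is to obtain the statement essentially for free, by transporting \cref{cor:Mnf-compactly-generated} along the equivalence $\LTn \colon \MnfSp \rightleftarrows \SpTn \cocolon \Mnf$ recorded just above. The point is that an equivalence of categories preserves compact generation and carries the full subcategory of compact objects of one side bijectively onto that of the other, so no argument needs to be redone from scratch.

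Concretely, I would proceed in three steps. First, invoke \cref{cor:Mnf-compactly-generated}: the monochromatic category $\MnfSp$ is compactly generated, and its compact objects are exactly the retracts of $\Lnf X$ for $X \in \FSpc$. Second, apply the equivalence $\LTn$ to conclude that $\SpTn$ is compactly generated and that its compact objects are exactly the retracts of $\LTn(\Lnf X)$ for $X \in \FSpc$. Third, identify $\LTn(\Lnf X)$: by construction the composite $\FSp \xto{\Lnf} \MnfSp \xto{\LTn} \SpTn$ is the telescopic localization $\LTn \colon \FSp \to \SpTn$, so $\LTn(\Lnf X) \simeq \LTn X$ is precisely the $\Tn$-localization of the compact type~$\ge \chrHeight$ spectrum $X$. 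Combining these three points yields the claim.

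I do not expect a genuine obstacle here; the only thing worth double-checking is the last identification, namely that composing $\Lnf$ with the equivalence $\MnfSp \simeq \SpTn$ recovers $\LTn$ on $\FSp$, which is immediate from the definition of $\LTn$ as the restriction of the smashing localization $\Lnf$ together with the fact that $\Lnf \colon \FSp \to \LnfSp$ factors through $\MnfSp$. Everything else is the purely formal statement that equivalences of categories preserve compactness and compact generation.
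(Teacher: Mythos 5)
Your proposal is correct and is exactly the paper's argument: the paper deduces this corollary directly from \cref{cor:Mnf-compactly-generated} via the inverse equivalences $\LTn \colon \MnfSp \rightleftarrows \SpTn \cocolon \Mnf$, identifying compacts of $\SpTn$ with retracts of $\LTn X$ for $X\in\FSpc$, just as you describe. Your extra care in checking that $\LTn\circ\Lnf \simeq \LTn$ on $\FSp$ is the only point the paper leaves implicit, and it holds for the reason you give.
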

    
        \begin{corollary}\label{cor:cpt-implies-dbl}
            Any compact $\Tn$-local spectrum is dualizable.
        \end{corollary}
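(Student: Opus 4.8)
The plan is to reduce the statement, via \cref{cor:Tn-spectra-compactly-generated}, to the dualizability of finite $p$-local spectra, and then transport this along the symmetric monoidal localization $\LTn$.

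First, by \cref{cor:Tn-spectra-compactly-generated} every compact $\Tn$-local spectrum is a retract of $\LTn X$ for some $X\in\FSpc$, so it is enough to show that $\LTn X$ is dualizable for such $X$ and that the dualizable objects of $\SpTn$ are closed under retracts. The latter is formal: in any idempotent-complete symmetric monoidal category, if $D$ is dualizable and $Y$ is the image of an idempotent $e\colon D\to D$, then $Y$ is dualizable with dual the image of $e^\vee$ on $D^\vee$; and $\SpTn$, being presentable, is idempotent-complete. For the former I would use that $\FSpc\subseteq\plocSp\comp$, so each $X\in\FSpc$ is a retract of a finite $p$-local spectrum and is therefore dualizable in $\plocSp$, with dual its Spanier--Whitehead dual $\hom(X,\SS)$.

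Next I would invoke that $\LTn\colon\plocSp\to\SpTn$ (in the sense already used when writing $\SSTn=\LTn\SS$) is a \emph{symmetric monoidal} Bousfield localization: the subcategory of $\Tn$-acyclics is a $\otimes$-ideal, since $\Tn\otimes C\simeq 0$ forces $\Tn\otimes C\otimes Z\simeq 0$ for all $Z$, so the $\Tn$-localized smash product endows $\SpTn$ with a symmetric monoidal structure for which $\LTn$ is symmetric monoidal. As symmetric monoidal functors send dualizable objects to dualizable objects, $\LTn X$ is dualizable in $\SpTn$ with dual $\LTn\hom(X,\SS)$, and combined with the retract-closure above this proves the claim.

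The argument is essentially formal; the only genuine input is the identification of the tensor product on $\SpTn$ with that of a symmetric monoidal localization of $\plocSp$, which is standard. If one wished to avoid invoking it, the main obstacle would instead be checking by hand that the unit, counit and triangle identities for the duality datum $(X,\hom(X,\SS))$ in $\plocSp$ persist after applying $\LTn$, using the identification $\LTn(A\otimes B)\simeq\LTn A\otimes_{\SpTn}\LTn B$ on the Spanier--Whitehead pair; this is routine.
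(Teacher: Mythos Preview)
Your proof is correct and follows essentially the same argument as the paper's: reduce to $\LTn X$ for $X\in\FSpc$ via \cref{cor:Tn-spectra-compactly-generated}, use that $\LTn$ is symmetric monoidal and hence preserves dualizability, and close under retracts. The paper's proof is more terse but identical in substance.
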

        \begin{proof}
            The $\Tn$-localization functor is symmetric monoidal, thus sends dualizable objects to dualizable objects. Any spectrum $X\in\FSpc\subseteq \plocSp\comp$ is dualizable and therefore so is its $\Tn$-localization.
            The claim follows since dualizability is preserved under retracts. 
        \end{proof}
    
    \subsection{The periodicity theorem rephrased}\label{subsec:periodicity}
        We now specialize to the category of compact spectra of type $\ge \chrHeight$, showing the periodicity theorem is equivalent to the existence of of a lift of $\FSpc$ in $\Catperf\SEvn$, corresponding to the asymptotically defined $\vn$-self-map. We will then show that the the $\vn$-nilpotents gives a lift of $\FSpc[\chrHeight+1]$ and inverting $\vn$ lifts the category of compact $\Tn$-local spectra.
        
        \begin{definition}
           We say that $(X, \periodNat[X]) \in {(\plocSp\comp)}\SMd$ is a $\vn$-self-map if $\periodNat[X]$ is a $\vn$-self-map in the sense of \cref{dfn:vn-self-map}. We define $\vnSpk \subseteq {(\FSpc)}\SMd[\vnSelfDeg]$ to be the full subcategory consisting of $\vn$-self-maps.
           By an abuse of notation we will denote the natural transformation of \cref{lem:natural-map-for-SM} for this category by $\vn \colon  \Sigma^{\vnSelfDeg} \to \id$.
        \end{definition}
        
        \begin{remark}\label{rmrk:subcategory-of-type->=n}
            By \cref{thm:periodicity} we could have defined $\vnSpk$ as the full subcategory of ${(\plocSp\comp)}\SMd[\vnSelfDeg]$ consisting of $\vn$-self-maps.
        \end{remark}

        \begin{lemma}
            The sequence $(\vnSpk, \vn)$ lifts to a compatible system $(\vnSp, \vn)\in\Catperf\SEvn$.
        \end{lemma}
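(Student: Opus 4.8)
The plan is to recognise the sequence $(\vnSpk,\vn)$ as a \emph{compatible system} in the sense of \cref{rmrk:objects-of-SED}, since by that remark such a system is precisely the data of an object of $\Catperf\SEvn$. Here $D=\Zn=\lim_{\vnSelfNum}\ZZ/\vnSelfDeg$ with $\vnSelfDeg=2p^{\vnSelfNum}(\extOrder)$, so the ratio of two consecutive degrees is $p$, and an object of $\Catperf\SEvn$ is a tower of categories carrying $\Sigma^{\vnSelfDeg}$-shifted endomorphisms of the identity together with equivalences identifying each stage with the $p$-th root of the next. I would take the $k$-th stage of the tower to be $\vnSpk$, equipped with the natural transformation $\vn\colon\Sigma^{\vnSelfDeg}\To\id$ furnished by \cref{lem:natural-map-for-SM}; three points then need checking: that $\vn$ restricts to $\vnSpk$, that the $p$-power maps restrict along the tower, and the $p$-th root compatibility.

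The first two are immediate from the explicit formulas. By \cref{lem:power-map-on-SMd} the $p$-power map of \cref{dfn:power-map} sends a pair $(X,v)$ with $v\colon\Sigma^{\vnSelfDeg}X\to X$ to $(X,v^p)$, now of degree $p\vnSelfDeg$; and $v^p$ is a $\vn$-self-map whenever $v$ is, because $\id_{\Kn}\otimes v^p=(\id_{\Kn}\otimes v)^p$ is invertible and $\id_{\Kn[m]}\otimes v^p=(\id_{\Kn[m]}\otimes v)^p$ is nilpotent for $m\neq\chrHeight$. Hence the $p$-power map carries $\vnSpk$ into the next stage, which is the corresponding category of $\vn$-self-maps of degree $p\vnSelfDeg$. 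Likewise the value of $\hat v$ at $(X,v)$ is the square morphism $(\Sigma^{\vnSelfDeg}X,\Sigma^{\vnSelfDeg}v)\to(X,v)$ of \cref{lem:natural-map-for-SM}, whose source again carries a $\vn$-self-map, so $\hat v$ restricts to a natural transformation of $\id_{\vnSpk}$; thus $(\vnSpk,\vn)$ lands in the finite stage and the power maps are compatible with these structures.

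For the $p$-th root compatibility I would unwind the right adjoint $\sqrt[p]{(-)}$ described in \cref{subsec:shifted-endomorphisms} (together with \cref{cor:sqrt-of-SMd}): an object of the $p$-th root of the $(k{+}1)$-st stage is a spectrum $X\in\FSpc$ together with a map $v\colon\Sigma^{\vnSelfDeg}X\to X$ such that $v^p$ is a $\vn$-self-map (this $v^p$ being the self-map recorded in the degree-$p\vnSelfDeg$ coordinate, with $\hat v$ supplying the coherence isomorphism). The key elementary observation is that $v^p$ is a $\vn$-self-map if and only if $v$ is: if $g^p$ is invertible then so is $g$, with one-sided inverses $g^{p-1}(g^p)^{-1}$ and $(g^p)^{-1}g^{p-1}$; and if $g^p$ is nilpotent then so is $g$. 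Applying this to $g=\id_{\Kn}\otimes v$ and $g=\id_{\Kn[m]}\otimes v$ identifies the resulting full subcategory with $\vnSpk$; a parallel computation on mapping spaces — a map commuting with $v$ automatically commutes with $v^p$ — promotes this to an equivalence of categories, and the natural transformation carried by $\sqrt[p]{(-)}$, being by construction the one induced from \cref{lem:natural-map-for-SM}, matches $\vn$ on $\vnSpk$ under it. Granting this, the tower is a compatible system and so defines an object $(\vnSp,\vn)\in\Catperf\SEvn$; by \cref{lem:U-SMD-adjunction} its underlying category is $\colim_k\vnSpk$, with $\vn$ the associated asymptotically defined endomorphism of the identity. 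The only step I expect to demand real care is the last one — upgrading the object- and morphism-level matchings into an equivalence of objects of the finite stage $2$-category, coherently in $k$ — whereas the mathematical content, that a map whose $p$-th power is a $\vn$-self-map is itself a $\vn$-self-map, is trivial and, notably, uses none of the periodicity theorem (which enters only later, in identifying the underlying categories).
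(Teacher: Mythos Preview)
Your proposal is correct and follows essentially the same approach as the paper: both reduce to the observation that $v^p$ is a $\vn$-self-map if and only if $v$ is, and use this to identify $\sqrt[p]{\vnSpk[\vnSelfNum+1]}$ with $\vnSpk$. The paper streamlines the coherence concern you flag at the end by restricting the already-established equivalence $\sqrt[p]{(\FSpc)\SMd[\vnSelfDeg[\vnSelfNum+1]]}\simeq(\FSpc)\SMd[\vnSelfDeg]$ of \cref{cor:sqrt-of-SMd} to the full subcategory $\sqrt[p]{\vnSpk[\vnSelfNum+1]}$, which makes fully-faithfulness automatic and reduces everything to checking essential surjectivity onto $\vnSpk$.
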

        \begin{proof}
            We need to provide isomorphisms $\sqrt[p]{\vnSpk[\vnSelfNum+1]} \to \vnSpk$. Recall the isomorphism 
            \begin{equation*}
                F\colon \sqrt[p]{{(\FSpc)}\SMd[{\vnSelfDeg[\vnSelfNum+1]}]} \isoto {(\FSpc)}\SMd[\vnSelfDeg]
            \end{equation*}
            of \cref{cor:sqrt-of-SMd}. Recall that $\sqrt[p]{{(\FSpc)}\SMd[{\vnSelfDeg[\vnSelfNum+1]}]}$ consists of tuples $((X,v), u)$ of elements $(X,v)\in {(\FSpc)}\SMd[{\vnSelfDeg[\vnSelfNum+1]}]$ and a self-map $u$ of it of degree $\vnSelfDeg$, i.e.\ squares of the form
            \begin{equation*}
                \begin{tikzcd}
                    {\Sigma^{(\vnSelfDeg+\vnSelfDeg[\vnSelfNum+1])}X} && {\Sigma^{\vnSelfDeg[\vnSelfNum+1]}X} \\ \\
                    {\Sigma^{\vnSelfDeg}X} && X
                    \arrow["{\Sigma^{\vnSelfDeg}u}", from=1-1, to=1-3]
                    \arrow["{\Sigma^{\vnSelfDeg[\vnSelfNum+1]}v}", from=1-1, to=3-1]
                    \arrow["v", from=1-3, to=3-3]
                    \arrow["u", from=3-1, to=3-3]
                \end{tikzcd}
            \end{equation*}
            with an identification $u^p\simeq v$. The functor $F$ then sends such a square to $(X,u)$. As $u^p\simeq v$ is a $\vn$-self-map, so is $u$. Thus, restricting $F$ to $\sqrt[p]{\vnSpk[\vnSelfNum+1]}$ we get a fully faithful functor
            \begin{equation*}
                F\colon \sqrt[p]{\vnSpk[\vnSelfNum+1]} \into \vnSpk.
            \end{equation*}
            It is left verifying that $F$ is essentially surjective. Let $(\Sigma^{\vnSelfNum}X \xto{u} X) \in \vnSpk$, then $u$ is the image of
            \begin{equation*}
                \begin{tikzcd}
                    {\Sigma^{\vnSelfDeg+\vnSelfDeg[\vnSelfNum+1]}} & {\Sigma^{\vnSelfDeg[\vnSelfNum+1]}X} \\
                    {\Sigma^{\vnSelfDeg}X} & X
                    \arrow["{\Sigma^{\vnSelfDeg}u}", from=1-1, to=1-2]
                    \arrow["{\Sigma^{\vnSelfDeg[\vnSelfNum+1]}u^p}", from=1-1, to=2-1]
                    \arrow["u^p", from=1-2, to=2-2]
                    \arrow["u", from=2-1, to=2-2]
                \end{tikzcd} \in \sqrt[p]{\vnSpk[\vnSelfNum+1]}
            \end{equation*}
            under $F$.
        \end{proof}
        
        \begin{remark}
            We could have defined $\vnSp$ living in $\Catperf\SE$, as the category of $\vn$-self-maps of arbitrary degree, and not necessarily $p$-typical. By \cref{thm:periodicity}, the underlying categories in both situations identify.
        \end{remark}
        
        \begin{theorem}\label{thm:periodicity-rephrased}
            The composition
            \begin{equation*}
                u\colon U(\vnSpk) \into U((\FSpc)\SEvn) \xto{u} \FSpc
            \end{equation*}
            is an isomorphism, where $u$ is the underlying spectrum functor as in \cref{cor:properties-of-SM}(\ref{2}).
        \end{theorem}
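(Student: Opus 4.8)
By \cref{cor:properties-of-SM}(\ref{2}) the underlying-object functor $u\colon \cC\SMd[\vnSelfDeg]\to\cC$ is exact, conservative and colimit-preserving, and these properties are stable under the passage to the colimit defining $U(\vnSp)=\colim_{\vnSelfNum}\vnSpk$ (a filtered colimit along the $p$-power maps); so to prove that the displayed composition is an isomorphism of categories it suffices to check that it is \emph{essentially surjective} and \emph{fully faithful}. Recall (\cref{rmrk:subcategory-of-type->=n}) that an object of $\vnSpk$ is a pair $(X,v)$ with $X$ a compact $p$-local spectrum and $v\colon\Sigma^{\vnSelfDeg}X\to X$ a $\vn$-self-map in the sense of \cref{dfn:vn-self-map}, and that $u(X,v)=X$.

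\textbf{Essential surjectivity.} This is exactly the existence half of the periodicity theorem (\cref{thm:periodicity}): a compact $p$-local spectrum $X$ lies in $\FSpc$ if and only if it admits a $\vn$-self-map, and then it admits one of degree $\vnSelfDeg$ for \emph{some} $\vnSelfNum$. Thus $X\simeq u(X,v)$ for $(X,v)\in\vnSpk$, and the image of $u$ meets every isomorphism class of $\FSpc$.

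\textbf{Fully faithfulness.} Fix objects $(X,v),(Y,w)$, represented at a common level $k_0$ after replacing $v,w$ by powers. Mapping spaces in a filtered colimit of categories are computed as filtered colimits of mapping spaces, and each inclusion $\vnSpk\hookrightarrow\cC\SMd[\vnSelfDeg]$ is full; hence the map induced by $u$ on mapping spaces is
\begin{equation*}
    \colim_{\vnSelfNum\ge k_0}\Map_{\cC\SMd[\vnSelfDeg]}\big((X,v^{p^{\vnSelfNum-k_0}}),(Y,w^{p^{\vnSelfNum-k_0}})\big)\;\xrightarrow{\ u\ }\;\hom_{\FSpc}(X,Y).
\end{equation*}
By \cref{cor:properties-of-SM}(\ref{1}), each term of the colimit is the equalizer — equivalently, since we are in a stable setting, the fiber — of the pair of maps $f\mapsto f\circ v^{(\vnSelfNum)}$ and $f\mapsto w^{(\vnSelfNum)}\circ\Sigma^{\vnSelfDeg}f$ from $\hom(X,Y)$ to $\hom(\Sigma^{\vnSelfDeg}X,Y)$, and $u$ is the canonical map from this fiber to $\hom(X,Y)$. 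So the content to prove is that $u$ identifies the colimit with $\hom(X,Y)$, i.e.\ that the fiber of $u$ vanishes; unwinding the fiber sequences, this is the vanishing of $\colim_{\vnSelfNum}\hom_{\FSpc}(\Sigma^{\vnSelfDeg}X,Y)$, along the transition maps induced by the $p$-power maps. On $\pi_0$ this is nothing but \cref{prop:periodicity-morphisms}: any map $X\to Y$ eventually commutes with powers of the chosen $\vn$-self-maps, so it is hit (and, by the uniqueness clause of \cref{thm:periodicity}, uniquely so) in the colimit. The full statement for all homotopy groups is obtained by upgrading this to a statement about the whole spectrum $Z:=\hom_{\FSpc}(X,Y)$ — again a finite spectrum of type $\ge\chrHeight$ — on which $v$ and $w$ induce commuting $\vn$-self-maps: one shows the relevant composite transition maps vanish after applying each Morava $K$-theory, using that a $\vn$-self-map is invertible on $\Kn$ and nilpotent on $\Kn[m]$ for $m\neq\chrHeight$ (in particular rationally), together with the asymptotic uniqueness of \cref{thm:periodicity} to control the $\Kn$-behaviour, and then invokes the nilpotence/thick-subcategory technology to deduce the integral vanishing.

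\textbf{Main obstacle.} Essential surjectivity and the formal properties are immediate from \cref{sec:self-maps} and the existence half of periodicity; the real work is the fully-faithful step, and within it the coherent vanishing of $\colim_{\vnSelfNum}\hom_{\FSpc}(\Sigma^{\vnSelfDeg}X,Y)$ — i.e.\ promoting the $\pi_0$-level asymptotic uniqueness of \cref{prop:periodicity-morphisms} to an equivalence of mapping \emph{spectra}. This is where I expect the argument to be most delicate and where the precise form of the periodicity theorem is essential.
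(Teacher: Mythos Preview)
Your setup, the essential-surjectivity argument, and the observation that conservativity passes to the filtered colimit are all correct and match the paper. You also correctly extract $\pi_0$-surjectivity of $u$ (i.e.\ fullness) from \cref{prop:periodicity-morphisms}. The divergence is in how full faithfulness is then obtained.

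The paper's route is much shorter: it proves a general lemma (\cref{lem:exact-functor-equivalence}) that an exact functor between stable categories which is \emph{full} and \emph{conservative} is automatically fully faithful. The argument is brief---exactness reduces everything to $\pi_0$, fullness gives surjectivity there, and for injectivity one notes that if $Ff\simeq 0$ then the cofiber sequence of $Ff$ splits, lifts a splitting along $F$ using fullness, and invokes conservativity to conclude $f\simeq 0$. Since you already have exactness, fullness, and conservativity in hand, this finishes the proof immediately, with no spectral analysis required.

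Your direct approach via the vanishing of $\colim_{\vnSelfNum}\hom(\Sigma^{\vnSelfDeg}X,Y)$ is not wrong in principle, but it is left as a sketch precisely where the content lies, and two steps are not justified. First, the claim that $\pi_0$-injectivity follows from ``the uniqueness clause of \cref{thm:periodicity}'' is not what that clause says: asymptotic uniqueness of $\vn$-self-maps does not by itself control the ambiguity in the choice of commuting homotopy, which is exactly what distinguishes two preimages of the same $f$ in $\pi_0$ of the lax equalizer. Second, your proposed $K$-theoretic argument for the spectral vanishing does not handle $\Kn$: both self-maps act invertibly there, so the transition maps on the cofibers are certainly not nilpotent in $\Kn_*$, and a finer analysis (identifying the difference map $\delta_k$ as zero in $\Kn_*$ and then tracking the induced maps on cofibers) would be needed. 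Both difficulties evaporate once you invoke \cref{lem:exact-functor-equivalence}, whose hypotheses you have already verified.
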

        
        To prove this theorem we will need the following technical lemma, which proves that a full, conservative exact functor between stable categories is already fully faithful.
        \begin{lemma}\label{lem:exact-functor-equivalence}
            Let $F:\cC\to\cD$ be an exact functor between stable categories. Assume that $F$ is full\footnote{A functor $F$ is full if it is surjective on path components of mapping space, i.e.\ for every $X,Y\in \cC$ the map $\pi_0 F \colon  \pi_0\Map_{\cC}(X,Y) \to \pi_0\Map_{\cD}(FX, FY)$ is surjective.} and conservative. Then $F$ is  fully faithful.
        \end{lemma}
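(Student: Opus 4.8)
The plan is to reduce full faithfulness to a statement about $\pi_0$ of mapping spaces, and then extract that statement from fullness together with conservativity.

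\textbf{Reduction.} Since $F$ is exact it commutes with suspension, so the induced maps of mapping spectra $\hom_\cC(\Sigma^n X,Y)\to\hom_\cD(\Sigma^n FX,FY)$ are compatible and assemble to a map of spectra $\hom_\cC(X,Y)\to\hom_\cD(FX,FY)$; on $\pi_n$ this is identified, via $\pi_n\hom_\cC(X,Y)\cong\pi_0\Map_\cC(\Sigma^n X,Y)$ and $F\Sigma^n X\simeq\Sigma^n FX$, with $\pi_0 F$ on the pair $(\Sigma^n X,Y)$. Hence, if $\pi_0 F\colon\pi_0\Map_\cC(A,B)\to\pi_0\Map_\cD(FA,FB)$ is a bijection for all $A,B\in\cC$, then all of these spectrum maps are equivalences, so in particular the induced maps of mapping spaces are equivalences and $F$ is fully faithful. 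Fullness is precisely surjectivity of $\pi_0 F$ on every pair, so it remains to prove injectivity.

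\textbf{Injectivity.} As $F$ is exact, hence additive, $\pi_0 F$ is a group homomorphism, so it suffices to show that a morphism $\phi\colon X\to Y$ in $\cC$ with $F\phi\simeq 0$ is nullhomotopic. Form the cofiber sequence $X\xto{\phi}Y\xto{i}C$ in $\cC$. One checks that $\phi\simeq 0$ if and only if $i$ is a split monomorphism: a retraction $r$ of $i$ gives $\phi\simeq r\circ i\circ\phi\simeq 0$ since $i\circ\phi\simeq 0$, and conversely if $\phi\simeq 0$ then $C\simeq Y\oplus\Sigma X$ with $i$ the inclusion. Applying the exact functor $F$ yields a cofiber sequence $FX\xto{F\phi}FY\xto{Fi}FC$ in $\cD$; as $F\phi\simeq 0$, it splits, producing a retraction $\bar r\colon FC\to FY$ with $\bar r\circ Fi\simeq\id_{FY}$.

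\textbf{Transporting the splitting back.} This is the one step that uses the hypotheses essentially. By fullness pick $r\colon C\to Y$ in $\cC$ with $Fr\simeq\bar r$; such an $r$ need not satisfy $r\circ i\simeq\id_Y$ on the nose, but $F(r\circ i)\simeq Fr\circ Fi\simeq\id_{FY}$ is an equivalence, so by conservativity $r\circ i\colon Y\to Y$ is an equivalence in $\cC$. Then $(r\circ i)^{-1}\circ r$ is a genuine retraction of $i$, so $i$ splits and $\phi\simeq 0$. This completes the argument; I expect the third step to be the crux, with everything else being formal manipulation of (co)fiber sequences and the standard identification $\pi_n\Map(X,Y)\cong\pi_0\Map(\Sigma^n X,Y)$ in a stable category.
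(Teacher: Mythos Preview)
Your proof is correct and follows essentially the same approach as the paper's: reduce to bijectivity on $\pi_0$ via exactness, then show injectivity by using fullness and conservativity to transport the splitting of the cofiber sequence back from $\cD$ to $\cC$. The only cosmetic difference is that the paper lifts a \emph{section} of the boundary map $\cofib(\phi)\to\Sigma X$ whereas you lift a \emph{retraction} of $Y\to\cofib(\phi)$; these are dual characterizations of the same splitting and the correction step (inverting the near-identity produced by conservativity) is identical.
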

        
        \begin{proof}
        \newcommand{\tmpDeg}{m}
            It suffices to prove that for all $X,Y\in\cC$, 
            \begin{equation*}
                F\colon \hom_{\cC}(X,Y) \to \hom_{\cD}(FX, FY)
            \end{equation*}
            is an isomorphism. Equivalently 
            \begin{equation*}
                \pi_{\tmpDeg} F\colon  \pi_{\tmpDeg}\hom_{\cC}(X,Y) \to \pi_{\tmpDeg} \hom_{\cD}(FX,FY)
            \end{equation*}
            is an isomorphism for all $\tmpDeg$. 
            By exactness, $\pi_{\tmpDeg} F$ is given by
            \begin{equation*}
                \begin{split}
                    \pi_{\tmpDeg} \hom_{\cC}(X,Y)
                    & = \pi_0 \hom_{\cC}(\Sigma^{\tmpDeg} X, Y)
                      \xrightarrow{\pi_0 F} \pi_0 \hom_{\cD}(F\Sigma^{\tmpDeg} X, FY) \\
                    & \simeq \pi_0 \hom_{\cD}(\Sigma^{\tmpDeg} FX, FY)
                      = \pi_{\tmpDeg}\hom_{\cD}(FX,FY)
                \end{split}
            \end{equation*}
            Thus it is enough to prove that $\pi_0 F$ is an isomorphism of groups. 
            As $F$ is full, 
            \begin{equation*}
                \pi_0 F\colon  \pi_0\hom_{\cC}(X,Y) = \pi_0\Map_{\cC}(X,Y) \to \pi_0 \Map_{\cD}(FX, FY) = \pi_0 \hom_{\cD}(FX, FY)
            \end{equation*} 
            is surjective, so it is left to prove that its kernel is trivial, i.e.\ for any $f\colon X\to Y$, if $Ff\simeq 0$ then $f\simeq 0$.
            Let $f\colon X\to Y$ with $Ff \simeq 0$.
            Since $Ff\simeq 0$, the exact sequence 
            \begin{equation*}
                FY\to \cofib(Ff) \xto{Fg} \Sigma FX\simeq F\Sigma X
            \end{equation*}
            splits. Let $\overline{s}\colon F\Sigma X \to \cofib(Ff) \simeq F\cofib(f)$ be a splitting and $s\colon \Sigma X\to \cofib(f)$ a lifting, which exists as $F$ is full. Denote by $g\colon \cofib(f)\to \Sigma X$ the associated map. 
            \begin{equation*}
                F(g\circ s) = Fg \circ \overline{s} \simeq \id
            \end{equation*} 
            and as $F$ is conservative, $g\circ s$ is an isomorphism. Choose $s'\coloneqq s\circ {(g\circ s)}^{-1}$.
            Then 
            \begin{equation*}
                g\circ s' = g\circ s \circ {(g\circ s)}^{-1}\simeq \id.     
            \end{equation*}
            So $Y\to \cofib(f) \to \Sigma X$ splits which proves $f\simeq 0$.
        \end{proof}

        \begin{proof}[Proof of \cref{thm:periodicity-rephrased}]
            As $u$ is an exact functor of stable categories, by \cref{lem:exact-functor-equivalence} it is enough to prove $u$ is full, conservative and essentially surjective.
                
            Conservativity follows from \cref{cor:properties-of-SM} and as conservative functors are closed under sequential colimits.
            The periodicity theorem --- \cref{thm:periodicity} proves that $u$ is essentially surjective.
            
            Let $f\colon X\to Y$ be a map between compact spectra of type $\ge \chrHeight$. As $u$ is essentially surjective, there exist some liftings $(X, \periodNat[X]), (Y,\periodNat[Y])\in \vnSp$. Replacing the maps by large enough powers of them, we may assume $(X, \periodNat[X]),(Y, \periodNat[Y]) \in \vnSpk$ for some $\vnSelfNum$. By \cref{cor:properties-of-SM}
            \begin{equation*}
                \pi_0 \Map_{\vnSpk}((X, \periodNat[X]),(Y, \periodNat[Y])) = \{ (g, H) \mid g \colon  X\to Y, H\colon  g\circ \periodNat_X \simeq \periodNat_Y \circ g \}
            \end{equation*}
            so to show $u$ is full it is enough to show that (maybe for larger $\vnSelfNum$) $f$ commutes with $\periodNat$ up to homotopy. This is exactly \cref{prop:periodicity-morphisms}.
        \end{proof}

        \begin{corollary}\label{cor:underlying-of-nil-and-inv}
           The isomorphism $u$ of \cref{thm:periodicity-rephrased} induces isomorphisms of categories
           \begin{equation*}
               U(\Nil_{\vn}(\vnSp)) \simeq \FSpc[\chrHeight+1], \qquad U(\vnSp[\vn^{-1}]) \simeq \TnComp. 
           \end{equation*}
        \end{corollary}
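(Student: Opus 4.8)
The plan is to deduce both identifications from \cref{thm:periodicity-rephrased} together with the explicit descriptions of $\Nil$ and $L$ built in \cref{subsec:nilpotents-and-invertible,subsec:asymptotically-defined}. Throughout I use the isomorphism $u\colon U(\vnSp)\isoto\FSpc$ of \cref{thm:periodicity-rephrased} to regard an object of $U(\vnSp)$ as a compact spectrum $X$ of type $\ge\chrHeight$ carrying, asymptotically, a $\vn$-self-map $v_X$; by \cref{lem:natural-map-for-SM} the natural transformation $\vn\colon\Sigma^{\vnSelfDeg}\To\id$ corresponds under $u$ to the family of underlying self-maps $\{v_X\}$.

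\emph{First equivalence.} At each finite stage $\Nil_{\vn}(\vnSp)\hookrightarrow\vnSp$ is the inclusion of the full subcategory on those $(X,v_X)$ for which $v_X$ is nilpotent, and $U$ is a sequential colimit of these fully faithful exact inclusions, hence fully faithful; so $U(\Nil_{\vn}(\vnSp))$ is the full subcategory of $\FSpc$ spanned by the $X$ admitting a nilpotent $\vn$-self-map. If $v_X^k\simeq 0$, then $\Kn\otimes v_X^k$ is null-homotopic yet, being a composite of $k$ suspensions of the isomorphism $\Kn\otimes v_X$ (\cref{dfn:vn-self-map}(1)), itself an isomorphism; hence $\Kn\otimes X\simeq 0$, i.e.\ $X\in\FSpc[\chrHeight+1]$. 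Conversely, if $X\in\FSpc[\chrHeight+1]$ then $X$ has type $\ge\chrHeight$, so \cref{thm:periodicity} supplies a $\vn$-self-map $v_X$; for every $m\neq\chrHeight$ the map $\Kn[m]\otimes v_X$ is nilpotent by \cref{dfn:vn-self-map}(2), while $\Kn\otimes v_X=0$ because $\Kn\otimes X\simeq 0$, so $v_X$ is $\Kn[m]$-nilpotent for all $m$ and therefore nilpotent by the nilpotence theorem of \cite{Hopkins-Smith-nilpotenceII}. (Equivalently, the zero map is a degenerate $\vn$-self-map of such an $X$, so the asymptotic uniqueness in \cref{thm:periodicity} gives $v_X^i\simeq 0$ directly.) Thus $U(\Nil_{\vn}(\vnSp))\simeq\FSpc[\chrHeight+1]$, compatibly with the inclusions into $\FSpc$.

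\emph{Second equivalence.} By construction (\cref{subsec:nilpotents-and-invertible}, passed to the asymptotic setting in \cref{subsec:asymptotically-defined}) $L$ is the cofiber of the colocalization $\Nil\to\id$, so $\vnSp[\vn^{-1}]=L(\vnSp)$ is the cofiber of $\Nil_{\vn}(\vnSp)\to\vnSp$ taken in $\Catperf\SEvn$. The underlying-category functor $U\colon\Catperf\SEvn\to\Catperf$ lies in $\PrL$, hence preserves this cofiber, so together with the first part
\[
U(\vnSp[\vn^{-1}])\;\simeq\;\cofib\bigl(U(\Nil_{\vn}(\vnSp))\to U(\vnSp)\bigr)\;\simeq\;\cofib\bigl(\FSpc[\chrHeight+1]\hookrightarrow\FSpc\bigr),
\]
the Verdier quotient formed in $\Catperf$, i.e.\ the idempotent completion of the quotient of $\FSpc$ by the thick subcategory $\FSpc[\chrHeight+1]$. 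Since $\SpTn\simeq\FSp/\FSp[\chrHeight+1]$ (\cref{subsec:preliminaries}) with $\FSp\simeq\Ind(\FSpc)$ and $\FSp[\chrHeight+1]\simeq\Ind(\FSpc[\chrHeight+1])$ compactly generated (\cref{prop:type-n-compactly-generated} and the same argument for type $\ge\chrHeight+1$), the localization theorem for compactly generated stable categories identifies the compacts of this quotient --- consistently with \cref{cor:Tn-spectra-compactly-generated} --- with the idempotent completion of $\FSpc/\FSpc[\chrHeight+1]$. Hence $U(\vnSp[\vn^{-1}])\simeq\TnComp$.

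The mathematical content is light: everything rests on the classical periodicity and nilpotence theorems, which are already in hand. The step that needs care --- and is the only real obstacle --- is the bookkeeping: verifying that $\vn$ really restricts to the self-maps $v_X$ on underlying spectra, so that ``$\vn$ acts nilpotently at $(X,v_X)$'' is the same condition as ``$v_X$ is a nilpotent self-map'', and that $U$ genuinely preserves the cofiber used above. Both follow directly from the constructions and the $\PrL$-level adjunctions of \cref{sec:self-maps}, so I expect the actual proof to be quite short.
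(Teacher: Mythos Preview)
Your proposal is correct and follows essentially the same route as the paper: restrict the equivalence $u$ of \cref{thm:periodicity-rephrased} to the nilpotent locus to get $U(\Nil_{\vn}(\vnSp))\simeq\FSpc[\chrHeight+1]$, then use that $U$ is a left adjoint (hence preserves the cofiber defining $L$) to deduce $U(\vnSp[\vn^{-1}])\simeq\cofib(\FSpc[\chrHeight+1]\hookrightarrow\FSpc)\simeq\TnComp$. The paper's write-up is terser---for essential surjectivity onto $\FSpc[\chrHeight+1]$ it simply observes that the zero map is a $\vn$-self-map, and it invokes the identification $\cofib(\FSpc[\chrHeight+1]\hookrightarrow\FSpc)\simeq\TnComp$ directly from \cref{subsec:preliminaries}---but your additional justifications (the $\Kn$-acyclicity argument, the nilpotence theorem alternative, and the localization-theorem unpacking of the Verdier quotient) are all sound and amount to spelling out what the paper leaves implicit.
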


        \begin{proof}
            Recall (\cref{prop:SMd-adjoint-U}) that $U$ is a left adjoint, therefore 
            \begin{equation*}
                U(\Nil_{\vn}(\vnSp)) \simeq \colim_{\vnSelfNum} U(\Nil_{\vn}(\vnSpk)).
            \end{equation*}
            The functor $U(\Nil_{\vn}(\vnSpk)) \to \FSpc$ lands in spectra with nilpotent $\vn$-self-maps, therefore of type $\ge \chrHeight+1$. Thus, restricting the isomorphism of \cref{thm:periodicity-rephrased} to $\Nil_{\vn}(\vnSp)$ gives a fully faithful functor
            \begin{equation*}
                u\colon U(\Nil_{\vn}(\vnSp)) \to \FSpc[\chrHeight+1].
            \end{equation*}
            It is left verifying it is essentially surjective. Let $X\in\FSpc[\chrHeight+1]$, then $0\colon \Sigma^{p|\vn|}X\to X$ is a $\vn$-self map and $X \simeq u(X,0)$.

            As cofibers commute with filtered colimits, $\vnSp[\vn^{-1}] = L(\vnSp,\vn)$ is the cofiber of $\Nil_{\vn}(\vnSp) \to \vnSp$. Since $U$ commutes with cofibers
            \begin{equation*}
                U(\vnSp[\vn^{-1}]) \simeq \cofib(U(\Nil_{\vn}(\vnSp)) \to U(\vnSp)) \simeq \cofib(\FSpc[\chrHeight+1] \to \FSpc) \simeq \TnComp.
            \end{equation*}
        \end{proof}

        \begin{corollary}\label{cor:group-action-on-Tn-comp}
            There is a group homomorphism
            \begin{equation*}
                \Sigma^{(-)} \colon \Zn \to \pi_0\Aut(\TnComp)
            \end{equation*}
            sending the topological generator $1\in \lim_{\vnSelfNum} \ZZ/\vnSelfDeg$ to the suspension automorphism.
        \end{corollary}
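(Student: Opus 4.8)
The plan is to read this off directly from the general theory of \cref{subsec:asymptotically-defined} applied to the profinite group $D = \Zn$, combined with the identification of underlying categories established in \cref{cor:underlying-of-nil-and-inv}. Concretely, I would take $D = \Zn = \lim_{\vnSelfNum}\ZZ/\vnSelfDeg$ presented as the sequential limit along the $p$-power surjections, so that $\vnSelfDeg = 2p^{\vnSelfNum}(\extOrder)$. Then $(\vnSp, \vn)$ is an object of $\Catperf\SEvn = \Catperf\SED$ for this $D$, and inverting $\vn$ via the localization $L$ of \cref{subsec:nilpotents-and-invertible} yields $(\vnSp[\vn^{-1}], \vn) = L(\vnSp, \vn) \in \Catperf\SEvniso = \Catperf\SEDiso$.

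Next I would apply \cref{cor:group-action-on-id-SEDiso} to this object. It supplies, from the $\Omega^2 S^2_D$-action on $\id_{\Catperf\SEDiso}$, a group homomorphism $\pi_2(S^2_D) = \pi_0(\Omega^2 S^2_D) \to \pi_0\Aut\bigl(U(\vnSp[\vn^{-1}])\bigr)$ under which the topological generator acts by suspension. Invoking \cref{lem:pi2-of-SD} to identify $\pi_2(S^2_D) \cong D = \Zn$, and \cref{cor:underlying-of-nil-and-inv} to identify $U(\vnSp[\vn^{-1}]) \simeq \TnComp$, this is exactly a homomorphism $\Sigma^{(-)}\colon \Zn \to \pi_0\Aut(\TnComp)$. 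That $1 \in \Zn$ maps to the suspension automorphism is then the content of \cref{rmrk:action-is-suspension}: the identification $\vn \colon \Sigma^{\vnSelfDeg}\isoto \id$ matches the $\vnSelfDeg$-fold suspension on $\TnComp$ with the identity, and the equivalence $U(\vnSp[\vn^{-1}]) \simeq \TnComp$ intertwines the two suspension functors.

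I do not anticipate any genuine difficulty here: everything needed has already been proved above, and the argument is essentially bookkeeping. The one point worth a line of justification is that the $\Omega^2 S^2_D$-action, which lives a priori on $\id_{\Catperf\SEDiso}$, transports correctly to $\TnComp$ under the equivalence of \cref{cor:underlying-of-nil-and-inv}; this holds because the underlying-category functor $U$ is in particular a functor (indeed a left adjoint, by the $D$-analogue of \cref{prop:SMd-adjoint-U}) and hence carries the action on $(\vnSp[\vn^{-1}],\vn)$ to an action on its underlying category, while \cref{cor:underlying-of-nil-and-inv} gives a natural equivalence of that category with $\TnComp$ respecting the relevant suspension endofunctors.
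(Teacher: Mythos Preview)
Your proposal is correct and follows essentially the same approach as the paper: apply \cref{cor:group-action-on-id-SEDiso} to $\vnSp[\vn^{-1}] \in \Catperf\SEvniso$, transport the resulting action through the underlying-category functor $U$ using \cref{cor:underlying-of-nil-and-inv}, and invoke \cref{lem:pi2-of-SD} for the identification $\pi_2(S^2_{\Zn}) \cong \Zn$. The paper's write-up is terser (it just says ``as $U$ is an exact functor'' for the transport step), but the logic is the same.
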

        \begin{proof}
            By \cref{cor:group-action-on-id-SEDiso} there is a $\Omega^2 S^2_{\Zn}$-action on any category in $\Catperf\SEvniso$ where $1$ acts by suspension, in particular on $\vnSp[\vn^{-1}]$. As $U$ is an exact functor, there is such a group action on $\TnComp = U(\vnSp[\vn^{-1}])$. Using \cref{lem:pi2-of-SD}, we have a group homomorphism
            \begin{equation*}
                \Zn \cong \pi_0 (\Omega^2 S^2_{\Zn}) \to \pi_0 \Aut(\TnComp).
            \end{equation*}
        \end{proof}

\section{The telescopic Picard}\label{sec:Tn-Pic}
    We describe the $\Tn$-local Picard group as the group of automorphisms of $\TnComp$ using \cref{subsec:preliminaries}. We then use the results of \cref{subsec:periodicity} to construct a large subgroup of $\Pic(\SpTn)$, in particular finding a subgroup of the form $\Zp \times \ZZ / (\extOrder)$ in the even Picard group.

    \subsection{Automorphisms and Picard}
        
        We recall the definition of the group of units. Let $\cD \in\CAlg(\PrL)$. Then $\cD$ admits a symmetric monoidal unit map
        \begin{equation*}
            \ounit[-]\colon  \spc \to \cD
        \end{equation*}
        which induces a functor in $\PrL$
        \begin{equation*}
            \ounit[-]\colon  \CMon = \CAlg(\spc) \to \CAlg(\cD).
        \end{equation*}
        Its right adjoint is corepresented by $\ounit$, i.e.\ it sends $R$ to $\Map(\ounit, R)$ which admits a natural commutative monoid structure via the multiplication of $R$.
        The right adjoint to the composition
        \begin{equation*}
            \cnSp \simeq \CMon\gp \subseteq \CMon \xto{\ounit[-]} \CAlg(\cD)
        \end{equation*}
        is called the group of units and denoted by ${(-)}\units$.
        Notice that for $R\in \CAlg(\cD)$, the underlying space of $R\units$ consists of the connected components of invertible elements in $\Map(\ounit, R)$.

        We now apply it to $\cD = \Cat$:    
        \begin{definition}
            The Picard spectrum of $\cC\in \CAlg(\Cat)$ is defined to be $\pic(\cC)\coloneqq \cC\units$. The Picard group is defined to be $\Pic(\cC) = \pi_0\pic(\cC)$.
        \end{definition}
        
        \begin{lemma}
            For $\cC\in \CAlg(\PrL)$, the Picard group $\Pic(\cC)$ is isomorphic to the group of $\cC$-linear automorphisms $\pi_0\Aut^{\cC}(\cC)$.
        \end{lemma}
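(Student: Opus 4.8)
The plan is to identify both sides with the same thing: the connected components of the space of $\cC$-linear equivalences of $\cC$ with itself. First I would recall that for $\cC \in \CAlg(\PrL)$, the category $\Mod_\cC(\PrL)$ is itself symmetric monoidal (with $\cC$ as unit), so one can form its Picard space $\pic(\Mod_\cC(\PrL))$. The key structural input is that the action functor
\begin{equation*}
    \cC \to \Fun^{\cC}(\cC,\cC) = \End^{\cC}(\cC), \qquad X \mapsto (X \otimes -)
\end{equation*}
is a symmetric monoidal equivalence: $\cC$ is the unit of $\Mod_\cC(\PrL)$, and the endomorphism object of the unit in any symmetric monoidal category recovers the category itself (here using that $\cC$ is dualizable over itself as the unit, and that colimit-preserving functors $\cC \to \cD$ are classified by $\cD$ itself when $\cC$ is the unit — this is the standard $\Fun^{\cC}(\cC, \cD) \simeq \cD$). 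Under this equivalence, the monoidal structure on $\cC$ matches composition of endofunctors, so it restricts to an equivalence of grouplike $\EE_1$-spaces (in fact $\EE_\infty$) between the invertible objects of $\cC$ and the invertible objects of $\End^{\cC}(\cC)$, i.e. $\Aut^{\cC}(\cC)$.

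Concretely, the steps in order are: (1) Recall/cite that $\Fun^{\cC}(\cC,\cC) \simeq \cC$ as symmetric monoidal categories via $X \mapsto X \otimes -$, where the target carries composition as its monoidal product; this is where the hypothesis $\cC \in \CAlg(\PrL)$ (presentability plus the unit being a compact-like generator in the relevant sense) is used. (2) Pass to invertible objects / units on both sides: applying $(-)\units$ is functorial for symmetric monoidal functors, so the equivalence induces $\pic(\cC) \simeq \Aut^{\cC}(\cC)$ as spectra (or grouplike $\EE_\infty$-spaces). (3) Take $\pi_0$ to get the group isomorphism $\Pic(\cC) = \pi_0\pic(\cC) \cong \pi_0\Aut^{\cC}(\cC)$. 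One should note that an object $X \in \cC$ is $\otimes$-invertible if and only if $X \otimes -$ is an equivalence of categories, which is immediate from the equivalence in step (1), so the restriction to invertibles is the correct matching.

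The main obstacle — and really the only nontrivial point — is step (1): establishing that every colimit-preserving ($\cC$-linear) endofunctor of $\cC$ is of the form $X \otimes -$ and that this identification is symmetric monoidal with respect to composition. This is the content of the classical fact that for $\cC \in \CAlg(\PrL)$ one has $\Mod_\cC(\PrL) \ni \cC$ with $\mathrm{End}_{\Mod_\cC(\PrL)}(\cC) \simeq \cC$; I would invoke it from the literature (Lurie, \emph{Higher Algebra}, on the self-duality of the unit and the computation of internal endomorphism objects in $\PrL$) rather than reprove it. Everything after that is formal: units and $\pi_0$ are functorial, and invertibility on one side corresponds to invertibility on the other.
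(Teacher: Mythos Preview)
Your proposal is correct and takes essentially the same approach as the paper: both identify $\End^{\cC}(\cC)$ with $\cC$ (or its core) via $X \mapsto X\otimes -$, observe this is compatible with the monoidal/composition structures, and then pass to invertibles and $\pi_0$. The paper phrases the key identification as the chain of equivalences $\cC\core \simeq \Map(\pt,\cC) \simeq \Map^{\mrm{L}}(\spc,\cC) \simeq \Map^{\cC}(\cC,\cC)$ and only needs that this is an isomorphism of monoids, whereas you work with the full functor category and invoke a symmetric monoidal equivalence; your version is slightly more structured than necessary for the $\pi_0$ statement, but the argument is the same.
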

        \begin{proof}
            The Picard spectrum $\pic(\cC)=\cC\units$ is given as the invertible components of $\Map(\pt, \cC)$ where $\pt$ is the terminal category (i.e.\ the unit in $\Cat$). 
            Now,
            \begin{equation*}
                \cC\core \simeq \Map(\pt, \cC) \simeq \Map^{\mrm{L}} (\spc, \cC) \simeq \Map^{\cC}(\cC, \cC) = \End^{\cC}(\cC)
            \end{equation*}
            where $\Map^{\mrm{L}}$ is the space of colimit-preserving functors (i.e.\ functors in $\PrL$) and $\Map^{\cC}$ is the space of $\cC$-linear functors in $\PrL$. The isomorphism $\cC\core \isoto \End^{\cC}(\cC)$ is given by $X\mapsto X\otimes -$ and is an isomorphism of monoids. We thus get
            \begin{equation*}
                \Pic(\cC) = \pi_0\pic(\cC) \cong \pi_0 {\End^{\cC}(\cC)}\units = \pi_0\Aut^{\cC}(\cC).
            \end{equation*}
        \end{proof}
        
        \begin{corollary}\label{thm:pic=aut}
            $\Pic(\SpTn) \cong \pi_0 \Aut(\TnComp)$.
        \end{corollary}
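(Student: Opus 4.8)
The plan is to bootstrap off the preceding lemma, which identifies $\Pic(\cC)\cong\pi_0\Aut^{\cC}(\cC)$ for every $\cC\in\CAlg(\PrL)$. Applying it to $\cC=\SpTn$ gives $\Pic(\SpTn)\cong\pi_0\Aut^{\SpTn}(\SpTn)$, so the task reduces to two things: removing the $\SpTn$-linearity condition, and then replacing the big presentable category $\SpTn$ by its compact objects $\TnComp$. Neither step involves a computation.

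For the first step, I would show that the forgetful map $\Aut^{\SpTn}(\SpTn)\to\Aut(\SpTn)$, from $\SpTn$-linear colimit-preserving automorphisms to all colimit-preserving automorphisms, is an equivalence; equivalently, that $\SpTn$-linearity is a \emph{property} of a colimit-preserving endofunctor rather than additional structure. This is because $\SpTn$ is a mode, i.e.\ an idempotent $\EE_\infty$-algebra in $\PrL$ (the monochromatic layer $\MnfSp$ is the category of modules over the idempotent algebra $\mathrm{fib}(\Lnf\SS\to\Lnf[\chrHeight-1]\SS)$; this is standard, see e.g.~\cite{CSY-cyclotomic}). For such an algebra the forgetful $2$-functor $\Mod_{\SpTn}(\PrL)\to\PrL$ is fully faithful --- for any $\SpTn$-module the action map $\SpTn\otimes(-)\to(-)$ is an equivalence, so the module structure on a colimit-preserving functor between $\SpTn$-modules is unique and automatic --- whence $\Fun^{\mrm{L}}_{\SpTn}(\SpTn,\SpTn)\simeq\Fun^{\mrm{L}}(\SpTn,\SpTn)$ and in particular $\Aut^{\SpTn}(\SpTn)\simeq\Aut(\SpTn)$.

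The second step is formal. By \cref{cor:Tn-spectra-compactly-generated} the category $\SpTn$ is compactly generated with compact objects $\TnComp$, hence $\SpTn\simeq\Ind(\TnComp)$. Any equivalence of presentable categories preserves compact objects, so an automorphism of $\SpTn$ in $\PrL$ automatically lies in $\PrL_{\st,\omega}$, the subcategory of colimit-preserving functors preserving compacts; under the equivalence $\Ind\colon\Catperf\xrightarrow{\,\sim\,}\PrL_{\st,\omega}$ (used already in the proof of \cref{lem:formula-for-L}, following~\cite[Lemma~5.3.2.9]{Lurie-HA}) it corresponds to an exact self-equivalence of $\TnComp$. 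Thus $\Aut(\SpTn)\simeq\Aut(\TnComp)$, and chaining the three identifications gives $\Pic(\SpTn)\cong\pi_0\Aut(\TnComp)$.

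I expect the only non-formal input --- and hence the main obstacle, such as it is --- to be the mode property of $\SpTn$ invoked in the first step; once that is granted, the rest follows purely formally from compact generation and the already-established lemma. If one wished to avoid citing the mode structure directly, one could instead check by hand that $\SpTn$ is generated under colimits by its tensor unit and that the corresponding idempotent algebra in $\PrL$ squares to itself, which is exactly the mode condition.
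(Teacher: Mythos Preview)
Your proof is correct and follows essentially the same route as the paper's: apply the preceding lemma, use that $\SpTn$ is a mode to drop the linearity condition, and then pass to compacts via compact generation. The paper's argument is terser but invokes exactly the same two ingredients (citing \cite[\S~5]{CSY-ambiheight} for the mode property rather than \cite{CSY-cyclotomic}, and \cref{cor:Tn-spectra-compactly-generated} for compact generation).
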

        \begin{proof}
            As $\SpTn$ is a mode, i.e.\ an idempotent algebra in $\PrL$ (see~\cite[\S~5]{CSY-ambiheight}), the category of $\SpTn$-modules sits fully-faithfully inside $\PrL$. Therefore
            \begin{equation*}
                \Pic(\SpTn) \cong \pi_0 \Aut^{\SpTn}(\SpTn) = \pi_0\Aut(\SpTn) \cong \pi_0 \Aut(\TnComp),
            \end{equation*}
            where the last isomorphism is true since $\SpTn$ is compactly generated (\cref{cor:Tn-spectra-compactly-generated}).
        \end{proof}
        
    \subsection{Constructing Picard elements}
        We use \cref{sec:Tn-cat} to define automorphisms of the compact $\Tn$-local category. Using \cref{thm:pic=aut}, we construct telescopic Picard elements. By \cref{cor:group-action-on-Tn-comp} the suspension lifts to a group homomorphism
        \begin{equation*}
            \Sigma^{(-)} \colon \Zn \to \pi_0\Aut(\TnComp)
        \end{equation*}
        
        Note that for $p$ odd, $\vnSize$ is prime to $p$ and so 
        \begin{equation*}
            \Zn = \lim_\vnSelfNum \ZZ / \vnSelfDeg \cong \lim_\vnSelfNum \ZZ / p^\vnSelfNum \times \ZZ / \vnSize \cong \Zp\times \ZZ / \vnSize
        \end{equation*}
        and for $p=2$
        \begin{equation*}
            \Zn = \lim_{\vnSelfNum} \ZZ / \vnSelfDeg = \lim_{\vnSelfNum} \ZZ / (2^{\vnSelfNum}\cdot 2(2^{\chrHeight}-1)) \cong \lim_{\vnSelfNum} \ZZ / 2^{\vnSelfNum+1} \times \ZZ / (2^{\chrHeight}-1) \cong \Zq{2} \times \ZZ / (2^{\chrHeight}-1).
        \end{equation*}
    
        Denoting 
        \begin{equation*}
            a_p = \begin{cases}
                2, & p \text{ is odd} \\
                1, & p = 2
            \end{cases}
        \end{equation*}
        we write $\Zn \cong \Zp\times \ZZ/ (a_p (\extOrder))$.
    
        \begin{theorem}\label{thm:ZpxZ/vn->Pic}
            The group homomorphism $\Sigma^{(-)}\colon \Zn \cong \Zp\times \ZZ/ (a_p (\extOrder)) \to \Pic(\SpTn)$ is injective.
        \end{theorem}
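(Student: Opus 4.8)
The plan is to check injectivity on each of the two factors $\Zp$ and $\ZZ/(a_p(\extOrder))$ separately, using two different multiplicative homology theories to detect the Picard elements, exactly as sketched in the introduction. For the $p$-complete factor, the idea is to exploit the Morava stabilizer action: recall that $\Zp \subseteq \Zp\units \subseteq \extMorStb$ acts on Lubin--Tate theory $\En$, and that $\En$ is a $\Tn$-locally faithful commutative algebra in $\SpTn$ (via $\LTn\En = \LKn\En$ together with the fact that $\En$-based descent sees the $\Tn$-local sphere). Remembering only this $\B\Zp$-worth of structure, one gets a composite
\begin{equation*}
    \Zp \xrightarrow{\Sigma^{(-)}} \Pic(\SpTn) \xrightarrow{\En\otimes -} \Pic(\Mod_{\En}(\SpTn^{\B\Zp})).
\end{equation*}
Since $\En_* S^d$ is $\En_*$ placed in degree $d$, with $\Zp$ acting through its inclusion into $\Zp\units$ and hence on $\En_{2}$ by the tautological action, the element $\Sigma^d \SSTn$ maps to the graded invertible $\En$-module $\En_{*-d}$ equipped with a nontrivial twist of the $\Zp$-action whenever $d \neq 0$; reading off the action on $\pi_2$ recovers $d \in \Zp$ faithfully. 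Thus the composite, and a fortiori $\Sigma^{(-)}$ restricted to $\Zp$, is injective.

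For the torsion factor $\ZZ/(a_p(\extOrder))$, the plan is to apply Morava $K$-theory $\Kn$ instead. Since $\Kn_*$ is a graded field with $\pi_*\Kn = \Fp[\vn^{\pm 1}]$ and $|\vn| = \vnSize = 2(\extOrder)$, every $\Kn_*$-module is free, so the Picard group of $\Kn_*$-graded modules is cyclic of order $|\vn| = 2(\extOrder)$, generated by the shift $\Kn_{*-1}$; concretely the distinct invertible graded modules are $\Kn_*, \Kn_{*-1}, \dots, \Kn_{*-(2(\extOrder)-1)}$, with $\Kn_{*-2(\extOrder)} \simeq \Kn_*$ via $\vn$. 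The composite $\Zn \to \Pic(\SpTn) \xrightarrow{\Kn_*(-)} \Pic(\Kn_*\text{-mod}) \cong \ZZ/(2(\extOrder))$ sends $\Sigma^d \SSTn$ to $\Kn_{*-d}$, i.e. it is reduction modulo $2(\extOrder)$. When $p$ is odd, $a_p = 2$, so the $\ZZ/(2(\extOrder))$ factor of $\Zn$ maps isomorphically onto $\ZZ/(2(\extOrder))$, giving injectivity. When $p = 2$, $a_p = 1$, and $\Zn$ already contains $\Zq{2}$, whose unit $\vnSize/2 = 2^\chrHeight - 1$ times $2$ equals $|\vn|$; here the extra care is that a generator $d$ of the $\ZZ/(\extOrder)$ factor must be mapped to $\Kn_{*-2d}$ rather than $\Kn_{*-d}$ — precisely because $S^d \SSTn$ is identified modulo the $\Zq{2}$-factor with $S^{2d}$ after the relevant identification (a self-map lives in degree $\vnSelfDeg = 2^{\vnSelfNum+1}(\extOrder)$, which is $2$ times the odd number $2^\chrHeight-1$ times $2^\vnSelfNum$), so the composite is multiplication by $2$ followed by reduction mod $2(\extOrder)$, which is still injective on $\ZZ/(\extOrder)$ since $\extOrder = 2^\chrHeight-1$ is odd.

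Assembling the two: an element of $\Zn = \Zp \times \ZZ/(a_p(\extOrder))$ in the kernel of $\Sigma^{(-)}$ must die under both detection maps; the $\En$-argument kills the $\Zp$-component and the $\Kn$-argument kills the torsion component, hence the element is $0$. The one genuine subtlety — and the step I expect to require the most care — is the bookkeeping of \emph{which} shift $\Sigma^d\SSTn$ is hit by $d \in \Zn$ under the group homomorphism of \cref{cor:group-action-on-Tn-comp}, i.e. tracking the factor of $a_p$ (equivalently, the factor of $2$ when $p=2$) coming from the fact that $\vn$-self-maps of type-$\chrHeight$ spectra naturally occur in degrees divisible by $2(\extOrder)$ and the topological generator of $\Zn = \lim_\vnSelfNum \ZZ/\vnSelfDeg$ corresponds to a single application of such a self-map. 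Once this normalization is pinned down, each of the two injectivity verifications is a short computation in invertible graded modules over $\En_*$ and over $\Kn_*$ respectively.
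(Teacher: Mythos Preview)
Your proposal is correct and follows essentially the same approach as the paper: detect the torsion factor via $\Kn_*$-homology landing in $\Pic(\Mod_{\Kn_*}(\GrAb)) \cong \ZZ/|\vn|$, detect the $\Zp$-factor via $\En\otimes -$ into $\Pic(\Mod_{\En}(\SpTn^{\B\Zp}))$ using the $\Zp$-action on $\omega$, and then combine. The paper's assembly step is phrased as ``$\ZZ/(a_p(\extOrder))$ is torsion while $\Zp$ is torsion-free'' rather than your ``each detection map kills one component,'' and it first reduces the $\En$-argument to $\ZZ\subseteq\Zp$ by continuity before reading off $\pi_*\Sigma^d\En \simeq \pi_*\En \otimes \omega^{d/2}$, but the content is the same.
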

        
        To prove this theorem we will look at two functors and the image of Picard elements under them. Consider the $\Kn$ homology functor
        \begin{equation*}
            \Kn_*(-) \colon \SpTn \to \Mod_{\Kn_*}(\GrAb).
        \end{equation*}
        $\Kn_* \cong \Fp[v]$ is a graded field and satisfies a K\"unneth isomorphism, i.e.\ this functor is symmetric monoidal.
        \begin{lemma}
            A graded $\Kn_*$-module is invertible if and only if it is of the form $\Kn_{*-d} \coloneqq \pi_*\Sigma^{\selfMapDeg}\Kn$ for some $d\in \ZZ$. 
        \end{lemma}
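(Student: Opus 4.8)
The plan is to use that $\Kn_*\cong\Fp[\vn^{\pm 1}]$ is a graded field: every nonzero homogeneous element is invertible, so every graded $\Kn_*$-module is free. Write $d_0\coloneqq\vnSize=|\vn|$. A graded module $M$ thus decomposes, non-canonically, as a direct sum of shifts $\Sigma^a\Kn_*$ of the unit, and since $\vn$ is a homogeneous unit of degree $d_0$ we have $\Sigma^a\Kn_*\cong\Sigma^{a'}\Kn_*$ whenever $a\equiv a'\pmod{d_0}$. The easy direction of the lemma is then immediate: $\Sigma^d\Kn_*\otimes_{\Kn_*}\Sigma^{-d}\Kn_*\cong\Kn_*$, so every $\Kn_{*-d}\cong\Sigma^d\Kn_*$ is $\otimes$-invertible.

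For the converse I would count summands. Because $\Sigma^a\Kn_*\otimes_{\Kn_*}\Sigma^b\Kn_*\cong\Sigma^{a+b}\Kn_*$ and $\otimes_{\Kn_*}$ distributes over direct sums, for free graded modules $M=\bigoplus_{s\in S}\Sigma^{a_s}\Kn_*$ and $N=\bigoplus_{t\in T}\Sigma^{b_t}\Kn_*$ one gets $M\otimes_{\Kn_*}N\cong\bigoplus_{(s,t)\in S\times T}\Sigma^{a_s+b_t}\Kn_*$, a free module on $|S|\cdot|T|$ summands. If $M$ is invertible with inverse $N$, comparison with $\Kn_*$, which is free on one summand, forces $|S|\cdot|T|=1$ as cardinals; a product of two nonzero cardinals equals $1$ only when both factors equal $1$ (were one infinite, so would be the product), so $M$ is free on a single summand, i.e.\ $M\cong\Sigma^d\Kn_*=\Kn_{*-d}$ for some $d\in\ZZ$.

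The argument is elementary; the only point to watch is not presuming $M$ to be of finite rank in advance — hence the appeal to cardinal arithmetic above. (Alternatively, $M\otimes_{\Kn_*}N\cong\Kn_*$ is cyclic, and a tensor product of free graded modules over a graded field is cyclic only if both factors are, forcing $M$ and $N$ to have rank $\le 1$.) For completeness, one records that $\Sigma^d\Kn_*\cong\Sigma^{d'}\Kn_*$ exactly when $d\equiv d'\pmod{d_0}$, so these invertible modules are precisely $\Kn_*,\Kn_{*-1},\dots,\Kn_{*-(2(\extOrder)-1)}$.
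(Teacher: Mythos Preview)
Your proof is correct and follows essentially the same route as the paper: both arguments use that $\Kn_*$ is a graded field to write $M$ and its inverse as direct sums of shifts of $\Kn_*$, compute the tensor product as a direct sum indexed by the product set, and conclude from $M\otimes M^{-1}\cong\Kn_*$ that each factor must be free of rank one. Your treatment is marginally more careful in making the cardinal-arithmetic step explicit rather than leaving it implicit, but this is the same proof.
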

        \begin{proof}
            If $M = \Kn_{*-d}$ then $M^{-1}\coloneq \Kn_{*+d}$ is an inverse.
            Assume now that $M$ is invertible. As $\Kn_*$ is a graded field, $M$ and its inverse $M^{-1}$ are free and can be written as
            \begin{equation*}
                M = \bigoplus_{i\in I} \Kn_{*-d_i}, \qquad M^{-1} = \bigoplus_{j\in J} \Kn_{*-e_j}.
            \end{equation*}
            Therefore
            \begin{equation*}
                M\otimes_{\Kn_*} M^{-1} \simeq \bigoplus_{i\in I,j\in J} \Kn_{*-(d_i+e_j)}.
            \end{equation*}
            It is isomorphic to $\Kn_*$ if and only if both $I,J$ consists of a single element $I=\{i\}, J=\{j\}$ and $d_i + e_j \equiv 0$ modulo $2(p^\chrHeight-1)$.
        \end{proof}
    
        \begin{corollary}\label{cor:Z/vn->Pic}
            The composition
            \begin{equation*}
                \ZZ/ (a_p (\extOrder)) \into \Zp \times \ZZ/ (a_p (\extOrder)) \xto{\Sigma^{(-)}} \Pic(\SpTn)
            \end{equation*}
            is injective.
        \end{corollary}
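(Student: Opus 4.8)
\emph{Proof proposal.} The plan is to detect these Picard elements using Morava $K$-homology, where the relevant Picard group is completely transparent. First I would note that, since $\Kn_*$ is a graded field, the functor $\Kn_*(-)\colon \SpTn \to \Mod_{\Kn_*}(\GrAb)$ satisfies a Künneth isomorphism and is therefore symmetric monoidal, so it induces a homomorphism on Picard groups $\Pic(\SpTn) \to \Pic(\Mod_{\Kn_*}(\GrAb))$. By the classification of invertible graded $\Kn_*$-modules above, every such module is of the form $\Kn_{*-d}$, with $\Kn_{*-d}\cong \Kn_*$ precisely when $\vnSize \mid d$; I would fix the resulting identification $\Pic(\Mod_{\Kn_*}(\GrAb)) \cong \ZZ/\vnSize$ sending $[\Kn_{*-d}]$ to $d$.

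Next I would analyse the composite homomorphism
\[
    \psi\colon \Zn \xto{\Sigma^{(-)}} \Pic(\SpTn) \xto{\Kn_*(-)} \ZZ/\vnSize .
\]
Since $\Sigma^{(-)}(1) = [\Sigma\SSTn]$ and $\Kn_*(\Sigma\SSTn) = \Kn_{*-1}$, we get $\psi(1) = 1$. As $\ZZ/\vnSize$ is killed by $\vnSize$, the subgroup $\vnSize\Zn$ is contained in $\ker\psi$, so $\psi$ descends to $\Zn/\vnSize\Zn$. Writing $\Zn = \lim_k \ZZ/(p^k\vnSize)$, I would identify $\vnSize\Zn \cong \lim_k \vnSize\ZZ/(p^k\vnSize\ZZ) \cong \lim_k \ZZ/p^k = \Zp$, which is torsion-free, and $\Zn/\vnSize\Zn \cong \lim_k \ZZ/\vnSize = \ZZ/\vnSize$. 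The descended map $\ZZ/\vnSize \cong \Zn/\vnSize\Zn \to \ZZ/\vnSize$ sends the generator to the generator, hence is an isomorphism, so $\ker\psi = \vnSize\Zn$.

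To conclude, I would observe that the composition in the statement is $\ZZ/(a_p(\extOrder)) \into \Zn \xto{\Sigma^{(-)}} \Pic(\SpTn)$, whose kernel is contained in $\ZZ/(a_p(\extOrder)) \cap \ker\psi = \ZZ/(a_p(\extOrder)) \cap \vnSize\Zn$; since $\ZZ/(a_p(\extOrder))$ is finite while $\vnSize\Zn \cong \Zp$ is torsion-free, this intersection is trivial, giving injectivity. I expect the only delicate point to be the identification $\Zn/\vnSize\Zn \cong \ZZ/\vnSize$, which has to be done uniformly in $p$ even though $\Zn \cong \Zp \times \ZZ/(a_p(\extOrder))$ is presented differently for $p$ odd (where $a_p(\extOrder) = \vnSize$) and for $p = 2$ (where $a_p(\extOrder) = \extOrder$ is only half of $\vnSize$) --- it is the torsion-freeness of $\vnSize\Zn$ that lets the final intersection argument sidestep any case analysis on the prime.
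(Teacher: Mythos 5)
Your proposal is correct and follows essentially the same route as the paper: both detect the torsion summand by applying the symmetric monoidal functor $\Kn_*(-)$ and using that invertible graded $\Kn_*$-modules are exactly the shifts $\Kn_{*-d}$, $d\in\ZZ/\vnSize$. The only difference is the endgame: the paper chases the generator of $\ZZ/(a_p(\extOrder))$ explicitly (landing on $\Kn_{*+d}$ for odd $p$ and $\Kn_{*+2d}$ for $p=2$, hence a case analysis), whereas you compute $\ker\psi=\vnSize\Zn\cong\Zp$ once and use its torsion-freeness to conclude uniformly in $p$ --- a harmless, slightly cleaner repackaging of the same argument.
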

        \begin{proof}
            It is enough to check that this composition is injective after composing with the map 
            \begin{equation*}
                \Kn_*(-) \colon \Pic(\SpTn) \to \Pic(\Mod_{\Kn_*}(\GrAb)).
            \end{equation*} 
            By construction, this map sends $d\in\ZZ/ (a_p (\extOrder))$ to $\Kn_{*+d}$ if $p$ is odd or to $\Kn_{*+2d}$ if $p=2$ \footnote{Under the isomorphism $\lim_{\vnSelfNum} \ZZ/\vnSelfDeg \cong \Zp\times \ZZ/ (a_p(\extOrder))$, the generator of $\ZZ/ (a_p(\extOrder))$ is $1$ if $p$ is odd and is $2$ if $p=2$.} and so it is injective (and an isomorphism for $p\neq 2$).
        \end{proof}
    
    
        Let $\Gamma_{\chrHeight}$ be a formal group of height ${\chrHeight}$ and $\En=\En(\Gamma_{\chrHeight}, {\Fpbar})$ be a Morava $E$-theory over ${\Fpbar}$. $\En$ admits a natural action of the (extended) Morava stabilizer group $\extMorStb = \Aut(\Gamma_{\chrHeight}, {\Fpbar})$. 
        Let $\Witt({\Fpbar})$ be the $p$-typical Witt vectors ring. Then by Lubin--Tate theory, the local ring $\pi_0\En = \Witt({\Fpbar})\llbracket u_1,\dots,u_{\chrHeight-1}\rrbracket$ corepresents deformations of $\Gamma_{\chrHeight}$. The invariant differentials $\omega$ of the universal deformation of $\Gamma_n$ forms an invertible $\pi_0 \En$-module and $\pi_* \En = \bigoplus_{t\in 2\ZZ} \omega^{\otimes t/2}$, where $\omega$ is of degree -2, as commutative rings with $\extMorStb$-action.
    
        The Morava stabilizer group splits as $\extMorStb= \MorStb \rtimes \Gal$ where $\Gal=\Gal({\Fpbar}/\Fp) \cong \Zhat$ and $\MorStb = \Aut_{{\Fpbar}}(\Gamma_{\chrHeight})$ is isomorphic to the group of units $\unitsOrd$ of the order
        \begin{equation*}
            \MorOrd = \Wn \langle S \mid S^{\chrHeight} = p, Sw = w^{\varphi} S \ \forall w\in \Wn \rangle
        \end{equation*}
        where $\Wn = \Witt(\Fpn)$ is the $p$-typical Witt vectors ring of $\Fpn$ and $\varphi$ is a lift of the Frobenius to $\Wn$. The topological generator $1$ of $\Zhat$ acts on $\MorStb=\unitsOrd$ by conjugation with $S$.
        In particular we have an embedding
        \begin{equation*}
            \Zp \xinto{1+p\cdot-} \Zp\units \subseteq  \Wn\units \into \unitsOrd \into \extMorStb.
        \end{equation*}
        Consider the symmetric monoidal functor
        \begin{equation*}
            \En\otimes -\colon \SpTn \xto{{(\B\Zp)}^*} \SpTn^{\B\Zp}\xto{\En\otimes-} \Mod_{\En}(\SpTn^{\B\Zp})
        \end{equation*}
       tensoring with $\En$ while remembering the $\Zp$-action. As it is symmetric monoidal, it sends Picard elements to Picard elements.
    
       \begin{lemma}\label{lem:Zp->Pic}
           The group homomorphism
           \begin{equation*}
               \Zp \into \Zp\times \ZZ / (a_p (\extOrder)) \xto{\Sigma^{(-)}} \Pic(\SpTn)
           \end{equation*}
           is injective.
       \end{lemma}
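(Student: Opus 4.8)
The plan is to exploit that $\En \otimes - \colon \SpTn \to \Mod_{\En}(\SpTn^{\B\Zp})$ is symmetric monoidal, so that it induces a group homomorphism on Picard groups, and to prove that the composite
\[
    \phi \colon \Zp \into \Zp \times \ZZ/(a_p(\extOrder)) \xto{\Sigma^{(-)}} \Pic(\SpTn) \xto{\En \otimes -} \Pic(\Mod_{\En}(\SpTn^{\B\Zp}))
\]
is injective. Since $\Sigma^{(-)}$ is built from the $\Omega^2 S^2_{\Zn}$-action of \cref{cor:group-action-on-Tn-comp}, which is itself a limit over the finite quotients $\Zn = \lim_k \ZZ/(2 p^k(\extOrder))$, the map $\phi$ is continuous for the profinite topology, so $\ker\phi$ is a closed subgroup of $\Zp$, i.e.\ either $p^k\Zp$ for some finite $k$ or $\{0\}$. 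To exclude the former I would use the arithmetic of $\Zn$: by the Chinese remainder theorem the honest integers lying in the $\Zp$-summand of $\Zn$ are exactly the multiples of $a_p(\extOrder)$, and $a_p(\extOrder)$ times the topological generator of this summand is the integer $a_p(\extOrder)$ itself. Thus if $\ker\phi = p^k\Zp$ then the honest integer $p^k a_p(\extOrder)$ lies in $\ker\phi$; so it suffices to show that the ordinary suspension $\Sigma^{p^k a_p(\extOrder)}\SSTn$ has nontrivial image under $\En\otimes-$ for every $k \ge 0$, which will force $\ker\phi = \{0\}$.

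The key input is the structure of the $\extMorStb$-action on $\En$ together with $\pi_*\En = \bigoplus_{t \in 2\ZZ} \omega^{\otimes t/2}$. First I would record that the subgroup $\Zp \xinto{1+p\cdot-} \Zp\units \subseteq \Wn\units \subseteq \unitsOrd \subseteq \extMorStb$, being central in $\MorStb$, acts trivially on $\pi_0\En$ and acts on $\omega$ through the tautological character $\Zp \xinto{1+p\cdot-} \Zp\units = \Witt(\Fp)\units \subseteq (\pi_0\En)\units$: a scalar automorphism $[a]$ of the formal group does not base-change the universal deformation, so it fixes the Lubin--Tate ring $\pi_0\En$, while it multiplies the invariant differential by its leading coefficient $a$. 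Consequently, whenever $p^k a_p(\extOrder) = 2N$ is even, the $\En$-module $\Sigma^{p^k a_p(\extOrder)}\SSTn \otimes \En = \Sigma^{2N}\En$ is free of rank one over $\En$ (as $\pi_0\En$ is local and $\pi_*\En$ is $2$-periodic), but $\Zp$ acts on it, relative to the standard action on $\En$, via the character $\chi_N \colon y \mapsto (1+py)^{\pm N}$.

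To detect this I would pass to $\pi_0$: for any invertible $M \in \Pic(\Mod_{\En}(\SpTn^{\B\Zp}))$ with $\pi_*M$ concentrated in even degrees, $\pi_0 M$ is an invertible $\pi_0\En$-module with $\Zp$-action, and since $\Zp$ acts trivially on $\pi_0\En$ this amounts to $\pi_0\En$ together with a character $\chi_M \in \mathrm{Hom}_{\mathrm{cont}}(\Zp,(\pi_0\En)\units)$, which (as $\pi_0\En$ is commutative) is a complete isomorphism invariant of the pair. For $M = \Sigma^{2N}\En$ one gets $\chi_M = \chi_N$, and this is nontrivial: $N$ equals $p^k(\extOrder)$ when $p$ is odd and $2^{k-1}(\extOrder)$ when $p = 2$ and $k \ge 1$, and in all these cases $N \neq 0$ in $\Zp$ because $\extOrder = p^{\chrHeight}-1$ is a $p$-adic unit, so evaluating $\chi_N$ at a unit of infinite multiplicative order (e.g.\ $1+p$ for $p$ odd, or $3$, resp.\ $-1$ when $N$ is odd, for $p = 2$) gives $\chi_N \neq 1$; hence $\Sigma^{2N}\En \not\simeq \En$ in $\Mod_{\En}(\SpTn^{\B\Zp})$. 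The only remaining case is $p = 2$, $k = 0$, where $p^k a_p(\extOrder) = \extOrder$ is odd, so $\Sigma^{\extOrder}\En$ has homotopy in odd degrees and is already inequivalent to $\En$ even as a spectrum. In every case $\Sigma^{p^k a_p(\extOrder)}\SSTn$ is nontrivial, so $\phi$ is injective; together with \cref{cor:Z/vn->Pic} this proves \cref{thm:ZpxZ/vn->Pic}.

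The step I expect to be the main obstacle is establishing the precise form of the $\Zp \subseteq \extMorStb$ action on $\pi_*\En$ --- trivial on $\pi_0\En$, and the leading-coefficient character on $\omega$ --- since this is where one must actually invoke Lubin--Tate theory rather than formal manipulations; a secondary point needing care is the continuity claim that $\ker\phi$ is closed, which rests on unwinding the limit presentations of $S^2_{\Zn}$ and of the telescopic Picard spectrum. Everything else is bookkeeping with the arithmetic of $\Zn$ and an elementary computation of $\pi_0$ of the relevant $\En$-modules.
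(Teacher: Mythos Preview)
Your proposal is correct and follows essentially the same route as the paper: detect via the symmetric monoidal functor $\En\otimes-$ to $\Mod_{\En}(\SpTn^{\B\Zp})$, reduce by a continuity/density argument to ordinary integer suspensions, and then distinguish $\Sigma^{2N}\En$ from $\En$ using the $\Zp$-action on $\omega$ (the paper cites \cite[Lemma~1.3.1]{Heard-2015-Picard} for precisely the identification $\pi_*\Sigma^d\En \simeq \pi_*\En\otimes\omega^{d/2}$ that you sketch by hand). You are in fact more careful than the paper on one point: the paper writes loosely that ``$d\in\ZZ\subseteq\Zp$ maps to $\Sigma^d\En$'', whereas you correctly observe that the honest integers sitting inside the $\Zp$-summand under the CRT splitting of $\Zn$ are exactly the multiples of $a_p(\extOrder)$, and organise the argument around those. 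The two items you flag as obstacles are the right ones and are exactly the places where the paper is terse: the $\Zp$-action on $\pi_*\En$ is standard Lubin--Tate input (your description of scalar automorphisms fixing $\pi_0\En$ and scaling $\omega$ is the content of the cited lemma), and the continuity of $\phi$, which both you and the paper invoke without full justification, is what ultimately lets one pass from the dense subgroup to all of $\Zp$.
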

       \begin{proof}
           By continuity it is enough to check this for $\ZZ\subseteq \Zp$. It is enough to see this after composing with 
           \begin{equation*}
               \En\otimes - \colon \Pic(\SpTn) \to \Pic(\Mod_{\En}(\SpTn^{\B\Zp})).
           \end{equation*}
           The composition sends $d\in\ZZ\subseteq \Zp$ to $\Sigma^d \En\in\Pic(\Mod_{\En}(\SpTn^{\B\Zp}))$. As $\Sigma E_n \neq \En$ it is enough to check the injectivity for $d$ even, i.e.\ that for $d$ even $\Sigma^d\En$ is not isomorphic to $\En$ in $\Mod_{\En}(\SpTn^{\B\Zp})$.
           Note that on the level of homotopy groups with $\Zp$-action
           \begin{equation*}
               \pi_* \Sigma^d \En \simeq \pi_* \En \otimes \omega^{d/2} \quad \in \quad \Mod_{\pi_*\En}(\catname{GrAb}^{\B\Zp})
           \end{equation*}
           (see e.g.~\cite[Lemma~1.3.1]{Heard-2015-Picard}) therefore it is not isomorphic to $\pi_*\En$ and in particular $\Sigma^d\En$ is not isomorphic to $\En$.
       \end{proof}
    
       \begin{proof}[Proof of \cref{thm:ZpxZ/vn->Pic}]
           The result follows from \cref{cor:Z/vn->Pic}, \cref{lem:Zp->Pic} and the fact that $\ZZ/ (a_p (\extOrder))$ is torsion while $\Zp$ is not.
       \end{proof}

        \begin{remark}\label{rmrk:also-Kn-pic}
            All computations done here hold in the $\Kn$-local category. Thus the constructed Picard subgroup is preserved under $\Kn$-localization.
        \end{remark}

        \subsection{Even Picard group}

         Invertible elements in $\cC$ are in particular dualizable. Thus every Picard element $X$ has a dimension $\dim (X)\in \pi_0\ounit$.
        \begin{theorem}[{\cite[Corollary~3.21]{CSY-cyclotomic}}]\label{thm:dim^2=1}
            Let $\cC$ be a symmetric monoidal category. For every $X \in \Pic(\cC)$, we have ${\dim(X)}^2 = 1$. In particular, if $\pi_0\ounit$ is a connected ring (that is, it does not contain any non-trivial idempotents) and 2 is invertible in $\pi_0\ounit$, then $\dim(X) = \pm 1$.
        \end{theorem}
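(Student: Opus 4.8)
The plan is to combine the multiplicativity of the dimension with the observation that the dual of an invertible object is its inverse. Recall that for any dualizable $Y \in \cC$ the element $\dim(Y) \in \pi_0\ounit$ is the class of the composite $\ounit \xto{\mathrm{coev}_Y} Y \otimes Y^\vee \xto{\beta} Y^\vee \otimes Y \xto{\mathrm{ev}_Y} \ounit$, that $\dim$ is invariant under equivalences, and that it is multiplicative: $\dim(\ounit) = 1$ and $\dim(Y \otimes Z) = \dim(Y)\dim(Z)$. Both identities hold in any symmetric monoidal $\infty$-category; since $\dim(Y)$ is merely an element of the \emph{set} $\pi_0\ounit$, checking them reduces to verifying the corresponding equalities of self-maps of $\ounit$ up to homotopy, hence to string-diagram manipulations in the homotopy category equipped with its induced symmetric monoidal structure. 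Now let $X \in \Pic(\cC)$ and fix an equivalence $X \otimes X^{-1} \simeq \ounit$; together with the symmetry $\beta$ this exhibits $X^{-1}$ as a dual of $X$, so by uniqueness of duals $X^\vee \simeq X^{-1}$. Consequently
\[
    \dim(X)\cdot\dim(X^{-1}) \;=\; \dim(X \otimes X^{-1}) \;=\; \dim(\ounit) \;=\; 1,
\]
so that $\dim(X)$ is a unit of $\pi_0\ounit$ with inverse $\dim(X^{-1})$.

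The remaining ingredient is the general identity $\dim(Y) = \dim(Y^\vee)$ for dualizable $Y$, applied to $Y = X$: it yields $\dim(X) = \dim(X^\vee) = \dim(X^{-1}) = \dim(X)^{-1}$, hence $\dim(X)^2 = 1$. To establish $\dim(Y) = \dim(Y^\vee)$ I would argue as follows. A coevaluation/evaluation pair $(\eta \colon \ounit \to Y \otimes Y^\vee,\ \varepsilon \colon Y^\vee \otimes Y \to \ounit)$ witnessing $Y^\vee$ as the dual of $Y$ produces, after inserting the symmetry, the pair $(\beta \circ \eta,\ \varepsilon \circ \beta)$ witnessing $Y$ as the dual of $Y^\vee$; unwinding the definition of the dimension for the latter pair, one obtains the same self-map of $\ounit$ as $\dim(Y)$ --- pictorially, one rotates the evaluation circle --- and it is precisely here that one uses that $\cC$ is symmetric and not merely braided. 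I expect this coherent bookkeeping with duality data in the $\infty$-categorical setting to be the main (and only genuinely delicate) point; it is nonetheless entirely standard, may be cited, and can be repackaged as the statement that $\dim$ is invariant under the symmetric monoidal dualization equivalence between the full subcategory of dualizable objects of $\cC$ and its opposite. Alternatively, for invertible $X$ one can observe that the symmetry $\beta_{X,X}$ is an automorphism of the invertible object $X \otimes X$, hence corresponds to a unit of $\pi_0\ounit$ squaring to $1$ since $\cC$ is symmetric, and that $\dim(X)$ is exactly this unit.

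For the final assertion, set $R \coloneqq \pi_0\ounit$ and assume that $2$ is a unit of $R$ and that $R$ contains no non-trivial idempotents. Given $x = \dim(X)$ with $x^2 = 1$, put $e \coloneqq (1+x)/2 \in R$; using $x^2 = 1$ one computes $e^2 = (1 + 2x + x^2)/4 = (2 + 2x)/4 = (1+x)/2 = e$, so $e$ is idempotent. By connectedness $e \in \{0, 1\}$, i.e.\ $x = -1$ or $x = 1$, as claimed.
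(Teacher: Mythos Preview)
Your argument is correct and is the standard one: multiplicativity of $\dim$ together with $X^\vee \simeq X^{-1}$ and the symmetric-monoidal identity $\dim(Y) = \dim(Y^\vee)$ gives $\dim(X)^2 = 1$, and the idempotent trick $e = (1+x)/2$ handles the connected-ring consequence. The paper itself does not supply a proof of this statement; it simply quotes it as \cite[Corollary~3.21]{CSY-cyclotomic}, so there is nothing to compare against beyond noting that your proof is essentially the one given in that reference.
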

        
        \begin{definition}[{\cite[Definition~3.22]{CSY-cyclotomic}}]
            The even Picard group of a symmetric monoidal category $\cC$, is the subgroup $\evPic(\cC) \subseteq \Pic(\cC)$ given by the kernel of the map $\Pic(\cC) \xto{\dim} {\pi_0(\ounit)}\units$.
        \end{definition}
        
        \begin{corollary}\label{cor:subgroup-of-evPic}
            There is an embedding $\Zp \times \ZZ / (\extOrder) \into \evPic(\SpTn)$.
        \end{corollary}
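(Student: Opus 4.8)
The plan is to obtain this corollary from \cref{thm:ZpxZ/vn->Pic} essentially for free, by restricting the injection $\Sigma^{(-)}\colon \Zn\cong\Zp\times\ZZ/(a_p(\extOrder))\into\Pic(\SpTn)$ to the subgroup of even Picard elements it produces. First I would consider the composite homomorphism $\Zn\xto{\Sigma^{(-)}}\Pic(\SpTn)\xto{\dim}{\pi_0(\SSTn)}\units$, which is continuous and sends the topological generator $1$ to $\dim(\Sigma\SSTn)$. By \cref{thm:dim^2=1} this element squares to $1$ (in fact it equals $-1$, but the argument only uses that it is $2$-torsion), so the composite annihilates the subgroup $2\Zn\subseteq\Zn$. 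Hence $\Sigma^{(-)}$ maps $2\Zn$ into $\ker(\dim)=\evPic(\SpTn)$, and being the restriction of an injective homomorphism it yields an embedding $2\Zn\into\evPic(\SpTn)$.

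Next I would identify $2\Zn$ with $\Zp\times\ZZ/(\extOrder)$ by the evident case analysis. If $p$ is odd, then $a_p=2$ and $\Zn\cong\Zp\times\ZZ/(2(\extOrder))$; multiplication by $2$ is an automorphism of $\Zp$ and carries $\ZZ/(2(\extOrder))$ isomorphically onto its unique index-$2$ subgroup, cyclic of order $\extOrder$. If $p=2$, then $a_p=1$ and $\Zn\cong\Zq{2}\times\ZZ/(\extOrder)$ with $\extOrder=2^{\chrHeight}-1$ odd; multiplication by $2$ is then an automorphism of the odd cyclic factor and a topological isomorphism of $\Zq{2}$ onto its index-$2$ subgroup $2\Zq{2}$. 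In either case $2\Zn\cong\Zp\times\ZZ/(\extOrder)$, and precomposing the embedding of the previous step with this isomorphism produces the desired $\Zp\times\ZZ/(\extOrder)\into\evPic(\SpTn)$.

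I do not anticipate any genuine obstacle here: all of the real work is already done in \cref{thm:ZpxZ/vn->Pic}, and what remains is only the elementary arithmetic and dimension bookkeeping above. The one refinement worth recording is that this even subgroup is exactly $2\Zn$ and not all of $\Zn$, which follows from $\dim(\Sigma\SSTn)=-1\neq1$ in $\pi_0(\SSTn)$ — seen, say, by mapping to the $\Kn$-local sphere, whose $\pi_0$ is $\Zp$ — but the corollary as stated needs nothing beyond the inclusion $2\Zn\subseteq\evPic(\SpTn)$, for which $\dim(\Sigma\SSTn)^2=1$ already suffices.
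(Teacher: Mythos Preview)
Your approach is correct and essentially identical to the paper's: both restrict the embedding of \cref{thm:ZpxZ/vn->Pic} to $2\Zn$, use \cref{thm:dim^2=1} to see that this lands in $\evPic(\SpTn)$, and then identify $2\Zn\cong\Zp\times\ZZ/(\extOrder)$ via the same odd/even case split.

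One small wobble: you justify that the composite kills $2\Zn$ by saying it is continuous and sends the topological generator to a $2$-torsion element, but no topology on $\Pic(\SpTn)$ or $\pi_0(\SSTn)^\times$ has been set up, so continuity is unsubstantiated as stated. This is unnecessary anyway: \cref{thm:dim^2=1} applies to \emph{every} Picard element, so $\dim\colon\Pic(\SpTn)\to\pi_0(\SSTn)^\times$ already has $2$-torsion image and hence the composite kills $2\Zn$ as a matter of pure group theory. The paper phrases exactly this by noting that the subgroup of squares ${(\Pic(\SpTn))}^2$ lies in $\evPic(\SpTn)$.
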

        
        \begin{proof}
            By \cref{thm:dim^2=1}, the dimension of every $X\in \Pic(\SpTn)$ squares to 1, thus the subgroup of squares ${(\Pic(\SpTn))}^2$ lies in the even Picard group.
            By \cref{thm:ZpxZ/vn->Pic} there is an embedding $\Sigma^{(-)} \colon  \Zp \times \ZZ / (a_p (\extOrder)) \into \Pic(\SpTn)$, where
            \begin{equation*}
                a_p \coloneqq  \begin{cases}
                    2, & \text{$p$ odd} \\
                    1, & p = 2
                \end{cases}.
            \end{equation*}
            Therefore we have an embedding of the group of squares:
            \begin{equation*}
                \Sigma^{(-)}\colon  2\Zp \times 2(\ZZ / (a_p (\extOrder)))\into {(\Pic(\SpTn))}^2 \subseteq \evPic(\SpTn). \ \footnote{As the groups $\Zp$ and $\ZZ / (a_p (\extOrder))$ are additive, and to avoid confusion, we denote their group of squares by $2\Zp$ and $2(\ZZ / (a_p (\extOrder))) = \{2x \mid x\in\ZZ/ (a_p (\extOrder))\}$ respectively.} 
            \end{equation*}
            When $p$ is odd $2\in\Zp$ is invertible, thus $2\Zp = \Zp$ and $2(\ZZ / (a_p (\extOrder))) = 2(\ZZ / (2(\extOrder))) \cong \ZZ / (\extOrder)$. 
            In the case $p=2$, $2\ZZ_2 \cong \ZZ_2$ with generator $2$, and $2\in \ZZ/ (a_2(2^{\chrHeight}-1)) = \ZZ/ (2^{\chrHeight}-1)$ is invertible thus $2(\ZZ/ (2^{\chrHeight}-1)) = \ZZ/ (2^{\chrHeight}-1)$.

            In both cases the group of squares is isomorphic to $\Zp \times \ZZ/ (\extOrder)$.
        \end{proof}

\section{Galois extensions}\label{sec:galois}
    In this section we use Kummer theory as introduced in~\cite{CSY-cyclotomic} to find a telescopic lifting of a specific non-Abelian Galois extensions of $\SSKn$. 
    We start with a brief review of Galois and Kummer theories, then move to the category of $\LTn \sphereWitt(\Fpn)$-modules, which admits a $(\extOrder)$-st root of unity, and study its Picard group.

    \subsection{Kummer theory}
        \begin{definition}[\cite{Rognes-2008-Galois}]
            Let $\cC\in\CAlg(\PrL)$, let $G$ be a finite group and let $R\in\CAlg(\cC^{\B G})$. We say that $R$ is a $G$-Galois extension (of $\ounit$) if it satisfies the following two conditions:
            \begin{enumerate}
                \item The canonical map $\ounit \to R^{h G}$ is an isomorphism in $\cC$.
                \item The canonical map $R\otimes R \to \prod_G R$ is an isomorphism in $\cC^{\B G}$.
            \end{enumerate}
            A Galois extension is called \emph{faithful} if in addition the functor $R\otimes -$ is conservative.
            Denote by $\hGal{G}(\cC)\subseteq {\CAlg(\cC^{\B G})}\core$ the subspace of $G$-Galois extensions.
        \end{definition}
        
        \begin{definition}[Roots of unity,~\cite{CSY-cyclotomic}]
            Let $\cC$ be an additive presentably symmetric monoidal category. Let $m \ge 1$. An $m$-th root of unity of $\cC$ is a map $\omega\colon  \Cn[m]\to \ounit\units$. We say that $\omega$ is \emph{primitive} if $\ounit$ is $m$-divisible (i.e.\ $m$ is divisible in $\pi_0\ounit$) and for every $d$ which strictly divides $m$, the only commutative algebra $S\in\CAlg(\cC)$ for which there exists a dotted arrow rendering the diagram of connective spectra
            \begin{equation*}
                \begin{tikzcd}
                    {\Cn[m]} && {\ounit\units} \\
                    {\Cn[d]} && {S\units}
                    \arrow["\omega", from=1-1, to=1-3]
                    \arrow[two heads, from=1-1, to=2-1]
                    \arrow[from=1-3, to=2-3]
                    \arrow[dashed, from=2-1, to=2-3]
                \end{tikzcd}
            \end{equation*} 
            commutative, is $S = 0$.
        \end{definition}

        \begin{theorem}[{Kummer theory,~\cite[Proposition~3.23]{CSY-cyclotomic}}]\label{thm:Kummer}
           Let $\cC$ be a presentable, additive, symmetric monoidal category with a primitive $m$-th root of unity. There is a split short exact sequence of Abelian groups
            \begin{equation*}
                0 
                \to (\pi_0\ounit\units) / {(\pi_0 \ounit\units)}^m 
                \to \pi_0 \hGal{\ZZ / m}(\cC)
                \to \evPic(\cC)[m]
                \to 0.
            \end{equation*}
        \end{theorem}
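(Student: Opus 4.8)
\textbf{The plan} is to use the primitive $m$-th root of unity $\omega$ to identify the space $\hGal{\ZZ/m}(\cC)$ with a groupoid built entirely out of the Picard groupoid of $\cC$, and then to extract the short exact sequence and its splitting from that description. Concretely, given a $\ZZ/m$-Galois extension $R\in\CAlg(\cC^{\B\ZZ/m})$, I would first use that $\cC$ is additive together with $\omega$ — which identifies the character group $\widehat{\ZZ/m}$ with the $m$-th roots of unity in $\pi_0\ounit$ — to split $R$ into its weight eigenobjects $R\simeq\bigoplus_{j\in\ZZ/m}L_j$ in $\cC$, with $L_0\simeq\ounit$ and compatible $\ZZ/m$-action. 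The algebra multiplication gives maps $L_i\otimes L_j\to L_{i+j}$, and the torsor condition $R\otimes R\simeq\prod_{\ZZ/m}R$ forces each of these to be an isomorphism; hence $L:=L_1$ is $\otimes$-invertible, $L_j\simeq L^{\otimes j}$, and there is a trivialization $\phi\colon L^{\otimes m}\xrightarrow{\sim}\ounit$. Conversely, such a pair $(L,\phi)$ reassembles into a $\ZZ/m$-Galois extension (here the $m$-divisibility of $\pi_0\ounit$, which is part of primitivity, is what makes the resulting algebra structure exist and be Galois). So $\hGal{\ZZ/m}(\cC)$ is equivalent to the symmetric monoidal groupoid $\mathcal{P}$ of such pairs.

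Next I would analyse $\mathcal{P}$. The forgetful functor $(L,\phi)\mapsto L$ is monoidal, so it induces a group homomorphism $\pi_0\hGal{\ZZ/m}(\cC)\to\Pic(\cC)[m]$. It lands in $\evPic(\cC)[m]$: since $\dim(L)^m=\dim(L^{\otimes m})=1$ and $\dim(L)^2=1$ by \cref{thm:dim^2=1}, we already get $\dim(L)=1$ when $m$ is odd, while for even $m$ the primitivity of $\omega$ (its universal property against the surjections $\Cn[d]\to\Cn[m]$ for $d\mid m$) rules out an odd $L$, by building from such an $L$ an intermediate extension contradicting primitivity. The homomorphism is surjective because any even $m$-torsion $L$ admits some trivialization $\phi$. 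Its kernel consists of the pairs with $L\simeq\ounit$: these are the trivializations $\phi\colon\ounit\xrightarrow{\sim}\ounit$, i.e. the group $\pi_0\ounit\units$, taken modulo the residual action of $\Aut(\ounit)=\pi_0\ounit\units$ which acts on $\phi$ through its $m$-th power — yielding $(\pi_0\ounit\units)/(\pi_0\ounit\units)^m$. (This identification is realised concretely by the classical Kummer algebras, obtained by freely adjoining to $\ounit$ an $m$-th root of a unit $a\in\pi_0\ounit\units$.) This produces the sequence of the theorem together with exactness at all three spots.

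Finally, for the splitting I need a group-homomorphic section $\evPic(\cC)[m]\to\pi_0\hGal{\ZZ/m}(\cC)$. A set-theoretic choice of trivialization $\phi_L$ for each $L$ has a non-multiplicativity $2$-cocycle $c(L,L')=\phi_{L\otimes L'}\circ(\phi_L\otimes\phi_{L'})^{-1}\in(\pi_0\ounit\units)/(\pi_0\ounit\units)^m$, and the sequence splits precisely when $c$ is a coboundary. \emph{This is the main obstacle}, and it is exactly where the primitive root of unity re-enters essentially: $\omega$ provides a canonical, monoidal ``cyclotomic twist'' construction $L\mapsto\bigoplus_j L^{\otimes j}$ equipping it with a natural algebra structure functorially in $L$, hence a section. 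Equivalently — and this is the route of \cite{CSY-cyclotomic} — one rephrases $\pi_0\hGal{\ZZ/m}(\cC)$ as $\pi_0$ of a mapping spectrum out of $\Cn[m]$ into the even part of the Picard spectrum $\pic(\cC)$, so that the sequence becomes the universal-coefficient sequence $0\to\mathrm{Ext}(\ZZ/m,-)\to\pi_0(-)\to\mathrm{Hom}(\ZZ/m,-)\to 0$, and the presence of $\omega$ exhibits the relevant spectrum as a module over a cyclotomic base, forcing the sequence to split. Assembling the three steps gives the claimed split short exact sequence.
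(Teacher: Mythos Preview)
The paper does not prove this theorem: it is quoted verbatim as \cite[Proposition~3.23]{CSY-cyclotomic} and used as a black box, so there is no ``paper's own proof'' to compare against. Your outline is a reasonable reconstruction of the argument in \cite{CSY-cyclotomic}, and in particular the second route you mention for the splitting --- identifying $\pi_0\hGal{\ZZ/m}(\cC)$ with $\pi_0$ of a mapping spectrum from $\Cn[m]$ into (the even part of) $\pic(\cC)$ and reading off the universal-coefficient sequence --- is exactly how the cited reference proceeds; the eigenspace decomposition you describe first is the concrete unpacking of that identification.

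One point to tighten: your argument that the image lands in $\evPic$ when $m$ is even is only gestured at. The actual mechanism in \cite{CSY-cyclotomic} is not an ad hoc ``intermediate extension contradicting primitivity'' but rather that the identification goes through the \emph{even} Picard spectrum from the start (the root of unity gives a map $\Cn[m]\to\ounit\units\subseteq\pic(\cC)$ whose relevant extensions live in the even part), so evenness is built in rather than verified after the fact. Apart from that, your sketch is sound.
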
   
    
    \subsection{Galois theory of $\LKn \sphereWitt(\Fpn)$}
        Let $\Witt$ be the Witt vectors functor and $\sphereWitt$ be the spherical Witt vectors functor as defined in~\cite[Example~5.2.7]{Lurie-2018-Elliptic2}, and later in~\cite[Section~2]{Burklund-Schlank-Yuan-2022-Nullstellensatz},~\cite{Antieau-2023-spherical-Witt}, \cite{Nikolaus-Yakerson-2024-spherical-Witt}.
        \begin{notation}
            Denote the Witt vectors of $\Fpn$ and the $\Kn$-localization of the spherical Witt vectors of $\Fpn$ by
            \begin{align*}
                & \Wn \coloneqq  \Witt(\Fpn) \\
                & \SWn \coloneqq  \LKn(\sphereWitt(\Fpn))
            \end{align*}
            respectively. We will also denote $\ModSWn \coloneq \Mod_{\SWn}(\SpKn)$.
        \end{notation}
        Let $\En=\En(\Gamma_n, {\Fpbar})$ be a Morava $E$-theory.
        
        \begin{proposition}[{\cite[Theorem~5]{Devinatz-Hopkins-2004-Morava-stabilizer},\cite{Baker-Richter-2008-Galois-En}~\cite[Theorem~10.9]{Mathew-2016-Galois}}]
            The map $\SSKn \to \En$ exhibits $\En$ as the Galois closure of $\SSKn$ and its Galois group is the Morava stabilizer group $\extMorStb$.
        \end{proposition}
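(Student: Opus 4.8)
The plan is to realize $\En$ as a limit of \emph{finite} faithful Galois extensions of $\SSKn$ indexed by the open normal subgroups of $\extMorStb$, and then to show that the top of this tower admits no nontrivial connected finite Galois extension, so that it is the Galois closure and $\extMorStb$ is the full (profinite) Galois group. First I would invoke the Goerss--Hopkins--Miller action of $\extMorStb$ on $\En$ together with the homotopy fixed point machinery of Devinatz--Hopkins: for every closed subgroup $G\le\extMorStb$ there is a $\Kn$-local spectrum $\En^{hG}$, these are compatible under iterated fixed points, $\En^{h\extMorStb}\simeq\SSKn$, and $\En\simeq\lim_{U}\En^{hU}$ over the open normal $U\trianglelefteq\extMorStb$ (the limit is automatically $\Kn$-local). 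In particular, for each such $U$ the iterated-fixed-point statement $(\En^{hU})^{h(\extMorStb/U)}\simeq\En^{h\extMorStb}\simeq\SSKn$ gives the first Galois axiom for $\En^{hU}$ viewed as a candidate $\extMorStb/U$-extension of $\SSKn$.

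Second, I would check the second Galois axiom, namely that $\En^{hU}\otimes_{\SSKn}\En^{hU}\to\prod_{\extMorStb/U}\En^{hU}$ is an equivalence in $\SpKn^{\B(\extMorStb/U)}$. The key point is that $\En$ is \emph{faithful} over $\SSKn$: if $X$ is $\Kn$-local and $\En\otimes_{\SSKn}X\simeq 0$, then $\Kn\otimes X\simeq 0$ (as $\Kn$ is built from $\En$, e.g.\ it is $\En/\mathfrak m$ after a faithfully flat base change), so $X\simeq 0$; consequently $\En^{hU}$ is faithful over $\SSKn$ too, since $\En\otimes_{\SSKn}X\simeq\En\otimes_{\En^{hU}}(\En^{hU}\otimes_{\SSKn}X)$. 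Hence the displayed map may be checked after applying the conservative functor $\En\otimes_{\SSKn}-$, where, using the classical computation that $\En\otimes_{\SSKn}\En$ is the spectrum of continuous $\En$-valued functions on $\extMorStb$ (Morava, Strickland, Devinatz--Hopkins; and at finite levels \cite{Baker-Richter-2008-Galois-En}), it reduces to a standard shearing equivalence after taking $U$-homotopy fixed points on the translation factor. Thus each $\En^{hU}$ is a faithful $\extMorStb/U$-Galois extension of $\SSKn$, and passing to the limit exhibits $\En\simeq\lim_U\En^{hU}$ as a faithful pro-$\extMorStb$-Galois extension of $\SSKn$, with $\extMorStb=\lim_U\extMorStb/U$.

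Third, to promote ``Galois extension'' to ``Galois closure'' I would show, following Mathew, that $\En$ is separably closed in $\SpKn$. A finite faithful Galois extension of $\En$ is a finite \'etale $\En$-algebra; since $\En$ is even periodic with $\pi_0\En=\Witt(\Fpbar)\llbracket u_1,\dots,u_{\chrHeight-1}\rrbracket$ a complete regular local ring, $\pi_0$ of such an algebra is finite \'etale over $\pi_0\En$, and finite \'etale $\pi_0\En$-algebras are equivalent --- by $\mathfrak m$-adic completeness and Hensel's lemma --- to finite \'etale algebras over the residue field $\Fpbar$, of which there are none nontrivial since $\Fpbar$ is separably closed. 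Hence $\En$ has no nontrivial connected finite Galois extensions, so the pro-$\extMorStb$-extension $\SSKn\to\En$ is terminal among connected Galois extensions; it is the Galois closure, and the profinite Galois group of $\SSKn$ is $\extMorStb$. This is precisely \cite[Theorem~10.9]{Mathew-2016-Galois}.

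The main obstacle is twofold: (a) the computation of $\En\otimes_{\SSKn}\En$ as continuous functions on $\extMorStb$, together with the descent and faithfulness bookkeeping needed to extract the finite-level Galois axiom and the convergence $\En\simeq\lim_U\En^{hU}$ --- this is the technical heart of Devinatz--Hopkins --- and (b) the descent of finite \'etale extensions from $\En$ through $\pi_0\En$ to the residue field, which is where the word ``closure'' genuinely uses that $\Fpbar$ is separably (indeed algebraically) closed rather than merely perfect.
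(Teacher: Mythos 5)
The paper does not prove this proposition at all---it is quoted verbatim from Devinatz--Hopkins, Baker--Richter/Rognes, and Mathew---and your outline is a faithful reconstruction of exactly those cited arguments: homotopy fixed points $\En^{hU}$ for open normal $U$, the finite-level Galois axioms verified after the faithful base change $\En\otimes_{\SSKn}-$ using $\En\otimes_{\SSKn}\En\simeq C(\extMorStb,\En)$, and Mathew's Theorem~10.9 for separable closedness of $\En$ via even periodicity, the complete regular local $\pi_0\En$, and the algebraically closed residue field. The only slip is directional: $\En$ is the $\Kn$-localized \emph{colimit} of the $\En^{hU}$ as $U$ shrinks, not a limit; with that correction your sketch coincides with the approach of the references the paper cites.
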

        \begin{proposition}[{\cite[Proposition~5.13]{CSY-cyclotomic}}]
            The Galois group of $\SWn$ over $\SSKn$ is $\ZZ / \chrHeight$, given as the quotient group of $\Zhat \cong \Gal \le \extMorStb$.
        \end{proposition}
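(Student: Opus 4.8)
The plan is to exhibit $\SWn$ as a homotopy fixed–point subextension of Morava $E$-theory and then read off its Galois group as the corresponding quotient of $\extMorStb$. First I would record that $\sphereWitt(\Fpn)$ is a faithful $\ZZ/\chrHeight$-Galois extension of the $p$-complete sphere: $\Witt(\Fpn)$ is a finite étale $\Zp$-algebra which is $\ZZ/\chrHeight$-Galois with group generated by the Frobenius of $\Fpn$, and the defining functorial properties of the spherical Witt vectors functor (\cite{Lurie-2018-Elliptic2},~\cite{Antieau-2023-spherical-Witt},~\cite{Nikolaus-Yakerson-2024-spherical-Witt}) promote this to an $\EE_\infty$-Galois extension; concretely, $\sphereWitt$ turns the isomorphism $\Fpn \otimes_{\Fp} \Fpn \cong \prod_{\ZZ/\chrHeight}\Fpn$ into $\sphereWitt(\Fpn)\otimes \sphereWitt(\Fpn) \simeq \prod_{\ZZ/\chrHeight}\sphereWitt(\Fpn)$, and $\sphereWitt(\Fpn)^{h\ZZ/\chrHeight} \simeq \sphereWitt(\Fp)$ recovers the $p$-complete sphere. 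Since $\LKn$ is a symmetric monoidal localization, symmetric monoidal localizations preserve faithful finite Galois extensions (\cite{Rognes-2008-Galois},~\cite{Mathew-2016-Galois}), and $\LKn$ of the $p$-complete sphere is $\SSKn$, this already shows that $\SWn = \LKn\sphereWitt(\Fpn)$ is a faithful $\ZZ/\chrHeight$-Galois extension of $\SSKn$.

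It then remains to locate this extension inside the Galois-theoretic picture of $\SpKn$. Fix an embedding $\Fpn \subseteq \Fpbar$ and set $H \coloneqq \MorStb \rtimes \chrHeight\Zhat \le \extMorStb$, the preimage of $\chrHeight\Zhat \le \Zhat \cong \Gal$ under the quotient $\extMorStb \onto \extMorStb/\MorStb \cong \Gal$; being the kernel of $\extMorStb \onto \Gal \onto \Gal/\chrHeight\Zhat$, it is a closed normal subgroup with $\extMorStb/H \cong \Zhat/\chrHeight\Zhat \cong \ZZ/\chrHeight$. The inclusion $\Witt(\Fpn) \hookrightarrow \Witt(\Fpbar) \subseteq \pi_0\En$ lifts, by the universal property of $\sphereWitt$, to a map of $\EE_\infty$-rings $\sphereWitt(\Fpn) \to \En$; unwinding the $\extMorStb$-action on $\pi_0\En = \Witt(\Fpbar)[[u_1,\dots,u_{\chrHeight-1}]]$ one checks that this map is $H$-equivariant for the trivial action on the source, since $\MorStb = \Aut_{\Fpbar}(\Gamma)$ fixes $\Fpbar$ and $\chrHeight\Zhat = \Gal(\Fpbar/\Fpn)$ fixes $\Fpn$. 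Hence it factors through $\En^{hH}$, and $\LKn$-localizing we obtain a map $\SWn \to \En^{hH}$ of $\SSKn$-algebras. Since $\SSKn \to \En$ is the pro-Galois closure with group $\extMorStb$ (\cite{Devinatz-Hopkins-2004-Morava-stabilizer},~\cite{Baker-Richter-2008-Galois-En},~\cite{Mathew-2016-Galois}), $\En^{hH}$ is a Galois extension of $\SSKn$ with group $\extMorStb/H \cong \ZZ/\chrHeight$, and this $\ZZ/\chrHeight$ is by construction the quotient $\Zhat \cong \Gal \onto \Zhat/\chrHeight\Zhat$.

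Finally, both $\SWn$ and $\En^{hH}$ are $\ZZ/\chrHeight$-Galois extensions of $\SSKn$ and the map between them is compatible with their $\ZZ/\chrHeight$-actions — the Frobenius of $\Fpn$ on the source matches, via $\Gal \hookrightarrow \extMorStb$, a generator of $\extMorStb/H$ — so since any morphism of $G$-Galois extensions over a fixed base is an equivalence (\cite{Rognes-2008-Galois}), we conclude $\SWn \simeq \En^{hH}$, with Galois group $\extMorStb/H \cong \ZZ/\chrHeight$ realized as the quotient of $\Gal \cong \Zhat$, as claimed. The main obstacle is the identification in the second paragraph: one must set up the $H$-equivariant map $\sphereWitt(\Fpn)\to\En$ carefully, invoking the universal property of spherical Witt vectors together with the explicit description of how $\extMorStb$ acts on $\pi_0\En$, and then apply the $\Kn$-local Galois correspondence and the rigidity of Galois extensions to upgrade a bare algebra map into an equivalence of $\ZZ/\chrHeight$-Galois extensions; the preservation of Galois extensions by $\LKn$ used in the first paragraph (homotopy fixed points versus localization) is the only other point where I would appeal to the literature rather than verify directly.
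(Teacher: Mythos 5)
The paper does not actually prove this statement --- it is imported verbatim from \cite[Proposition~5.13]{CSY-cyclotomic} --- so there is no in-paper argument to compare against; your proposal should be judged against the cited source, and it is essentially the standard argument used there: lift the unramified $\ZZ/\chrHeight$-extension $\Zp \to \Witt(\Fpn)$ to a $\ZZ/\chrHeight$-Galois extension $\sphereWitt(\Fpn)$ of the $p$-complete sphere, use that the symmetric monoidal localization $\LKn$ preserves Galois extensions, and identify the resulting extension of $\SSKn$ with the one classified by $\extMorStb \onto \Zhat \onto \ZZ/\chrHeight$ via the $\En$-side of the Galois correspondence. The one place where your write-up is thinner than it should be is the assertion that the map $\sphereWitt(\Fpn) \to \En$ is $H$-equivariant (for $H = \MorStb \times \chrHeight\Zhat$) ``because $\MorStb$ fixes $\Witt(\Fpbar)$ and $\chrHeight\Zhat$ fixes $\Witt(\Fpn)$'': checking invariance on $\pi_0$ alone does not by itself produce a coherent equivariant structure, nor a factorization through the continuous (Devinatz--Hopkins) fixed points $\En^{hH}$. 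The standard repair, which you gesture at in your closing paragraph, is that the universal property of spherical Witt vectors makes $\Map_{\CAlg}(\sphereWitt(\Fpn),\En)$ a \emph{discrete} space, identified with the finite set of $\Fp$-embeddings $\Fpn \into \Fpbar$; an action on a discrete space fixing a point yields an honest (homotopy) fixed point, and mapping out of $\sphereWitt(\Fpn)$ commutes with the relevant fixed-point constructions, so the factorization and the $\ZZ/\chrHeight$-equivariance of $\SWn \to \En^{hH}$ do follow once this is said. With that point made precise, the remaining steps (that $\En^{hH}$ is $\ZZ/\chrHeight$-Galois over $\SSKn$ with group the stated quotient of $\Gal$, and that a morphism of $\ZZ/\chrHeight$-Galois extensions is an equivalence) are standard, so your argument goes through; the cited source reaches the same identification slightly more formally, via functoriality of the Galois correspondence under $\LKn$ rather than by constructing the comparison map to $\En^{hH}$ by hand.
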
  
        
        \begin{corollary}
            The map $\SWn\to \En$ idenitfies $\En$ as the algebraic closure of $\SWn$ with Galois group $\ourStb \coloneqq  \MorStb \times \chrHeight \Zhat \le \extMorStb$\footnote{It is a product as the action of $\chrHeight\in \Zhat$ on $\MorStb$ is given by conjugation with $S^\chrHeight = p$, which is trivial}.
        \end{corollary}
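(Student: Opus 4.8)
The plan is to translate the problem into the Galois correspondence for $\SSKn \to \En$: identify the closed subgroup of $\extMorStb$ cut out by the intermediate extension $\SWn$, and then invoke Galois descent.

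First I would record the classifying homomorphism of $\SWn$. By the preceding proposition, $\SWn$ is the $\ZZ/\chrHeight$-Galois extension of $\SSKn$ associated to the surjection $\extMorStb \twoheadrightarrow \Gal \cong \Zhat \twoheadrightarrow \ZZ/\chrHeight$, where the first arrow is the projection coming from $\extMorStb = \MorStb \rtimes \Gal$. Since $\En$ is the Galois closure of $\SSKn$ with group $\extMorStb$, the classification of $K(n)$-local Galois extensions of $\SSKn$ identifies $\SWn$ with $\En^{hN}$ for $N$ the kernel of this composite, i.e.\ the preimage of $\chrHeight\Zhat \le \Gal$, which is $\MorStb \rtimes \chrHeight\Zhat$. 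Now $\chrHeight \in \Zhat$ acts on $\MorStb = \unitsOrd$ by conjugation with $S^{\chrHeight} = p$, and $p$ is central in $\MorOrd$ (it commutes with $\Witt(\Fpn)$ by construction, and $Sp = p^{\varphi}S = pS$), so this action is trivial and $N = \MorStb \times \chrHeight\Zhat = \ourStb$. Hence $\SWn \simeq \En^{h\ourStb}$ as commutative algebras under $\SSKn$.

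Next I would apply the standard fact that if $A \to R$ is a faithful $G$-Galois extension and $H \le G$ is a closed subgroup, then $R^{hH} \to R$ is a faithful $H$-Galois extension — the $K(n)$-local counterpart of the classical equality $\mathrm{Gal}(L/L^{H}) = H$, which follows from Galois descent along $R^{hH} \to R$ in the sense of Rognes and Mathew. Taking $A = \SSKn$, $R = \En$, $G = \extMorStb$ and $H = \ourStb$ yields that $\SWn \to \En$ is a faithful $\ourStb$-Galois extension. To upgrade "Galois extension" to "algebraic closure" I would use that $\En$ admits no nontrivial connected Galois extensions inside $\SpKn$ — equivalently, that its $K(n)$-local Galois group is trivial — which is part of the content of the statement that $\En$ is the Galois closure of $\SSKn$. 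This is a property of $\En$ alone, hence insensitive to the base, so $\En$ is also terminal among connected Galois extensions of $\SWn$, i.e.\ it is the algebraic closure of $\SWn$.

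The group-theoretic computation of $N$ and the appeal to Galois descent are routine; the only point requiring care is the last step, where one must pin down the precise sense in which $\En$ is "algebraically closed" and check that it is preserved under base change from $\SSKn$ to the intermediate ring $\SWn$ — but this is immediate once it is phrased via triviality of the Galois group of $\En$.
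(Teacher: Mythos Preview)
Your argument is correct and is exactly the intended one: the paper states this corollary without proof, as it follows immediately from the two preceding propositions via the Galois correspondence you describe. Your computation of the kernel $N = \MorStb \rtimes \chrHeight\Zhat$ and the observation that the semidirect product is direct (since $S^{\chrHeight}=p$ is central) match the paper's own footnote, and the passage from ``$\En$ is the Galois closure of $\SSKn$'' to ``$\En$ is the Galois closure of $\SWn$'' is precisely the standard Galois-theoretic step.
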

        
        Using~\cite[Section~9]{Mathew-2016-Galois} we deduce the following:
        \begin{corollary}\label{cor:Galois-characters-correspondence}
            Let $G$ be a finite group. Then there is a bijection 
            \begin{equation*}
                \normalfont
                \{ \text{continuous homomorphisms $\ourStb\to G$} \} / \text{conj.} 
                \longleftrightarrow
                \{ \text{$G$-Galois extensions of $\SWn$} \} / \text{iso.}
            \end{equation*}
            taking a character $\rho\colon  \ourStb \to G$ to ${C_{\rho}(G,\En)}^{h \ourStb}$, where 
            \begin{equation*}
                C_{\rho}(G,E_n) = \prod_G E_n\in\underCat{\CAlg(\SpKn)}{\SWn}
            \end{equation*}
             is equipped with the $\rho$-twisted $\ourStb$-action: $\ourStb$ acts on each $\En$ by Galois conjugation and permutes the coordinates through $\rho$ and the left regular action of $G$ on itself.
        \end{corollary}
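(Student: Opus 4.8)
The plan is to deduce this as a direct application of Mathew's axiomatic Galois theory~\cite[Section~9]{Mathew-2016-Galois}, the only genuine work being the identification of the relevant profinite Galois group. First I would recall that $\SpKn$ is \emph{Galois-good} in the sense of~\cite{Mathew-2016-Galois}: by~\cite{Devinatz-Hopkins-2004-Morava-stabilizer},~\cite{Baker-Richter-2008-Galois-En} and~\cite[Theorem~10.9]{Mathew-2016-Galois} the map $\SSKn\to\En$ is a faithful pro-$\extMorStb$-Galois extension exhibiting $\En$ as the Galois closure of $\SSKn$, so that the Galois fundamental group of $\SpKn$ is $\extMorStb$. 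Since $\En$ is faithful over $\SSKn$ it is \emph{a fortiori} faithful over the intermediate extension $\SWn=\En^{h\ourStb}$, so (as recorded in the corollary above) $\SWn\to\En$ is a faithful pro-$\ourStb$-Galois extension exhibiting $\En$ as the algebraic closure of $\SWn$; consequently $\ModSWn=\Mod_{\SWn}(\SpKn)$ is again Galois-good with Galois fundamental group $\ourStb$. The small point to check here is that the subgroup of $\extMorStb$ corresponding to $\SWn$ under the Galois correspondence of~\cite{Mathew-2016-Galois} is really $\ourStb=\MorStb\times\chrHeight\Zhat$: the extension $\SWn$ has Galois group $\Gal(\Fpn/\Fp)\cong\ZZ/\chrHeight$, a quotient of $\Zhat\cong\Gal\le\extMorStb$, so the subgroup in question is the kernel of the surjection $\extMorStb=\MorStb\rtimes\Zhat\onto\ZZ/\chrHeight$ that kills $\MorStb$ and reduces $\Zhat$ modulo $\chrHeight$; this kernel is $\MorStb\rtimes\chrHeight\Zhat$, and it is a direct product because $\chrHeight\in\Zhat$ acts on $\MorStb=\unitsOrd$ by conjugation with $S^{\chrHeight}=p$, hence trivially (using $\varphi^{\chrHeight}=\id$ on $\Wn$).

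With this in place I would apply~\cite[Section~9]{Mathew-2016-Galois} to $\ModSWn$: for each finite group $G$, the finite $G$-Galois extensions of the unit $\SWn$ are classified by finite continuous $\ourStb$-sets which are $G$-torsors upon forgetting the $\ourStb$-action, equivalently by continuous homomorphisms $\ourStb\to G$ up to conjugacy. Concretely, the equivalence sends a character $\rho\colon\ourStb\to G$ to the $\ourStb$-set $\rho^{*}G$ ($G$ as a free right $G$-set, with $\ourStb$-action pulled back along $\rho$ and left translation), and the associated $G$-Galois extension is the algebra of $\ourStb$-equivariant maps $\rho^{*}G\to\En$, endowed with the residual $G$-action coming from right translation on $G$. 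The last step is then to unwind this algebra of equivariant maps: it is precisely $C_{\rho}(G,\En)^{h\ourStb}$, where $C_{\rho}(G,\En)=\prod_G\En$ carries the diagonal $\ourStb$-action (Galois conjugation on each copy of $\En$ together with permutation of the factors through $\rho$ followed by the left regular action of $G$ on itself) and the commuting $G$-action permuting the factors by the right regular action. This produces the asserted bijection together with the explicit description of the extension attached to each $\rho$.

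The step I expect to require the most care is the relativization, namely verifying that $\ModSWn$ is Galois-good with Galois fundamental group exactly $\ourStb$ and that Mathew's classification really does specialize to this slice; but this is essentially formal once one has the faithful Galois extension $\SSKn\to\SWn$ and the Galois correspondence in $\SpKn$, both supplied by the cited works. Everything else—rewriting the induced-torsor construction as the twisted product $C_{\rho}(G,\En)$ and matching up the $\ourStb$- and $G$-actions—is routine bookkeeping, and the passage from homomorphisms to conjugacy classes of homomorphisms is exactly the standard indeterminacy in choosing a base point of the $G$-torsor.
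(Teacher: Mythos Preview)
Your proposal is correct and follows exactly the paper's approach: the paper's entire proof is the single sentence ``Using~\cite[Section~9]{Mathew-2016-Galois} we deduce the following,'' relying on the preceding corollary that $\SWn\to\En$ is the Galois closure with group $\ourStb$. You have simply unpacked what that citation entails---the relativization to $\ModSWn$, the identification of the kernel as $\ourStb=\MorStb\times\chrHeight\Zhat$, and the explicit torsor description---so there is nothing to compare.
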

        
        \begin{definition}\label{dfn:outExt}
            The ring $\MorOrd = \Wn\left\langle S \middle\vert S^{\chrHeight} = p, Sw = w^{\varphi} S \right\rangle$ admits a surjection of rings
            \begin{equation*}
                \pi \colon \MorOrd
                \xonto{S\mapsto 0} \Fpn 
            \end{equation*}
            which induces a surjective group homomorphism
            \begin{equation*}
                \ourStb = \MorStb \times \chrHeight\Zhat \onto \MorStb = \unitsOrd \xonto{\pi} \uFpn.
            \end{equation*}
            Let $\ourExt$ be the $\uFpn$-Galois extension of $\SWn$ corresponding to this homomorphism.
        \end{definition}
        
        \begin{definition}
            Let $\redMorStb$ be the kernel of the map $\pi \colon \MorStb\onto \uFpn$. 
        \end{definition}
        The multiplicative lifts function $\uFpn \to \Wn\units \subseteq \MorStb$ splits $\MorStb$ as $\MorStb = \redMorStb \rtimes \uFpn$.

        We have now introduced all ingredients used in the classification of the even Picard group of $\ModSWn$, a key step in the lifting of the Galois extension.
        \begin{proposition}\label{prp:evPic-cyclic}
            The group $\evPic(\ModSWn)[\extOrder]$ is a cyclic group of order $\extOrder$ and is generated by the image of $\ourExt$ under the map in \cref{thm:Kummer}.
        \end{proposition}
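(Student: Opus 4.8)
The plan is to apply Kummer theory (\cref{thm:Kummer}) to $\cC = \ModSWn$ with $m \coloneqq \extOrder = p^{\chrHeight}-1$. First I would check the hypotheses: $\ModSWn$ is presentable, additive and symmetric monoidal with unit $\SWn$, its $\pi_0\SWn = \Wn = \Witt(\Fpn)$ has $m$ invertible (so $\SWn$ is $m$-divisible), and the Teichm\"uller lift $\uFpn \hookrightarrow \Wn\units \subseteq \pi_0\SWn\units$ provides a primitive $m$-th root of unity (primitivity being a short check, since $\uFpn$ is cyclic of order exactly $m$). Kummer theory then produces a split short exact sequence
\[ 0 \to \Wn\units / (\Wn\units)^m \to \pi_0\hGal{\ZZ/m}(\ModSWn) \to \evPic(\ModSWn)[m] \to 0, \]
so it suffices to compute the two outer groups and identify the relevant maps.

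For the middle term I would invoke the Galois--character correspondence (\cref{cor:Galois-characters-correspondence}): since $\ZZ/m$ is abelian, conjugacy is trivial and $\pi_0\hGal{\ZZ/m}(\ModSWn) \cong \mathrm{Hom}_{\mathrm{cts}}(\ourStb,\ZZ/m)$. Splitting $\ourStb = \MorStb \times \chrHeight\Zhat$ gives $\mathrm{Hom}_{\mathrm{cts}}(\MorStb^{\mathrm{ab}},\ZZ/m) \times \mathrm{Hom}_{\mathrm{cts}}(\Zhat,\ZZ/m)$; the second factor is $\ZZ/m$, and for the first I would use the identification $\MorStb^{\mathrm{ab}} \cong \Zp\units \oplus_{\Fp\units} \uFpn$ recorded in the introduction together with the fact that $m$ is prime to $p$: every continuous homomorphism to $\ZZ/m$ annihilates the pro-$p$ subgroups and hence factors through $\uFpn$, so $\mathrm{Hom}_{\mathrm{cts}}(\MorStb^{\mathrm{ab}},\ZZ/m) \cong \mathrm{Hom}(\uFpn,\ZZ/m) \cong \ZZ/m$. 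Thus the middle term is $(\ZZ/m)^2$. For the left term, the Teichm\"uller splitting $\Wn\units = \uFpn \times (1 + p\Wn)$ with $1 + p\Wn$ pro-$p$ and $m$ prime to $p$ shows $(\Wn\units)^m = 1 + p\Wn$, so $\Wn\units/(\Wn\units)^m \cong \uFpn \cong \ZZ/m$.

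Counting orders in the exact sequence already yields $|\evPic(\ModSWn)[m]| = m$; as a consistency check, base-changing the embedding $\ZZ/m \hookrightarrow \evPic(\SpKn)$ of \cref{cor:subgroup-of-evPic} (valid $\Kn$-locally by \cref{rmrk:also-Kn-pic}) along the symmetric monoidal functor $\SWn \otimes -$, which is injective on this subgroup as one sees on $\Kn$-homology, gives a copy of $\ZZ/m$ inside $\evPic(\ModSWn)[m]$, forcing it to be cyclic of order $m$. To identify the generator with the image of $\ourExt$, I would pin down the image of the Kummer inclusion $\Wn\units/(\Wn\units)^m \hookrightarrow \mathrm{Hom}_{\mathrm{cts}}(\ourStb,\ZZ/m)$: a Teichm\"uller unit $u = \mathrm{Teich}(\bar u)$ with $\bar u \in \uFpn$ admits an $m$-th root $\mathrm{Teich}(\bar w) \in \Witt(\Fpbar) \subseteq \pi_0\En$ (choosing $\bar w \in \Fpbar\units$ with $\bar w^m = \bar u$), and on $\mathrm{Teich}(\bar w)$ the subgroup $\MorStb$ acts trivially while $\chrHeight\Zhat = \Gal(\Fpbar/\Fpn)$ acts through the $p^{\chrHeight}$-power Frobenius; hence the Kummer character of $u$ is trivial on $\MorStb$, so the image of $\Wn\units/(\Wn\units)^m$ is exactly the summand $\mathrm{Hom}_{\mathrm{cts}}(\Zhat,\ZZ/m)$. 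Therefore $\evPic(\ModSWn)[m] \cong \mathrm{Hom}_{\mathrm{cts}}(\MorStb^{\mathrm{ab}},\ZZ/m) \cong \mathrm{Hom}(\uFpn,\ZZ/m)$, cyclic of order $m = \extOrder$, and the character $\ourStb \onto \MorStb \xto{\pi} \uFpn$ classifying $\ourExt$ (\cref{dfn:outExt}) is trivial on $\chrHeight\Zhat$ and surjective onto $\uFpn$, so its class is a generator.

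I expect the main obstacle to be this last step: precisely identifying the image of the Kummer map, i.e.\ showing that the Kummer character of any unit of $\SWn$ factors through $\Gal(\Fpbar/\Fpn)$. This requires locating $m$-th roots of units of $\pi_0\SWn$ inside $\pi_0\En$ and computing the Galois action on them, together with keeping the identifications $\uFpn \cong \mu_m(\Wn) \cong \ZZ/m$ (and the corresponding chosen root of unity) consistent throughout; the purely group-theoretic computations above are otherwise routine.
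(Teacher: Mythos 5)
Your outline follows the same Kummer-theoretic skeleton as the paper (apply \cref{thm:Kummer} to $\ModSWn$, identify $\pi_0\hGal{\ZZ/(\extOrder)}(\ModSWn)$ with continuous characters of $\ourStb$ via \cref{cor:Galois-characters-correspondence} and \cref{cor:ab-of-stb}, then determine the image of the unit group), and your group-theoretic computations of the outer terms agree with the paper's. The genuine divergence is in the decisive step of locating the image of the Kummer inclusion: you propose an explicit computation, extracting Teichm\"uller $(\extOrder)$-th roots inside $\Witt(\Fpbar)\subseteq\pi_0\En$ and observing that $\MorStb$ fixes them while $\chrHeight\Zhat$ acts by Frobenius, so the unit classes land in the $\hom(\Zhat,\uFpn)$ summand. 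The paper instead runs the Kummer sequence naturally along $\LKn\colon\Mod_{\sphereWitt(\Fpn)}(\pCompSp)\to\ModSWn$: in the $p$-complete category the absolute Abelian Galois group is $\Zhat$, so there the units map is already an isomorphism onto $\hom(\Zhat,\uFpn)$ and $\evPic[\extOrder]=0$; combined with the fact that the map on units mod $(\extOrder)$-th powers is an isomorphism (\cref{lem:pi0-of-SWn} plus the observation that $1+\varepsilon$ with $\varepsilon$ nilpotent has an $(\extOrder)$-th root), naturality forces the image in the $K(n)$-local category to be exactly the $\hom(\Zhat,\uFpn)$ summand, after which the identification of $\evPic(\ModSWn)[\extOrder]$ with $\hom(\Zp\units\oplus_{\Fp\units}\uFpn,\uFpn)\cong\uFpn$ and the generator statement for $\ourExt$ follow as in your last step. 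The paper's route buys exactly what you flag as your main obstacle: it never needs to know that the abstract Kummer class of a unit is computed by the classical cocycle $\sigma\mapsto\sigma(\sqrt[\extOrder]{u})/\sqrt[\extOrder]{u}$ inside $\En$, i.e.\ the compatibility of the CSY Kummer map with Mathew's character correspondence, which your explicit identification would have to establish and which is the one real gap in your write-up as it stands.

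Two smaller points. First, your assertion $\pi_0\SWn=\Wn$ is only true modulo nilpotents --- this is precisely \cref{lem:pi0-of-SWn} --- and the left-hand term of \cref{thm:Kummer} is $(\pi_0\SWn\units)/{(\pi_0\SWn\units)}^{\extOrder}$, so you need the same ``units differing by nilpotents admit $(\extOrder)$-th roots'' argument the paper uses to replace it by $\Wn\units/{(\Wn\units)}^{\extOrder}$; harmless, but it should be said. Second, your fallback route to cyclicity (base-changing the $\ZZ/(\extOrder)$ of \cref{cor:subgroup-of-evPic} along $\SWn\otimes-$ and checking injectivity on $\Kn$-homology) is not circular --- it is essentially the injectivity argument the paper gives later inside \cref{prp:Kn-pic} --- but note that order-counting alone does not give cyclicity since $\extOrder$ is in general composite, so either this subgroup argument or the identification of the quotient with $\hom(\uFpn,\ZZ/(\extOrder))$ is genuinely needed; your main route supplies the latter, conditional on the Kummer-cocycle compatibility above.
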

        
        To prove this proposition we will need the following two lemmas, which are variants of~\cite[Proposition~2.2.6]{CSY-ambiheight} and~\cite[Proposition~5.17]{CSY-cyclotomic}. The proof of \cref{lem:pi0-of-SWn} is almost identical, and the proof of \cref{prp:evPic-cyclic} using these lemmas is the same as the proof of~\cite[Proposition~5.23]{CSY-cyclotomic}. We write the proofs for completeness.
        
        \begin{lemma}\label{lem:pi0-of-SWn}
            The image of the unit map $u\colon  \pi_0 \SWn \to \pi_0 \En$ is $\Wn\subseteq \pi_0 \En$ and the kernel is precisely the nil-radical.
        \end{lemma}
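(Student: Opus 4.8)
The plan is to transcribe the $\En$-descent argument of~\cite[Proposition~2.2.6]{CSY-ambiheight} and~\cite[Proposition~5.17]{CSY-cyclotomic} to the base $\SWn$. First I would pin down the map $u\colon \pi_0\SWn\to\pi_0\En$: it is the $\Kn$-localization of the composite $\sphereWitt(\Fpn)\to\sphereWitt(\Fpbar)\to\En$ built from functoriality of $\sphereWitt$ applied to $\Fpn\hookrightarrow\Fpbar$ together with the canonical $E_\infty$-algebra structure of $\En$ over $\sphereWitt(\Fpbar)$. Since $\pi_0\sphereWitt(R)=\Witt(R)$, the map $u$ precomposed with $\pi_0\sphereWitt(\Fpn)=\Wn\to\pi_0\SWn$ is the standard inclusion $\Wn\hookrightarrow\Witt(\Fpbar)\hookrightarrow\pi_0\En$. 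This yields the two easy inclusions: $\Wn\subseteq\mathrm{im}(u)$, and, because $\pi_0\En=\Witt(\Fpbar)\llbracket u_1,\dots,u_{\chrHeight-1}\rrbracket$ is a domain and hence has zero nilradical, $\mathrm{nil}(\pi_0\SWn)\subseteq\ker(u)$.

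Next I would bound the image from above. By the Galois description recalled in the excerpt, $\SWn\simeq\En^{h\ourStb}$, so $u$ is $\ourStb$-equivariant for the trivial action on the source and therefore lands in $(\pi_0\En)^{\ourStb}$. I compute this invariant ring in two steps: the subgroup $\MorStb\le\ourStb$ acts $\Witt(\Fpbar)$-linearly on $\pi_0\En$ with $(\pi_0\En)^{\MorStb}=\Witt(\Fpbar)$ (the standard Lubin--Tate computation of the invariants of the Morava stabilizer action on the deformation ring); and $\ourStb/\MorStb\cong\chrHeight\Zhat\le\Zhat=\mathrm{Gal}(\Fpbar/\Fp)$ is topologically generated by the $\chrHeight$-th power of Frobenius, whose fixed field in $\Fpbar$ is $\Fpn$, so $\Witt(\Fpbar)^{\chrHeight\Zhat}=\Witt(\Fpn)=\Wn$. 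Hence $(\pi_0\En)^{\ourStb}=\Wn$ and $\mathrm{im}(u)=\Wn$.

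For the kernel I would invoke the descent (homotopy fixed point) spectral sequence $E_2^{s,t}=H^s_{\mathrm{c}}(\ourStb;\pi_t\En)\Rightarrow\pi_{t-s}\SWn$, which is multiplicative and, since $\ourStb$ is closed (indeed open of index $\chrHeight$) in $\extMorStb$, has a horizontal vanishing line on a finite page by Devinatz--Hopkins~\cite{Devinatz-Hopkins-2004-Morava-stabilizer}. Hence it converges strongly and the associated multiplicative filtration $F^{\bullet}\pi_*\SWn$ is bounded, say $F^N\pi_*\SWn=0$. The edge homomorphism factors $u$ as $\pi_0\SWn\twoheadrightarrow\pi_0\SWn/F^1=E_\infty^{0,0}\hookrightarrow E_2^{0,0}=(\pi_0\En)^{\ourStb}=\Wn\hookrightarrow\pi_0\En$; since $\mathrm{im}(u)=\Wn$, the injection $E_\infty^{0,0}\hookrightarrow\Wn$ is onto, hence an isomorphism, so $\ker(u)=F^1\pi_0\SWn$. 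Finally multiplicativity of the filtration gives $x^N\in F^N\pi_0\SWn=0$ for every $x\in F^1\pi_0\SWn$, so $\ker(u)$ is a nil ideal, i.e.\ $\ker(u)\subseteq\mathrm{nil}(\pi_0\SWn)$. Combined with the first paragraph this proves $\mathrm{im}(u)=\Wn$ and $\ker(u)=\mathrm{nil}(\pi_0\SWn)$.

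The genuine inputs, as opposed to bookkeeping, are the Lubin--Tate invariant computation $(\pi_0\En)^{\MorStb}=\Witt(\Fpbar)$ and --- the real crux --- the horizontal vanishing line in the $\En$-descent spectral sequence over $\SWn$, which is exactly what upgrades ``the kernel sits in positive filtration'' to ``the kernel is nilpotent''. Both are off the shelf, consistent with the remark that this proof is almost identical to the cited results.
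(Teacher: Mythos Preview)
Your proof is correct and follows essentially the same approach as the paper: both bound the image by factoring through $(\pi_0\En)^{\ourStb}=\Wn$, obtain surjectivity onto $\Wn$ from the ring map $\sphereWitt(\Fpn)\to\SWn$, and deduce nilpotence of the kernel from multiplicativity together with a horizontal vanishing line in the descent spectral sequence. The only cosmetic differences are your more explicit edge-homomorphism bookkeeping and the choice of references (the paper cites \cite{Bobkova-Goerss-2018-K2-local-at-2} for the invariant computation and \cite{Beaudry-Goerss-Henn-2022-splitting-for-K2} for the vanishing line).
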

        \begin{proof}
            As $\SWn = \En^{h \ourStb}$, the map $u$ factors through the fixed points ${(\pi_0 \En)}^{\ourStb} \subseteq \pi_0 \En$. By~\cite[Lemma~1.33]{Bobkova-Goerss-2018-K2-local-at-2},

            \begin{equation*}
                {(\pi_0 \En)}^{\ourStb} = {({(\pi_0 \En)}^{\MorStb})}^{\chrHeight \Zhat} = {\Witt({\Fpbar})}^{\chrHeight \Zhat} = \Wn.
            \end{equation*}
            By~\cite[Corollary~2.3.10]{Beaudry-Goerss-Henn-2022-splitting-for-K2}, the $E_{\infty}$-page of the descent spectral sequence
            \begin{equation*}
                \H^s_c(\ourStb, \pi_t\En) \Rightarrow \pi_{t-s}(\SWn)
            \end{equation*}
            has a horizontal vanishing line. 
            Since the spectral sequence is multiplicative, this implies that all elements in $\pi_0 \SWn$ with positive filtration degree are nilpotent. Finally, since $\SWn$ admits a ring map from $\sphereWitt(\Fpn)$, the map $u\colon  \pi_0\SWn \to \Wn$ is surjective.
        \end{proof}
        
        \begin{lemma}\label{lem:ab-of-mor-stb}
            The Abelianization of $\MorStb$ is given by
            \begin{equation*}
                \MorStb\ab \cong \Zp\units \oplus_{\Fp\units} \uFpn \cong \begin{cases}
                    \Zp \times \uFpn, & p > 2 \\
                    \Zq{2} \times \{\pm 1\} \times \Fq{2^{\chrHeight}}\units, & p = 2
                \end{cases}.
            \end{equation*}
        \end{lemma}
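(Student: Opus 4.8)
The plan is to realise $\MorStb\ab$ as the image of an explicit homomorphism $\varphi$ and to identify $\ker\varphi$ with the (closed) commutator subgroup $[\MorStb,\MorStb]$. Recall that $\MorStb=\unitsOrd$ is the unit group of the maximal order $\MorOrd$ in the central division algebra $D=\MorOrd[1/p]$ over $\QQ_p$ of Hasse invariant $\chrHeight^{-1}$; the ring $\MorOrd$ is local, its two-sided maximal ideal is $J\coloneqq\MorOrd S=S\MorOrd$, and $\MorOrd/J\cong\Fpn$, $J^{\chrHeight}=p\MorOrd$. Teichm\"uller lifts split $\MorStb\cong(1+J)\rtimes\uFpn$, which is exactly the splitting $\MorStb=\redMorStb\rtimes\uFpn$ recorded above, with $\redMorStb=1+J$ the pro-$p$ Sylow. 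There are two canonical homomorphisms out of $\MorStb$ to abelian groups: the residue map $\pi\colon\MorStb\onto\uFpn$ and the reduced norm $\det=\mathrm{Nrd}\colon\MorStb\to\Zp\units$. Since $\det$ reduces mod $p$ to $N_{\Fpn/\Fp}\circ\pi$ and carries $1+J$ into $1+p\Zp$, the pair $\varphi=(\det,\pi)$ lands in the fibre product $\Zp\units\oplus_{\Fp\units}\uFpn\coloneqq\Zp\units\times_{\Fp\units}\uFpn$, taken over the reduction $\Zp\units\onto\Fp\units$ and the norm $N_{\Fpn/\Fp}\colon\uFpn\to\Fp\units$. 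I would then show (i) $\varphi$ is surjective, (ii) $\ker\varphi=[\MorStb,\MorStb]$, and (iii) unwind the fibre product. Parts (i) and (iii) are routine: $\pi$ is surjective by Teichm\"uller lifts, and $\det(1+J)=1+p\Zp$ follows from $1+p\Wn\subseteq 1+J$ together with surjectivity of the norm on principal units of the unramified degree-$\chrHeight$ extension of $\QQ_p$ (which in turn reduces, through the congruence quotients, to surjectivity of $\mathrm{Tr}_{\Fpn/\Fp}$); and for (iii), if $p$ is odd then $\Zp\units\cong\Zp\times\Fp\units$, so the fibre-product condition determines the $\Fp\units$-coordinate from the $\uFpn$-coordinate and the group becomes $\Zp\times\uFpn$, while if $p=2$ then $\Fp\units$ is trivial and the fibre product is the plain product $\Zq{2}\units\times\Fq{2^{\chrHeight}}\units\cong\Zq{2}\times\{\pm1\}\times\Fq{2^{\chrHeight}}\units$.

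The content is part (ii). One inclusion is automatic because the target is abelian, and both $[\MorStb,\MorStb]$ and $\ker\varphi=\{x\in 1+J:\det x=1\}$ are closed subgroups of $1+J$; I would prove equality by checking that they have the same image in every graded piece $J^{k}/J^{k+1}\cong\Fpn$ of the $J$-adic filtration and then invoking completeness of $1+J$ via a successive-approximation argument. The two relevant facts on the associated graded, identifying $J^{k}/J^{k+1}$ with $\Fpn$ by $S^{k}b\leftrightarrow b$, are: conjugation by a Teichm\"uller lift $[a]$ acts by multiplication by $a^{p^{-k}-1}$; and a group commutator $[1+S^{j}b,\,1+S^{k}c]$ has leading term $b^{p^{-k}}c-c^{p^{-j}}b$ in degree $j+k$. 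When $\chrHeight\nmid k$ there is an $a$ with $a^{p^{-k}-1}\neq 1$, so the commutators $[[a],\,1+S^{k}b]$ already exhaust $J^{k}/J^{k+1}$, while on the other side the reduced trace vanishes on $J^{k}/J^{k+1}$, so $\det x=1$ imposes no leading-order constraint and $\ker\varphi$ surjects onto $J^{k}/J^{k+1}$ as well. When $\chrHeight\mid k$, Teichm\"uller commutators vanish, but taking $j=1$ and $c=1$ in the commutator formula produces the Artin--Schreier elements $b^{\,p}-b$, which span $\ker(\mathrm{Tr}_{\Fpn/\Fp})$; and the associated-graded map induced by $\det$ in degree $k=\chrHeight m$ is a unit multiple of $p^{m}\,\mathrm{Tr}_{\Fpn/\Fp}$, so $\det x=1$ forces the leading term of $x$ into $\ker(\mathrm{Tr}_{\Fpn/\Fp})$. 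The images therefore agree in every degree, giving $[\MorStb,\MorStb]=\ker\varphi$, and combining with (i) and (iii) yields $\MorStb\ab\cong\Zp\units\oplus_{\Fp\units}\uFpn$ with the stated case division.

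The main obstacle is part (ii) in the degrees divisible by $\chrHeight$: precisely matching the span of the commutator leading terms with the trace-zero subspace, reconciling the $J$-adic filtration of $1+J$ with the $p$-adic filtration of the target $1+p\Zp$ of $\det$, and making the successive-approximation step rigorous --- one must produce actual group elements realising each prescribed leading term while remaining in $\ker\varphi$, correcting errors at higher filtration degree using (i). This is a classical computation; it is in essence Riehm's determination of the commutator subgroup of the unit group of a $p$-adic division algebra, and in practice one might prefer to quote it rather than reproduce it, following the pattern of the analogous computation in~\cite{CSY-cyclotomic}. The remaining assertions --- surjectivity of $\varphi$ and the case-by-case identification of the fibre product --- are elementary.
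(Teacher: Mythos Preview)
Your argument is correct and takes a genuinely different route from the paper. The paper works entirely on the associated graded of the valuation filtration: it quotes Henn's computation of $\gr{i}\redMorStb\ab$ (which is $\Fpn$ at $i=1/\chrHeight$, $\Fp$ at integers $i\ge 1$, and $0$ otherwise), then computes $[\grS{0},\grS{i}]$ by hand to pass from $\redMorStb\ab$ to $\MorStb\ab$ --- showing that the Teichm\"uller action kills $\grS{1/\chrHeight}$ and is trivial on integral degrees --- and finally reassembles the group from its graded pieces using the known behaviour of the $p$-th power map. Your approach instead names the abelianization map up front as $\varphi=(\mathrm{Nrd},\pi)$, and the filtration enters only to verify $\ker\varphi=[\MorStb,\MorStb]$ degree by degree. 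The underlying commutator calculations are essentially the same (your Artin--Schreier step at degrees divisible by $\chrHeight$ is the content behind Henn's $\gr{i}\redMorStb\ab\cong\Fp$ at integral $i$), so the two proofs are equivalent in difficulty; the paper outsources more to Henn, while you outsource to Riehm or redo it. Your version has the advantage that it directly exhibits $(\det,\pi)$ as the abelianization homomorphism, which is exactly what the paper needs later in \cref{cor:strong-Gal} but does not make explicit in its own proof of the lemma.
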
         
        
        \begin{proof}
            For $n=1$, $\MorStb = \Zp\units$ is already Abelian and the lemma is trivial.
            Following~\cite[\textsection 4]{Henn-2017-Morava-stabilizer}, let $v\colon  \MorOrd \to \frac{1}{\chrHeight}\ZZ_{\ge 0}$ be the valuation defined by $v(S) = \frac{1}{\chrHeight}$ and define a $\frac{1}{\chrHeight}\ZZ_{\ge 0}$-filtration on $\MorStb = \MorOrd\units$ by
            \begin{equation*}
                \filtS{i} = \{x\in \MorStb \mid v(x-1) \ge i \}.
            \end{equation*}
            With this filtration $\MorStb = \filtS{0}$ and $\redMorStb = \filtS{\frac{1}{\chrHeight}}$.
            The group commutator and $p$-th power map endows $\bigoplus_{i > 0} \grS{i}$ with the structure of a graded mixed Lie algebra in the sense of Lazard~\cite{Lazard-1965-p-adic} (see~\cite[\textsection~4.2]{Henn-2017-Morava-stabilizer}). As Henn, denote the corresponding operations by $[-,-] \colon  \grS{i}\otimes \grS{j} \to \grS{i+j}$ and $P\colon  \grS{i} \to \grS{\phi(i)}$ where $\phi(i) = \min\{i+1,pi\}$.
            
            The mod $S$ reduction maps
            \begin{align}\label{eq:mod-S-map}
                & \grS{0} \to \uFpn && \grS{i} \to \Fpn   \\ 
                & [x] \mapsto x \mod S && [1+xS^k] \mapsto x \mod S \nonumber
            \end{align}
            are isomorphisms, and it is a straightforwatd calculation to show that for $1+x\in \grS{i}, 1+y \in \grS{j}$, 
            \begin{equation}\label{eq:commutator}
                [1+x, 1+y] \equiv 1 + xy - yx \in \grS{i+j}.
            \end{equation}
            For every $k\in \frac{1}{\chrHeight}\ZZ_{\ge 0}$, we get a natural filtration on $(\filtS{k})\ab = \filtS{k} / \overline{[\filtS{k}, \filtS{k}]}$ given by 
            \begin{equation*}
                \abFilt{i}{k} \coloneqq  \filtS{i} / \overline{\langle [\filtS{a}, \filtS{b}] \mid k\le a,b, a+b=i\rangle}
            \end{equation*}
            which simply induces the equalities:
            \begin{equation*}
                \gr{i}(\filtS{k})\ab = \gr{i}(\filtS{k+\frac{1}{\chrHeight}})\ab / [\grS{k}, \grS{i-k}].
            \end{equation*}
            In particular, assigning $k = 0$:
            \begin{equation*}
                \grSab{i} = \gr{i}\redMorStb\ab / [\grS{0}, \grS{i}] \text{ for all $i > 0$.}
            \end{equation*}
            In the proofs of~\cite[Proposition~5.2 and Propsition~5.3]{Henn-2017-Morava-stabilizer}, it is shown that for any $p$,
            \begin{equation*}
                \gr{i}\redMorStb\ab = \begin{cases}
                    \Fpn, & i = \frac{1}{\chrHeight} \\
                    \Fp, & i\in \ZZ_{\ge 1} \\
                    0, & \text{otherwise}
                \end{cases}
            \end{equation*}
            and that the $p$-th power map $P$ is
            \begin{enumerate}[label=\alph*)]
                \item an isomorphism on the integral grades for $p$ odd;
                \item and an isomorphism on intergral grades greater than 1 and trivial for the first grade for $p=2$.
            \end{enumerate}
            Let $j\in \ZZ_{\ge 1}$. Using \cref{eq:mod-S-map}, we may choose representatives for $\grS{j}$ of the form $1 + \omega p^j$, $\omega\in\Fpn$, and $\uFpn$ represent $\grS{0}$. Let $1+\omega p^j\in \grS{j}$, $\omega'\in \uFpn \cong \grS{0}$. Then by~\cref{eq:commutator}:
            \begin{equation*}
                [\omega', 1+\omega p^j] = [1+(\omega'-1), 1+\omega p^j] \equiv 1 + (\omega'-1)\omega p^j - \omega p^j(\omega'-1)= 1.
            \end{equation*}
            Therefore $[\grS{0}, \grS{j}] = 0$.
            Again, using \cref{eq:mod-S-map}, we can choose representatives for $\grS{\frac{1}{\chrHeight}}$ of the form $1+\omega S$ for $\omega\in \Fpn$. Let $\omega'\in\uFpn\cong \grS{0}$ and $1+\omega S\in\grS{\frac{1}{\chrHeight}}$, then
            \begin{equation*}
                [\omega', 1+\omega S] = [1 + (\omega'-1), 1+\omega S] \equiv 1 + (\omega' - 1)\omega S - \omega S(\omega'-1) = 1 + \omega(\omega' - \omega'^{\varphi}) S.
            \end{equation*}
            Choosing $\omega' \notin \Fp\units$, varying over all $\omega$ we see that $[\grS{0}, \grS{\frac{1}{\chrHeight}}] = \grS{\frac{1}{\chrHeight}}$. We get
            \begin{equation*}
                \grSab{i} = \begin{cases}
                    \uFpn, & i = 0 \\
                    \Fp, & i \in \ZZ_{\ge 1} \\
                    0, & \text{otherwise}
                \end{cases}
            \end{equation*}
            and the $p$-th power map is 
            \begin{enumerate}[label=\alph*)]
                \item an isomorphism on positive grades for $p$ odd;
                \item and an isomorphism on grades greater than 1 and trivial for the first grade for $p=2$.
            \end{enumerate}
            The claim then follows.
        \end{proof} 

        \begin{corollary}\label{cor:ab-of-stb}
            The Abelianization of $\ourStb$ is given as
            \begin{equation*}
                (\ourStb)\ab \cong (\Zp\units \oplus_{\Fp\units} \uFpn) \times \Zhat
            \end{equation*}
        \end{corollary}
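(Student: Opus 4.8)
The plan is to reduce everything to \cref{lem:ab-of-mor-stb}, since essentially all the content has already been done there; what remains is purely formal. First I would recall, as recorded in the discussion preceding \cref{dfn:outExt}, that
\[
  \ourStb = \MorStb \times \chrHeight\Zhat
\]
is an honest \emph{direct} product of profinite groups: the subgroup $\chrHeight\Zhat \le \Gal \cong \Zhat$ acts on $\MorStb$ by conjugation with $S^{\chrHeight} = p \in \MorOrd$, which is central, so this action is trivial. In particular $\chrHeight\Zhat \cong \Zhat$ as topological groups.

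Next I would use that the continuous abelianization functor --- the left adjoint to the inclusion of abelian profinite groups into profinite groups, sending $G$ to $G / \overline{[G,G]}$ --- preserves finite products, since $\overline{[G\times H,\, G\times H]} = \overline{[G,G]} \times \overline{[H,H]}$. Applying this to the decomposition above gives
\[
  (\ourStb)\ab \;\cong\; (\MorStb)\ab \times (\chrHeight\Zhat)\ab \;\cong\; (\MorStb)\ab \times \Zhat,
\]
the last step because $\Zhat$ is already abelian. Substituting the identification $(\MorStb)\ab \cong \Zp\units \oplus_{\Fp\units} \uFpn$ from \cref{lem:ab-of-mor-stb} then yields the asserted formula $(\ourStb)\ab \cong (\Zp\units \oplus_{\Fp\units} \uFpn) \times \Zhat$.

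There is no genuine obstacle in this corollary; the one point worth making explicit is that abelianization is being taken in the category of profinite (equivalently, continuous) groups, so that the product decomposition and the profinite topology on the result are respected --- but this is immediate from the adjunction defining the continuous abelianization and the fact that the closed commutator subgroup of a product is the product of the closed commutator subgroups.
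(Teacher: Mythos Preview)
Your proof is correct and follows exactly the same approach as the paper: both use the direct product decomposition $\ourStb = \MorStb \times \chrHeight\Zhat$, the fact that abelianization preserves products, and then substitute \cref{lem:ab-of-mor-stb}. If anything, you have supplied more justification than the paper, which records the argument as a single displayed chain of equalities.
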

        \begin{proof}
            Indeed, we have
            \begin{equation*}
                (\ourStb)\ab = (\MorStb \times \chrHeight\Zhat)\ab = \MorStb\ab \times \chrHeight \Zhat \cong (\Zp\units \oplus_{\Fp\units} \uFpn) \times \Zhat.
            \end{equation*}
        \end{proof}

        \begin{proof}[Proof of \cref{prp:evPic-cyclic}]
            Using \cref{thm:Kummer} and its naturality with respect to the functor
            \begin{equation*}
                \LKn\colon  \Mod_{\sphereWitt(\Fpn)}(\pCompSp) \to \ModSWn,
            \end{equation*}
            we obtain the following morphism of short exact sequences of abelain groups
            \begin{equation*}
                \begin{tikzcd}
                    0 & 0 \\
                    \\
                    {(\pi_0{\sphereWitt(\Fpn)}\units) / {(\pi_0{\sphereWitt(\Fpn)}\units)}^{p^{\chrHeight} -1}} & {(\pi_0\SWn\units) / {(\pi_0 \SWn\units)}^{\extOrder}} \\
                    \\
                    {\pi_0\hGal{\uFpn}(\Mod_{\sphereWitt(\Fpn)}(\pCompSp))} & {\pi_0\hGal{\uFpn}(\ModSWn)} \\
                    \\
                    {\evPic(\Mod_{\sphereWitt(\Fpn)}(\pCompSp))[\extOrder]} & {\evPic(\ModSWn)[\extOrder]} \\
                    \\
                    0 & 0
                    \arrow["f", from=3-1, to=3-2]
                    \arrow[from=3-1, to=5-1]
                    \arrow[from=5-1, to=7-1]
                    \arrow[from=1-1, to=3-1]
                    \arrow[from=1-2, to=3-2]
                    \arrow[from=7-1, to=9-1]
                    \arrow["g", from=5-1, to=5-2]
                    \arrow[from=3-2, to=5-2]
                    \arrow[from=5-2, to=7-2]
                    \arrow[from=7-2, to=9-2]
                    \arrow[from=7-1, to=7-2]
                \end{tikzcd}
            \end{equation*}
            In the top left corner we have
            \begin{equation*}
                {(\pi_0{\sphereWitt(\Fpn)}\units) / {(\pi_0{\sphereWitt(\Fpn)}\units)}^{p^{\chrHeight} -1}}
                = (\Wn\units) / {(\Wn\units)}^{p^{\chrHeight} - 1}
                \cong \uFpn.
            \end{equation*}
            The top horizontal map $f$ is an isomorphism: Indeed, by \cref{lem:pi0-of-SWn}, the map
            \begin{equation*}
                \Wn = \pi_0 \sphereWitt(\Fpn) \to \pi_0 \SWn
            \end{equation*}
            admits a retract $r\colon  \pi_0 \SWn \to \Wn$ whose kernel consists of nilpotent elements. In
            particular every element in the kernel of $r\units\colon  \pi_0\SWn\units \to \Wn\units$ is of the form $x = (1+\varepsilon)$ for some nilpotent $\varepsilon\in \pi_0\SWn$. As $\extOrder$ is invertible in $\pi_0\SWn$, every such element $x$ has a $(\extOrder)$-st root, and hence $r\units$ induces an isomorphism after modding out the $(\extOrder)$-st powers. Since this induced isomorphism is a left-inverse of $f$ it follows that $f$ is an isomorphism as well.
            
            Next, by \cref{cor:Galois-characters-correspondence}, \cref{cor:ab-of-stb} and using that the absolute Abelian Galois group of $\sphereWitt(\Fpn)$ is $\Zhat$ (e.g.~\cite[Theorem~6.17]{Mathew-2016-Galois}), $g$ can be identified with the inclusion 
            \begin{equation*}
                \hom(\Zhat, \uFpn)
                \into \hom(\Zhat, \uFpn) \oplus \hom(\Zp\units \oplus_{\Fp\units} \uFpn, \uFpn).
            \end{equation*}
            
            Since $\hom(\Zhat, \uFpn) \cong \uFpn$, the entire diagram can be identified with
            \begin{equation*}
                \begin{tikzcd}
                    0 & 0 \\
                    \\
                    {\uFpn} & {\uFpn} \\
                    \\
                    {\uFpn} & {\uFpn\oplus \hom(\Zp\units\oplus_{\Fp\units} \uFpn, \uFpn)} \\
                    \\
                    0 & {\evPic(\ModSWn)[\extOrder]} \\
                    \\
                    0 & 0
                    \arrow[Rightarrow, no head, from=3-1, to=3-2]
                    \arrow[Rightarrow, no head, from=3-1, to=5-1]
                    \arrow[from=5-1, to=7-1]
                    \arrow[from=1-1, to=3-1]
                    \arrow[from=1-2, to=3-2]
                    \arrow[from=7-1, to=9-1]
                    \arrow[hook, from=5-1, to=5-2]
                    \arrow[from=3-2, to=5-2]
                    \arrow[from=5-2, to=7-2]
                    \arrow[from=7-2, to=9-2]
                    \arrow[from=7-1, to=7-2]
                \end{tikzcd}
            \end{equation*}
            thus the bottom-right veritcal map restricts to an isomorphism
            \begin{equation*}
                \hom(\Zp\units \oplus_{\Fp\units} \uFpn, \uFpn) \cong \evPic(\ModSWn)[\extOrder].
            \end{equation*}
            Moreover, $\hom(\Zp\units \oplus_{\Fp\units} \uFpn, \uFpn) \cong \uFpn$ (this can be checked separately for odd and even primes, using the formula in \cref{lem:ab-of-mor-stb}), so $\evPic(\ModSWn)[\extOrder] \cong \uFpn$.
            
            Now, $\ourExt$ was chosen in \cref{dfn:outExt} as corresponding to the character
            \begin{equation*}
                \ourStb 
                \onto (\ourStb)\ab 
                \cong \Zhat \times (\Zp\units \oplus_{\Fp\units} \uFpn) \onto \Zp\units \oplus_{\Fp\units} \uFpn \to \uFpn
            \end{equation*}
            which generates a cyclic subgroup of $\hom(\Zp\units \oplus_{\Fp\units} \uFpn, \uFpn)$ of order $\extOrder$, and so the same holds also in $\evPic(\ModSWn)[\extOrder]$.
        \end{proof} 
        
        \begin{proposition}\label{prp:Kn-pic}
            $\evPic(\SpKn)[\extOrder]$ is a cyclic group of order $\extOrder$.
        \end{proposition}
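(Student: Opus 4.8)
The plan is to sandwich $\evPic(\SpKn)[\extOrder]$ between a cyclic group of order $\extOrder$ and the group $\evPic(\ModSWn)[\extOrder]$ computed in \cref{prp:evPic-cyclic}. For the lower bound I would invoke \cref{cor:subgroup-of-evPic} together with \cref{rmrk:also-Kn-pic}, which provide an embedding $\Zp\times\ZZ/(\extOrder)\hookrightarrow\evPic(\SpKn)$; since $\extOrder=p^{\chrHeight}-1$ is prime to $p$ the factor $\Zp$ has no $\extOrder$-torsion, so passing to $\extOrder$-torsion leaves a cyclic subgroup $\ZZ/(\extOrder)\hookrightarrow\evPic(\SpKn)[\extOrder]$. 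Thus it remains only to prove $\#\evPic(\SpKn)[\extOrder]\le\extOrder$.

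For the upper bound I would exploit that $\SWn=\LKn\sphereWitt(\Fpn)$ is a faithful $\ZZ/\chrHeight$-Galois extension of $\SSKn$, so $\SpKn\simeq\ModSWn^{h\ZZ/\chrHeight}$. Base change along $\SWn$ is symmetric monoidal and hence induces a group homomorphism $q_*\colon\Pic(\SpKn)\to\Pic(\ModSWn)$ carrying $\evPic$ into $\evPic$; since $\evPic(\ModSWn)[\extOrder]$ is cyclic of order $\extOrder$ by \cref{prp:evPic-cyclic}, it suffices to show that $q_*$ is injective on $\extOrder$-torsion, i.e. that $K\coloneqq\ker q_*$ has no nonzero $\extOrder$-torsion.

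To control $K$ I would pass to Picard spectra. As $\pic$ preserves limits of symmetric monoidal categories, Galois descent gives $\pic(\SSKn)\simeq\pic(\ModSWn)^{h\ZZ/\chrHeight}$, with descent spectral sequence $E_2^{s,t}=H^s(\ZZ/\chrHeight;\pi_t\pic(\ModSWn))\Rightarrow\pi_{t-s}\pic(\SSKn)$; here $q_*$ is the edge map, so $K$ is the positive-filtration part of $\pi_0$, an iterated extension of the groups $E_\infty^{s,s}$ $(s\ge1)$, each a subquotient of $E_2^{1,1}=H^1(\ZZ/\chrHeight;(\pi_0\SWn)^{\times})$, respectively $E_2^{s,s}=H^s(\ZZ/\chrHeight;\pi_{s-1}\SWn)$ for $s\ge2$. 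Any prime $\ell\mid\extOrder$ is $\neq p$, and $\ell$ is invertible in $\pi_0\SSKn$ and hence in every $\Kn$-local ring, so $\ell$ acts invertibly on $\pi_*\SWn$; therefore the positive-degree $\ZZ/\chrHeight$-cohomology of each $\pi_{s-1}\SWn$ is uniquely $\ell$-divisible. For the layer $s=1$ I would use \cref{lem:pi0-of-SWn}: the surjection $(\pi_0\SWn)^{\times}\twoheadrightarrow\Wn^{\times}$ has kernel $1+\mathfrak n$ for the nilradical $\mathfrak n$, which is uniquely $\ell$-divisible because $\ell$ is invertible, and $\Wn^{\times}\cong\uFpn\times(1+p\Wn)$ with $H^1(\ZZ/\chrHeight;\uFpn)=H^1(\Gal(\Fpn/\Fp);\uFpn)=0$ by Hilbert 90 and $1+p\Wn$ pro-$p$. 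Hence every $E_2^{s,s}$ with $s\ge1$ is $\ell$-torsion-free; since $\ell$-torsion-freeness passes to subquotients, extensions, and inverse limits, $K$ has no $\extOrder$-torsion. Together with the lower bound this gives $\evPic(\SpKn)[\extOrder]\cong\ZZ/(\extOrder)$.

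The hard part is this last step. The $\ZZ/\chrHeight$-descent spectral sequence has, a priori, infinitely many contributing filtration layers $E_\infty^{s,s}$, and because $\gcd(\chrHeight,\extOrder)$ need not be $1$ they cannot all be dismissed by the crude observation that $|\ZZ/\chrHeight|$ annihilates positive-degree group cohomology. The decisive input is the genuinely topological fact that $\SWn$ is $\Kn$-local, which forces $\ell$ to act invertibly on all of $\pi_*\SWn$, combined with Hilbert 90 to handle the single layer $H^1(\ZZ/\chrHeight;\Wn^{\times})$ in which $\ell$ is not a priori invertible. Two minor points also need attention: that base change is compatible with $\dim$ so that $q_*$ indeed lands in $\evPic$, and that the descent spectral sequence converges strongly enough that $\bigcap_s F^s\pi_0\pic(\SSKn)=0$, so that $K$ is genuinely exhausted by its filtration.
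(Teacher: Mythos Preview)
Your proof is correct, and it takes a genuinely different route from the paper's.

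Both arguments share the same lower bound (\cref{cor:subgroup-of-evPic} together with \cref{rmrk:also-Kn-pic}), and both compare $\evPic(\SpKn)[\extOrder]$ with $\evPic(\ModSWn)[\extOrder]$ via base change along $\SWn$. The difference lies entirely in the upper bound. The paper argues elementarily: if $\SWn\otimes X\simeq\SWn$ then, using that $\sphereWitt(\Fpn)\simeq\SS^{\oplus\chrHeight}$ as a spectrum, one gets $X^{\oplus\chrHeight}\simeq\SSKn^{\oplus\chrHeight}$, whence $\Kn\otimes X\simeq\Kn$, and then the $\Kn$-homology injectivity from \cref{cor:Z/vn->Pic} forces the suspension class to vanish. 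Your argument instead runs the $\ZZ/\chrHeight$-descent spectral sequence for $\pic$ and shows every positive-filtration layer is a $\ZZ[1/\ell]$-module for each prime $\ell\mid\extOrder$: for $s\ge2$ this is automatic since $\ell$ is invertible in $\pi_0\SWn$, while for $s=1$ you use the (equivariantly split) decomposition $(\pi_0\SWn)^{\times}\cong\Wn^{\times}\times(1+\mathfrak{n})$ coming from \cref{lem:pi0-of-SWn}, together with Hilbert~90 for $\uFpn$ and the pro-$p$ nature of $1+p\Wn$.

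What each buys: the paper's method is lighter on machinery and reuses the $\Kn$-homology detection already established, but as written it only checks injectivity on the explicit cyclic subgroup generated by suspensions rather than on all of $\evPic(\SpKn)[\extOrder]$. Your spectral-sequence argument proves outright that the base-change map has no $\ell$-torsion in its kernel for any $\ell\mid\extOrder$, which is a cleaner statement and makes the sandwich airtight; the cost is invoking Galois descent for $\pic$, Hilbert~90, and the convergence caveat you flag. Both are valid ways to close the argument.
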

        
        \begin{proof}
            Using \cref{cor:subgroup-of-evPic} and \cref{rmrk:also-Kn-pic}, there is an injective homomorphism
            \begin{equation*}
                 \ZZ / (\extOrder) \into \evPic(\SpKn)[\extOrder].
            \end{equation*}
            Consider the composition
            \begin{equation*}
                \ZZ / (\extOrder) \into \evPic(\SpKn)[\extOrder] \xto{\SWn\otimes-} \evPic(\ModSWn)[\extOrder].
            \end{equation*}
            We will show it is injective, thus finishing by \cref{prp:evPic-cyclic}. It is enough to show that before moving to even Picard elements, i.e.\ to show that the composition
            \begin{equation*}
                \ZZ / (a_p (\extOrder)) \into \Pic(\SpKn) \xto{\SWn\otimes -} \Pic(\ModSWn)
            \end{equation*}
            is injective.
            Assume $d\in \ZZ / (a_p (\extOrder))$ is in the kernel and let $X$ be the corresponding Picard spectrum. Then $\SWn\otimes X \simeq \SWn$ in $\ModSWn$. Composing with the forgetful functor $\ModSWn \to \SpKn$, remembering that as a spectrum $\sphereWitt(\Fpn) \simeq \SS^{\oplus \chrHeight}$ we get that $X^{\oplus \chrHeight} \simeq \SSKn^{\oplus \chrHeight}$ in $\SpKn$.
            Tensoring now with $\Kn$ we get that $\Kn\otimes X^{\oplus \chrHeight} \simeq \Kn^{\oplus \chrHeight}$, therefore $\Kn\otimes X\simeq \Kn$. But we showed in the proof of \cref{cor:Z/vn->Pic} that the composition
            \begin{equation*}
                \ZZ / (a_p (\extOrder)) \into \Pic(\SpTn) \xto{\Kn\otimes -} \Pic(\BiMod_{\Kn})
            \end{equation*}
            is injective, and it is equivalent to the composition
            \begin{equation*}
                \ZZ / (a_p (\extOrder)) \into \Pic(\SpTn) \xto{\LKn} \Pic(\SpKn) \xto{\Kn\otimes-} \Pic(\BiMod_{\Kn}).
            \end{equation*}
            Thus, as $X\otimes \Kn\simeq \Kn$, $d$ is in the kernel of the above injective map and so is trivial as required.
        \end{proof}
        
    
    \subsection{Telescopic lifting}
        In \cref{cor:subgroup-of-evPic} we constructed a $(\extOrder)$-cyclic subgroup of $\evPic(\SpTn)$. Let \mbox{$\SWnf \coloneqq  \LTn(\sphereWitt(\Fpn))$} and denote also by $\ModSWnf \coloneqq \Mod_{\SWnf}(\SpTn)$. By~\cite[Theorem~5.9 and Theorem~5.27]{CSY-cyclotomic} $\SWnf$ is a faithful $\ZZ / \chrHeight$-Galois extension of $\SSTn$, and $\SWn = \LKn(\SWnf)$ is classified as a Galois extension of $\SSKn$ by the character 
        \begin{equation*}
            \extMorStb \onto \Zhat \onto \ZZ / \chrHeight.
        \end{equation*}
        Using \cref{rmrk:also-Kn-pic}, \cref{prp:evPic-cyclic} \cref{prp:Kn-pic} we get a commutative diagram of groups
        \begin{equation*}
            \begin{tikzcd}
                & {\evPic(\SpTn)[\extOrder]} && {\evPic(\ModSWnf)[\extOrder]} \\
                {\uFpn} \\
                & {\evPic(\SpKn)[\extOrder]} && {\evPic(\ModSWn)[\extOrder]}
                \arrow[hook, from=2-1, to=1-2]
                \arrow["{\SWnf \otimes -}", from=1-2, to=1-4]
                \arrow["\sim"', "j", from=2-1, to=3-2]
                \arrow["\LKn", from=1-2, to=3-2]
                \arrow["\sim"', "{\SWn \otimes -}", from=3-2, to=3-4]
                \arrow["\LKn", from=1-4, to=3-4]
            \end{tikzcd}.
        \end{equation*}
        Let $P_{\chrHeight}\in\evPic(\SpKn)[\extOrder]$ be such that $\SWn\otimes P_{\chrHeight}\in\evPic(\ModSWn)[\extOrder]$ is the image of $\ourExt$ under the Kummer map \cref{thm:Kummer}. 
        Let $P_{\chrHeight}^f$ be the image of $j^{-1}(P_{\chrHeight})$ in $\evPic(\SpTn)[\extOrder]$, so $\LKn(P_{\chrHeight}^f) = P_{\chrHeight}$. Using \cref{thm:Kummer} again choose a lifting $\ourExtf$ of $\SWnf \otimes P_{\chrHeight}^f\in\evPic(\ModSWnf)[\extOrder]$ in
        \begin{equation*} 
            \hGal{\ZZ / (\extOrder)}(\ModSWnf) = \hGal{\uFpn}(\ModSWnf).
        \end{equation*}
        Then the image of $\LKn \ourExtf$ in $\evPic(\ModSWn)$ is a generator of the embedded cyclic group. Thus by \cref{prp:evPic-cyclic} we can assume $\LKn(\ourExtf) \simeq \ourExt$.
        
        To summarize:
        \begin{theorem}\label{thm:Gal-summary}
            There exists a $(\uFpn\rtimes\ZZ / \chrHeight)$-Galois extension $\ourExtf$ of $\SSTn$, lifting the Galois extension $\ourExt$ of $\SSKn$ corresponding to the character
            \begin{equation*}
                \extMorStb = \MorStb \rtimes \Zhat \xonto{\pi\rtimes\id} \uFpn \rtimes \Zhat \onto \uFpn \rtimes \ZZ / \chrHeight.
            \end{equation*} 
        \end{theorem}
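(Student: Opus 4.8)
\emph{The plan} is to assemble the statement from the ingredients developed above: (i) Kummer theory produces a $\uFpn$-Galois extension of $\SWnf$; (ii) since the relevant Picard element is defined over the base, this extension is $\ZZ/\chrHeight$-equivariant and hence upgrades to an $(\uFpn\rtimes\ZZ/\chrHeight)$-Galois extension of $\SSTn$; (iii) a classification argument identifies its $\Kn$-localization with the prescribed character.

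For (i): by~\cite[Theorem~5.9, Theorem~5.27]{CSY-cyclotomic}, $\SWnf=\LTn\sphereWitt(\Fpn)$ is a faithful $\ZZ/\chrHeight$-Galois extension of $\SSTn$, and $\ModSWnf=\Mod_{\SWnf}(\SpTn)$ carries a primitive $(\extOrder)$-th root of unity, induced by the Teichm\"uller embedding $\uFpn\into\Wn\units$, on which $\ZZ/\chrHeight=\Gal(\Fpn/\Fp)$ acts through the Frobenius. Thus \cref{thm:Kummer} applies to $\ModSWnf$ with $m=\extOrder$. By \cref{cor:subgroup-of-evPic} we have an order-$\extOrder$ element $P_{\chrHeight}^f\in\evPic(\SpTn)[\extOrder]$ with $\LKn(P_{\chrHeight}^f)=P_{\chrHeight}$; base-changing along $\SSTn\to\SWnf$ gives $\SWnf\otimes P_{\chrHeight}^f\in\evPic(\ModSWnf)[\extOrder]$. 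Lifting it through the surjection $\pi_0\hGal{\ZZ/\extOrder}(\ModSWnf)\onto\evPic(\ModSWnf)[\extOrder]$ of \cref{thm:Kummer}, and using $\ZZ/\extOrder\cong\uFpn$, yields a $\uFpn$-Galois extension $\ourExtf$ of $\SWnf$ in $\SpTn$.

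For (ii): the point is that $P_{\chrHeight}^f$ is defined over the base $\SSTn$, so $\SWnf\otimes P_{\chrHeight}^f$ is canonically $\ZZ/\chrHeight$-equivariant, the group acting only through its Galois action on $\SWnf$. Running the (natural) Kummer sequence inside the genuine equivariant category $\Mod_{\SWnf}(\SpTn^{\B(\ZZ/\chrHeight)})$ therefore produces a $\ZZ/\chrHeight$-equivariant $\uFpn$-Galois extension of $\SWnf$, which by the standard composition formalism for Galois extensions (cf.~\cite[\S6]{Rognes-2008-Galois},~\cite[\S9]{Mathew-2016-Galois}) is the same datum as an $(\uFpn\rtimes\ZZ/\chrHeight)$-Galois extension $\ourExtf$ of $\SSTn$; the semidirect-product twist is exactly the Frobenius action, because that is how the Galois action of $\ZZ/\chrHeight$ moves the chosen root of unity. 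Faithfulness of $\ourExtf$ over $\SSTn$ follows by composing faithfulness over $\SWnf$ with faithfulness of $\SWnf$ over $\SSTn$.

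For (iii): since $\LKn$ is symmetric monoidal, $\LKn(\ourExtf)$ is an $(\uFpn\rtimes\ZZ/\chrHeight)$-Galois extension of $\SSKn$, and by \cref{cor:Galois-characters-correspondence} (\cite[\S9]{Mathew-2016-Galois}) such extensions are classified by continuous homomorphisms $\extMorStb\to\uFpn\rtimes\ZZ/\chrHeight$ up to conjugacy. Naturality of the Kummer sequence along $\ModSWnf\to\ModSWn$, together with $\LKn(P_{\chrHeight}^f)=P_{\chrHeight}$ and the fact (by construction and \cref{prp:evPic-cyclic}) that $\SWn\otimes P_{\chrHeight}$ is the image of $\ourExt$ under the Kummer map of $\ModSWn$, shows that $\LKn(\ourExtf)$ restricts over $\SWn$ to a $\uFpn$-Galois extension generating $\evPic(\ModSWn)[\extOrder]$; post-composing the identification $\ZZ/\extOrder\cong\uFpn$ with an automorphism of $\uFpn$, we may assume this restriction is $\ourExt$. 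The remaining point --- which I expect to be the main obstacle --- is that the homomorphism classifying $\LKn(\ourExtf)$ is not merely \emph{some} extension of $\ourStb\xto{\pi}\uFpn$ to $\extMorStb$ (in general there are $p-1$ non-conjugate such extensions, differing by Frobenius twists) but precisely $\pi\rtimes\id$ followed by the projection $\uFpn\rtimes\Zhat\onto\uFpn\rtimes\ZZ/\chrHeight$. I would establish this by tracking the Kummer generator $P_{\chrHeight}^f$ --- which, being defined over $\SSTn$, is $\Gal$-fixed --- through the descent along $\SSKn\to\SWn$, so that the relative position of $\LKn(\ourExtf)$ over $\SWn$ carries no Frobenius twist beyond the one already forced by the root of unity. (When $p=2$ there is a unique conjugacy class, so this step is automatic.) Combining (i)--(iii) proves \cref{thm:Gal-summary}.
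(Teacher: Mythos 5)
Your core construction is the same as the paper's: both take the telescopic Picard element $P_{\chrHeight}^f$ supplied by \cref{cor:subgroup-of-evPic} and \cref{rmrk:also-Kn-pic}, base-change to $\SWnf$, lift $\SWnf\otimes P_{\chrHeight}^f$ through the Kummer sequence of \cref{thm:Kummer} in $\ModSWnf$ to get a $\uFpn$-Galois extension $\ourExtf$ of $\SWnf$, and then use \cref{prp:evPic-cyclic} and \cref{prp:Kn-pic} to arrange $\LKn(\ourExtf)\simeq\ourExt$ over $\SWn$. The paper is in fact terser than you are: it stops essentially at that identification (choosing $P_{\chrHeight}$ so that $\SWn\otimes P_{\chrHeight}$ is literally the Kummer image of $\ourExt$ and adjusting the lift by a unit in the kernel term, rather than relabelling $\uFpn$ by an automorphism), and it treats your steps (ii) and (iii) --- the assembly into a $(\uFpn\rtimes\ZZ/\chrHeight)$-Galois extension of $\SSTn$ and the identification of the classifying character on all of $\extMorStb$ rather than just on $\ourStb$ --- as implicit in the CSY-style composition formalism. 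So your proposal is more explicit than the paper precisely where the paper elides detail, and your worry in (iii) about which extension of $\pi$ to $\extMorStb$ one gets is a legitimate point that the paper's written proof does not address head-on.

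However, the mechanism you propose for (ii) does not work as stated. By faithful Galois descent, $\Mod_{\SWnf}(\SpTn^{\B(\ZZ/\chrHeight)})$ (with $\SWnf$ carrying its Galois action) is equivalent to $\SpTn$ itself, which has no primitive $(\extOrder)$-th root of unity; relatedly, the Teichm\"uller root of unity $\omega\colon \Cn[\extOrder]\to\SWnf\units$ is not $\ZZ/\chrHeight$-equivariant --- Frobenius sends $\omega\mapsto\omega^p$ --- so \cref{thm:Kummer} cannot be ``run equivariantly'' in that category, and if the construction were literally equivariant in this naive sense the output would be a $(\uFpn\times\ZZ/\chrHeight)$-extension, contradicting the semidirect product you want. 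The actual content of this step (and what the paper implicitly leans on) is the verification that the Kummer extension of $\SWnf$ attached to a Picard element defined over $\SSTn$ carries a semilinear $\ZZ/\chrHeight$-action which conjugates the $\uFpn$-action through Frobenius, making the composite $\SSTn\to\ourExtf$ Galois for $\uFpn\rtimes\ZZ/\chrHeight$; your closing sentence of (ii) names the right phenomenon, but it is the thing to be proved, not a consequence of the equivariant framing. Likewise (iii) is only a sketch (``I would establish this by tracking\dots''): the natural way to close it is to note that the $\Kn$-local extension classified by $\pi\rtimes\id$ is itself obtained by the same Kummer-plus-descent construction applied to $P_{\chrHeight}$ (using \cref{cor:Galois-characters-correspondence}), and then invoke naturality of the whole construction under $\LKn$; as written, that step remains open in your proposal, just as it is left implicit in the paper.
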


        \begin{corollary}\label{cor:strong-Gal}
            There exists an $((\uFpn \oplus_{\Fp\units} \Zp\units) \rtimes \Zhat)$-pro-Galois extension $\ourExtTf$ of $\SSTn$, lifting the Galois extension $\ourExtT$ of $\SSKn$ corresponding to the character
            \begin{equation*}
                \extMorStb = \MorStb \rtimes \Zhat \xonto{(\det \oplus_{\Fp\units} \pi) \rtimes \id} (\Zp\units \oplus_{\Fp\units} \uFpn) \rtimes \Zhat.
            \end{equation*}
        \end{corollary}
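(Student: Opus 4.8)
\emph{Proof proposal.} The plan is to re-run the construction behind \cref{thm:Gal-summary} over a larger base ring and then to identify the resulting extension by a uniqueness argument after $\Kn$-localization, exactly along the lines sketched in the introduction. First I would replace the $\ZZ/\chrHeight$-Galois base $\SWnf = \LTn\sphereWitt(\Fpn)$ by
\[
    R^f \coloneqq \LTn\bigl(\sphereWitt(\Fpbar)\bigr)\bigl[\omega_{p^\infty}^{(n)}\bigr],
\]
the $\Tn$-localization of the spherical Witt vectors of $\Fpbar$ with the height-$\chrHeight$ $p$-power cyclotomic extensions of \cite{CSY-cyclotomic} adjoined. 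Its $\Zhat$-pro-Galois part comes from the action of $\Gal(\Fpbar/\Fp)$ on $\sphereWitt(\Fpbar)$ and its $\Zp\units$-pro-Galois part from the higher cyclotomic tower; combining these (and invoking the faithfulness statements of \cite{CSY-cyclotomic}, just as for \cref{thm:Gal-summary}) shows $R^f$ is a faithful pro-Galois extension of $\SSTn$ with group $\Zp\units\times\Zhat$, whose $\Kn$-localization $\LKn R^f$ is classified, via \cref{cor:Galois-characters-correspondence} extended to the relevant profinite groups, by the product of the determinant of the Morava stabilizer group and the Frobenius projection $\extMorStb\onto\Zhat$.

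Next I would feed $R^f$ into Kummer theory. The ring $R^f$ carries a primitive $(\extOrder)$-th root of unity: the Teichm\"uller lift of a generator of $\Fpn\units\subseteq\Fpbar\units$ provides one, and primitivity follows as in \cref{lem:pi0-of-SWn} from the facts that $\extOrder$ is invertible in $\pi_0 R^f$ and that $\pi_0 R^f$ surjects onto $\Witt(\Fpbar)$ with nilpotent kernel. Applying \cref{thm:Kummer} to $\Mod_{R^f}(\SpTn)$ gives a split exact sequence whose right-hand term is $\evPic(\Mod_{R^f}(\SpTn))[\extOrder]$; by \cref{cor:subgroup-of-evPic}, base-changed along $\SSTn\to R^f$, this group contains a cyclic subgroup of order $\extOrder$, and by the computation of \cref{prp:evPic-cyclic}--\cref{prp:Kn-pic} (carried out over $\LKn R^f$, using the description of $\MorStb\ab$ from \cref{lem:ab-of-mor-stb}) a canonical generator of it is available. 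Choosing a Kummer lift of such a generator, compatibly with the $\Zp\units\times\Zhat$-action on $\Mod_{R^f}(\SpTn)$, yields a $\ZZ/(\extOrder)=\uFpn$-Galois extension of $R^f$ which descends along $R^f/\SSTn$ to a pro-Galois extension $\ourExtTf$ of $\SSTn$. Its group is an extension of $\Zp\units\times\Zhat$ by $\uFpn$; the Teichm\"uller roots of order $p-1$ common to $\Zp\units$ and $\uFpn$ are already present in $R^f$, forcing the amalgamation $\uFpn\oplus_{\Fp\units}\Zp\units$, and the action of $\Gal(\Fpbar/\Fp)$ on the adjoined $(\extOrder)$-th root of unity through the Frobenius produces the semidirect factor $\Zhat$, so the group is $(\uFpn\oplus_{\Fp\units}\Zp\units)\rtimes\Zhat$.

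It then remains to check that $\LKn\ourExtTf$ is the $G$-Galois extension of $\SSKn$ classified by the stated character, where $G=(\uFpn\oplus_{\Fp\units}\Zp\units)\rtimes\Zhat$. By \cref{cor:Galois-characters-correspondence}, $G$-pro-Galois extensions of $\SSKn$ are classified by continuous homomorphisms $\extMorStb\to G$ up to conjugacy, so it suffices to determine this homomorphism on each factor of $G$: post-composing with the quotient $G\onto\Zp\units\times\Zhat$ recovers the classifying character of $\LKn R^f$ from the first step (the determinant and the Frobenius), and post-composing with the quotient $G\onto\uFpn$ recovers the classifying character of $\ourExt$ from \cref{dfn:outExt}, namely $\pi$. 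Since a homomorphism into $G$ is pinned down by these two components, $\LKn\ourExtTf$ is classified by $(\det\oplus_{\Fp\units}\pi)\rtimes\id$; after replacing $\ourExtTf$ by an isomorphic lift if necessary (as was done for $\ourExtf$ in \cref{thm:Gal-summary}) we get $\LKn\ourExtTf\simeq\ourExtT$, which is the assertion of the corollary.

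\textbf{Main obstacle.} The delicate point is the identification of the classifying character of $\LKn\ourExtTf$. Concretely this splits into two subtleties that must be controlled simultaneously: that the group of the assembled extension really is the amalgamated semidirect product $(\uFpn\oplus_{\Fp\units}\Zp\units)\rtimes\Zhat$ and not some other extension of $\Zp\units\times\Zhat$ by $\uFpn$ — which hinges on the $\Fp\units$-torsion being "already present" in $R^f$ and on $\Gal(\Fpbar/\Fp)$ acting on the new $(\extOrder)$-th root of unity through the Frobenius — and that the three component characters match the determinant, $\pi$, and the Frobenius projection. Both rely on the higher-cyclotomic Galois-group computations of \cite{CSY-cyclotomic} together with the even-Picard computation of \cref{prp:evPic-cyclic}--\cref{prp:Kn-pic} transported to $\Mod_{\LKn R^f}$, and it is the bookkeeping of these identifications — rather than any new homotopy-theoretic input — that constitutes the bulk of the work.
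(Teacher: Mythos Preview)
Your approach is valid in outline but takes a different, heavier route than the paper. The paper does \emph{not} re-run Kummer theory over the larger base $R^f=\LTn\sphereWitt(\Fpbar)[\omega_{p^\infty}^{(\chrHeight)}]$; instead it simply tensors the extension $\ourExtf$ already produced in \cref{thm:Gal-summary} with the higher cyclotomic extension of \cite{CSY-cyclotomic}, both regarded over $\SWnf$:
\[
    \ourExtTf \coloneqq \ourExtf \otimes_{\SWnf} \LTn\sphereWitt(\Fpbar)[\omega_{p^\infty}^{(\chrHeight)}].
\]
The identification of the character is then immediate from \cref{cor:ab-of-stb}: the two tensor factors correspond, over $\SWn$, to the two projections $\ourStb\onto\uFpn$ and $\ourStb\onto\Zp\units\times\chrHeight\Zhat$, and their tensor product corresponds to the full abelianization map. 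No new Picard computation is needed, and no delicate analysis of the extension structure of the resulting Galois group is required --- the amalgamation $\uFpn\oplus_{\Fp\units}\Zp\units$ drops out of the abelianization computation already done in \cref{lem:ab-of-mor-stb}. (The introduction does mention your route as an equivalent alternative, but the actual proof uses the tensor-product shortcut.)

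Where your approach incurs genuine extra cost is exactly where you flag it: the even Picard computation of \cref{prp:evPic-cyclic} is carried out for the base $\SWn$, whose Galois group over $\En$ is $\ourStb=\MorStb\times\chrHeight\Zhat$. Over your base $\LKn R^f$ the Galois group of $\En$ is instead $\ker(\extMorStb\to\Zp\units\times\Zhat)$, a different profinite group whose abelianization and character set you would have to analyze separately before you can pin down $\evPic(\Mod_{\LKn R^f})[\extOrder]$ and identify the generator. This is doable, but it is not a consequence of \cref{prp:evPic-cyclic}--\cref{prp:Kn-pic} as stated; you would need an analogue of \cref{lem:ab-of-mor-stb} for this kernel. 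The paper sidesteps all of this by reducing to the case already handled.
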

        \begin{proof}
            We saw in \cref{cor:ab-of-stb} that the Abelianization of $\ourStb$ is $(\ourStb)\ab \cong (\Zp\units \oplus_{\Fp\units} \uFpn) \times \chrHeight\Zhat$. The quotient map
            \begin{equation*}
                \ourStb \to (\Zp\units \oplus_{\Fp\units} \uFpn) \times \chrHeight\Zhat \onto \Zp\units \times \chrHeight\Zhat
            \end{equation*}
            corresponds to the the maximal Abelian pro-Galois extension of $\SSKn$, thought of as an extension of $\SWn$:
            \begin{equation*}
                \SWn \to \LKn\sphereWitt(\Fpbar)[\omega_{p^{\infty}}^{(\chrHeight)}],
            \end{equation*}
            see~\cite[Proposition~5.17]{CSY-cyclotomic}. The quotient
            \begin{equation*}
                \ourStb \to (\Zp\units \oplus_{\Fp\units} \uFpn) \times \chrHeight\Zhat \onto \uFpn
            \end{equation*}
            is the character $\pi$ of $\ourExt$. That means the maximal Abelian pro-Galois extension $\ourExtT$ of $\SWn$, corresponding to the quotient map
            \begin{equation*}
                \ourStb \to (\Zp\units \oplus_{\Fp\units} \uFpn) \times \chrHeight\Zhat
            \end{equation*}
            is obtained by adding all roots of unity and higher roots of unity to $\ourExt$, or euqivalently:
            \begin{equation*}
                \ourExtT \coloneqq \ourExt\otimes_{\SWn} \LKn\sphereWitt(\Fpbar)[\omega_{p^{\infty}}^{(\chrHeight)}].
            \end{equation*}
            We can lift it to a pro-Galois extension of $\SWnf$, using that $\LKn\sphereWitt(\Fpbar)[\omega_{p^{\infty}}^{\chrHeight}]$ lifts to an extension of $\SSTn$ (\cite[Theorem~5.31]{CSY-cyclotomic}):
            \begin{equation*}
                \ourExtTf \coloneqq \ourExtf \otimes_{\SWnf} \LTn\sphereWitt(\Fpbar)[\omega_{p^{\infty}}^{(\chrHeight)}].
            \end{equation*}
            $\ourExtf$ is a $((\Zp\units \oplus_{\Fp\units} \uFpn) \rtimes \Zhat)$-pro-Galois extension of $\SSTn$ lifting the claimed character.
        \end{proof}

\bibliographystyle{alpha}
\phantomsection\addcontentsline{toc}{section}{\refname}
\bibliography{references}
\end{document}